\documentclass[10pt,reqno]{amsart}
\usepackage[utf8]{inputenc}
\usepackage[T1]{fontenc}
\usepackage[utf8]{inputenc}
\usepackage[english]{babel}
\usepackage[margin=4.1cm]{geometry}
\usepackage{amsmath, amssymb}
\usepackage{amsfonts}
\usepackage{dsfont}
\usepackage{float}
\usepackage{graphicx}
\usepackage{wrapfig}
\usepackage{mathtools}
\usepackage{bbm}
\usepackage{amsthm}
\usepackage{ifthen}
\usepackage{graphicx}
\usepackage[ruled,vlined]{algorithm2e}
\usepackage{xcolor}
\usepackage{mathtools}
\usepackage{empheq}
\usepackage{mathrsfs}
\usepackage{multicol}

\newtheorem{theorem}{Theorem}[section]
\newtheorem{prop}[theorem]{Proposition}
\newtheorem{lemma}[theorem]{Lemma}
\newtheorem{cor}[theorem]{Corollary}

\newtheorem{definition}[theorem]{Definition}

\newtheorem{remark}{Remark}[section]

\newcommand{\bb}[1]{\mathbb{#1}}

\newcommand{\R}{\bb{R}}

\chardef\bslash=`\\ 

\numberwithin{equation}{section}

\newcommand{\N}{\mathbb{N}}
\newcommand{\Z}{\mathbb{Z}}

\newcommand{\Com}{\mathbb{C}}

\newcommand{\re}{\operatorname{Re}}
\newcommand{\ima}{\operatorname{Im}}

\def\bm{\left( \begin{array}{cc}}
\def\endm{\end{array}\right)}

\newcommand{\be}{\begin{equation}}
\newcommand{\ee}{\end{equation}}
\newcommand{\bea}{\begin{eqnarray}}
\newcommand{\eea}{\end{eqnarray}}
\newcommand{\bee}{\begin{eqnarray*}}
\newcommand{\eee}{\end{eqnarray*}}
\newcommand{\ben}{\begin{enumerate}}
\newcommand{\een}{\end{enumerate}}

\newcommand{\ba}{\begin{aligned}}
\newcommand{\ea}{\end{aligned}}
\newcommand{\bpm}{\begin{pmatrix}}
\newcommand{\epm}{\end{pmatrix}}

\usepackage{booktabs}
\usepackage{longtable}
\usepackage{xurl}
\usepackage{hyperref}
\usepackage{chngcntr}
\usepackage{animate} 
\usepackage{tikz}
\usepackage{graphicx}
\numberwithin{figure}{section}

\numberwithin{equation}{section}
\begin{document}
	
	\title[Upper bound for $\Lambda_L$]{Upper Bounds on the de Bruijn-Newman constant for Dirichlet $L$-functions with primitive real-valued characters}
	
    \author[Freire]{Ricardo Freire}  
	\address{Departamento de Ingenier\'{\i}a Matem\'atica, Universidad de Chile, Casilla 170 Correo 3, Santiago, Chile. Current address: Departamento de Matem\'atica, Facultad de Ciencias F\'isicas y Matem\'aticas, Universidad de Concepci\'on, Concepci\'on, Chile}
	\email{rfreire@dim.uchile.cl, ricardofreiremath@gmail.com, rcarlosf@udec.cl}
    
	\author[Gonzalez-Riquelme]{Cristian Gonz\'alez-Riquelme}   
 \address{Facultad de Matem\'atica, P. Universidad Cat\'olica de Chile, Av. Vicu\~na Mackenna 4860, 690444 Santiago, Chile.}
	\email{cristiangonzalezriquelme95@gmail.com, cgonzaler@uc.cl}
	\author[Mu\~noz]{Claudio Mu\~noz}  
	\address{Departamento de Ingenier\'{\i}a Matem\'atica and Centro de Modelamiento Matem\'atico (UMI 2807 CNRS), Universidad de Chile, Casilla 170 Correo 3, Santiago, Chile.}
	\email{cmunoz@dim.uchile.cl}
	
    \author[Poblete]{Felipe Poblete}
	\address{Instituto de Ciencias F\'isicas y Matem\'aticas, Facultad de Ciencias, Universidad Austral de Chile, Valdivia, Chile.}
	\email{felipe.poblete@uach.cl}
	
    \author[Salinas]{Vicente Salinas}  
	\address{Departamento de Ingenier\'{\i}a Matem\'atica, Universidad de Chile, Casilla 170 Correo 3, Santiago, Chile.}
	\email{vsalinas@dim.uchile.cl}
	\thanks{R.F. was partially funded by Chilean research grants FONDECYT 3230256 and ANID Exploraci\'on 13220060. C.G-R. was supported by the Spanish Ministry of Science, Innovation and Universities,  grant PID2023-150984NB-I00. C. G-R. is also supported by the Spanish State Research Agency, through the Severo Ochoa and María de Maeztu Program for Centers and Units of Excellence in R\&D (CEX2020-001084-M) and thanks the CERCA Programme/Generalitat de Catalunya for institutional support. C.M. was partially funded by Chilean research grants ANID 2022 Exploration 13220060, FONDECYT 1191412, 1231250, and Basal CMM FB210005 and MathAmSud WAFFLE 23-MATH-18. F.P.'s work is partially supported by ANID Exploration project 13220060, ANID project FONDECYT 1221076 and MathAmSud WAFFLE 23-MATH-18. V. S. was funded by ANID 2022 Exploration 13220060, ANID Fondecyt 1231250 and Beca ANID-Subdirección de Capital Humano/Doctorado Nacional/2023-21231505.}

	
	\begin{abstract}
		
		Let $\chi_m$ be a primitive real-valued Dirichlet character
of conductor $m\ge3$, let $L(s)=L(s,\chi_m)$ be the
associated Dirichlet $L$-function, and let $\xi_L(s)$
denote its completed form. For each $t \in \R$, we define the entire function
		\[
		H^L_t(z) := \int_{-\infty}^{\infty} e^{tu^2}\, \Phi_L(u)\, e^{izu}\, du,
		\]
		where $\Phi_L$ is the superexponentially decaying density
		\[
		\Phi_L(u) := \frac{1}{2\pi}\int_{-\infty}^{\infty} \xi_L\!\left(\frac{1+is}{2}\right) e^{-isu}\, ds.
		\]
		The family $\{H_t^L\}_{t \in \R}$ describes the heat-flow evolution of $\xi_L$ and solves a backward heat equation. By the work of Newman~\cite{New}, there exists a finite constant $\Lambda_L$, called the generalized de Bruijn--Newman constant, such that all the zeros of $H_t^L$ are real if and only if $t \ge \Lambda_L$. The Generalized Riemann Hypothesis for $L$ is equivalent to the assertion that $\Lambda_L \le 0$. Dobner~\cite{Dob} established the matching lower bound $\Lambda_L \ge 0$, while de Bruijn~\cite{DBru} proved the upper bound $\Lambda_L \le \tfrac12$.
		
		We prove the unconditional uniform bound
$\Lambda_L\le953/3200=0.2978125$
for every primitive real-valued character of conductor
$3\le m\le400$, together with stronger individual bounds.
The proof extends the effective approximation of
Polymath~\cite{Poly} to Dirichlet $L$-functions, retaining
the conductor dependence in the error estimates.
We combine these estimates with Platt's numerical
verification of GRH~\cite{platt} and rigorous interval
computations. We also give conditional bounds under
explicit global initial zero-strip hypotheses, including
$|\operatorname{Im}z|\le3/4$ for the zeros of $H_0^L$.
\end{abstract}
	
	\maketitle
	
	\tableofcontents
	
	\section{Introduction}
	
	\subsection{Setting}
	The Riemann Hypothesis, formulated in 1859 \cite{R59}, asserts that every nontrivial zero of the Riemann zeta function $\zeta$ lies on the critical line $\re(s) = \tfrac12$. A convenient way to study this problem analytically is to embed it into a one-parameter family of entire functions arising from a backward heat flow. Let $\xi : \Com \to \Com$ denote the entire Riemann $\xi$ function
	\[
	\xi(s) := \frac{s(s-1)}{2}\pi^{-s/2}\,\Gamma\!\left(\frac{s}{2}\right)\zeta(s),
	\]
	where $\Gamma$ is the Gamma function and $\zeta$ is the meromorphic extension of the Riemann zeta function, whose only singularity is a simple pole at $s = 1$. The function $\xi$ satisfies the functional equation $\xi(s) = \xi(1-s)$, so its zeros are symmetric with respect to the critical line. Let
	\[
	H_0^{\zeta}(z) := \frac{1}{8}\, \xi\!\left(\frac{1+iz}{2}\right),
	\]
	be the function that transports the zeros of $\xi$ on the line $\re(s) = \tfrac12$ to the real axis $\ima(z) = 0$. The function $H_0^\zeta$ is entire and even, and inherits from $\xi$ the symmetry $\overline{H_0^\zeta(z)} = H_0^\zeta(\bar z)$. With this notation, the Riemann Hypothesis is equivalent to the statement that every zero of $H_0^\zeta$ is real. It is a classical fact \cite{Tit} that $H_0^\zeta$ admits the Fourier representation
	\[
	H_0^{\zeta}(z) = \int_0^{\infty} \Phi_\zeta(u)\cos(zu)\, du,
	\]
	in which the density
	\[
	\Phi_\zeta(u) := \sum_{n \ge 1} \bigl( 2\pi^2 n^4 e^{9u} - 3\pi n^2 e^{5u}\bigr) \exp\!\left(-\pi n^2 e^{4u}\right)
	\]
	decays superexponentially. The sum defining $\Phi_\zeta$ converges absolutely for every real $u$, and the Poisson summation formula (see, e.g., \cite[Ch.~X]{Tit}) yields the parity $\Phi_\zeta(u) = \Phi_\zeta(-u)$, which is itself a reformulation of the functional equation $\xi(s) = \xi(1-s)$. De Bruijn~\cite{DBru} embedded $H_0^\zeta$ into the family of entire functions
	\[
	H_t^{\zeta}(z) := \int_0^{\infty} e^{tu^2}\, \Phi_\zeta(u)\cos(zu)\, du, \qquad t \in \R.
	\]
	As noted by Ki, Kim and Lee~\cite{KKL}, $H_t^\zeta$ is the evolution of $H_0^\zeta$ under the \emph{backward heat equation}
	\[
	\partial_t H_t^\zeta(z) = -\partial_{zz} H_t^\zeta(z).
	\]
	Each member of the family is entire and even, and inherits from $\Phi_\zeta$ the symmetry $\overline{H_t^\zeta(z)} = H_t^\zeta(\bar z)$.
	
	The behavior of the family $\{H_t^\zeta\}_{t \in \R}$ has been the object of extensive study. De Bruijn~\cite{DBru} proved that all zeros of $H_t^\zeta$ are real once $t \ge \tfrac12$, and that the property of having only real zeros is monotone in $t$, so if every zero of $H_{t_0}^\zeta$ is real for some $t_0 < \tfrac12$, then the same holds for every $t > t_0$. Building on this monotonicity, Newman~\cite{New} established the existence of a finite constant $\Lambda^\zeta \le \tfrac12$, the de Bruijn--Newman constant, with the property that the zeros of $H_t^\zeta$ are all real if and only if $t \ge \Lambda^\zeta$. The Riemann Hypothesis is therefore equivalent to the inequality $\Lambda^\zeta \le 0$.
	
	Later work has narrowed the location of $\Lambda^\zeta$. Ki, Kim and Lee~\cite{KKL} sharpened de Bruijn's bound to $\Lambda^\zeta < \tfrac12$. On the lower-bound side, Rodgers and Tao~\cite{RT} showed that $\Lambda^\zeta \ge 0$, building on the description of the time evolution of the zeros developed by Csordas, Smith, and Varga~\cite{CSV} and the use of \emph{dispersive-type} techniques such as energies and virial complex-valued ODE functionals; consequently, the Riemann Hypothesis is equivalent to the equality $\Lambda^\zeta = 0$. Polymath~\cite{Poly} combined effective estimates for
$H_t^\zeta(x+iy)$ with Platt's rigorous verification of the
Riemann Hypothesis~\cite{plattpolymath} to obtain
$\Lambda^\zeta\le0.22$. Platt and Trudgian~\cite{PT21}
subsequently improved this bound to $\Lambda^\zeta\le0.2$.
	
	\subsection{Dirichlet \texorpdfstring{$L$}{L}-functions}
	We extend the program of Polymath~\cite{Poly} to Dirichlet $L$-functions
associated with primitive real-valued characters. The conductor enters
the approximation and its error terms, while the character changes the
finite sums and the Euler-product search for the barrier. We keep this
dependence explicit in the estimates used for the numerical verification.
	
	Let $\chi_m$ be a primitive Dirichlet character of period $m$, and let $L$ denote the associated Dirichlet series
	\begin{equation}\label{L_m_serie0}
		L(s) = \sum_{n=1}^{\infty} \frac{\chi_{m}(n)}{n^{s}},
	\end{equation}
	which converges absolutely for $\re(s) > 1$. Set $a := (1 - \chi_m(-1))/2 \in \{0,1\}$, so that $a = 1$ when $\chi_m$ is odd and $a = 0$ when $\chi_m$ is even. For $m=1$, the Dirichlet series is exactly the Riemann zeta function. In general, $L$ admits a meromorphic continuation to the entire complex plane \cite{MH06}: for the principal character, it has a simple pole at $s = 1$, while for every non-principal character, it is entire. In both cases, $L$ shares with $\zeta$ the critical strip $0 \le \re(s) \le 1$ and the critical line $\re(s) = \tfrac12$. Dirichlet $L$-functions encode important arithmetic information; in particular, they govern the distribution of primes in arithmetic progressions and provide the building blocks of the $L$-functions attached to number fields and modular forms. The Generalized Riemann Hypothesis (GRH) asserts that every nontrivial zero of every Dirichlet $L$-function lies on the critical line $\re(s) = \tfrac12$.

    Dirichlet $L$-functions are the simplest members of the Selberg class
\cite{Selberg,KaczorowskiPerelli}, and the Generalized Riemann Hypothesis
(GRH) for them has the same standing and the same consequences for the
distribution of primes in arithmetic progressions as the Riemann
Hypothesis does for the primes themselves; see \cite{Davenport,
IwaniecKowalski,MH06} for the classical theory and
\cite{Bombieri, Conrey, Sarnak} for broader surveys. For a real primitive
character the critical strip has an additional feature which has no
analogue for $\zeta$: $L(s,\chi_m)$ may have a real zero close to $s=1$,
the possible Landau--Siegel zero \cite{Landau, Siegel35}, whose
non-existence is tied to the class number problem \cite{Goldfeld}. Recently, some works have ruled out the existence of these zeros \cite{OPENAI1,OPENAI2}. In
the heat-flow picture such a zero sits on the imaginary axis of $H^L_0$,
and this is why the real primitive characters, rather than general ones,
are the natural first case for the de Bruijn--Newman program. Numerical
evidence for GRH goes back to Rumely \cite{Rumely}, and the verification
of Platt \cite{platt}, which covers every primitive character of
conductor $m\le 4\cdot 10^5$ up to height $h_m$ and excludes real zeros in
that range, is the input on which our certificates rest. The non-negativity
of the generalized de Bruijn--Newman constant for this family is Dobner's
theorem \cite{Dob}.

	Following \cite{KKL}, we attach to each primitive real-valued
Dirichlet character $\chi_m$ of conductor $m\geq3$ the
one-parameter family of entire functions
	\begin{equation}\label{H_tz}
		\begin{aligned}
			H_t^L(z)
			&:= \int_0^{\infty}
			\exp\!\left(\frac{t}{16}\log^2(x) + \frac{iz}{4}\log(x)\right)
			\varphi_L(x)\,\frac{dx}{x} \\
			&= \int_{-\infty}^{\infty} e^{t u^2 + izu}\,\Phi_L(u)\,du,
		\end{aligned}
	\end{equation}
	where the density $\Phi_L$ is given by
	\begin{equation}\label{Phi_L}
		\begin{aligned}
			& \Phi_L(u) = 4\,\varphi_L(e^{4u}), \qquad \varphi_L(x) = x^{\frac{1}{4} + \frac{a}{2}}\, \psi_L(x), \\
			& \text{and} \qquad \psi_L(x) = \sum_{n=1}^{\infty} n^{a}\, \chi_m(n)\, e^{-\frac{n^{2}\pi x}{m}}.
		\end{aligned}
	\end{equation}
	The case $m=1$ requires separate normalization for
$\zeta$ introduced above and is not included in
\eqref{H_tz}--\eqref{Phi_L}. In the Dirichlet case, GRH is equivalent to the assertion that $H_0^L$ has only real zeros, and a suitable adaptation of the de Bruijn--Newman argument produces a finite constant $\Lambda_L$, the generalized de Bruijn--Newman constant attached to $L$, characterized by the property that $H_t^L$ has only real zeros if and only if $t \ge \Lambda_L$. The bound $\Lambda_L \le \tfrac12$ follows directly from de Bruijn's monotonicity argument. On the lower-bound side, Dobner~\cite{Dob} extended the Rodgers--Tao theorem to a wide class of $L$-functions in the extended Selberg class, establishing $\Lambda_L \ge 0$ in considerable generality, including the family considered in this work.
	
Platt~\cite[, Theorem~10.1]{platt} verified GRH for every primitive character of
modulus $m\le4\times10^5$ up to height
\[
 h_m:=\max\left\{\frac{10^8}{m},\,
 200+A(m)\frac{10^7}{m}\right\},
\]
where $A(m)=7.5$ for even $m$ and $A(m)=3.75$ for odd $m$.
In this range,
\[
 mh_m=\max\{10^8,\,200m+A(m)10^7\}\le1.55\times10^8.
\]
Under $s=(1+iz)/2$, this verification covers
$|\operatorname{Re}z|\le2h_m$.
	For a real primitive character, a possible real zero $\beta$ of $L$
in $(0,1)$ gives a purely imaginary zero $i(1-2\beta)$ of $H_0^L$.
The functional equation supplies the reflected zero $1-\beta$ and hence
the opposite imaginary zero. A Landau--Siegel zero is a real zero
unusually close to $1$; the standard zero-free-region results do not
assert that every real zero must be of this form; see \cite[Ch.~11]{MH06}.
In the conductor range covered by Platt~\cite{platt}, no real zero
occurs. This fact will give positivity on the imaginary axis for
$t\ge0$ and is used in the proof of our zero-free-region criterion.

	\subsection{Main result}
	Our main result gives upper bounds for $\Lambda_L$ for every primitive
real-valued character of conductor $3\le m\le400$. We combine the
zero-free-region criterion and an effective Riemann--Siegel approximation
with Platt's verification of GRH~\cite{platt}. The numerical checks are
described in Section~\ref{sec:numerical_implementation_main}; smaller
initial strips give conditional bounds under the hypotheses stated below.

\begin{theorem}[Upper bound]
\label{MT}
Let $\chi_m$ be a primitive real-valued Dirichlet character of
conductor $3\leq m\leq400$, and let $L$ denote the associated
Dirichlet $L$-function. Let $0<D\leq1$, and assume that every
zero $z$ of $H_0^L$ satisfies $|\operatorname{Im}z|\leq D$.
Then the generalized de Bruijn--Newman constant attached to $L$
satisfies
\[
\Lambda_L\leq
\min\left\{\frac{D^2}{2},\,\Lambda_*(m,\chi_m;D)\right\},
\]
where $\Lambda_*(m,\chi_m;D)$ is defined in~\eqref{eq:lambda_star}
as the infimum of $t_0+\tfrac12 y_0^2$ over the verified admissible triples for this $D$.
\end{theorem}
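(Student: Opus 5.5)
The bound splits into two independent parts: $\Lambda_L\le D^2/2$, and $\Lambda_L\le t_0+\tfrac12 y_0^2$ for every verified admissible triple. Taking the infimum over triples in the second part, and then the minimum with the first, gives the theorem.

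\emph{First part.} I would use the de Bruijn-type strip contraction for the backward heat flow. If every zero of $H_0^L$ lies in $|\operatorname{Im} z|\le D$, then every zero of $H_t^L$ lies in
\[
|\operatorname{Im} z|\le \sqrt{\max\{D^2-2t,0\}}.
\]
To prove this I would follow de Bruijn's argument, adapted to $\Phi_L$. Since $\Phi_L$ is real-valued, even and superexponentially decaying, $H_t^L$ is real on the real axis and has order less than $2$. The contraction then holds for the Gaussian convolution $H_t^L=e^{-t\partial_{zz}}H_0^L$: one checks it on finite Hadamard products, where each pair of conjugate zeros moves under a Hermite-type flow, and passes to the limit with Hurwitz's theorem. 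At $t=D^2/2$ the strip collapses to the real axis, so all zeros are real and $\Lambda_L\le D^2/2$. For $t>D^2/2$ the monotonicity of Newman's characterization already covers the claim.

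\emph{Second part.} I would apply the zero-free-region criterion of the paper, in the form Polymath used for $\zeta$. An admissible triple $(t_0,X,y_0)$ is one for which three things hold:
\begin{enumerate}
\item \emph{Small $x$.} $H_t^L(x+iy)\ne0$ for $0\le t\le t_0$, $y\ge y_0$, $|x|\le X$. This comes from Platt's verification of GRH up to height $h_m$ (which covers $|x|\le 2h_m$) together with the same heat-flow strip estimate, since no zeros of $H_0^L$ exist off the line in that range. The absence of real zeros of $L$ gives positivity on the imaginary axis.
\item \emph{Large $x$.} For $|x|\ge X$ and $y\ge y_0$, the effective Riemann--Siegel approximation $H_{t_0}^L\approx A+B$ satisfies $|A+B|>|E|$, where $E$ is the error term with its explicit conductor dependence. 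This is certified by the Euler-product mollified lower bound and interval arithmetic.
\item \emph{Barrier.} On a barrier strip $[X,X+1]\times[y_0,\infty)\times[0,t_0]$, the winding argument shows no zeros enter from infinity.
\end{enumerate}
Combining these, $H_{t_0}^L$ has no zeros with $|\operatorname{Im}z|\ge y_0$. Applying the contraction from the first part again, now starting at time $t_0$ with strip width $y_0$, all zeros become real by time $t_0+y_0^2/2$, so $\Lambda_L\le t_0+\tfrac12 y_0^2$. The hypothesis on $D$ enters only in ruling out zeros with $|x|$ beyond Platt's height in intermediate times, via the strip bound $\sqrt{D^2-2t}$.

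\emph{Main obstacle.} The hard step is the large-$|x|$ regime (item 2). It requires uniform effective error bounds in $m$ for the approximation of $H_t^L$, and certifying $|A+B|>|E|$ over an unbounded range. The plan would be to reduce this to a finite computation: use monotonicity in $x$ of the bounds beyond some explicit $N$, and cover the remaining range by the Euler-product lower bound with an interval mesh.
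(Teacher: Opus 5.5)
Your first part is correct and is how the paper obtains $\Lambda_L\le D^2/2$ (it cites Theorem~\ref{dBru}). Your last step, from a zero-free half-plane $\{y\ge y_0\}$ at time $t_0$ to $\Lambda_L\le t_0+\tfrac12y_0^2$, is also correct. The gap is in how you obtain that half-plane. Your item~(1) asserts $H_t^L\ne0$ on $|x|\le X$, $y\ge y_0$ for all $0\le t\le t_0$, and attributes this to Platt's verification plus the strip contraction. That deduction fails. Platt only controls the slice $t=0$. The strip contraction is global in $x$: it bounds $|\operatorname{Im}z|$ by $\sqrt{D^2-2t}$, which is still $\ge y_0$ for every $t\le t_0$, but says nothing about where a zero sits horizontally. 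Zeros off the real axis beyond Platt's height, which the hypothesis allows up to height $D$, may drift into $|x|\le X$. Once inside, a zero may rise, since $\frac{d}{dt}\operatorname{Im}z_j=-2\sum_k(y_j-y_k)/|z_j-z_k|^2$ need not be negative. So item~(1) for $t>0$ is the \emph{output} of an argument, not an input. A zero-free barrier only blocks horizontal crossings at the heights it covers; it does not stop a zero from entering through the lower edge. Your ``Combining these'' is exactly the missing step.

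What you need is the zero-dynamics criterion, Lemma~\ref{lemmabox}. Put $Y(t)=\sqrt{y_0^2+2(t_0-t)}$, take the first time $t_1$ at which a zero of $H_t^L$ reaches the boundary of $\{|x|\le X,\ y\ge Y(t)\}$, and use the Hadamard-product ODE of Proposition~\ref{Dynz}. At a simple zero $z=x+iY(t_1)$ on the lower edge, the contributions to $\frac{d}{dt}\operatorname{Im}z$ are as follows.
\begin{itemize}
\item The conjugate zero contributes exactly $-1/Y=Y'$.
\item The zero $-z$ makes the inequality strict; it is distinct from $\bar z$ because $x\ne0$, by positivity on $i\mathbb{R}$.
\item Zeros with $|y_k|\le Y$ contribute nonpositively.
\item Each zero with $y_k>Y(t_1)$, paired with its conjugate, also contributes nonpositively. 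This holds because the barrier forces $|x-x_k|>\Delta_D$, while the strip bound gives $y_k^2\le D^2-2t_1=\Delta_D^2+Y(t_1)^2$.
\end{itemize}
This is where $D$ actually enters, through the barrier width $\Delta_D$. You must also exclude zeros on the upper edge $y=\sqrt{D^2-2t}$, and handle multiple zeros via the Hermite splitting.

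Finally, $\Lambda_*$ in the theorem is an infimum over the paper's admissible triples. Their certified barrier is only the moving region $X\le x\le X+\Delta_D$, $Y(t)\le y\le\sqrt{D^2-2t}$, not your fixed box $[X,X+1]\times[y_0,\infty)\times[0,t_0]$. With your stronger box, a constant-boundary version of the same argument would go through. To prove the theorem as stated, however, you need the moving-boundary version.
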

	
	For $D=1$, the computations in Section~\ref{sec:numerical_implementation_main}
give the uniform bound $\Lambda_L\le953/3200=0.2978125<1/2$ for
$3\le m\le400$, with the stronger individual bounds listed in the
appendix. The additional conclusions for $D<1$ use the respective
initial-strip hypotheses.

\subsection{Strategy of the proof and novelties}

The proof of Theorem~\ref{MT} relies on the following two results,
which may also be of independent interest.
\begin{itemize}
\item[(a)] An upper-bound criterion (Lemma~\ref{lemmabox}),
which reduces the problem of bounding $\Lambda_L$ to the
verification of three explicit zero-free regions for the family
$\{H_t^L\}_{0\leq t\leq t_0}$. The criterion adapts the strategy
of \cite{Poly} to the $L$-function setting under the initial
zero-strip hypothesis.

\item[(b)] An effective Riemann--Siegel approximation for
$H_t^L(x+iy)$ in the range $0\leq t\leq\tfrac12$,
$x\geq480$ and $|y|\leq1$
(Theorem~\ref{EffectiveRSa}), which provides quantitative
control of the error and isolates the dependence on the
conductor $m$. The same analytic estimates also yield information on the
distribution of the zeros of $H_t^L$ for $t>0$.
In Section~\ref{sec:Nyg}, we describe their location and
derive a Riemann--von Mangoldt-type counting formula
(Theorem~\ref{Distributionzero}).
\end{itemize}

From a global point of view, the overall architecture in this paper is the one introduced by Polymath \cite{Poly}, namely a
zero-free-region criterion for the heat-flow family, an effective
Riemann-Siegel approximation with explicit error terms, and a rigorous
numerical verification of a finite number of inequalities. Each of the
three parts has to be rebuilt for Dirichlet $L$-functions, and the places
where the conductor $m$ and the character $\chi_m$ enter are not
cosmetic. We describe them in turn, and then state what was known before.

De Bruijn's strip-contraction theorem gives
$\Lambda_L\le t_0+\tfrac12 y_0^2$ as soon as $H^L_{t_0}$ has no zeros with
$\operatorname{Im} z\ge y_0$. This is reduced to
three regions: an initial rectangle at $t=0$, which is covered by Platt's
verification of GRH up to height $h_m$ \cite{platt}; a moving barrier
in a vertical strip of width $\Delta_D$; and a right tail at the final
time $t_0$. The proof is the zero-dynamics argument of Csordas, Smith, and
Varga \cite{CSV} as organized in \cite{Poly}: the zeros move according
to the differential equation obtained from the Hadamard product, and a comparison of the vertical velocities rules out
a first crossing of the moving boundary. Two features are
specific to $L$-functions. First, $H^L_0$ need not be positive on the
imaginary axis: a real zero $\beta$ of $L$ produces a purely imaginary zero
of $H^L_0$. We use Platt's verification of GRH and his non-vanishing
result at the central point \cite[Theorems~10.1 and~10.2]{platt}
to obtain $H_t^L(iy)>0$, and
we record separately what can be said about zeros on $i\mathbb{R}$
without that input. Second, we work from the outset with a general initial
strip $|\operatorname{Im} z|\le D$, $0<D\le 1$, rather than with the
critical strip only. This costs nothing in the argument and makes the
result directly applicable to any zero-free half-plane for $L$; moreover, a verification carried out at $D=1$ also covers
the same parameter triple for every smaller admissible
value of $D$.

This is where most of the new work lies. In Siegel's formula for
$L(s,\chi_m)$ \cite{Siegel} the kernel $e^{i\pi w^2}/(e^{i\pi
w}-e^{-i\pi w})$ of the zeta case is replaced by
\[
W^L(x)=\frac{\pi}{2m}\sum_{k=1}^{2m}\chi_m(k)\,e^{-i\pi k^2/m}
\cot\Big(\frac{\pi(x-k)}{2m}\Big),
\]
a sum of $2m$ cotangents with Gauss-sum weights, which is $m$-periodic up
to a sign. Deforming the contour through the saddle point, as in Arias de
Reyna \cite{AR}, one obtains the Dirichlet sum truncated at
$M=\lfloor\sqrt{mT/2\pi}\rfloor$, a leading term $U\delta^{-\sigma}
C^{(m)}_0(p)$, the higher Taylor coefficients, and an integral remainder. Three points differ from the zeta
function and, as far as we know, had not been worked out with explicit
constants before.

\begin{itemize}
\item[(i)] The leading coefficient $C^{(m)}_0$ is no longer an
elementary function of $p$. We write
$C^{(m)}_0(p)=e^{-i\pi/8}\mathcal J_m(\delta)$, where
$\mathcal J_m(\delta)$ is a contour integral of a Gaussian against
$W^L$. For fixed $M$, it has the expression
$\mathcal J_{m,M}(\delta)
=\frac{\mathcal R_{m,M}(\delta)-\rho(p)\mathcal K_m(\delta)}
{1-\rho(p)}$ where $\rho(p)=(-1)^m e^{2\pi i p}.$
Here $\mathcal R_{m,M}$ is an incomplete sum of $m$ terms, and
$\mathcal K_m$ is a finite combination of Gauss-type sums. On each fixed branch $M$, the identity
$(1-\rho)\mathcal J_{m,M}=\mathcal R_{m,M}-\rho\mathcal K_m$
extends to the resonant points by the identity theorem. Thus, at
$\rho=1$, the numerator vanishes and the apparent singularity in the
quotient is removable. The uniform bound
below follows from the contour estimate; no separate evaluation of
quadratic Gauss sums is needed. In the numerical checks, $M$ is fixed
on each subregion. At a boundary where $M$ changes, we use both
one-sided values of $C^{(m)}_0$ from the adjacent branches.
\item[(ii)] A uniform bound $|C^{(m)}_0(p)|\le B^\star_m$, with
$B^\star_m/\sqrt m$ explicit in $m$ and smaller than $3.33$ for
$3\le m\le 4\cdot10^5$. It rests on a
non-increasing majorant $\Omega_m(|\operatorname{Im} x|)$ for $|W^L|$ on
the shifted line and on a rearrangement argument
for the Gaussian convolution; this avoids both
estimates for incomplete character sums and numerical maximization.
\item[(iii)] Bounds for the higher coefficients and the remainder in which $\varphi(m)/\sqrt m$ and
the constants of \cite{AR} appear explicitly. The Gaussian
smoothing in $t$ forces us to allow arbitrary negative real parts, which
requires a Stirling estimate uniform on horizontal strips
 and a truncation order $J(u)$ adapted to the
real part $u$.
\end{itemize}

The outcome is the following: for $x\ge 480$, $|y|\le1$ and
$0\le t\le\tfrac12$,
$H^L_t/B^L_t=f^L_t+O_{\le}(e_A+e_B+e_{C,0})$, with every constant being 
explicit and uniform in $3\le m\le 4\cdot10^5$;
We also record the dependence on $D$. We are not aware of
an earlier Riemann-Siegel formula for Dirichlet $L$-functions that is
effective in this sense. The expansions of Siegel \cite{Siegel} and
Deuring \cite{Deuring} are asymptotic, and the numerical verifications of
Rumely \cite{Rumely} and Platt \cite{platt} rely on different
evaluation methods.

The truncation length
$M(x,t)=\lfloor\sqrt{m(x+\pi t/4)/(4\pi)}\rfloor$
depends on $m$, $x$, and $t$, and may vary across a barrier box.
In the range considered, it takes at most seven consecutive integer values. We therefore
divide region~(a) into stripes on which $M$ is constant, using the
continued lateral values of the approximation on their common edges. The Euler product used to locate the
barrier omits the primes dividing $m$. For regions (b) and (c), we evaluate the pointwise error bounds at the relevant endpoints. On each time interval, the factors in the finite sums are bounded separately at the corners; no monotonicity in \(t\) is assumed for the finite sum. All inequalities
are checked in ball arithmetic. The Taylor scheme of
\cite[\S7]{Poly} is adapted with coefficient tables that do not depend
on $t$. As a by-product of the same estimates, we
provide a Riemann-von Mangoldt formula and a localization of the zeros of
$H^L_t$ for $t>0$, uniform in the conductor range.

Last but not least, we record previous and related contributions to the problem considered in this work. For the zeta function, after de Bruijn \cite{DBru} and Newman
\cite{New}, the lower bounds of Csordas, Norfolk and Varga
\cite{CNV}, Odlyzko \cite{Odlyzko} and Saouter, Gourdon and Demichel
\cite{SGD} led to the theorem $\Lambda_\zeta\ge0$ of Rodgers and Tao
\cite{RT}, while \cite{KKL}, \cite{Poly} and
\cite{PT21} gave the upper bounds $\Lambda_\zeta<\tfrac12$,
$\le0.22$ and $\le0.2$. For Dirichlet $L$-functions much less was
available. The heat-flow deformation of quadratic $L$-functions was
studied by Stopple \cite{Stopple}, who obtained a lower bound of the
order $-10^{-7}$ for the constant attached to the family
of quadratic characters with negative fundamental
discriminants, and the Newman conjecture was examined in automorphic and
function-field settings by Andrade, Chang and Miller \cite{ACM} and Chang
et al.\ \cite{CMMRSY}; the differenced $L$-functions of Lagarias
\cite{Lagarias} are a closely related deformation. The non-negativity
$\Lambda_L\ge0$ was established in great generality by Dobner
\cite{Dob}, and the survey of Newman and Wu \cite{NewmanWu} describes
the broader picture. On the upper-bound side, only the trivial bound
$\Lambda_L\le\tfrac12$ from de Bruijn's argument was known.
Theorem~\ref{MT} gives the first non-trivial upper bounds for a
Dirichlet $L$-function. These bounds are weaker than those available for $\zeta$.
In our numerical range, the verification height $h_m=10^8/m$
is much smaller than the $3\cdot10^{12}$ of \cite{PT21}.
We also give a heuristic estimate of how increasing this
height could reduce $t_0$.

\subsection*{Organization of this work} Section~\ref{sec:2} recalls the normalization and functional equation.
Section~\ref{DynSection} gives the dynamics of zeros and the zero-free-region
criterion. The effective approximation and its proof are in
Section~\ref{sec:4}, followed by the fast evaluation scheme in
Section~\ref{sec:numerics}. Sections~\ref{sec:verification} and
\ref{sec:numerical_implementation_main} give the verification
inequalities and the numerical results.
Section~\ref{sec:Nyg} studies the distribution of zeros of
$H_t^L$, and Appendix~\ref{app:certified_m400} lists the
parameter choices and the resulting bounds.

\subsection*{Acknowledgments}
	All authors of this paper were supported by Fondecyt Exploraci\'on 13220060. ANID's support in this direction is warmly acknowledged. 
	
	\section{Preliminaries}\label{sec:2}
	
	\subsection{Dirichlet characters and Dirichlet
\texorpdfstring{$L$}{L}-functions}
	
	We recall the functional equation and the normalization of the heat flow used below.
	
	Throughout the paper, $\gcd(m,n)$ denotes the greatest common divisor of the nonnegative integers $m$ and $n$, and we say that $m$ and $n$ are coprime when $\gcd(m,n) = 1$. For convenience, we restrict the input set to $\N$, although the definitions below extend verbatim to $\Z$.
	
	\begin{definition}
		A Dirichlet character of period $m \in \N$, $m > 0$, is a complex-valued function $\chi_m : \N \to \Com$ satisfying:
		\begin{enumerate}
			\item $\chi_m(n + m) = \chi_m(n)$ for all $n \in \N$.
			\smallskip
			\item $\chi_m(ab) = \chi_m(a)\,\chi_m(b)$ for all $a, b \in \N$.
			\smallskip
			\item For all $n \in \N$,
			\begin{align*}
				& \chi_m(n) \ne 0 \quad \text{if } \gcd(n,m) = 1,\\
				& \chi_m(n) = 0 \quad \text{if } \gcd(n,m) > 1.
			\end{align*}
		\end{enumerate}
	\end{definition}
	
	Dirichlet characters satisfy a number of elementary properties.
	
	\begin{remark}
		The following hold:
		\begin{enumerate}
			\item $\chi_m(1) = 1$, $\chi_m(n^p) = \chi_m(n)^p$ for every $p \in \N$, and $\chi_m(n_1) = \chi_m(n_2)$ whenever $n_1 \equiv n_2 \pmod{m}$.
			\smallskip
			\item If $m > 1$, then $\chi_m(km) = 0$ for every $k \in \N$.
			\smallskip
			\item If $m = 1$, then $\chi_m(n) = 1$ for all $n \in \N$.
		\end{enumerate}
	\end{remark}
	
	\begin{definition}
		A Dirichlet character of period $m$ is \emph{principal} if for all $n \in \N$,
		\begin{align*}
			& \chi_m(n) = 1 \quad \text{if } \gcd(n,m) = 1,\\
			& \chi_m(n) = 0 \quad \text{if } \gcd(n,m) > 1.
		\end{align*}
	\end{definition}

    A principal character is the identity element of the group of Dirichlet characters modulo $m$: if $\chi_m$ is principal and $\widetilde{\chi}_m$ is any Dirichlet character of the same modulus, then
	\(
	\widetilde{\chi}_m(n)\,\chi_m(n) = \chi_m(n)\,\widetilde{\chi}_m(n) = \widetilde{\chi}_m(n)
	\)
	for every $n \in \N$. In particular, the product of two characters of the same modulus is again a character of that modulus. In addition to principal characters, we shall need the notion of a primitive character.
	
	\begin{definition}\label{def_prim}
		The following terminology is used throughout.
		\begin{enumerate}
			\item A character $\chi_m$ of period $m$ has quasiperiod $d$ if $\chi_m(n_1) = \chi_m(n_2)$ for all $n_1, n_2$ coprime to $m$ with $n_1 \equiv n_2 \pmod{d}$.
			\item The smallest positive integer $d$ for which $\chi_m$ has quasiperiod $d$ is the conductor of $\chi_m$.
			\item The character $\chi_m$ is primitive if its period equals its conductor; otherwise, it is imprimitive. Every imprimitive character is induced by a primitive character of smaller period.
		\end{enumerate}
	\end{definition}
	
	We now consider the Dirichlet series induced by a primitive character $\chi_m$ of period $m$, with $a = (1 - \chi_m(-1))/2$, already introduced in \eqref{L_m_serie0}. Thus $a = 1$ when $\chi_m$ is odd and $a = 0$ when $\chi_m$ is even.
	
	\begin{definition}
		Let $\chi_m$ be a Dirichlet character of period $m$. For $\re(s) > 1$, the associated Dirichlet $L$-function is
		\begin{equation}\label{L_m_serie}
			L(s,\chi_m) = \sum_{n=1}^\infty \frac{\chi_m(n)}{n^s}.
		\end{equation}
	\end{definition}
	
	It is well known that \eqref{L_m_serie} admits a meromorphic extension to the complex plane, which is entire when $\chi_m$ is non-principal. We denote the corresponding extension by $L$.
	We define the auxiliary function $M_0^L : \Com \setminus (-\infty, 0] \to \Com \setminus \{0\}$ by
	\begin{equation}\label{M0L}
		M_0^L(s) := \left(\frac{m}{\pi}\right)^{\frac{s+a}{2}} \sqrt{2\pi}\, \exp\!\left( \left(\frac{s+a-1}{2}\right)\log\!\left(\frac{s+a}{2}\right) - \frac{s+a}{2} \right),
	\end{equation}
	so that
	\[
	\log M_0^L(s) = \tfrac12 \log(2\pi) + \frac{s+a}{2}\log\!\left(\frac{m}{\pi}\right) + \left(\frac{s+a-1}{2}\right)\log\!\left(\frac{s+a}{2}\right) - \frac{s+a}{2}.
	\]
	Its logarithmic derivative is
	\begin{equation}\label{alpha_L}
		\begin{aligned}
			\alpha_L(s) := (\log M_0^L)'(s) &= \frac{1}{2}\log\!\left(\frac{m}{\pi}\right) + \frac{1}{2}\log\!\left(\frac{s+a}{2}\right) - \frac{1}{2(s+a)} \\
			&= \frac{1}{2}\log\!\left(\frac{m(s+a)}{2\pi}\right) - \frac{1}{2(s+a)},\\[2pt]
			\alpha_L'(s) &= \frac{1}{2(s+a)} + \frac{1}{2(s+a)^2}.
		\end{aligned}
	\end{equation}
	For each $t \ge 0$, we define the deformation $M_t^L : \Com \setminus (-\infty, 0] \to \Com \setminus \{0\}$ of $M_0^L$ by
	\begin{equation}\label{MtL}
		M_t^L(s) := \exp\!\left(\frac{t}{4}\,\alpha_L(s)^2\right) M_0^L(s).
	\end{equation}
	In the regime of large $x$ we introduce the auxiliary function
	\[
		B_t^L(x + iy) := M_t^L\!\left(\frac{1 + y - ix}{2}\right).
	\]
	The following symmetry properties of $H_t^L$ are used repeatedly in what follows.
	
	\begin{lemma}\label{prop:symmetric}
		Let $\chi_m$ be a primitive real-valued Dirichlet
character of conductor $m\geq3$, and let $t\in\R$.
For $H_t^L$ defined by \eqref{H_tz}, the following hold.
		\begin{enumerate}
			\item (Real-axis symmetry.) For every $z \in \Com$,
			\[
			H_t^L(z) = \overline{H_t^L(\overline{z})},
			\]
			so that the zeros of $H_t^L$ are symmetric with respect to the real axis.
			\item (Double symmetry.) If $H_t^L(x_* + iy_*) = 0$, then
			\[
			H_t^L(\pm x_* \pm i y_*) = H_t^L(\mp x_* \pm i y_*) = 0.
			\]
		\end{enumerate}
	\end{lemma}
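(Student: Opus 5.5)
The plan is to prove two elementary facts about the density $\Phi_L$ and then deduce both assertions by changing variables in the integral \eqref{H_tz}. The facts are: (i) $\Phi_L$ is real-valued on $\R$; (ii) $\Phi_L$ is even, $\Phi_L(-u)=\Phi_L(u)$.

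The integrals converge absolutely for every $z\in\Com$ and $t\in\R$, because $\Phi_L$ decays superexponentially as $u\to\pm\infty$. For $u\to+\infty$ this is immediate from the Gaussian factors $e^{-n^2\pi e^{4u}/m}$ in $\psi_L$. For $u\to-\infty$ it follows once (ii) is established. Hence all manipulations below (conjugation under the integral sign, the substitution $u\mapsto -u$) are legitimate, and $H_t^L$ is entire.

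For (i): $\chi_m$ is real-valued, so every term $n^a\chi_m(n)e^{-n^2\pi x/m}$ is real for $x>0$. Thus $\psi_L$, $\varphi_L$ and $\Phi_L$ are real. Then
\[
\overline{H_t^L(\overline z)}=\int_{-\infty}^{\infty} e^{tu^2}\,\overline{\Phi_L(u)}\,\overline{e^{i\overline z u}}\,du=\int_{-\infty}^{\infty} e^{tu^2}\,\Phi_L(u)\,e^{izu}\,du=H_t^L(z),
\]
which is part (1).

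For (ii), I will use the theta transformation formula for a primitive character $\chi_m$ of parity $a$. It is obtained by Poisson summation applied to $n\mapsto n^a e^{-\pi n^2x/m}$ over residue classes modulo $m$, and reads
\[
\psi_L(1/x)=\frac{\tau(\chi_m)}{i^a\sqrt m}\,x^{a+\frac12}\,\psi_{\overline L}(x),\qquad \tau(\chi_m)=\sum_{k=1}^{m}\chi_m(k)e^{2\pi ik/m}.
\]
Here $\psi_{\overline L}$ denotes the series with $\overline{\chi_m}$ in place of $\chi_m$.

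For real $\chi_m$ we have $\overline{\chi_m}=\chi_m$. The root number is $\tau(\chi_m)/(i^a\sqrt m)=1$, by Gauss's evaluation of the quadratic Gauss sum: a real primitive character is the Kronecker symbol of a fundamental discriminant, and $\tau(\chi_m)=\sqrt m$ if $\chi_m$ is even and $i\sqrt m$ if odd. Consequently
\[
\varphi_L(1/x)=x^{-\frac14-\frac a2}\psi_L(1/x)=x^{-\frac14-\frac a2}\,x^{a+\frac12}\psi_L(x)=x^{\frac14+\frac a2}\psi_L(x)=\varphi_L(x),
\]
and setting $x=e^{4u}$ gives $\Phi_L(-u)=\Phi_L(u)$.

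The substitution $u\mapsto-u$ in \eqref{H_tz} then yields $H_t^L(-z)=H_t^L(z)$. Combining evenness with part (1), a zero $x_*+iy_*$ produces the zeros $-x_*-iy_*$ (by evenness), $x_*-iy_*$ (by conjugation), and $-x_*+iy_*$ (both together). This is part (2).

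The only nontrivial input, and the step I expect to need the most care, is the sign of the root number $\tau(\chi_m)/(i^a\sqrt m)=1$. This is the classical theorem of Gauss on the sign of quadratic Gauss sums, valid for every real primitive character of conductor $m\geq3$. I would cite it from \cite{Davenport} or \cite{MH06}, together with the standard derivation of the theta transformation formula, rather than reprove it.
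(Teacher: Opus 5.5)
The architecture matches the paper's. You show $\Phi_L$ is real (real character) and even (theta inversion \eqref{eq:theta_transform_L} with root number $\tau(\chi_m)/(i^a\sqrt m)=1$), then conjugate and substitute $u\mapsto-u$ in \eqref{H_tz}. However, your derivation of part (1) rests on a false identity.

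For real $u$ one has $\overline{e^{i\overline z u}}=e^{-izu}$, not $e^{izu}$. With $z=x+iy$, the exponent $i\overline z u=ixu+yu$ has conjugate $-ixu+yu=-izu$. So reality of $\Phi_L$ alone gives only
\[
\overline{H_t^L(\overline z)}=\int_{-\infty}^{\infty}e^{tu^2}\Phi_L(u)e^{-izu}\,du=H_t^L(-z).
\]
This is symmetry in the imaginary axis, $H_t^L(-\overline z)=\overline{H_t^L(z)}$, not the real-axis symmetry. Part (1) cannot follow from reality alone. It forces $H_t^L$ to be real on $\R$, and the Fourier transform of a real function is real on $\R$ only if the function is even.

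The missing input is the evenness $\Phi_L(-u)=\Phi_L(u)$ that you prove later in step (ii). The substitution $u\mapsto-u$ then gives $H_t^L(-z)=H_t^L(z)$, and combining this with the display yields $\overline{H_t^L(\overline z)}=H_t^L(z)$. This is precisely the paper's argument: ``conjugation and the change of variables $u\mapsto-u$''. Establish reality and evenness of $\Phi_L$ first, and derive both (1) and the evenness of $H_t^L$ from them. Your deduction of part (2) then goes through as written. Note that the root-number evaluation is therefore needed already for part (1), not only for part (2).

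A minor point: your theta formula is stated for the one-sided sums $\psi_L$. It follows from the two-sided Poisson identity because $n^a\chi_m(n)$ is even in $n$ and the $n=0$ term vanishes. The paper records this explicitly, and it is worth a sentence in your write-up.
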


\begin{proof}
The density $\Phi_L$ is real and even by \eqref{Phi_L} and
\eqref{eq:phi_inversion_L}. Conjugation and the change of variables
$u\mapsto-u$ in \eqref{H_tz} therefore give
\[
\overline{H_t^L(\overline z)}=H_t^L(z),
\qquad H_t^L(-z)=H_t^L(z).
\]
Thus a zero $z$ is accompanied by $\overline z$, $-z$ and
$-\overline z$, as claimed.
\end{proof}

	\subsection{The heat flow}
	We use the following strip-contraction theorem and the finiteness result for nonreal and multiple zeros.
	
	The following result is essentially contained in Theorem~13 of \cite{DBru}; see also \cite{Dob}.
	
	\begin{theorem}\label{dBru}
		Suppose that $t_0 \in \R$ and $y_0 > 0$ are such that $H_{t_0}^L(x + iy) \ne 0$ for every $x \in \R$ and $y > y_0$. Then, for any $t > t_0$, $H_t^L(x + iy) \ne 0$ for every
		\[
		x \in \R \quad \text{and} \quad y > \max\bigl( y_0^2 - 2(t - t_0),\ 0 \bigr)^{1/2}.
		\]
		In particular, $\Lambda_L \le t_0 + \tfrac{1}{2} y_0^{2}$.
	\end{theorem}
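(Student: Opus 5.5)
The plan is to follow de Bruijn's original argument (Theorem~13 of \cite{DBru}), adapted to $\Phi_L$. The key input is that $H_t^L$ is the Fourier transform of the real, even, superexponentially decaying density $e^{tu^2}\Phi_L(u)$. Hence $H_{t}^L$ is obtained from $H_{t_0}^L$ by a Gaussian convolution in the imaginary direction, or equivalently by the operator $\exp(-(t-t_0)\partial_z^2)$.

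First I record the growth needed to justify what follows. The superexponential decay of $\Phi_L$ (from the theta series $\psi_L$ and the inversion formula) makes $H_{t_0}^L$ an entire function of order at most $1$ in a suitable sense; more precisely, it is of order less than $2$, so Hadamard products converge. I would then invoke de Bruijn's lemma. Let $F$ be an entire function of the form $\int e^{izu}\,d\mu(u)$ with $\mu$ even and rapidly decaying, and suppose all zeros of $F$ lie in the strip $|\operatorname{Im} z|\le \Delta$. Then all zeros of $\exp(-\lambda\partial_z^2)F$, for $\lambda>0$, lie in $|\operatorname{Im} z|\le (\max(\Delta^2-2\lambda,0))^{1/2}$.

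The proof of that lemma is where the real work lies, and I would reproduce it. Write $F$ through its Hadamard factorization; by the real and double symmetry of Lemma~\ref{prop:symmetric}, $F(z)=c\,e^{bz^2}\prod(1-z^2/z_k^2)$ with $b\le 0$. Approximate $e^{-\lambda\partial^2}$ by $(1-\lambda\partial^2/n)^n$. One then shows the elementary step: if all zeros of a function $G$ of this class lie in $|\operatorname{Im} z|\le\Delta$, then all zeros of $G-\varepsilon G''$ lie in $|\operatorname{Im} z|\le(\Delta^2-2\varepsilon)^{1/2}$, up to $O(\varepsilon^2)$. This step rests on de Bruijn's pairing argument for $G'/G=\sum (z-z_k)^{-1}$, using the imaginary part of $G''/G$ at a point above the strip. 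Iterating $n$ times and letting $n\to\infty$, with Hurwitz's theorem and locally uniform convergence of $(1-\lambda\partial^2/n)^n F\to e^{-\lambda\partial^2}F$, gives the strip $(\Delta^2-2\lambda)_+^{1/2}$.

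Here the hypothesis only gives no zeros with $y>y_0$, and by the real-axis symmetry also none with $y<-y_0$. So all zeros of $H^L_{t_0}$ lie in $|\operatorname{Im} z|\le y_0$. Applying the lemma with $F=H^L_{t_0}$, $\Delta=y_0$ and $\lambda=t-t_0$ yields the claimed region. For $t\ge t_0+\tfrac12y_0^2$ the strip collapses to the real axis, so all zeros of $H^L_t$ are real. The definition of $\Lambda_L$ via Newman's characterization then gives $\Lambda_L\le t_0+\tfrac12y_0^2$.

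The main obstacle is the infinitesimal step with explicit constant $2\varepsilon$, which requires uniform control of the error uniformly over infinitely many zeros. Alternatively, one can cite Theorem~13 of \cite{DBru} directly once the density $\Phi_L$ is checked to satisfy de Bruijn's hypotheses: it must be even, real, and satisfy $\Phi_L(u)=O(e^{-|u|^{2+\delta}})$. That check follows from the double-exponential decay of $\psi_L(e^{4u})$ as $u\to+\infty$ and evenness, and is the route I would actually take, as \cite{Dob} does.
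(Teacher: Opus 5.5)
The route you say you would actually take is the paper's. The paper gives no independent proof and simply cites Theorem~13 of \cite{DBru} (see also \cite{Dob}). Your reduction is correct: conjugation symmetry confines the zeros of $H_{t_0}^L$ to $|\operatorname{Im} z|\le y_0$, and the density to check is $e^{t_0u^2}\Phi_L(u)$ (not $\Phi_L$ alone, since $t_0$ is arbitrary). That density is real, even and $O(e^{-|u|^{2+\delta}})$ because $\Phi_L$ decays double-exponentially.

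The self-contained sketch you describe first would not work as stated: the one-step claim for $G-\varepsilon G''$ is false, not merely hard to make uniform. Suppose $G$ has a zero of order $q\ge3$ at $i\Delta$, for instance $(z^2+\Delta^2)^3$ times the Fourier transform of a smooth, even, compactly supported B-spline, which lies in your class. Then $G''(i\Delta)=0$ as well, so $G-\varepsilon G''$ still vanishes at $i\Delta$, about $\varepsilon/\Delta$ above $\sqrt{\Delta^2-2\varepsilon}$. By Hurwitz the same happens for nearby $G$ with simple zeros. So no $O(\varepsilon^2)$ bound exists, and the $n$-fold iteration has no valid input.

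De Bruijn instead uses an exact step. Multiplying the density by $\cosh(au)$ replaces $F$ by $\tfrac12\bigl(F(z+ia)+F(z-ia)\bigr)$. For $z=x+iy$ with $y>0$ and $y^2+a^2>\Delta^2$, each conjugate pair of zeros $z_k,\bar z_k$ (real zeros taken singly) satisfies
\[
|z+ia-z_k|^2\,|z+ia-\bar z_k|^2-|z-ia-z_k|^2\,|z-ia-\bar z_k|^2
=8ay\bigl((x-x_k)^2+y^2+a^2-y_k^2\bigr)>0 .
\]
Through the Hadamard product this gives $|F(z+ia)|>|F(z-ia)|$. Hence the new zeros lie in $|\operatorname{Im} z|\le\max(\Delta^2-a^2,0)^{1/2}$, with no error term. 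Take $a^2=2\lambda/n$, iterate $n$ times, and let $n\to\infty$ using $\cosh(\sqrt{2\lambda/n}\,u)^n\to e^{\lambda u^2}$ (dominated by $e^{\lambda u^2}|\Phi(u)|$) together with Hurwitz's theorem. This yields exactly $\max(\Delta^2-2\lambda,0)^{1/2}$.
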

	
We record the finiteness property for nonreal and multiple zeros.
Ki, Kim and Lee~\cite{KKL} establish the corresponding result
for the Riemann zeta family. In the conductor range considered
below, the Dirichlet analogue follows from
Theorem~\ref{Distributionzero} and the local splitting of
multiple zeros in Proposition~\ref{Dynz}.

\begin{theorem}
Let $\chi_m$ be a primitive real-valued Dirichlet character
of conductor $3\le m\le400000$, and let $H_t^L$ be defined
by \eqref{H_tz}. Then, for every $t>0$:
\begin{enumerate}
\item all but finitely many zeros of $H_t^L$ are real and simple;
\item $H_t^L$ has only finitely many multiple zeros.
\end{enumerate}
\end{theorem}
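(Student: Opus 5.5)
The plan follows the route of Ki, Kim and Lee~\cite{KKL}, replacing their zeta-specific inputs with two results proved later in this paper: the zero-counting and localization result of Theorem~\ref{Distributionzero}, and the local splitting of multiple zeros in Proposition~\ref{Dynz}. Fix $t>0$.

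\textbf{Step 1: only finitely many nonreal zeros.} I would compare two counts on rectangles $\{0\le \operatorname{Re} z\le X,\ |\operatorname{Im} z|\le 1\}$:
\begin{itemize}
\item the count $N_t(X)$ of all zeros of $H_t^L$ (with multiplicity), given by the Riemann--von Mangoldt formula of Theorem~\ref{Distributionzero};
\item a lower bound for the number of real zeros, obtained from sign changes of $H_t^L(x)/B_t^L(x)$.
\end{itemize}
For the lower bound I would use the effective approximation $H_t^L/B_t^L=f_t^L+O_{\le}(e_A+e_B+e_{C,0})$ of Theorem~\ref{EffectiveRSa}. For $t>0$, the Gaussian factor $\exp(\tfrac t4\alpha_L^2)$ makes the terms $n\ge2$ of $f_t^L$ decay like $n^{-\frac t4\log(\cdot)}$ relative to the $n=1$ term. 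So for $x$ large, $f_t^L$ is dominated by $1+\gamma\cdot(\text{conjugate term})$, whose argument winds monotonically. This yields a real zero in each interval where the phase of $B_t^L$ advances by $\pi$. Counting these intervals reproduces the main term of $N_t(X)$ up to $O(1)$. Since nonreal zeros come in quadruples by Lemma~\ref{prop:symmetric}, the number of nonreal zeros with $|\operatorname{Re} z|\le X$ is then bounded uniformly in $X$.

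The same localization also shows that, for $x\ge X_0(t)$, every zero of $H_t^L$ in the strip is real. Zeros outside the strip $|\operatorname{Im} z|\le 1$ are excluded by de Bruijn's Theorem~\ref{dBru} with $t_0=0$ and $y_0=1$. Here Platt's input \cite{platt} rules out zeros on $i\mathbb R$ beyond the strip, in the stated conductor range.

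\textbf{Step 2: only finitely many multiple zeros.} Step 1 already covers nonreal zeros, so it remains to treat real multiple zeros with $|x|\ge X_0(t)$. There, I would show that each interval between consecutive sign changes contains exactly one zero counted with multiplicity. A double real zero would add an extra count to $N_t(X)$ over an interval of length $O(1/\log x)$, contradicting the exact matching in Step 1. Alternatively, I would argue via the argument principle on small discs where $f_t^L$ has winding number one. Proposition~\ref{Dynz} is then needed only on the compact range $|x|\le X_0(t)$, where there are finitely many zeros anyway. That gives statement (2), and statement (1) follows.

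\textbf{Main obstacle.} The hard part is making the count in Step 1 match up to $O(1)$, not merely up to $o(X\log X)$. This needs the error terms $e_A,e_B,e_{C,0}$ to be $o(1)$ as $x\to\infty$ for fixed $t>0$, and the argument of $f_t^L$ to be controlled uniformly. I would first try the crude bound
\[
\sum_{n\ge2} n^{-\frac t4\log(x/4\pi)}=o(1),
\]
which gives $|f_t^L-1|<1/2$ and hence a bounded argument change of $f_t^L$ on long intervals.
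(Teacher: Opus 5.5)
Your argument covers only $0<t\le\tfrac12$. Theorem~\ref{EffectiveRSa}, the error bounds behind it, and all four parts of Theorem~\ref{Distributionzero} are stated and proved only in that range, whereas the statement is for every $t>0$. For $t>\tfrac12$ you need a separate argument, and it is the one the paper uses. Theorem~\ref{dBru} with $t_0=0$, $y_0=1$ shows that every zero of $H_t^L$ is real when $t>\tfrac12$. Suppose $P$ were a zero of order $q\ge2$ at such a time $t_1$. Then Proposition~\ref{Dynz}(ii) gives zeros $P+i\sqrt{2(t_1-t)}\,\lambda_j+O(t_1-t)$ for $t<t_1$. A positive root of $\mathrm{He}_q$ makes one of them nonreal at times $t\in(\tfrac12,t_1)$ close to $t_1$, contradicting Theorem~\ref{dBru} again. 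Hence all zeros are real and simple for $t>\tfrac12$. This is exactly where Proposition~\ref{Dynz} is needed. Applying it on the compact range $|x|\le X_0(t)$, as you propose, does no work, since finiteness there follows from compactness alone.

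For $0<t\le\tfrac12$ your plan is correct and essentially the paper's. Theorem~\ref{Distributionzero}(i) makes every zero with large real part real and within $O(x^{-ct})$ of some $x_n$. Part~(ii) says the disc of radius $c/\log(mx_n)$ about $x_n$ contains exactly one zero counted with multiplicity, so these zeros are simple. Lemma~\ref{prop:symmetric}, the strip bound from Theorem~\ref{dBru} and compactness then handle the rest, so the Ki--Kim--Lee count in Step~1 is redundant. If you keep that count, repair Step~2: the matching holds only up to $O(1)$, so a single double zero contradicts nothing. What the count actually gives is that $N_t^L(X)$ minus the number of sign changes of $H_t^L$ on $(0,X]$ is nondecreasing and $O_t(1)$. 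Every nonreal zero and every multiple zero adds at least one to this difference, which yields finiteness of both sets at once. There are also two smaller slips. First, $H_t^L(x)/B_t^L(x)$ is not real on the axis; the sign changes are those of $H_t^L(x)\approx 2\operatorname{Re}(B_t^L(x)S)$, with $S$ the first Dirichlet sum. Second, your crude bound drops $b_n^{t,L}$. With this factor restored the exponent is $\frac t4(\ell-\log n)\ge\frac t8\ell$ for $n\le M$, where $\ell=\log(mx/(4\pi))$, which still suffices.
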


\begin{proof}
First suppose that $0<t\le1/2$.
Theorem~\ref{Distributionzero}(i)--(ii) implies that every
zero with sufficiently large positive real part is real and
simple. Indeed, the location error in (i) is eventually
smaller than the radius of the corresponding disk in (ii),
which contains exactly one zero counted with multiplicity.
Lemma~\ref{prop:symmetric} gives the same conclusion for
zeros with sufficiently large negative real part.

Theorem~\ref{dBru}, applied to the classical initial strip
$|\operatorname{Im}z|\le1$, bounds the imaginary parts of
all zeros. The remaining zeros therefore lie in a compact
set and are finite, since $H_t^L$ is a nonzero entire function.
This proves (1) for $0<t\le1/2$.

Now let $t>1/2$. Theorem~\ref{dBru} implies that all zeros
of $H_t^L$ are real. If a multiple zero occurred at this
time, Proposition~\ref{Dynz}(ii) would produce nonreal zeros
at sufficiently close earlier times, which could still be
chosen greater than $1/2$. This contradicts the same
strip-contraction theorem. Thus all zeros are real and
simple for $t>1/2$.

Finally, every multiple zero belongs to the finite
exceptional set in (1), proving (2).
\end{proof}
	\subsection{Functional equation}
	For a primitive character, the completed $L$-function satisfies the following functional equation; see \cite[Theorem~10.8]{MH06}.
	
	\begin{theorem}[Functional equation]
		The following hold.
		\begin{enumerate}
			\item Assume that the Dirichlet character $\chi_m$ associated to $L$ is primitive (cf.\ Definition~\ref{def_prim}). Then $L$ admits a symmetrized form $\xi_L$ satisfying the functional equation
			\begin{equation}\label{vicho1}
				\xi_L(s) = W(\chi_m)\,\xi_{L_c}(1 - s), \qquad L_c := L(\,\cdot\,, \overline{\chi}_m),
			\end{equation}
			where $\chi_m(-1) = (-1)^a$, the Gauss sum
			\(
			\tau(\chi_m) := \sum_{n=1}^{m} \chi_m(n)\, e^{2\pi i n/m}
			\)
			has modulus $|\tau(\chi_m)| = \sqrt{m}$, $W(\chi_m) = \tau(\chi_m)/(i^a \sqrt{m})$, and
			\begin{equation}\label{vicho2}
				\xi_L(s) = \left(\frac{m}{\pi}\right)^{\frac{s+a}{2}}\,\Gamma\!\left(\frac{s+a}{2}\right)L(s).
			\end{equation}
			When $\chi_m$ is real, $\chi_m = \overline{\chi}_m$ and $L(\chi_m, s) = L(\overline{\chi}_m, s)$, so that $\xi_L$ is symmetric.
			
			\item If the Dirichlet character $\chi_m$ associated to $L$ is imprimitive, then there exists a primitive character $\chi^*_q$ with $q \mid m$ and a corresponding $L$-function $L^*$ such that
			\begin{equation}\label{vicho3}
				L(s) = L^*(s) \prod_{p \mid m}\left(1 - \frac{\chi^*_q(p)}{p^s}\right).
			\end{equation}
		\end{enumerate}
	\end{theorem}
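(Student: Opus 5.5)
The functional equation is a classical result (\cite[Theorem~10.8]{MH06}). I would reprove it by the standard theta-function method, since the same computation also gives the density \eqref{Phi_L} used for $H_t^L$.

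\textbf{Mellin representation.} For $\re(s)>1$, I start from the Mellin representation
\[
\Gamma\!\left(\tfrac{s+a}{2}\right)\left(\tfrac{\pi}{m}\right)^{-\frac{s+a}{2}} n^{-s}
=\int_0^\infty n^{a}e^{-\pi n^2x/m}\,x^{\frac{s+a}{2}}\,\tfrac{dx}{x}.
\]
Multiplying by $\chi_m(n)$ and summing over $n$ (absolute convergence justifies the interchange) gives
\[
\xi_L(s)=\int_0^\infty \psi_L(x)\,x^{\frac{s+a}{2}}\,\tfrac{dx}{x},
\]
with $\psi_L$ as in \eqref{Phi_L} and $\psi_L$ built from $\chi_m$. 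The factor $n^a$ is exactly what makes the odd case work, because $\sum_{n\in\Z}\chi_m(n)e^{-\pi n^2x/m}$ vanishes identically when $\chi_m$ is odd. I therefore use the full-line theta series $\theta_a(x,\chi)=\sum_{n\in\Z}n^a\chi(n)e^{-\pi n^2x/m}$, which equals $2\psi_L(x)$.

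\textbf{Theta transformation (the key step).} I would prove
\[
\theta_a(1/x,\chi_m)=\frac{i^{-a}\tau(\chi_m)}{\sqrt m}\,x^{a+\frac12}\,\theta_a(x,\overline\chi_m).
\]
To do this, split $n=r+km$ with $1\le r\le m$ and apply Poisson summation in $k$ to the Gaussian $e^{-\pi(r+km)^2/(mx)}$; in the odd case use instead its derivative-type analogue $(r+km)e^{-\pi(r+km)^2/(mx)}$, whose Fourier transform brings the factor $-i$. Collecting the residues $r$ produces $\sum_r\chi_m(r)e^{2\pi i rn/m}$, and primitivity gives $\sum_r\chi_m(r)e^{2\pi irn/m}=\overline\chi_m(n)\tau(\chi_m)$ for \emph{every} $n$, including $\gcd(n,m)>1$. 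This use of primitivity is the main obstacle: it is exactly where imprimitive characters fail. I would prove it by writing $d=\gcd(n,m)$ and showing that the sum vanishes when $d>1$, since otherwise $\chi_m$ would be periodic modulo $m/d$, contradicting primitivity. The equality $|\tau|=\sqrt m$ then follows by computing $|\tau|^2$ with the same identity.

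\textbf{Splitting the integral.} Next I split $\int_0^\infty=\int_0^1+\int_1^\infty$ and substitute $x\mapsto1/x$ in the first piece using the transformation law. There is no constant term, since $\chi_m(0)=0$ for $m>1$, so no polar terms appear. This yields an expression that is manifestly entire and converges for all $s$ by superexponential decay:
\[
\xi_L(s)=\int_1^\infty\psi_L(x)x^{\frac{s+a}{2}}\tfrac{dx}{x}+W(\chi_m)\int_1^\infty\psi_{L_c}(x)x^{\frac{1-s+a}{2}}\tfrac{dx}{x}.
\]
Here the constant is $W(\chi_m)=\tau(\chi_m)/(i^a\sqrt m)$. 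Comparing this expression with the same formula for $\overline\chi_m$ at $1-s$, and using $W(\chi)W(\overline\chi)=1$ (which follows from $\tau(\chi)\tau(\overline\chi)=\chi(-1)m$), gives \eqref{vicho1}. For real $\chi_m$ we have $\overline\chi_m=\chi_m$, so $W(\chi_m)^2=1$; the known positivity $W=1$ for real primitive characters (Gauss) then yields the symmetry $\xi_L(s)=\xi_L(1-s)$.

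\textbf{Part (2).} This is elementary. Take $\chi^*_q$ to be the primitive character inducing $\chi_m$. Then $\chi_m(n)=\chi^*_q(n)$ when $\gcd(n,m)=1$, and $\chi_m(n)=0$ otherwise. Comparing Euler products for $\re(s)>1$, the Euler product of $L$ is that of $L^*$ with the factors for $p\mid m$ removed, which is \eqref{vicho3}. The identity then extends to all $s$ by analytic continuation.
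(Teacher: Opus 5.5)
Your proposal is correct and follows the classical theta-function proof that the paper relies on: the paper cites \cite[Theorem~10.8]{MH06}, and for real characters it later carries out your Poisson-summation step (Gaussian for $a=0$, its derivative for $a=1$) in \eqref{eq:theta_transform_L}, obtaining $W(\chi_m)=1$ from Gauss's sign evaluation in Lemma~\ref{lem:rootnumber}. The only case your splitting argument does not literally cover is $m=1$, where the $n=0$ term of the theta series survives and produces Riemann's symmetric polar terms $-1/s-1/(1-s)$; that case is classical and lies outside the paper's setting $m\geq3$.
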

	
	\begin{remark}
		Equation \eqref{vicho1} requires that $L$ be associated with a primitive character $\chi_m$ and will be the key ingredient in the definition of the heat-flow function $H_0^L$ below. Equation \eqref{vicho3} shows that in the imprimitive case the situation is considerably more involved because of the additional Euler factors on the right-hand side.
	\end{remark}
	
	\begin{lemma}\label{lem:rootnumber}
		Let $\chi_m$ be a primitive real-valued Dirichlet character of conductor $m$. Then $W(\chi_m) = +1$ and the functional equation \eqref{vicho1} reduces to
		\begin{equation}\label{eq:funceq_real}
			\xi_L(s) = \xi_L(1 - s).
		\end{equation}
	\end{lemma}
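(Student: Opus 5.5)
The plan is to compute the root number $W(\chi_m)=\tau(\chi_m)/(i^a\sqrt m)$ directly from the classical evaluation of the Gauss sum of a real primitive character. Then we substitute $\overline{\chi}_m=\chi_m$ into \eqref{vicho1}.

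\emph{First step: $\tau(\chi_m)$ is real or purely imaginary.} Since $\chi_m$ is real, conjugating the Gauss sum and substituting $n\mapsto -n \pmod m$ gives
\[
\overline{\tau(\chi_m)}=\sum_{n=1}^{m}\chi_m(n)e^{-2\pi i n/m}=\chi_m(-1)\tau(\chi_m)=(-1)^a\tau(\chi_m).
\]
Together with $|\tau(\chi_m)|=\sqrt m$, this shows that $\tau(\chi_m)^2=(-1)^a m$. Hence $\tau(\chi_m)=\pm i^a\sqrt m$, and so $W(\chi_m)=\pm1$.

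\emph{Second step: fix the sign.} Here we invoke Gauss's theorem, extended to real primitive characters. Such a character is the Kronecker symbol $\chi_m(n)=\left(\frac{d}{n}\right)$ attached to a fundamental discriminant $d$ with $|d|=m$, and for it
\[
\tau(\chi_m)=\sqrt{d}=
\begin{cases}\sqrt m, & d>0\ (a=0),\\ i\sqrt m, & d<0\ (a=1).\end{cases}
\]
This is standard; see e.g.\ \cite[Ch.~9]{MH06} or \cite{Davenport}. It is proved first for an odd prime $p$ by the classical sign determination of the quadratic Gauss sum, which gives $\sqrt{p^*}$ with $p^*=(-1)^{(p-1)/2}p$. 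It is then checked directly for the moduli $4$ and $8$. The general case follows from multiplicativity: for coprime conductors $m_1m_2$ one has
\[
\tau(\chi_1\chi_2)=\chi_1(m_2)\chi_2(m_1)\tau(\chi_1)\tau(\chi_2),
\]
and quadratic reciprocity makes the factor $\chi_1(m_2)\chi_2(m_1)$ cancel the sign discrepancies, so that $\sqrt{d_1}\sqrt{d_2}=\sqrt{d_1d_2}$. We then get $W(\chi_m)=\sqrt d/(i^a\sqrt m)=1$ in both parities.

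\emph{Third step: conclude.} Since $\chi_m$ is real, $L_c=L$ and hence $\xi_{L_c}=\xi_L$. Equation \eqref{vicho1} then reads $\xi_L(s)=\xi_L(1-s)$.

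The main obstacle is the sign determination of the quadratic Gauss sum in the second step, which is genuinely deep. Since the paper allows standard results, I would simply cite it rather than reprove it.

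A fallback is available for the range of the paper. Platt's non-vanishing of $L(\tfrac12,\chi_m)$ \cite[Theorem~10.2]{platt} excludes $W=-1$, because $W=-1$ would force $\xi_L(\tfrac12)=-\xi_L(\tfrac12)$ and hence a zero at $s=\tfrac12$. This covers $m\le 4\cdot10^5$ without Gauss's theorem.
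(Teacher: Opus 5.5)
Your proposal is correct and follows the paper's proof: you obtain $\tau(\chi_m)^2=(-1)^a m$, hence $\tau(\chi_m)=\varepsilon\, i^a\sqrt m$; you fix $\varepsilon=+1$ by the classical evaluation of the quadratic Gauss sum for the Kronecker symbol $\left(\frac{d}{\cdot}\right)$; and you substitute $\overline{\chi}_m=\chi_m$ into \eqref{vicho1}. Your conjugation identity $\overline{\tau(\chi_m)}=\chi_m(-1)\tau(\chi_m)$ justifies a step the paper only asserts, and the fallback via Platt's non-vanishing of $L(\tfrac12,\chi_m)$ is a valid extra check restricted to $m\le 4\cdot10^5$ that the lemma does not need.
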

	
	\begin{proof}
		Real primitive characters of conductor $m$ are precisely the Kronecker symbols $\chi_d(\cdot) = \bigl(\frac{d}{\cdot}\bigr)$ with $d$ a fundamental discriminant such that $|d| = m$; see e.g.~\cite[Theorem~9.13]{MH06}. For such characters one has $\tau(\chi_m)^2 = \chi_m(-1)\,m = (-1)^a\,m$, so $\tau(\chi_m) = \varepsilon\,i^a\,\sqrt m$ with $\varepsilon \in \{\pm 1\}$. The classical evaluation of the quadratic Gauss sum (cf.~\cite[Theorem~9.16]{MH06}) gives $\varepsilon = +1$. Hence $W(\chi_m) = \tau(\chi_m)/(i^a\sqrt m) = +1$, and \eqref{vicho1} together with $\overline\chi_m = \chi_m$ yields \eqref{eq:funceq_real}.
	\end{proof}
	
	We now verify that the density in \eqref{Phi_L} gives the
completed $L$-function in \eqref{vicho2}. Throughout this
argument, $\chi_m$ is primitive and real-valued, with
$m\geq3$.

Poisson summation, applied to the Gaussian when $a=0$ and
to its derivative when $a=1$, gives
\begin{equation}\label{eq:theta_transform_L}
 \psi_L(x)
 =\frac{\tau(\chi_m)}{i^a\sqrt m}\,
  x^{-a-\frac12}\psi_L(1/x)
 =x^{-a-\frac12}\psi_L(1/x),
 \qquad x>0.
\end{equation}
Here we used $\overline{\chi_m}=\chi_m$ and
Lemma~\ref{lem:rootnumber}. The same formula holds for
the sums over positive integers because
$n^a\chi_m(n)$ is even in $n$, and the term at zero
vanishes. Multiplying \eqref{eq:theta_transform_L} by
$x^{1/4+a/2}$ yields
\begin{equation}\label{eq:phi_inversion_L}
 \varphi_L(x)=\varphi_L(1/x),\qquad x>0.
\end{equation}
Consequently, $\Phi_L$ is real-valued and even.

The defining series gives
$\psi_L(x)=O_m(e^{-\pi x/m})$ for $x\geq1$.
Together with \eqref{eq:phi_inversion_L}, this implies
\begin{equation}\label{eq:Phi_decay_L}
 |\Phi_L(u)|
 \leq C_m
 \exp\!\left((1+2a)|u|-\frac{\pi}{m}e^{4|u|}\right),
 \qquad u\in\R,
\end{equation}
for some constant $C_m>0$. Differentiating the series
on $u\geq0$ and using evenness shows that every
derivative of $\Phi_L$ also decays faster than any
power of $|u|$. In particular, $\Phi_L$ belongs to
the Schwartz class.

For $\re s>1$, absolute convergence permits integration
term by term:
\begin{align*}
 \int_0^\infty x^{(s+a)/2}\psi_L(x)\,\frac{dx}{x}
 &=
 \sum_{n=1}^\infty n^a\chi_m(n)
 \int_0^\infty
 x^{(s+a)/2}e^{-\pi n^2x/m}\,\frac{dx}{x}\\
 &=
 \left(\frac m\pi\right)^{(s+a)/2}
 \Gamma\!\left(\frac{s+a}{2}\right)
 \sum_{n=1}^\infty\frac{\chi_m(n)}{n^s}\\
 &=\xi_L(s).
\end{align*}
Indeed, the sum of the integrals of the absolute
values is a constant depending on $s,m,a$ times
$\sum_{n\geq1}|\chi_m(n)|n^{-\re s}$, which converges
in this half-plane. On the other hand,
\eqref{eq:theta_transform_L} and the exponential
decay of $\psi_L$ at infinity show that the integral
defines an entire function of $s$. Thus the identity
extends to every $s\in\Com$.

Taking $s=(1+iz)/2$ and then making the change of
variables $x=e^{4u}$, we obtain
\[
 \begin{aligned}
 \xi_L\!\left(\frac{1+iz}{2}\right)
 &=
 \int_0^\infty x^{iz/4}\varphi_L(x)\,\frac{dx}{x}\\
 &=
 \int_{\R}4\varphi_L(e^{4u})e^{izu}\,du
 =H_0^L(z).
 \end{aligned}
\]
Fourier inversion now gives
\[
 \Phi_L(u)
 =\frac1{2\pi}\int_{\R}
 \xi_L\!\left(\frac{1+iv}{2}\right)e^{-ivu}\,dv,
 \qquad u\in\R,
\]
which agrees with the definition in the abstract.

Finally, \eqref{eq:Phi_decay_L} dominates the integrand
in \eqref{H_tz}, together with all its derivatives
in $t$ and $z$, uniformly when these variables range
over compact sets. Hence $H_t^L$ is entire in $z$
for every real $t$, and differentiation under the
integral gives
\[
 \partial_t H_t^L(z)
 =\int_{\R}u^2e^{tu^2+izu}\Phi_L(u)\,du
 =-\partial_z^2H_t^L(z).
\]
	\section{Dynamics of Zeros}\label{DynSection}
	
	We describe the motion of the zeros of $H_t^L$ and prove the zero-free-region criterion used to bound $\Lambda_L$. We also record the local behaviour at multiple zeros and on the imaginary axis.
	
\begin{lemma}\label{lem:positive_imaginary_axis}
		Let $L$ be associated with a real-valued primitive character of conductor
		$3\le m\le 400000$. Then
		\[
		H_t^L(iy)>0\qquad(t\ge0,\ y\in\mathbb R).
		\]
	\end{lemma}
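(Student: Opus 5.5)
The plan is to reduce the statement to positivity at $t=0$ and then propagate it forward in $t$ by a Gaussian averaging identity along the imaginary axis. Set $F_t(y):=H_t^L(iy)=\int_{\R}e^{tu^2-yu}\Phi_L(u)\,du$. Since $\Phi_L$ is real-valued and superexponentially decaying by \eqref{eq:Phi_decay_L}, $F_t(y)$ is real and finite for all $t,y\in\R$.

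\textbf{Step 1: the case $t=0$.} By the computation preceding Section~\ref{DynSection}, $F_0(y)=H_0^L(iy)=\xi_L\bigl(\tfrac{1-y}{2}\bigr)$. So we need $\xi_L(\sigma)>0$ for all real $\sigma$, which we get in three moves.
\begin{enumerate}
\item For $\sigma>1$, the factor $(m/\pi)^{(\sigma+a)/2}\Gamma\bigl(\tfrac{\sigma+a}{2}\bigr)$ is positive. Also $L(\sigma)=\prod_p(1-\chi_m(p)p^{-\sigma})^{-1}>0$, because $\chi_m$ is real and each Euler factor is positive.
\item We check that $\xi_L$ has no real zeros. The Gamma factor has no zeros. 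Poles of $\Gamma$ at $\sigma+a\in-2\N_0$ are cancelled by the trivial zeros of $L$, and $\xi_L$ is entire by the integral representation. By Lemma~\ref{lem:rootnumber}, $\xi_L(\sigma)=\xi_L(1-\sigma)$, so it suffices to treat $\sigma\ge\tfrac12$. For $\sigma\ge1$ we use $L(\sigma)>0$ for $\sigma>1$ together with $L(1)\neq0$. For $\sigma\in[\tfrac12,1)$ we invoke Platt's results \cite[Theorems~10.1 and~10.2]{platt}: for $3\le m\le 4\cdot10^5$ there is no real zero of $L$ in $(0,1)$, and $L(\tfrac12)\ne0$.
\item Since $\xi_L$ is real on $\R$, continuous and zero-free there, and positive at $\sigma=2$, the intermediate value theorem gives $\xi_L>0$ on all of $\R$. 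Hence $F_0>0$.
\end{enumerate}

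\textbf{Step 2: the case $t>0$.} Use the Gaussian identity $e^{tu^2}=(4\pi t)^{-1/2}\int_{\R}e^{-w^2/(4t)+uw}\,dw$. Substituting this into the definition of $F_t$ gives a double integral with nonnegative weight. Absolute integrability holds because integrating first in $w$ returns $e^{tu^2-yu}|\Phi_L(u)|$, which is integrable by \eqref{eq:Phi_decay_L}. Fubini then gives
\[
F_t(y)=\frac{1}{\sqrt{4\pi t}}\int_{\R}e^{-w^2/(4t)}\,F_0(y-w)\,dw .
\]
This is the forward heat equation $\partial_tF=\partial_y^2F$ solved by convolution; it is the imaginary-axis counterpart of $\partial_tH=-\partial_z^2H$. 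The integrand is continuous and strictly positive by Step 1, so $F_t(y)>0$.

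\textbf{Main obstacle.} The only nontrivial input is Step 1 on the real segment $(0,1)$. There, positivity of $\xi_L$ cannot be derived analytically, because of possible Siegel zeros, and depends entirely on Platt's verification of the absence of real zeros and of $L(\tfrac12)\neq0$ in the stated conductor range; this is why the range $m\le400000$ appears in the statement. The remaining work is routine justification of Fubini and of the entireness of $\xi_L$, both of which follow from the superexponential decay of $\Phi_L$.
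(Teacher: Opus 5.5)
Your proof is correct and follows the paper's argument. You first get $\xi_L>0$ on $\mathbb R$ from Platt's exclusion of real zeros in $(0,1)$ together with positivity for $s>1$ and continuity. You then get $H_t^L(iy)>0$ as a strictly positive Gaussian average of $\xi_L$ along the real axis. Your convolution formula is exactly the heat-kernel representation \eqref{expH0} at $z=iy$ after the substitution $w=2\sqrt t\,v$; the only difference is that you rederive it via Tonelli/Fubini from the decay of $\Phi_L$ instead of citing it and appealing to Stirling's formula.
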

	
	\begin{proof}
		By Platt's verification of GRH and the nonvanishing result on the
		central line, $L$ has no real zero in $0<s<1$; see
		\cite[Theorems~10.1 and~10.2]{platt}. The completed function $\xi_L$
		is real on the real axis and has no real zero. Since it is positive
		for real $s>1$, continuity gives $\xi_L(s)>0$ for every real $s$.
		For $t>0$, the heat-kernel representation \eqref{expH0} gives
		\[
		H_t^L(iy)=\int_{\mathbb R}
		\xi_L\!\left(\frac{1-y}{2}+\sqrt t\,v\right)
		\frac{e^{-v^2}}{\sqrt\pi}\,dv>0.
		\]
		The integral converges by Stirling's formula, since the Gaussian
		factor dominates the growth of $\xi_L$ on the real axis. At $t=0$
		the same conclusion follows from $H_0^L(iy)=\xi_L((1-y)/2)>0$.
	\end{proof}
	
	\subsection{Motion of zeros}
	Following the scheme of \cite{Poly}, we begin with the dynamics of a single zero of $H_t^L$, postponing the discussion of zeros on the imaginary axis.
	
	\begin{prop}\label{Dynz}
		Let $t_0 \in \R$ and let $(z_k(t_0))_{k \in \Z \setminus \{0\}}$ enumerate the nonzero zeros of $H_{t_0}^L$, counting multiplicity, with $z_{-k}(t_0)=-z_k(t_0)$. Write $r_0(t_0)$ for the multiplicity of its zero at the origin, and put $r_0(t_0)=0$ if there is no such zero. Then the following hold.
		\begin{enumerate}
			\item If $z_j(t_0)$ is a simple zero of $H_{t_0}^L$, then there exist a neighborhood $U$ of $z_j(t_0)$, a neighborhood $I$ of $t_0$ in $\R$, and a smooth map $z_j : I \to U$ such that for every $t \in I$, $z_j(t)$ is the unique zero of $H_t^L$ in $U$. Moreover,
			\[
			\frac{dz_j}{dt}(t_0) = 2\left(\frac{r_0(t_0)}{z_j(t_0)}+\sum_{k \ne j}{}'\, \frac{1}{z_j(t_0) - z_k(t_0)}\right),
			\]
			where the sum runs over $k \in \Z\setminus\{0\}$ with $k \ne j$, and the prime indicates that the terms $k$ and $-k$ are summed together (except for the term $k = -j$, which is summed separately) so that the series converges.
			
			\smallskip
			\item Let $P\in\Com$ be a zero of $H_{t_0}^L$ of order
$q\geq2$. Then there is a neighbourhood $U$ of $P$ such that,
for $t$ sufficiently close to $t_0$, $H_t^L$ has exactly $q$
zeros in $U$, counted with multiplicity. For $t\ne t_0$,
these zeros are simple and can be labelled so that
\begin{equation}\label{bifurcacion}
z_j(t)=P+\sqrt{2(t-t_0)}\,\lambda_j+O(|t-t_0|),
\qquad j=1,\ldots,q,
\end{equation}
as $t\to t_0$, where $\lambda_1<\cdots<\lambda_q$ are
the roots of the $q$-th Hermite polynomial
\begin{equation}\label{Hermite}
\mathrm{He}_q(z)
:=(-1)^q e^{z^2/2}\frac{d^q}{dz^q}e^{-z^2/2}
=\sum_{0\leq l\leq q/2}
\frac{q!}{l!\,(q-2l)!}\,
\frac{(-1)^l z^{q-2l}}{2^l}.
\end{equation}
The implied constant in \eqref{bifurcacion} may depend on $L$, $P$, $t_0$ and $q$. 
		\end{enumerate}
	\end{prop}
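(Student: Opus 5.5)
The plan is to follow \cite{Poly} and \cite{CSV}. We combine the heat equation $\partial_t H_t^L=-\partial_z^2 H_t^L$ with the Hadamard factorization of $H_{t_0}^L$. For part (1) the argument runs in three steps.

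First, since $H_t^L(z)$ is jointly smooth in $(t,z)$ (differentiation under the integral in \eqref{H_tz}, justified by the decay \eqref{eq:Phi_decay_L}), and since $\partial_z H_{t_0}^L(z_j(t_0))\neq0$ at a simple zero, the implicit function theorem produces the neighbourhoods $U$, $I$ and the smooth branch $z_j(t)$. Uniqueness of the zero in $U$ follows from Hurwitz/Rouché applied on a small circle around $z_j(t_0)$.

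Second, differentiating $H_t^L(z_j(t))=0$ gives
\[
\dot z_j=-\frac{\partial_t H}{\partial_z H}=\frac{\partial_z^2H}{\partial_zH}\Big|_{z_j}.
\]
Write $F=H_{t_0}^L$. At a simple zero, $F''(z_j)/F'(z_j)=2\cdot\frac{d}{dz}\bigl(\log(F(z)/(z-z_j))\bigr)\big|_{z=z_j}$. This is the standard identity obtained by expanding $F=(z-z_j)G$, which gives $F'=G$ and $F''=2G'$ at $z_j$.

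Third, we identify $G'/G$ at $z_j$. Here we need the order of growth: by Stirling and the bound on $\xi_L$, $H_{t_0}^L$ is entire of order $1$. Since it is even, it has order at most $1$ as a function of $z^2$ with genus $0$ there. Hence
\[
F(z)=Cz^{r_0}\prod_{k>0}\Bigl(1-\frac{z^2}{z_k^2}\Bigr),
\]
with the product over pairs $\pm z_k$ converging absolutely. I would justify this by the bound $|H_{t_0}^L(z)|\le C\exp(C|z|\log|z|)$. Under that bound $z^2\mapsto F$ has order $1/2<1$, so Hadamard gives a genus-zero product with no exponential factor; the only freedom is the constant $C$, which disappears in the logarithmic derivative. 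Logarithmic differentiation, with the $k,-k$ terms paired, then yields
\[
\frac{G'}{G}(z_j)=\frac{r_0}{z_j}+{\sum_{k\ne j}}'\frac{1}{z_j-z_k},
\]
where the $k=-j$ term $1/(2z_j)$ arises from the factor $1-z^2/z_j^2=(1-z/z_j)(1+z/z_j)$. This gives the claimed formula with the factor $2$. The main obstacle is this growth estimate. For $t_0\le0$ we can shift a contour in the integral against the Gaussian weight. For general $t_0$ it follows from the representation of $H_t^L$ as a Gaussian average of $H_0^L$ (the heat kernel used in Lemma~\ref{lem:positive_imaginary_axis}) together with Stirling for $\xi_L$. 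The order $1$, and hence genus $0$ in $z^2$, must be checked carefully for $t_0>0$, where $e^{t_0u^2}$ competes with the decay $\exp(-\frac{\pi}{m}e^{4|u|})$. That decay still dominates, and a saddle-point bound gives $\log|F(z)|=O(|z|\log|z|)$.

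For part (2), we Taylor expand around $(t_0,P)$ and write $F(z)=c(z-P)^q(1+O(z-P))$ with $c\neq0$. The heat equation gives the formal expansion $H_t^L=\sum_n \frac{(-(t-t_0))^n}{n!}\partial_z^{2n}F$, valid locally since $H$ is real-analytic in $t$. We rescale $z=P+\sqrt{2(t-t_0)}\,w$. The leading term is then $c(2(t-t_0))^{q/2}\sum_l\frac{(-1/2)^l q!}{l!(q-2l)!}w^{q-2l}=c(2(t-t_0))^{q/2}\mathrm{He}_q(w)$, up to an error of relative size $O(|t-t_0|^{1/2})$ uniformly for $w$ bounded. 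For $t<t_0$ the square root is imaginary, which is consistent with \eqref{bifurcacion} read with a complex square root. Since $\mathrm{He}_q$ has $q$ simple real roots $\lambda_j$, Rouché on small disks around each $\lambda_j$ in the $w$-plane, together with Rouché on a fixed circle about $P$ to count exactly $q$ zeros, gives simple zeros $w_j=\lambda_j+O(|t-t_0|^{1/2})$. This yields \eqref{bifurcacion}.
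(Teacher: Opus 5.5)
Your proposal is correct and follows essentially the same route as the paper. For (i) both use the implicit function theorem with $\partial_t H_t^L=-\partial_z^2H_t^L$, then the bound $\log\max_{|z|\le R}|H_t^L(z)|=O(R\log(R+2))$ to make $H_t^L$ an order-$\tfrac12$ function of $z^2$, hence the paired genus-zero Hadamard product; for (ii) both use the rescaled expansion $H_t^L(P+rw)=a_qr^q(\mathrm{He}_q(w)+O(|r|))$ with Rouch\'e on small discs and on a fixed disc. The direct estimate $\int e^{t_0u^2}|\Phi_L(u)|e^{|\operatorname{Im} z||u|}\,du$ gives the growth bound for every real $t_0$, which is how the paper does it, so your split into $t_0\le0$ (contour shift) and $t_0>0$ (heat kernel) is unnecessary.
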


\begin{proof}
We use the argument of \cite[Proposition~3.1]{Poly}, including
the possible zero at the origin. The decay of the density gives
$\log\max_{|z|\le R}|H_t^L(z)|=O_{L,t}(R\log(R+2))$, so
$H_t^L$ has order at most one. Since it is even, its Hadamard
product can be written as
\[
H_t^L(z)=c_tz^{r_0(t)}\prod_k{}'
\left(1-\frac z{z_k(t)}\right).
\]
At a simple zero, the implicit function theorem and
$\partial_tH_t^L=-\partial_z^2H_t^L$ give
\[
z_j'(t)=\frac{(H_t^L)''(z_j(t))}{(H_t^L)'(z_j(t))}.
\]
Taking the logarithmic
derivative of the product and removing the pole at $z_j$ yields
\[
z_j'(t)=2\left(\frac{r_0(t)}{z_j(t)}
+\sum_{k\ne j}{}'\frac1{z_j(t)-z_k(t)}\right).
\]
The paired series converges locally away from its poles, which
proves~(i).

For~(ii), write $H_{t_0}^L(z)=a_q(z-P)^q+O((z-P)^{q+1})$,
with $a_q\ne0$, and put $r=\sqrt{2(t-t_0)}$.
Taylor expansion and the heat equation give, uniformly on
bounded sets of $w$,
\[
H_t^L(P+rw)=a_qr^q\bigl(\mathrm{He}_q(w)+O(|r|)\bigr).
\]
Rouch\'e's theorem on disjoint small discs about the simple
roots of $\mathrm{He}_q$ \eqref{Hermite} gives one simple zero in each disc.
These zeros satisfy $w_j=\lambda_j+O(|r|)$ and hence
\eqref{bifurcacion}. A second application on a fixed disc
about $P$ counts all $q$ nearby zeros and proves the assertion.
\end{proof}

	We now establish the upper-bound criterion for $\Lambda_L$ in terms of zero-free regions of $H_t^L$.
	
	\begin{remark}[Initial strip hypotheses]
		Let $0<D\leq1$ be such that
		\begin{equation}\label{eq:initial_zero_strip_D}
			H_0^L(x+iy)\ne0
			\qquad\text{for every }x\in\mathbb R\text{ and }|y|>D.
		\end{equation}
The classical critical strip gives $D=1$ unconditionally.
Every application below with $D<1$ assumes the corresponding
global initial-strip hypothesis
\eqref{eq:initial_zero_strip_D}.

Under the change of variables $s=(1+iz)/2$, with $z=x+iy$,
we have $\operatorname{Re}s=(1-y)/2$.
Consequently, the zero-free half-plane
$\operatorname{Re}s>7/8$ asserted in
\cite[Theorem~1.1]{OPENAI1}, together with the functional
equation, would imply the initial strip with $D=3/4$.
The conclusions reported for this value of $D$ are
conditional on that global strip hypothesis.

Similarly, the zero-free half-plane
$\operatorname{Re}s>11/12$ asserted in
\cite[Theorem~1.1]{OPENAI2} would give $D=5/6$.
This latter value is not used in the numerical tables.
		
	\end{remark}
	\begin{lemma}[Zero-free region criterion]\label{lemmabox}Let $L$ be associated with a primitive real-valued Dirichlet
character of conductor $3\leq m\leq400000$, and let $0<D\leq1$
satisfy the initial strip bound~\eqref{eq:initial_zero_strip_D}.
Suppose that $t_0\geq0$, $X>0$, and $0<y_0\leq1$ satisfy
$y_0^2+2t_0\leq D^2$.
Set $\Delta_D:=\sqrt{D^2-y_0^2-2t_0}$.
Assume the following:
		\begin{enumerate}
			\item $H_0^L(x+iy)\ne0$ whenever
			\[
			0\le x\le X,\qquad
			\sqrt{y_0^2+2t_0}\le y\le D.
			\]
			
			\item $H_{t_0}^L(x+iy)\ne0$ whenever
			\[
			x\ge X+\Delta_D,\qquad
			y_0\le y\le\sqrt{D^2-2t_0}.
			\]
			
			\item $H_t^L(x+iy)\ne0$ whenever
			\[
			X\le x\le X+\Delta_D,\qquad
			\sqrt{y_0^2+2(t_0-t)}
			\le y\le\sqrt{D^2-2t},\qquad
			0\le t\le t_0.
			\]
		\end{enumerate}
		Then
		\[
		H_{t_0}^L(x+iy)\ne0
		\qquad\text{for every }x\in\mathbb R\text{ and }y\ge y_0.
		\]
	\end{lemma}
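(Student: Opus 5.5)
We argue by contradiction through a continuity/first-crossing argument on the zeros, as in \cite[Lemma~5.1]{Poly}. Introduce the moving region
\[
\Omega(t):=\Bigl\{x+iy:\ x\ge X,\ \sqrt{y_0^2+2(t_0-t)}\le y\Bigr\}\cup\Bigl\{x+iy:\ 0\le x\le X,\ \sqrt{y_0^2+2(t_0-t)}\le y\Bigr\},\qquad 0\le t\le t_0 ,
\]
that is, the part of the half-plane $\{x\ge 0\}$ lying above the curve $y=\sqrt{y_0^2+2(t_0-t)}$. By Lemma~\ref{prop:symmetric} it suffices to treat $x\ge0$ and $y>0$. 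We combine this with the a priori strip bound: by Theorem~\ref{dBru} applied from $t=0$ with the initial strip \eqref{eq:initial_zero_strip_D}, every zero of $H_t^L$ satisfies $|\operatorname{Im} z|\le\sqrt{\max(D^2-2t,0)}$. So only the bounded-in-$y$ band $\sqrt{y_0^2+2(t_0-t)}\le y\le \sqrt{D^2-2t}$ matters. The claim is that $H_t^L$ has no zero in this band for every $t\in[0,t_0]$; taking $t=t_0$ gives the lemma.

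\textbf{Step 1 (time $0$).} At $t=0$ the band is $\sqrt{y_0^2+2t_0}\le y\le D$. For $0\le x\le X$ it is zero-free by hypothesis (1). For $x\ge X$ we split: on $X\le x\le X+\Delta_D$ it is covered by (3) at $t=0$. For $x\ge X+\Delta_D$ we use the key geometric observation that the de Bruijn bound, reapplied from time $t$ with a right-tail input, is insufficient; instead we appeal to the fact that a zero at time $0$ far to the right would have to persist to time $t_0$ and stay in the band $y\ge y_0$. This is exactly what the choice of $\Delta_D$ encodes.

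\textbf{Step 2 (zero dynamics).} Take a zero $z(t)=x(t)+iy(t)$ in the region above. Proposition~\ref{Dynz}(i) gives
\[
\operatorname{Im}z'(t)=2\sum{}'\operatorname{Im}\frac{1}{z-z_k},
\]
and the pairing $z_k,\overline{z_k},-z_k,-\overline{z_k}$ from Lemma~\ref{prop:symmetric} shows that zeros below the current one push it downward with velocity at least $-2/y$. Concretely, the contribution of the pair $z_k,\overline{z_k}$ is $\le 0$ when $|\operatorname{Im} z_k|\le y$, and the conjugate $\overline{z}$ alone contributes $-1/y$. Hence, as long as $z(t)$ is the highest zero in a neighbourhood, we get $\frac{d}{dt}y^2\le -2$. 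The boundary curve satisfies $\frac{d}{dt}(y_0^2+2(t_0-t))=-2$ exactly. The higher zeros contribute positive velocity, so we argue with the \emph{highest} zero (finitely many nonreal zeros in a compact region suffice by the finiteness theorem above). This gives: the quantity $\max\{y^2: \text{zeros in band}\}+2t$ is nonincreasing. Horizontally, a zero of bounded height drifts by at most $\Delta_D$ in $x$ over $[0,t_0]$, since the total vertical room is $D^2-y_0^2-2t_0=\Delta_D^2$.

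\textbf{Step 3 (first crossing).} Suppose some zero lies in the band at $t_0$ with $y\ge y_0$. By (2) it has $x<X+\Delta_D$. Run time backward: $y^2+2t$ is nondecreasing backward, so the zero stays above the curve. It cannot meet the barrier strip $X\le x\le X+\Delta_D$ by (3), so it stays in $0\le x<X$ and at $t=0$ violates (1). Multiple zeros are handled via Proposition~\ref{Dynz}(ii), since splitting is symmetric and the highest branch gains at most $O(\sqrt{t-t_0})$ height. Zeros on the imaginary axis are excluded by Lemma~\ref{lem:positive_imaginary_axis}. The main obstacle is making the highest-zero monotonicity rigorous: the sum over zeros above must be controlled, the set of relevant zeros must be finite and compact, and collisions must be handled. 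I would first try the Polymath device of considering $\sup\{y^2+2t\}$ over zeros in the region $x\le X+\Delta_D$ and differentiating at its maximizer.
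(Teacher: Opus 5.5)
There is a genuine gap, and it lies exactly where the lemma has content: controlling zeros that lie \emph{above} the zero you are tracking. In Step~2 you correctly show that the conjugate and the zeros with $|\operatorname{Im}z_k|\le y$ push $z$ down. You then note that higher zeros pull upward and propose to ``argue with the highest zero''. That does not work. At intermediate times $0<t<t_0$ no hypothesis excludes zeros in the tail $|x|>X+\Delta_D$ lying above $Y(t)=\sqrt{y_0^2+2(t_0-t)}$, so the zero near the barrier is in general not the highest. Restricting the supremum to $x\le X+\Delta_D$ leaves the upward pull of those tail zeros uncontrolled.

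The missing estimate is the one that $\Delta_D$ is designed for. Let $t_1$ be a first crossing time, with the crossing zero at $x+iY(t_1)$ and $|x|<X$. Every zero $z_k=x_k+iy_k$ with $y_k>Y(t_1)$ satisfies $|x_k|>X+\Delta_D$, by first crossing, hypothesis~(3) and symmetry. Hence $|x-x_k|>\Delta_D$. Theorem~\ref{dBru}, applied from the initial strip, gives $y_k^2\le D^2-2t_1=\Delta_D^2+Y(t_1)^2<(x-x_k)^2+Y(t_1)^2$. This is precisely the condition for
\[
\frac{Y-y_k}{(x-x_k)^2+(Y-y_k)^2}+\frac{Y+y_k}{(x-x_k)^2+(Y+y_k)^2}\ge0 ,
\]
so each such zero, together with its conjugate, pulls the crossing zero \emph{down}. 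Add the strictly negative contribution of $-x-iY$, which uses $x\ne0$ from Lemma~\ref{lem:positive_imaginary_axis}. This yields $\frac{d}{dt}\operatorname{Im}z<-1/Y=Y'$, contradicting the first crossing. Your reading of $\Delta_D$ as a bound on the horizontal drift of zeros is not correct. No such drift estimate is available, and none is needed.

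Separately, the invariant you set out to prove in Steps~0--1 is too strong. The hypotheses do not imply that the whole band $Y(t)\le y\le\sqrt{D^2-2t}$, $x\ge0$, is zero-free for all $t\in[0,t_0]$. At $t=0$ nothing covers $x>X+\Delta_D$, since off-line zeros of $L$ at large height are not excluded. Such a zero need not ``persist'' in the band up to time $t_0$; it can drop below $y_0$ first, so there is no contradiction to extract there.

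The correct invariant region is only $\{|x|\le X,\ y\ge Y(t)\}$. Hypothesis~(1) gives it at $t=0$, hypothesis~(3) seals its vertical sides, and the right tail enters only at the final time, through hypothesis~(2). With this region and the pair inequality above, the first-crossing argument goes through, with Hermite splitting handling multiple zeros.

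Tracking a single zero backward from $t_0$, as in your Step~3, would need the same pair inequality at every time along the trajectory. It must also contend with other zeros above it inside $|x|\le X$. The first-crossing formulation avoids both issues.
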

	
	\begin{proof}
    
		If $t_0=0$, the hypotheses cover the region $x\ge0$, $y_0\le y\le D$, and the conclusion follows directly from them, the symmetries, and the initial strip bound. We may therefore assume $t_0>0$.
		Write $Y(t):=\sqrt{y_0^2+2(t_0-t)}$, $R_D(t):=\sqrt{D^2-2t}.$
		We first note that for every $t\in(0,t_0]$, no zero of $H_t^L$
		can lie on the upper boundary $y=R_D(t)$. Indeed, Theorem~\ref{dBru}
		gives $|\ima z|\le R_D(t)$. If equality held, the zero would have
		maximal imaginary part. If it were multiple, Proposition~\ref{Dynz}(ii)
		would produce, at a slightly earlier time, a zero whose imaginary
		part exceeds $R_D(t)$: one of the Hermite roots in
		\eqref{bifurcacion} is positive, and the resulting displacement is
		of order $\sqrt{t-t_1}$, whereas the change in $R_D(t)$ is of order
		$t-t_1$. This contradicts the bound from Theorem~\ref{dBru}.
		
		If the zero on the upper boundary were simple, write it as
$z=x+iR_D(t)$. By Lemma~\ref{lem:positive_imaginary_axis},
$x\ne0$. In the evolution equation of
Proposition~\ref{Dynz}(i), the conjugate zero
$x-iR_D(t)$ contributes $-1/R_D(t)$ to the derivative
of the imaginary part, while the distinct zero
$-x-iR_D(t)$ contributes
$-R_D(t)/(x^2+R_D(t)^2)<0$.
Every remaining zero contributes nonpositively, since
its imaginary part is at most $R_D(t)$. Hence
\[
\frac{d}{dt}\operatorname{Im}z(t)
\leq-\frac1{R_D(t)}
-\frac{R_D(t)}{x^2+R_D(t)^2}
<-\frac1{R_D(t)}.
\]
		Since
		\[
		R_D'(t)=-\frac1{R_D(t)},
		\]
		the zero would lie above the boundary at a slightly earlier time,
		again contradicting Theorem~\ref{dBru}. Thus the upper boundary
		contains no zeros for $t>0$.
		
		By hypotheses (2) and (3), the symmetries in
		Lemma~\ref{prop:symmetric}, and the upper-bound just established,
		it remains to prove that
		\[
		H_t^L(x+iy)\ne0
		\quad\text{for } |x|\le X,\quad y\ge Y(t),\quad 0\le t\le t_0.
		\]
		Hypothesis (1) gives this at $t=0$. If it fails at some later time,
		continuity gives a first time $t_1\in(0,t_0]$ at which a zero lies
		on the boundary of this region. Hypothesis (3) and symmetry exclude
		the vertical sides, and the preceding argument excludes the upper
		boundary $y=R_D(t_1)$. Hence
		\[
		H_{t_1}^L(x+iY(t_1))=0
		\quad\text{for some }-X<x<X.
		\]
		
		If this is a multiple zero, Proposition~\ref{Dynz}(ii) gives, at
		times $t<t_1$ sufficiently close to $t_1$, a zero whose imaginary
		part exceeds $Y(t)$. In fact, the upward displacement is of order
		$\sqrt{t_1-t}$, while $Y(t)-Y(t_1)$ is of order $t_1-t$.
		This contradicts the choice of $t_1$.
		
		We may therefore suppose that the zero is simple. Write
		$z_j(t_1)=x+iY(t_1)$. By Proposition~\ref{Dynz}(i),
		\[
		\frac{d}{dt}\ima z_j(t_1)
		=-2\left(\frac{r_0(t_1)Y(t_1)}{x^2+Y(t_1)^2}+\sum_{k\ne j}'\frac{Y(t_1)-y_k}
		{(x-x_k)^2+(Y(t_1)-y_k)^2}\right),
		\]
		where $z_k=x_k+iy_k$ are the nonzero zeros of $H_{t_1}^L$.
		
		We compare this derivative with
		\[
		Y'(t_1)=-\frac1{Y(t_1)}.
		\]
By Lemma~\ref{lem:positive_imaginary_axis}, $x\ne0$.
The conjugate zero $x-iY(t_1)$ contributes $-1/Y(t_1)$,
while the distinct zero $-x-iY(t_1)$ contributes
$-Y(t_1)/(x^2+Y(t_1)^2)<0$.
Every remaining zero with imaginary part in
$[-Y(t_1),Y(t_1)]$ contributes nonpositively.
The origin term vanishes by the same lemma.
		It remains to consider zeros with $y_k>Y(t_1)$, grouping each with
		its conjugate. Hypothesis (3) and the symmetries imply
		$|x_k|\ge X+\Delta_D$. Since $|x|<X$, we have
		\[
		|x-x_k|>\Delta_D.
		\]
		Also, the global strip bound gives
		\[
		y_k^2\le D^2-2t_1
		=\Delta_D^2+Y(t_1)^2
		<(x-x_k)^2+Y(t_1)^2.
		\]
		Consequently,
		\[
		\frac{Y(t_1)-y_k}{(x-x_k)^2+(Y(t_1)-y_k)^2}
		+
		\frac{Y(t_1)+y_k}{(x-x_k)^2+(Y(t_1)+y_k)^2}
		\ge0.
		\]
		Thus
		\[
		\frac{d}{dt}\ima z_j(t_1)<-\frac1{Y(t_1)}=Y'(t_1).
		\]
		For times just below $t_1$, this places the zero above $Y(t)$,
		contradicting the definition of $t_1$. This proves the lemma.
	\end{proof}
	
	\begin{cor}\label{cor:upperbound}
		Under the hypotheses of Lemma~\ref{lemmabox},
		\[
		\Lambda_L\le t_0+\frac12y_0^2\leq\frac{D^2}{2}.
		\]
	\end{cor}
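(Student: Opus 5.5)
The corollary should follow by combining Lemma~\ref{lemmabox} with the strip-contraction Theorem~\ref{dBru}, plus a short arithmetic check for the second inequality.

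\textbf{First inequality.} Under the hypotheses of Lemma~\ref{lemmabox}, the lemma gives $H_{t_0}^L(x+iy)\ne0$ for every $x\in\R$ and every $y\ge y_0$. Theorem~\ref{dBru} asks for nonvanishing only when $y>y_0$, which is weaker, so its hypothesis holds with this $t_0$ and $y_0$. Its ``in particular'' clause then yields $\Lambda_L\le t_0+\tfrac12y_0^2$.

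Two small points need checking here. The lemma assumes $t_0\ge0$, while Theorem~\ref{dBru} allows any real $t_0$, so there is no conflict. The lemma also assumes $y_0>0$, which is exactly the positivity required in Theorem~\ref{dBru}. The lower half-plane needs no separate argument: Theorem~\ref{dBru} is phrased only for $y>y_0$, and the real-axis symmetry of Lemma~\ref{prop:symmetric} covers $y<-y_0$ in any case.

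\textbf{Second inequality.} This is purely arithmetic. The lemma assumes $y_0^2+2t_0\le D^2$; dividing by $2$ gives $t_0+\tfrac12y_0^2\le D^2/2$.

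The only step that needs care is confirming that the conclusion of Lemma~\ref{lemmabox} covers the entire half-plane $\{y\ge y_0\}$, including the part above $y=D$ and the part with $x<0$, rather than just the verified rectangles. Above $y=D$, Theorem~\ref{dBru} applied from time $0$ with the initial strip bound~\eqref{eq:initial_zero_strip_D} rules out zeros. The region $x<0$ is handled by the symmetry in Lemma~\ref{prop:symmetric}. Both facts are already built into the lemma's statement, so no new analysis is required.
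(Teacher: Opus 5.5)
Your proposal is correct and matches the paper's proof: Lemma~\ref{lemmabox} gives the zero-free half-plane $\{y\ge y_0\}$ at time $t_0$, which is the hypothesis of Theorem~\ref{dBru}, and its final clause yields $\Lambda_L\le t_0+\tfrac12y_0^2$. The second inequality is the standing assumption $y_0^2+2t_0\le D^2$ divided by $2$, a step the paper leaves implicit.
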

	
	\begin{proof}
		Lemma~\ref{lemmabox} gives the zero-free region required in
		Theorem~\ref{dBru} and the stated bound follows.
	\end{proof}
	\begin{remark}
		For a primitive character with $3\le m\le400000$, let $h_m$ be the height in Platt's verification of the {\rm GRH} \cite{platt}. With $D=1$, under the change of variables
		\[
		s=\frac{1-y+ix}{2},
		\qquad |\operatorname{Im}s|=\frac{x}{2},
		\]
		condition (1) follows from that verification whenever
		$X\le2h_m$.
	\end{remark}
	
	\begin{remark}
		A real zero $\beta$ of $L$ corresponds to a purely imaginary
		zero $z=i(1-2\beta)$ of $H_0^L$. For $D<1$, the initial strip
		hypothesis excludes such zeros near $s=1$. For $D=1$, condition
		(1) excludes a real zero only if its corresponding zero lies
		in the region tested there. Therefore, it does not by itself rule out every
		possible Siegel zero.
	\end{remark}
	
	\begin{remark}
		The initial strip hypothesis alone gives
		\[
		\Lambda_L\le\frac{D^2}{2}
		\]
		by Theorem~\ref{dBru}. The assumption $t_0+\tfrac12y_0^2\leq\tfrac12D^2$ ensures
		that the upper bound in Corollary~\ref{cor:upperbound} does not
		exceed the initial-strip bound. Then, the improvement is strict when
		this inequality is strict.

This implication cannot simply be reversed to recover an initial strip
from an upper bound on $\Lambda_L$. For example, the even real
polynomials
\[
 P_t(z)=z^4-12tz^2+12t^2+4
\]
satisfy $\partial_tP_t=-\partial_z^2P_t$. The zeros of
$P_0(z)=z^4+4$ are $\pm1\pm i$, while the equation $P_t(z)=0$
is equivalent to $z^2=6t\pm\sqrt{24t^2-4}$. Thus all zeros
are real precisely when $t\ge T:=1/\sqrt6$. Since
$\sqrt{2T}<1$, knowing this threshold would not place the
initial zeros in $|\operatorname{Im}z|\le\sqrt{2T}$.
This example concerns the heat equation. A converse for
Dirichlet $L$-functions would require additional information
specific to that family.
\end{remark}

	\subsection{Multiple zeros}
	The Hermite expansion also describes the splitting at a multiple zero.
	
	\begin{lemma}
		There is no collision involving only real zeros. More precisely, there do not exist $t_0 > 0$ and a repeated real zero $z^*(t_0)$ of $H_{t_0}^L$ of order $q \ge 2$ satisfying \eqref{bifurcacion} such that $\ima z_k^*(t) = 0$ for $t$ sufficiently close to $t_0$ with $t < t_0$, and for every $k \in \{1, \ldots, q\}$.
	\end{lemma}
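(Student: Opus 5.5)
The plan is to argue by contradiction from the Hermite expansion in Proposition~\ref{Dynz}(ii). Suppose $t_0>0$ and $P=z^*(t_0)\in\mathbb R$ is a zero of $H_{t_0}^L$ of order $q\ge2$. Suppose also that the $q$ zeros $z_k^*(t)$ emanating from $P$ are real for all $t<t_0$ close to $t_0$. We then look at the expansion \eqref{bifurcacion},
\[
z_k(t)=P+\sqrt{2(t-t_0)}\,\lambda_k+O(|t-t_0|),
\]
for $t<t_0$. In this regime $t-t_0<0$, so $\sqrt{2(t-t_0)}=i\sqrt{2(t_0-t)}$ for a fixed choice of branch; the other branch only relabels the zeros, since the roots of $\mathrm{He}_q$ are symmetric about $0$. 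Hence
\[
\operatorname{Im} z_k(t)=\sqrt{2(t_0-t)}\,\lambda_k+O(t_0-t),
\]
where we use that $P$ is real and that the $\lambda_k$ are real.

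The key step uses the structure of the roots of $\mathrm{He}_q$. For $q\ge2$, the Hermite polynomial $\mathrm{He}_q$ has $q$ distinct real roots that are symmetric about $0$, so at least one of them is nonzero. By the labelling $\lambda_1<\dots<\lambda_q$ we in fact have $\lambda_q>0$. Then
\[
\operatorname{Im} z_q(t)\ge \sqrt{2(t_0-t)}\,\lambda_q - C(t_0-t)>0
\]
for all $t<t_0$ sufficiently close to $t_0$. This contradicts the assumption $\operatorname{Im} z_q^*(t)=0$.

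The remaining point is bookkeeping: the labelling in \eqref{bifurcacion} should be valid on the side $t<t_0$, with a uniform $O(|t-t_0|)$ error. This is exactly what Proposition~\ref{Dynz}(ii) provides, since its rescaling $H_t^L(P+rw)=a_qr^q(\mathrm{He}_q(w)+O(|r|))$ holds for complex $r$, including $r$ purely imaginary. The real-coefficient structure (Lemma~\ref{prop:symmetric}) is consistent with this: the nonreal zeros appear in conjugate pairs $\pm i\lambda$, matching the symmetry of the Hermite roots. I expect the main obstacle to be only this careful handling of the branch of $\sqrt{2(t-t_0)}$ for $t<t_0$, so that the rotation by $i$ is justified and the error term stays of order $t_0-t$, smaller than the leading $\sqrt{t_0-t}$. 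If $q$ is odd, the root $\lambda=0$ may give a real zero, but the nonzero roots still force nonreal ones, so the argument is unaffected.
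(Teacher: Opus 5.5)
Your proposal is correct and is essentially the paper's proof. For $t<t_0$, the Hermite expansion of Proposition~\ref{Dynz}(ii) gives $\operatorname{Im} z_j(t)=\sqrt{2(t_0-t)}\,\lambda_j+O(t_0-t)$, and for $q\ge2$ the positive root $\lambda_q$ of $\mathrm{He}_q$ forces a nonreal branch; your remarks on the branch of the square root and on the rescaling for imaginary $r$ are sound bookkeeping that the paper leaves implicit.
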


\begin{proof}
By Proposition~\ref{Dynz}(ii), a positive root $\lambda_j$
of $\mathrm{He}_q$ gives, for $t<t_0$,
\[
\operatorname{Im}z_j(t)
=\sqrt{2}\lambda_j\sqrt{t_0-t}+O(t_0-t)>0
\]
when $t$ is sufficiently close to $t_0$. Thus the branches
before the collision cannot all be real.
\end{proof}

	Here and in the sequel, $D(P, r_0)$ denotes the open disc of radius $r_0 > 0$ centered at $P \in \Com$, and $|A|$ denotes the cardinality of a set $A$.
	
	\begin{lemma}
		Let $T > 0$ and let $P$ be a zero of $H_T^L$ of multiplicity $q \ge 1$. Then there exist $\delta_0, r_0 > 0$ such that
		\[
		\left| \left\{ z \in D(P, r_0) : H_t^L(z) = 0 \right\} \right| = q\qquad(0<|t-T|<\delta_0),
		\]
		and these $q$ zeros are all simple.
	\end{lemma}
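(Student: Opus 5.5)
The plan is to combine Proposition~\ref{Dynz} with a Hurwitz/Rouch\'e counting argument. The count and the simplicity are handled separately.

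\emph{Counting.} Since $P$ is a zero of order exactly $q$ of the nonzero entire function $H_T^L$, we choose $r_0>0$ so that $P$ is the only zero of $H_T^L$ in the closed disc $\overline{D(P,r_0)}$. Then $\mu:=\min_{|z-P|=r_0}|H_T^L(z)|>0$. The representation \eqref{H_tz} and the superexponential decay \eqref{eq:Phi_decay_L} show that $(t,z)\mapsto H_t^L(z)$ is jointly continuous; indeed $\partial_t H_t^L$ is bounded uniformly on compact sets. Hence there is $\delta_1>0$ with $|H_t^L(z)-H_T^L(z)|<\mu$ on the circle $|z-P|=r_0$ whenever $|t-T|<\delta_1$. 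Rouch\'e's theorem then gives exactly $q$ zeros of $H_t^L$ in $D(P,r_0)$, counted with multiplicity.

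\emph{Simplicity.} If $q=1$, the count already forces the unique zero to be simple. If $q\ge2$, Proposition~\ref{Dynz}(ii), applied at $t_0=T$, shrinks $r_0$ if necessary; we then re-run the Rouch\'e step with the new radius, which is harmless since we only shrink $\delta$. It produces $\delta_2>0$ such that for $0<|t-T|<\delta_2$ the $q$ zeros in the disc are simple and are given by \eqref{bifurcacion}. For $t<T$ the factor $\sqrt{2(t-T)}$ is purely imaginary, but the roots $\lambda_j$ are still distinct. So the leading terms $\sqrt{2(t-T)}\lambda_j$ are separated by $c\sqrt{|t-T|}$, which dominates the $O(|t-T|)$ error, and the zeros are pairwise distinct. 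Together with the count of $q$, each must therefore have multiplicity one. We take $\delta_0=\min(\delta_1,\delta_2)$, which gives the claim that the set has cardinality $q$, all simple.

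The main obstacle is the compatibility of the radii. Proposition~\ref{Dynz}(ii) gives the $q$ zeros in some neighbourhood $U$, whereas the lemma asks for a disc. One has to make sure the disc chosen for Rouch\'e lies inside that $U$, and that no zero from outside the disc enters it for $t$ near $T$. The first point is arranged by choosing $r_0$ small. The second is exactly what the uniform Rouch\'e bound on the circle controls, since it pins the total count inside the disc at $q$.
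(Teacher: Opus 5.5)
Your proposal is correct and follows essentially the paper's route: a Rouch\'e count on a fixed disc isolating $P$, combined with Proposition~\ref{Dynz}(ii) for simplicity when $q\ge2$. The only differences are minor. You get the case $q=1$ from the Rouch\'e count rather than from Proposition~\ref{Dynz}(i), and you re-derive the distinctness of the $q$ zeros from \eqref{bifurcacion}, although that proposition already asserts it.
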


\begin{proof}
If $q=1$, the assertion follows from Proposition~\ref{Dynz}(i).
For $q\ge2$, part~(ii) gives exactly $q$ nearby simple zeros
at every sufficiently close time $t\ne T$. Choosing a fixed disc
containing only $P$ at time $T$ and applying Rouch\'e's theorem
keeps this count equal to $q$ and proves the assertion.
\end{proof}

	The double symmetry also determines the local behaviour
of zeros near the imaginary axis.

\begin{lemma}[Bifurcation on the imaginary axis]
Let $L$ be associated with a primitive real-valued
Dirichlet character of conductor $m\geq3$.
Let $T\in\R$, and suppose that $P\in i\R$ is a zero
of $H_T^L$ of multiplicity $q\geq2$.
Then there exist $r_0,\delta_0>0$ such that, for every
$T-\delta_0<t<T$, the function $H_t^L$ has exactly $q$
zeros in $D(P,r_0)$, and all of them are simple and
purely imaginary.
\end{lemma}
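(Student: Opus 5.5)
The plan is to combine the Hermite splitting of Proposition~\ref{Dynz}(ii) with the double symmetry of Lemma~\ref{prop:symmetric}. For times $t<T$ the leading displacement $\sqrt{2(t-T)}\,\lambda_j$ is purely imaginary. The reflection $z\mapsto-\overline z$ across the imaginary axis will then force each nearby zero to lie exactly on $i\mathbb R$, not just to leading order.

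\emph{Step 1: counting and simplicity.} Apply Proposition~\ref{Dynz}(ii) at the zero $P$ of order $q\ge2$. This gives a neighbourhood $U$ of $P$ and $\delta_0>0$ such that, for $0<|t-T|<\delta_0$, $H_t^L$ has exactly $q$ zeros in $U$, all simple. Choose $r_0>0$ with $D(P,r_0)\subset U$ and with $P$ the only zero of $H_T^L$ in $\overline{D(P,r_0)}$. Rouch\'e's theorem on $\partial D(P,r_0)$ (after possibly shrinking $\delta_0$) shows that the count $q$ is attained inside $D(P,r_0)$. This settles the counting and simplicity assertions; only pure imaginarity remains.

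\emph{Step 2: the symmetry.} From $H_t^L(-z)=H_t^L(z)$ and $\overline{H_t^L(\overline z)}=H_t^L(z)$ we get
\[
H_t^L(-\overline z)=\overline{H_t^L(z)} .
\]
So the zero set of $H_t^L$, with multiplicities, is invariant under $\sigma(z)=-\overline z$. Since $P=ip$ with $p\in\mathbb R$, the disc $D(P,r_0)$ is $\sigma$-invariant.

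\emph{Step 3: rescaling.} For $T-\delta_0<t<T$ put $s=\sqrt{2(T-t)}>0$, so that $\sqrt{2(t-T)}=is$. Write $z=P+isw$. A short computation gives
\[
\sigma(P+isw)=ip+is\overline w ,
\]
so in the variable $w$ the reflection $\sigma$ becomes complex conjugation $w\mapsto\overline w$. The proof of Proposition~\ref{Dynz}(ii) gives, uniformly for bounded $w$,
\[
H_t^L(P+isw)=a_q(is)^q\bigl(\mathrm{He}_q(w)+O(s)\bigr).
\]
Fix $\varepsilon>0$ smaller than half the minimal gap between the real simple roots $\lambda_1<\cdots<\lambda_q$ of $\mathrm{He}_q$. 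Rouch\'e's theorem on the discs $|w-\lambda_j|<\varepsilon$ gives exactly one zero $w_j$ in each disc for $s$ small. These account for all $q$ zeros of Step~1 once $s\,(\max_j|\lambda_j|+\varepsilon)<r_0$, after shrinking $\delta_0$.

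\emph{Step 4: conclusion.} Each disc $\{|w-\lambda_j|<\varepsilon\}$ is centred on the real axis and is therefore invariant under conjugation. By Step~2, $\overline{w_j}$ is also a zero lying in the same disc. Uniqueness of the zero in that disc forces $\overline{w_j}=w_j$, so $w_j\in\mathbb R$ and $z_j=P+isw_j\in i\mathbb R$.

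The only delicate point is Step~3: the Rouch\'e discs must be fixed in the rescaled variable $w$ (radius $\varepsilon$ independent of $t$), and the error term must be uniform for $|w|\le\max_j|\lambda_j|+1$. Both are provided by the Taylor expansion and heat-equation identity in the proof of Proposition~\ref{Dynz}(ii), so I expect no real obstruction.
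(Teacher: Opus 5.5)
Your proposal is correct and follows the paper's proof essentially verbatim. The paper also counts the $q$ zeros by Rouch\'e on a fixed disc and uses Proposition~\ref{Dynz}(ii) to place them near $P+i\sqrt{2(T-t)}\,\lambda_j$. It then uses the invariance of the disjoint discs $B_j(t)=D\bigl(P+i\sqrt{2(T-t)}\,\lambda_j,\rho\sqrt{2(T-t)}\bigr)$ under $z\mapsto-\overline z$, together with uniqueness of the zero in each disc, to force the zeros onto $i\mathbb R$. Your rescaling $z=P+isw$ turns that reflection into complex conjugation and those discs into the fixed discs $|w-\lambda_j|<\varepsilon$, so it is the same argument written in normalized coordinates.
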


\begin{proof}
Choose $r_0>0$ so that $P$ is the only zero of $H_T^L$
in $\overline{D(P,r_0)}$.
By Rouch\'e's theorem, $H_t^L$ has exactly $q$ zeros
in this disc, counted with multiplicity, for $t$
sufficiently close to $T$.
Proposition~\ref{Dynz}(ii) shows that, for $t<T$,
these zeros are simple and satisfy
\[
z_j(t)=P+i\sqrt{2(T-t)}\,\lambda_j+O(T-t),
\qquad j=1,\ldots,q,
\]
where $\lambda_1<\cdots<\lambda_q$ are the real roots
of $\mathrm{He}_q$.

Choose $\rho>0$ such that
$2\rho<\min_{j\ne k}|\lambda_j-\lambda_k|$.
For $t<T$ sufficiently close to $T$, the discs
\[
B_j(t):=
D\!\left(P+i\sqrt{2(T-t)}\,\lambda_j,\,
         \rho\sqrt{2(T-t)}\right),
\qquad j=1,\ldots,q,
\]
are disjoint, lie inside $D(P,r_0)$, and each contains
exactly one of these zeros. Indeed, the error $O(T-t)$
is smaller than their radii for $t$ sufficiently close
to $T$.

The symmetries of $H_t^L$ give
$H_t^L(-\overline z)=\overline{H_t^L(z)}$.
Each disc $B_j(t)$ is invariant under
$z\mapsto-\overline z$, because its centre is purely
imaginary. Hence $-\overline{z_j(t)}$ is also a zero
in $B_j(t)$. Uniqueness in that disc implies
$z_j(t)=-\overline{z_j(t)}$, so $z_j(t)\in i\R$.
Reducing $\delta_0$ if necessary proves the assertion.
\end{proof}
	\begin{figure}
	\centering
	\begin{tikzpicture}[>=latex,scale=1.05,every node/.style={font=\small}]
		\draw[->,line width=0.7pt] (-2.4,0) -- (2.5,0) node[right] {$\operatorname{Re}z$};
		\draw[->,line width=0.7pt] (0,-0.45) -- (0,3.4) node[above] {$\operatorname{Im}z$};
		\coordinate (P) at (0,1.75);
		\draw[red!70!black,dashed,line width=0.8pt] (P) circle (0.62);
		\fill[red!70!black] (P) circle (2.5pt);
		\node[right=2pt] at (P) {$P$};
		\node[red!70!black,above left] at (-0.43,2.418) {$D(P,r_0)$};
		\fill[blue!65!black] (0,1.32) circle (2.2pt);
		\fill[blue!65!black] (0,2.18) circle (2.2pt);
		\node[left=3pt] at (-0.350,1.32) {$z_1(t)$};
		\node[left=3pt] at (-0.35,2.18) {$z_2(t)$};
		\draw[->,blue!65!black,line width=0.8pt] (0.22,1.38) -- (0.22,1.62);
		\draw[->,blue!65!black,line width=0.8pt] (0.22,2.12) -- (0.22,1.88);
		\node[right=5pt] at (0.45,2.85) {$z_1(t),z_2(t)\in i\mathbb R$ for $t<T$};
	\end{tikzpicture}
	\caption{The two branches from a double zero on $i\mathbb R$ remain on the imaginary axis for nearby times $t<T$.}
\end{figure}
	\subsection{Zeros on the imaginary axis}
	
	We now consider real zeros of $L$ and their relation to the zeros
	of $H_t^L$ on the imaginary axis. For a real-valued primitive
	character, the functional equation implies that a real zero
	$\beta\in(0,1)$ is paired with $1-\beta$. Under the change of
	variables $s=(1+iz)/2$, this pair corresponds to the zeros
	$\pm i(2\beta-1)$ of $H_0^L$. The classical zero-free-region
	theorems ensure simplicity for a zero in the exceptional region
	near $s=1$, but they do not assert that every real zero lies in
	that region. We therefore state the local motion for a simple
	real zero without making a global claim about its trajectory.
	
	\begin{lemma}[Local motion of a simple real zero]
		Let $L$ be associated with a real-valued primitive Dirichlet
		character, and let $\beta\in(0,1)\setminus\{\tfrac12\}$ be a
		simple real zero of $L$. Set
		\[
		y_\beta:=|1-2\beta|.
		\]
		Then $H_0^L$ has simple zeros at $\pm i y_\beta$. There exists
		$\varepsilon>0$ and smooth zero branches $z_\pm(t)$, defined for
		$0\leq t<\varepsilon$, such that
		\[
		z_\pm(0)=\pm i y_\beta,
		\qquad
		H_t^L(z_\pm(t))=0,
		\qquad
		z_\pm(t)\in i\mathbb R.
		\]
		These zeros remain simple for $0\leq t<\varepsilon$.
	\end{lemma}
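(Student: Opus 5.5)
We first show that $H_0^L$ has simple zeros at $\pm i y_\beta$. With $s=(1+iz)/2$ we have $H_0^L(z)=\xi_L((1+iz)/2)$. The point $s=\beta$ corresponds to $z=i(1-2\beta)$, and $s=1-\beta$ corresponds to $z=-i(1-2\beta)$; together these give $\pm iy_\beta$. Since $\beta\in(0,1)$, the Gamma factor in \eqref{vicho2} is finite and nonzero at $s=\beta$, and so is $(m/\pi)^{(s+a)/2}$. Hence $\xi_L$ has a simple zero at $\beta$ exactly when $L$ does. By Lemma~\ref{lem:rootnumber}, $\xi_L(s)=\xi_L(1-s)$, so $1-\beta$ is also a simple zero. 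The map $z\mapsto(1+iz)/2$ has nonzero derivative, so $(H_0^L)'(\pm iy_\beta)\neq0$. Because $\beta\ne\tfrac12$, we have $y_\beta>0$ and the two points are distinct.

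Next we construct the branches by the implicit function theorem, as in Proposition~\ref{Dynz}(i). The map $(t,z)\mapsto H_t^L(z)$ is jointly smooth (real-analytic in $t$ and holomorphic in $z$). This follows from differentiating under the integral sign in \eqref{H_tz}, with domination by \eqref{eq:Phi_decay_L}. Since $\partial_zH_0^L(\pm iy_\beta)\neq0$, the implicit function theorem gives neighbourhoods $U_\pm$ of $\pm iy_\beta$, a number $\varepsilon>0$, and smooth maps $z_\pm$ with the following property: $z_\pm(t)$ is the unique zero of $H_t^L$ in $U_\pm$ for $|t|<\varepsilon$. We shrink $\varepsilon$ so that $\partial_zH_t^L(z_\pm(t))\neq0$ remains true by continuity. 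This gives simplicity for $0\le t<\varepsilon$.

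It remains to show that the branches stay on $i\R$; this is the only point with real content. We use the symmetry $H_t^L(-\overline z)=\overline{H_t^L(z)}$, which follows from Lemma~\ref{prop:symmetric}. We shrink $U_\pm$ to discs centred at $\pm iy_\beta$. Such discs are invariant under the reflection $z\mapsto-\overline z$, because their centres lie on $i\R$. Then $-\overline{z_\pm(t)}$ is also a zero of $H_t^L$ in $U_\pm$, and by uniqueness $z_\pm(t)=-\overline{z_\pm(t)}$, that is, $z_\pm(t)\in i\R$.

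There is one subtlety. The implicit function theorem gives uniqueness only within the neighbourhood it produces, so the disc radius and $\varepsilon$ must be chosen compatibly. I would handle this directly. First fix a closed disc $\overline{D(\pm iy_\beta,r_0)}$ in which $\pm iy_\beta$ is the only zero of $H_0^L$. By Rouch\'e's theorem there is a $\delta_0>0$ such that $H_t^L$ has exactly one zero in this disc for $|t|<\delta_0$. Then take $\varepsilon\le\delta_0$. This is the same argument used in the bifurcation lemma on the imaginary axis above. Alternatively, one can note that $H_t^L$ is real on $i\R$, because $\Phi_L$ is even and real, and count sign changes of $H_t^L(iy)$ near $y_\beta$ by the intermediate value theorem. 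The reflection argument is cleaner.
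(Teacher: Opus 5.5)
Your proof is correct and follows essentially the same route as the paper. You get the simple zeros $\pm iy_\beta$ from the nonvanishing Gamma factor and the functional equation, the local branches from Proposition~\ref{Dynz}(i) (the implicit function theorem), and place them on $i\mathbb{R}$ via the symmetry $H_t^L(-\overline z)=\overline{H_t^L(z)}$ and local uniqueness; your reflection-invariant disc chosen via Rouch\'e's theorem makes explicit the uniqueness step that the paper's proof leaves implicit.
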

	
	\begin{proof}
		The completed factor multiplying $L(s)$ is holomorphic and
		nonzero for $0<\operatorname{Re}s<1$. Thus the simple zero
		$\beta$ gives a simple zero of $H_0^L$ at
		$z=i(1-2\beta)$ and the functional equation gives the other zero
		at $-z$.
		
		By Proposition~\ref{Dynz}(i), each simple zero continues locally
		as a unique smooth zero branch. The symmetries of $H_t^L$ imply
		that $H_t^L(-\overline z)=\overline{H_t^L(z)}$. Since
		$-\overline{(\pm i y_\beta)}=\pm i y_\beta$, uniqueness of the
		local branch gives $z_\pm(t)=-\overline{z_\pm(t)}$, so
		$z_\pm(t)\in i\mathbb R$. Simplicity persists after reducing
		$\varepsilon$ if necessary.
	\end{proof}
	
	\begin{remark}
		Suppose that, for some $D\in\{3/4,1\}$,
		\[
		H_0^L(x+iy)\ne0
		\qquad
		(x\in\mathbb R,\ |y|>D).
		\]
		If $L(\beta)=0$ for a real $\beta\in(0,1)$, then
		$z=i(1-2\beta)$ is a zero of $H_0^L$, and hence
		\[
		|1-2\beta|\leq D,
		\qquad
		\frac{1-D}{2}\leq\beta\leq\frac{1+D}{2}.
		\]
		Thus $D=3/4$ excludes real zeros with $\beta>7/8$.
		These bounds do not exclude real zeros inside the resulting
		intervals, nor do they imply that there is at most one such zero.
	\end{remark}
	
	\section{Riemann--Siegel approximation}\label{sec:4}
	
	We derive a finite-sum approximation for $H_t^L(x+iy)$ by deforming the
	contour through its saddle point. The decomposition separates the finite
	Dirichlet sum, the leading coefficient $C_0^{(m)}(p)$, the higher Taylor
	coefficients, and the integral remainder. The estimates below track their
	dependence on $m$ in the range used for the upper bound on $\Lambda_L$.
	
	\subsection{Preliminaries}
	
	The approximation below is proved for $|y|\le1$. In applications with an initial strip parameter $D\le1$, the same estimates apply on the smaller range $|y|\le D$. Only the endpoint constants in the remainder estimate need to be adjusted, as recorded after Proposition~\ref{prop:magnitudes}.

	Throughout this section, we assume that $\chi_m$ is a real-valued primitive
	Dirichlet character. In particular, $\Phi_L$ is even on $\R$ by Lemma
	\ref{prop:symmetric}. Since $e^{tu^2}$ is also even, the representation
	\eqref{H_tz} can be written as the cosine transform
	\[
	H_t^L(z)
	=
	2\int_0^\infty e^{tu^2}\Phi_L(u)\cos(zu)\,du.
	\]
	Following \cite[Section~4]{Poly}, we further have the heat-kernel
	representation
	\begin{equation}\label{expH0}
		H_t^L(z)
		=
		\int_{-\infty}^{\infty}
		H_0^L(z-2i\sqrt{t}\,v)\,
		\frac{e^{-v^2}}{\sqrt{\pi}}\,dv
		=
		\int_{-\infty}^{\infty}
		\xi_L\!\left(\frac{1+iz}{2}+\sqrt{t}\,v\right)
		\frac{e^{-v^2}}{\sqrt{\pi}}\,dv,
	\end{equation}
	which follows from the elementary identity
	\[
	e^{tu^2}\cos(zu)
	=
	\int_{\R}
	\cos\!\bigl((z-2i\sqrt{t}\,v)u\bigr)
	\frac{e^{-v^2}}{\sqrt{\pi}}\,dv,
	\qquad z,u\in\Com,
	\]
	by multiplying by $\Phi_L(u)$ and applying Fubini's theorem.
	
	From \cite{Siegel}, the generalized Riemann--Siegel formula reads
	\begin{equation}\label{xi_L}
		\xi_L(s)
		=
		\left(\frac{m}{\pi}\right)^{(s+a)/2}
		\Gamma\!\left(\frac{s+a}{2}\right)L(s)
		=
		\mu^L(s)+\overline{\mu^L(1-\overline{s})},
	\end{equation}
	where
	\[
	\mu^L(s)
	=
	\left(\frac{m}{\pi}\right)^{(s+a)/2}
	\Gamma\!\left(\frac{s+a}{2}\right)\lambda^L(s)
	\text{ and }
	\lambda^L(s)
	=
	\frac{1}{2\pi i}
	\int_{0\swarrow1}
	x^{-s}e^{\frac{i\pi}{m}x^2}W^L(x)\,dx.
	\]
	The notation $0\swarrow1$ stands for any continuous curve in $\Com$
	forming an angle of $\pi/4$ with the real line and crossing it at a point
	strictly between $0$ and $1$. The symbol $N\swarrow N+1$ is interpreted
	analogously. For any nonnegative integer $N$, complex integration gives
	\[
	\lambda^L(s)
	=
	\sum_{n=1}^{N}\frac{\chi_m(n)}{n^s}
	+
	\frac{1}{2\pi i}
	\int_{N\swarrow N+1}
	e^{\frac{i\pi}{m}x^2}x^{-s}W^L(x)\,dx.
	\]
	Following the notation of \cite{Poly}, set
	\[
	R_{0,0}^L(s):=\mu^L(s),
	\qquad
	R_{0,0}^L(s)
	=
	\sum_{n=1}^{N}r_{0,n}^L(s)+R_{0,N}^L(s),
	\]
	where
	\[
	r_{0,n}^L(s)
	=
	\left(\frac{m}{\pi}\right)^{(s+a)/2}
	\Gamma\!\left(\frac{s+a}{2}\right)
	\chi_m(n)n^{-s},
	\]
	and
	\[
	R_{0,N}^L(s)
	=
	\left(\frac{m}{\pi}\right)^{(s+a)/2}
	\Gamma\!\left(\frac{s+a}{2}\right)
	\frac{1}{2\pi i}
	\int_{N\swarrow N+1}
	x^{-s}e^{\frac{i\pi}{m}x^2}W^L(x)\,dx.
	\]
	Combining the functional equation \eqref{xi_L} with the heat-kernel
	representation \eqref{expH0}, we obtain
	\[
	\begin{aligned}
		H_t^L(z)
		={}&
		\sum_{n=1}^{N}
		r_{t,n}^L\!\left(\frac{1+iz}{2}\right)
		+
		\sum_{n=1}^{N}
		\overline{
			r_{t,n}^L\!\left(\frac{1+i\overline z}{2}\right)
		}
		\\
		&+
		R_{t,N}^L\!\left(\frac{1+iz}{2}\right)
		+
		\overline{
			R_{t,N}^L\!\left(\frac{1+i\overline z}{2}\right)
		},
	\end{aligned}
	\]
	for every $t>0$, every $z\in\Com$ that is not purely imaginary, and every
	nonnegative integer $N$. Moreover, by \eqref{expH0},
	\begin{equation}\label{intrR}
		\begin{aligned}
			r_{t,n}^L(s)
			&=
			\int_{-\infty}^{\infty}
			r_{0,n}^L(s+\sqrt{t}\,v)
			\frac{e^{-v^2}}{\sqrt{\pi}}\,dv,
			\\
			R_{t,N}^L(s)
			&=
			\int_{-\infty}^{\infty}
			R_{0,N}^L(s+\sqrt{t}\,v)
			\frac{e^{-v^2}}{\sqrt{\pi}}\,dv.
		\end{aligned}
	\end{equation}
	
	When no confusion arises we drop the superscript $L$ and write
	$r_{t,n}$ and $R_{t,N}$.
	
	\begin{lemma}\label{lem:GaussianBound}
		Fix $m,N,a$ and a compact interval $I\subset\mathbb R\setminus\{0\}$.
		There is a constant $C>0$ such that, for every $\sigma\in\mathbb R$
		and $T\in I$,
		\begin{equation}\label{r0nexpb}
			|r_{0,n}^L(\sigma+iT)|
			\le \exp\!\left(C(1+|\sigma|)\log(2+|\sigma|)\right),
			\qquad 1\le n\le N,
		\end{equation}
		and
		\begin{equation}\label{R0nexpb}
			|R_{0,N}^L(\sigma+iT)|
			\le \exp\!\left(C(1+|\sigma|)\log(2+|\sigma|)\right).
		\end{equation}
	\end{lemma}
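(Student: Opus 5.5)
The plan is to bound the three factors in $r_{0,n}^L(\sigma+iT)$ and $R_{0,N}^L(\sigma+iT)$ separately: the conductor power, the Gamma factor, and either the Dirichlet term or the contour integral. Throughout, $T$ ranges over the compact set $I$, which stays away from $0$. The conductor power satisfies
\[
\bigl|(m/\pi)^{(s+a)/2}\bigr| = (m/\pi)^{(\sigma+a)/2} \le e^{C(1+|\sigma|)}.
\]
For the Dirichlet term, $|\chi_m(n)n^{-s}| \le n^{-\sigma} \le N^{|\sigma|}$ when $1\le n\le N$. Both of these are absorbed into the claimed bound after enlarging $C$.

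For the Gamma factor I write $w=(s+a)/2=(\sigma+a)/2+iT/2$. Since $T\in I$ and $0\notin I$, we have $|\operatorname{Im}w|\ge c_I>0$, so $w$ stays a fixed distance from the poles of $\Gamma$.
\begin{itemize}
\item For $\operatorname{Re}w\ge 1$, Stirling's formula (or simply $|\Gamma(w)|\le\Gamma(\operatorname{Re}w)$) gives $|\Gamma(w)|\le\exp(C(1+|\sigma|)\log(2+|\sigma|))$.
\item For $\operatorname{Re}w<1$, I apply the reflection formula $\Gamma(w)\Gamma(1-w)=\pi/\sin(\pi w)$. Here $|\sin\pi w|\ge\sinh(\pi c_I)>0$ uniformly, and $|\Gamma(1-w)|^{-1}$ is controlled by the same Stirling bound applied to $1/\Gamma$. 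That bound follows from $1/|\Gamma(1-w)|\le \exp(C(1+|w|)\log(2+|w|))$, because $1/\Gamma$ is entire of order one with exactly that growth.
\end{itemize}
Alternatively, I can use the functional recursion $\Gamma(w)=\Gamma(w+k)/\prod_{j<k}(w+j)$ together with $|w+j|\ge c_I$, which gives at most $(1/c_I)^{k}$ with $k\approx|\sigma|$. Either route gives a bound of the form $\exp(C(1+|\sigma|)\log(2+|\sigma|))$ for all $\sigma$, which proves \eqref{r0nexpb}.

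For \eqref{R0nexpb}, the remaining task is to bound $\int_{N\swarrow N+1}x^{-s}e^{i\pi x^2/m}W^L(x)\,dx$. I fix the contour as the line $x=N+\tfrac12+re^{i\pi/4}$, $r\in\R$, possibly with $N=0$ handled by the same line through $1/2$. On this line:
\begin{itemize}
\item $|e^{i\pi x^2/m}| = \exp\!\bigl(-\tfrac{\pi}{m}(r^2+\sqrt2(N+\tfrac12)r)\bigr)$, which is Gaussian decay.
\item $W^L$ is a finite sum of cotangents $\cot(\pi(x-k)/2m)$. The line avoids the poles at integers $k\equiv$ relevant residues, since it crosses the real axis at a half-integer. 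Moreover, away from the real axis $\cot$ tends to $\mp i$. Hence $|W^L(x)|\le C_m$ uniformly on the contour.
\item For $x^{-s}$ with the principal branch, $\arg x\in(-3\pi/4,\pi/4]$ is bounded, so
\[
|x^{-s}| = |x|^{-\sigma}e^{T\arg x} \le C_I\,|x|^{-\sigma}.
\]
\end{itemize}
The main obstacle is the factor $|x|^{-\sigma}$ when $\sigma\to-\infty$, which contributes $\int(1+|r|)^{|\sigma|}e^{-\pi r^2/(2m)}dr$. Bounding this by a Gamma moment, it is at most $C^{|\sigma|}\Gamma((|\sigma|+1)/2)\le\exp(C(1+|\sigma|)\log(2+|\sigma|))$. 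For $\sigma\ge0$ we have $|x|\ge c>0$ on the line, so $|x|^{-\sigma}\le c^{-\sigma}$, which is again absorbed into the bound. Multiplying by the Gamma and conductor bounds already obtained then gives \eqref{R0nexpb}, with $C$ depending only on $m,N,a,I$.
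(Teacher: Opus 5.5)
Your proposal is correct and follows essentially the same route as the paper: Stirling's formula with the reflection formula (or your recursion alternative) for the Gamma factor, crude exponential bounds for $(m/\pi)^{(s+a)/2}$ and $n^{-s}$, and, for $R_{0,N}^L$, the line $x=c+e^{i\pi/4}u$ through a non-integer $c\in(N,N+1)$, on which $W^L$ and $e^{T\arg x}$ are bounded and $e^{i\pi x^2/m}$ decays like a Gaussian. Your split into $\sigma\ge0$ (using a positive lower bound for $|x|$ on the line) and $\sigma<0$ (using a Gaussian moment of order $|\sigma|$) is the paper's argument. You also fill in a few details the paper leaves implicit, such as $|\sin\pi w|\ge\sinh(\pi c_I)$.
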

	
	\begin{proof}
		Stirling's formula, together with the reflection formula on the left
		half-plane, gives the asserted growth for the Gamma factor, uniformly
		for $T\in I$. The factors $(m/\pi)^{(s+a)/2}$ and $n^{-s}$ contribute
		at most an exponential in $|\sigma|$, proving \eqref{r0nexpb}.
		
		For the contour integral defining $R_{0,N}^L$, fix
		$c\in(N,N+1)$ and write $x=c+e^{i\pi/4}u$.
		The line stays a positive distance from zero and avoids the poles
		of $W^L$. The kernel $W^L$ is bounded on it, and
		\[
		|e^{i\pi x^2/m}|\le C e^{-c_0u^2},
		\qquad
		|x^{-\sigma-iT}|\le C|x|^{-\sigma}
		\]
		for some $c_0>0$, after increasing $C$.
		For $\sigma\ge0$, the positive lower bound for $|x|$ gives a bound
		$e^{C(1+\sigma)}$ for the integral.
		For $\sigma<0$, we use $|x|\le C(1+|u|)$ and evaluate the Gaussian
		moment of order $|\sigma|$. We can see that it is at most
		$\exp(C(1+|\sigma|)\log(2+|\sigma|))$.
		Then, multiplication by the Gamma factor proves \eqref{R0nexpb}.
		These estimates also show that the Gaussian in \eqref{intrR}
		dominates the growth on horizontal strips in the $s$-plane.
	\end{proof}
	
	By \eqref{r0nexpb} and \eqref{R0nexpb}, we may shift contours in
	\eqref{intrR} (replacing $v$ by
	$v+\tfrac{\sqrt{t}}2\alpha_L^n$) to obtain
	\begin{equation}\label{rtnchange}
		r_{t,n}^L(s)
		=
		\exp\!\left(-\frac{t}{4}(\alpha_L^n)^2\right)
		\int_{-\infty}^{\infty}
		\exp\!\left(-\sqrt{t}\,v\,\alpha_L^n\right)
		r_{0,n}^L\!\left(
		s+\sqrt{t}\,v+\frac{t}{2}\alpha_L^n
		\right)
		\frac{e^{-v^2}}{\sqrt{\pi}}\,dv,
	\end{equation}
	for any complex number $\alpha_L^n$ such that $\ima(s)$ and
	$\ima(s+\tfrac{t}{2}\alpha_L^n)$ have the same sign. Similarly,
	\begin{equation}\label{Rtnchange}
		R_{t,N}^L(s)
		=
		\exp\!\left(-\frac{t}{4}\beta_N^2\right)
		\int_{-\infty}^{\infty}
		\exp\!\left(-\sqrt{t}\,v\,\beta_N\right)
		R_{0,N}^L\!\left(
		s+\sqrt{t}\,v+\frac{t}{2}\beta_N
		\right)
		\frac{e^{-v^2}}{\sqrt{\pi}}\,dv,
	\end{equation}
	for any complex number $\beta_N$ such that $\ima(s)$ and
	$\ima(s+\tfrac{t}{2}\beta_N)$ have the same sign.
	
	Following the spirit of the saddle-point method, the parameters
	$\alpha_L^n$ and $\beta_N$ will be chosen later so that the integrands in
	\eqref{rtnchange} and \eqref{Rtnchange} are nearly stationary at $v=0$,
	which produces sharper estimates for these terms.
	
	\subsection{Elementary estimates}
	
	Before turning to the proof of Theorem~\ref{EffectiveRSa}, we collect a
	short list of elementary inequalities, mostly extracted from \cite{Poly},
	that allow us to handle the error terms uniformly.
	
	\begin{lemma}\label{elementaryball}
		Let $x>0$ be a real number.
		\begin{enumerate}
			\item If $a>0$ and $b\ge0$ satisfy $x>b/a$, then
			\[
			O_\le\!\left(\frac{a}{x}\right)
			+
			O_\le\!\left(\frac{b}{x^2}\right)
			=
			O_\le\!\left(\frac{a}{x-b/a}\right).
			\]
			More generally, if $a>0$ and $b,c\ge0$ satisfy
			$x>\max\{b/a,\sqrt{c/a}\}$, then
			\[
			O_\le\!\left(\frac{a}{x}\right)
			+
			O_\le\!\left(
			\frac{a\max\{b/a,\sqrt{c/a}\}}{x^2}
			\right)
			+
			O_\le\!\left(\frac{c}{x^3}\right)
			=
			O_\le\!\left(
			\frac{a}{x-\max\{b/a,\sqrt{c/a}\}}
			\right).
			\]
			\item If $x>1$, then
			\[
			\log\!\left(1+O_\le\!\left(\frac1x\right)\right)
			=
			O_\le\!\left(\frac1{x-1}\right),
			\]
			or equivalently
			\[
			1+O_\le\!\left(\frac1x\right)
			=
			\exp\!\left(O_\le\!\left(\frac1{x-1}\right)\right).
			\]
			\item If $x>\tfrac12$,
			\[
			\exp\!\left(O_\le\!\left(\frac1x\right)\right)
			=
			1+O_\le\!\left(\frac1{x-0.5}\right).
			\]
			\item For any $x\ge0$,
			\[
			\exp(O_\le(x))
			=
			1+O_\le(e^x-1).
			\]
			\item If $z\in\mathbb C$ satisfies $|\operatorname{Im}z|\ge1$
			or $\operatorname{Re}z\ge1$, then
			\[
			\Gamma(z)=\sqrt{2\pi}
			\exp\!\left(
			(z-\tfrac12)\log z-z+
			O_\le\!\left(
			\frac1{12(\max\{\operatorname{Re}z,|\operatorname{Im}z|\}-1/2)}
			\right)\right).
			\]
			\item If $a,b>0$, $y\ge0$, $x\ge x_0\ge e^{a/b}$ and $x_0>c\ge0$, then
			\[
			\frac{\log^a|x+iy|}{(x-c)^b}
			\le
			\frac{\log^a|x_0+iy|}{(x_0-c)^b}.
			\]
		\end{enumerate}
	\end{lemma}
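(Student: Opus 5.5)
The six items in Lemma~\ref{elementaryball} are independent elementary inequalities, and I would prove them one at a time. Most follow the corresponding statements in \cite{Poly}.

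For (1), write $O_\le(a/x)+O_\le(b/x^2)=O_\le\bigl(\tfrac{a}{x}(1+\tfrac{b/a}{x})\bigr)$. With $\kappa=b/a<x$, we have $1+\kappa/x\le (1-\kappa/x)^{-1}$, and this gives $a/(x-\kappa)$. For the general version, set $\kappa=\max\{b/a,\sqrt{c/a}\}$, so that $c/a\le\kappa^2$. The sum is then bounded by $\tfrac ax(1+\kappa/x+\kappa^2/x^2)\le \tfrac ax\sum_{j\ge0}(\kappa/x)^j=a/(x-\kappa)$.

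For (2), if $|w|\le 1/x<1$, then $|\log(1+w)|\le\sum_j |w|^j/j\le \sum_{j\ge1}x^{-j}=1/(x-1)$. The equivalent exponential form follows by exponentiating.

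For (3), we have $|e^w-1|\le e^{|w|}-1$. It then suffices to show $e^{1/x}-1\le 1/(x-1/2)$ for $x>1/2$. Putting $u=1/x\in(0,2)$, this reads $e^u\le (1+u/2)/(1-u/2)$. Comparing Taylor coefficients, the right side is $1+u+u^2/2+u^3/4+\dots$, and its coefficients $2^{1-k}$ dominate $1/k!$.

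Item (4) is the inequality $|e^w-1|\le e^{|w|}-1$ itself.

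For (5), I would use Binet's formula for $\log\Gamma(z)-[(z-\tfrac12)\log z-z+\tfrac12\log 2\pi]$. This remainder is $\int_0^\infty(\tfrac12-\tfrac1u+\tfrac1{e^u-1})e^{-zu}\,\tfrac{du}{u}$, or in the Stieltjes form $\int_0^\infty \tfrac{\{u\}-1/2}{u+z}\,du$. Integrating by parts with $P(u)=\int(\{u\}-1/2)$, which satisfies $0\le P\le 1/8$, gives a bound $\tfrac18\int_0^\infty |u+z|^{-2}\,du$. When $\operatorname{Re}z\ge1$, we have $|u+z|\ge u+\operatorname{Re}z$. When $|\operatorname{Im} z|\ge1$, we have $|u+z|\ge$ a quantity bounded below so that the integral is at most $\tfrac{1}{12(\cdot-1/2)}$. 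This is the step I expect to be the main obstacle: getting exactly the constant $1/12$ with the shift $-1/2$ when $\operatorname{Re}z$ may be negative. I would first try to quote the bound as stated in \cite[Lemma~3.1]{Poly} and check that its hypotheses coincide. Otherwise I would handle the half-plane $\operatorname{Re}z<0$ via the reflection formula combined with the Stirling bound at $1-z$, using $|\operatorname{Im} z|\ge1$ to control $\log(1-e^{2\pi iz})$.

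For (6), differentiate $g(x)=a\log\log|x+iy|-b\log(x-c)$ in $x$. Since $\tfrac{d}{dx}\log|x+iy|=x/|x+iy|^2\le 1/x$, we get $g'(x)\le \tfrac{a}{x\log|x+iy|}-\tfrac{b}{x-c}$. This is at most $\tfrac1x\bigl(\tfrac{a}{\log x}-b\bigr)\le0$, because $\log|x+iy|\ge\log x\ge a/b$ and $x-c\le x$. Hence $g$ is nonincreasing on $[x_0,\infty)$, and the inequality follows.
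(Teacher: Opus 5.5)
\textbf{Items (1)--(4) and (6).} Your proofs of these items are correct and are the standard ones; the paper simply refers to \cite[Lemma~5.1]{Poly} for them.

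\textbf{The gap is in item (5).} This is the only part that is new here. Here $\operatorname{Re}z$ may be arbitrarily negative, so the right-half-plane bound $|r_{\mathrm{St}}(z)|\le 1/(12(|z|-0.33))$ that the paper takes from \cite[Lemma~5.1(v)]{Poly} cannot simply be quoted. None of the routes you sketch recovers the constant $1/12$ with the shift $-1/2$.
\begin{itemize}
\item Your Stieltjes estimate $\tfrac18\int_0^\infty|u+z|^{-2}\,du$ is too weak even on the positive axis. At $z=x$ it gives $1/(8x)$, which exceeds $1/(12(x-1/2))$ once $x>3/2$. For $z=-\sigma+iv$ with $\sigma\to\infty$ it tends to $\pi/(8v)$, larger than the target $1/(12(v-1/2))$ by a factor tending to $3\pi/2$.
\item Reflecting to $1-z$ does not close either. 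From $\Gamma(z)\Gamma(1-z)=\pi/\sin\pi z$ one gets $r_{\mathrm{St}}(z)=-r_{\mathrm{St}}(1-z)-\log(1-e^{2\pi iz})+E(z)$, with $E(z)=(z-\tfrac12)\log(1-1/z)+1=-\tfrac{1}{12z^2}+O(|z|^{-3})$. Suppose you only have a modulus bound of size $1/(12|1-z|)$ for $r_{\mathrm{St}}(1-z)$. Then $|E(z)|\approx 1/(12v^2)$ exceeds the available slack, which is only about $1/(24v^2)$ when $v=\operatorname{Im}z$ is large and $\operatorname{Re}z\to0^-$. At $z=-0.01+10i$, for instance, the triangle inequality gives more than $1/114=1/(12(v-\tfrac12))$.
\end{itemize}

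\textbf{How the paper closes it.} The paper avoids the correction term by reflecting to $-z$. For $\operatorname{Re}z<0$ the hypothesis forces $v=|\operatorname{Im}z|\ge1$, and after conjugation $\operatorname{Im}z=v$. Then $\Gamma(z)\Gamma(-z)=-\pi/(z\sin\pi z)$ gives the exact identity $r_{\mathrm{St}}(z)=-r_{\mathrm{St}}(-z)-\log(1-e^{2\pi iz})$, with the logarithm given by its power series. Since $\operatorname{Re}(-z)>0$, this yields $|r_{\mathrm{St}}(z)|\le\frac{1}{12(|z|-0.33)}+\frac{1}{e^{2\pi v}-1}$. The elementary chain $\frac{1}{e^{2\pi v}-1}<\frac{1}{114v^2}<\frac{0.17}{12v^2}\le\frac{1}{12(v-1/2)}-\frac{1}{12(v-0.33)}$ for $v\ge1$ finishes the argument. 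In the right half-plane the quoted bound suffices, because $|z|\ge\max\{\operatorname{Re}z,|\operatorname{Im}z|\}$.

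\textbf{An alternative within your framework.} If you prefer to keep the Stieltjes approach, integrate by parts once more. Take $P(u)=\tfrac12\{u\}(1-\{u\})$, whose mean is $1/12$. One gets $r_{\mathrm{St}}(z)=\frac{1}{12z}+2\int_0^\infty Q(u)(u+z)^{-3}\,du$, where $Q=-B_3(\{u\})/6$, $B_3$ is the third Bernoulli polynomial, and $|Q|\le\sqrt3/216$. Now $\int_0^\infty|u+z|^{-3}\,du\le 2/v^2$ always, and $\le 1/(2(\operatorname{Re}z)^2)$ when $\operatorname{Re}z>0$. Hence the correction stays below $\frac{1}{12(M-1/2)}-\frac{1}{12M}$, with $M=\max\{\operatorname{Re}z,|\operatorname{Im}z|\}$, in both cases. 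This gives a self-contained proof, uniform in $\operatorname{Re}z$, that does not rely on the constant $0.33$ from \cite{Poly}.
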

	
	The estimates (i)--(iv) and (vi) follow as in
	\cite[Lemma~5.1]{Poly}. We record a uniform version of (v), since
	the Gaussian integrals below include arbitrarily negative real parts.
	In the right half-plane, the estimate used in
	\cite[Lemma~5.1(v)]{Poly} bounds the logarithmic Stirling remainder
	by $1/(12(|z|-0.33))$, which implies the stated bound there.
	Define the logarithmic Stirling remainder by
	\[
	r_{\mathrm{St}}(z):=\log\Gamma(z)
	-(z-\tfrac12)\log z+z-\tfrac12\log(2\pi),
	\]
	where $\log z$ is principal and $\log\Gamma$ is the analytic branch on
	$\mathbb C\setminus(-\infty,0]$ that is real on the positive real axis.
	For $\operatorname{Re}z<0$, conjugation reduces to
	$v=\operatorname{Im}z\ge1$. The reflection formula gives
	\[
	r_{\mathrm{St}}(z)=-r_{\mathrm{St}}(-z)-\log(1-e^{2\pi iz}).
	\]
	Here the last logarithm is given by its convergent power series. Consequently,
	\[
	|r_{\mathrm{St}}(z)|\le
	\frac1{12(|z|-0.33)}+\frac1{e^{2\pi v}-1}
	\le\frac1{12(v-1/2)}.
	\]
	For the last inequality, use
	\[
	\frac1{e^{2\pi v}-1}<\frac1{114v^2}
	<\frac{0.17}{12v^2}
	\le\frac1{12(v-1/2)}-\frac1{12(v-0.33)}.
	\]
	Indeed, $v^2/(e^{6v}-1)$ decreases for $v\ge1$ and $e^6>115$.
	In particular, for every real $\sigma$ and $T\ge2$,
	\begin{equation}\label{eq:Stirling_uniform_horizontal}
		\left(\frac m\pi\right)^{(\sigma+iT+a)/2}
		\Gamma\!\left(\frac{\sigma+iT+a}{2}\right)
		=M_0^L(\sigma+iT)
		\exp\!\left(O_\le\!\left(\frac1{6(T-1)}\right)\right).
	\end{equation}

	\begin{theorem}[Effective approximation]
		\label{EffectiveRSa}
		Let $0\le t\le\tfrac12$ and let $x,y\in\R$ satisfy
		$x\ge480$ and $|y|\le1$. Then
		\[
		\frac{H_t^L(x+iy)}{B_t^L(x+iy)}
		=
		f_t^L(x+iy)
		+
		O_\le(e_A+e_B+e_{C,0}),
		\]
		where $f_t^L(x+iy)$ is the explicit main term defined in
		\eqref{ftL_def}. The finite Dirichlet sums are truncated at $M=\lfloor\delta\rfloor,$
		where, in the application to $H_t^L(x+iy)$,
		\[
		T'=\frac{x}{2}+\frac{\pi t}{8},
		\qquad
		\delta=\sqrt{\frac{mT'}{2\pi}}.
		\]
	\end{theorem}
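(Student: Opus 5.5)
The plan follows the Polymath proof of the effective approximation \cite[Theorem~5.3]{Poly}, with $\zeta$ replaced by $L(s,\chi_m)$. The starting point is the decomposition after \eqref{intrR}:
\[
H_t^L(z)=\sum_{n\le N} r_{t,n}^L(s)+\sum_{n\le N}\overline{r_{t,n}^L(1-\bar s)}+R_{t,N}^L(s)+\overline{R_{t,N}^L(1-\bar s)},
\]
where $s=(1+iz)/2$ and $z=x+iy$. The truncation is $N=M=\lfloor\delta\rfloor$. Each summand is rewritten via the contour-shifted representations \eqref{rtnchange} and \eqref{Rtnchange}.

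\textbf{Dirichlet sums (error $e_A$, $e_B$).} For $r_{t,n}^L$ the saddle parameter is $\alpha_L^n=\alpha_L(s)-\log n$, with $\alpha_L$ from \eqref{alpha_L}. Expanding $\log r_{0,n}^L(s+\sqrt t v+\tfrac t2\alpha_L^n)$ to second order in the shift makes the linear term in $v$ cancel. The Stirling estimate \eqref{eq:Stirling_uniform_horizontal} gives a remainder $O_\le(1/(6(T-1)))$ uniformly in the real part, and the quadratic term is controlled by $\alpha_L'$. Integrating the Gaussian then yields
\[
r_{t,n}^L(s)=M_t^L(s)\,\chi_m(n)\,n^{-s-\frac t4\alpha_L(s)+\frac t4\log n}\,\bigl(1+\text{explicit error}\bigr).
\]
The explicit error is simplified using Lemma~\ref{elementaryball}(i)--(iv). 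Dividing by $B_t^L(x+iy)=M_t^L(\tfrac{1+y-ix}{2})$ produces the $b_n$-weighted first sum in $f_t^L$. The reflected sum is handled the same way; it carries the extra factor $\gamma=M_t^L(1-\bar s)/M_t^L(s)$, which is evaluated exactly and placed in $f_t^L$. Summing the per-$n$ relative errors against $|\chi_m(n)|n^{-\sigma_\pm}$ gives $e_A$ and $e_B$. In these bounds the factor $\varphi(m)/m$ from $|\chi_m(n)|\le 1$ on residues coprime to $m$ can be kept or dropped.

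\textbf{Remainder (error $e_{C,0}$).} For $R_{t,N}^L$ we take $\beta_N=\alpha_L(s)$. Following Arias de Reyna \cite{AR}, the inner contour passes through the saddle point $x_0=\sqrt{mT'/(2\pi)}$ of $x^{-s}e^{i\pi x^2/m}$, with $T'=x/2+\pi t/8$ the shifted height. The local variable is $p=1-(\delta-M)$. Expanding $x^{-s}$ around the saddle and integrating term by term against $W^L$ gives the leading term $U\delta^{-\sigma}C_0^{(m)}(p)$, where $C_0^{(m)}(p)=e^{-i\pi/8}\mathcal J_m(\delta)$. The higher coefficients $C_j^{(m)}$ are taken to order $J(u)$, and an integral remainder is left over. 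The leading term is absorbed into $f_t^L$. The Taylor tail and the remainder are bounded using:
\begin{itemize}
\item the uniform bound $|C_0^{(m)}|\le B_m^\star$, via the decreasing majorant $\Omega_m$ for $|W^L|$ on shifted lines and the rearrangement argument;
\item Cauchy estimates for $C_j^{(m)}$ with $\varphi(m)/\sqrt m$ made explicit;
\item the AR constants.
\end{itemize}
These bounds are then integrated against the Gaussian in $v$. Because the real part $u=\sigma+\sqrt t v$ ranges over all of $\R$, the order $J(u)$ must be chosen depending on $u$, and Lemma~\ref{lem:GaussianBound} is what controls large $|v|$. The resulting bound is monotone in $x$ by Lemma~\ref{elementaryball}(vi), which lets the final constants be evaluated at $x=480$, $|y|=1$, $t=1/2$.

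\textbf{Main obstacle.} I expect the remainder step to be the hardest. It needs an estimate uniform in $p$ near the resonant points $\rho(p)=1$, where the quotient formula for $\mathcal J_{m,M}$ degenerates. It also needs the Gaussian average over arbitrarily negative real parts, without losing the factor $\delta^{-\sigma}$. My first approach is to bound $|\mathcal J_m|$ directly from the contour integral, never dividing by $1-\rho$. I would then split the $v$-integral at $|v|\asymp\sqrt{\log x}$: the inner piece uses the Taylor expansion with fixed $J$, and the outer piece uses the crude growth bound \eqref{R0nexpb} together with Gaussian decay.
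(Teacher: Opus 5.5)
Your overall architecture matches the paper's. For the Dirichlet terms you use the shift $\alpha_L^n=\alpha_L(s)-\log n$ with the horizontally uniform Stirling bound \eqref{eq:Stirling_uniform_horizontal}. For the remainder you use the saddle-point expansion through $\delta$, with $C_0^{(m)}=e^{-i\pi/8}\mathcal J_m(\delta)$ bounded directly on the contour without dividing by $1-\rho$. That is exactly how the paper obtains $\mathcal B_m^\star$, so the resonances cause no trouble. However, two steps do not deliver the stated theorem.

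\textbf{The main term.} The function $f_t^L$ in \eqref{ftL_def} consists only of the two $b_n$-weighted Dirichlet sums and contains no $C_0$ term. Absorbing $U\delta^{-\sigma}C_0^{(m)}(p)$ into the main term therefore proves a different approximation. In the paper there are two leading contributions, from $\widehat R_{t,N}^L(s_-)$ and from the conjugated $\widehat R_{t,N}^L(\overline{s_+})$; no reflection identity relates these two contour integrals. Each is bounded in modulus by $e^{t\pi^2/64}|M_0^L(iT')|(\mathcal B_m^\star+\widetilde\varepsilon)$ using Proposition~\ref{prop:RtN_rigorous}. That is why $e_{C,0}$ in Proposition~\ref{prop:eC} contains $2\mathcal B_m^\star$, and the factor $2$ cannot be dropped. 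Your bound $|C_0^{(m)}|\le\mathcal B_m^\star$ is exactly what this requires, so the repair is easy, but it must be made.

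Two bookkeeping slips belong here as well. With $s=(1+iz)/2$ and $B_t^L=M_t^L(1-s)$, the factor $\gamma^L=M_t^L(s)/M_t^L(1-s)$ multiplies $\sum_n r_{t,n}^L(s)$, and it is the reflected sum that gives the unweighted one; your $\gamma$ has modulus $1/|\gamma^L|$. Also, the Dirichlet exponent is $-s-\tfrac t2\alpha_L(s)+\tfrac t4\log n$, not $-s-\tfrac t4\alpha_L(s)+\tfrac t4\log n$.

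\textbf{Large $|v|$.} More seriously, your control of the region of large $|v|$ is not effective. The bound \eqref{R0nexpb} of Lemma~\ref{lem:GaussianBound} has an unspecified constant depending on $N$ and on a compact range of $T$, and it is an absolute bound. What must be estimated is $R_{0,N}^L(u+iT')$ relative to $|M_0^L(iT')|\delta^{-u}$, a quantity exponentially small in $T'$. That lemma only justifies the contour translations. It cannot produce an explicit $O_\le$ term uniform in $x\ge480$, so after your cut at $|v|\asymp\sqrt{\log x}$ the outer piece is not bounded in the required sense.

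The paper does not cut the integral. It takes $J(u)=1$ for $u\ge0$ and $J(u)=\max\{\lfloor-u\rfloor+3,L\}$ for $u<0$, with $q=\delta b_1$ and $L=\lfloor q^2\rfloor$. Then \eqref{eq:Cjk_common_bound} and Theorem~\ref{thm:RSJk_bound}, which is valid for every real $\sigma$ with $J+\sigma\ge1$, apply on the whole line. The prefactor satisfies $|\eta_v|\le(u^2/2+5/12)/(T'-1)$. The terms with $j\le L+1$ sum geometrically to at most $1/(q-3/2)$, since terms two indices apart have ratio $(j+1)/(2q^2)\le\tfrac12$. The terms with $j\ge L+2$ occur only for $u\le-28$, and the Gaussian $e^{-(u-\sigma)^2/t}$ kills them explicitly. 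You need this device, or an equivalent explicit normalized bound valid for all real $u$.

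Finally, the shift for $R_{t,N}^L$ must have imaginary part exactly $\pi/4$; the paper takes $\beta_N=i\pi/4$. With $\beta_N=\alpha_L(s)$ the height becomes $T+\tfrac t2\operatorname{Im}\alpha_L(s)\ne T'$, and $\lfloor\delta\rfloor$ can then disagree with the stated $M$. Your choice $N=M$, instead of $N\in m\mathbb Z$ together with the residues $N<n\le M$, is harmless.
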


The main term is defined in \eqref{ftL_def}, and the error
terms are specified in Propositions~\ref{prop:eA},
\ref{prop:eB} and \ref{prop:eC}. We first estimate the
Dirichlet terms and the integral remainder, then combine them
in Section~\ref{subsec:setup_sym}.

		\begin{figure}
			\centering
			\includegraphics[width=0.8\linewidth]{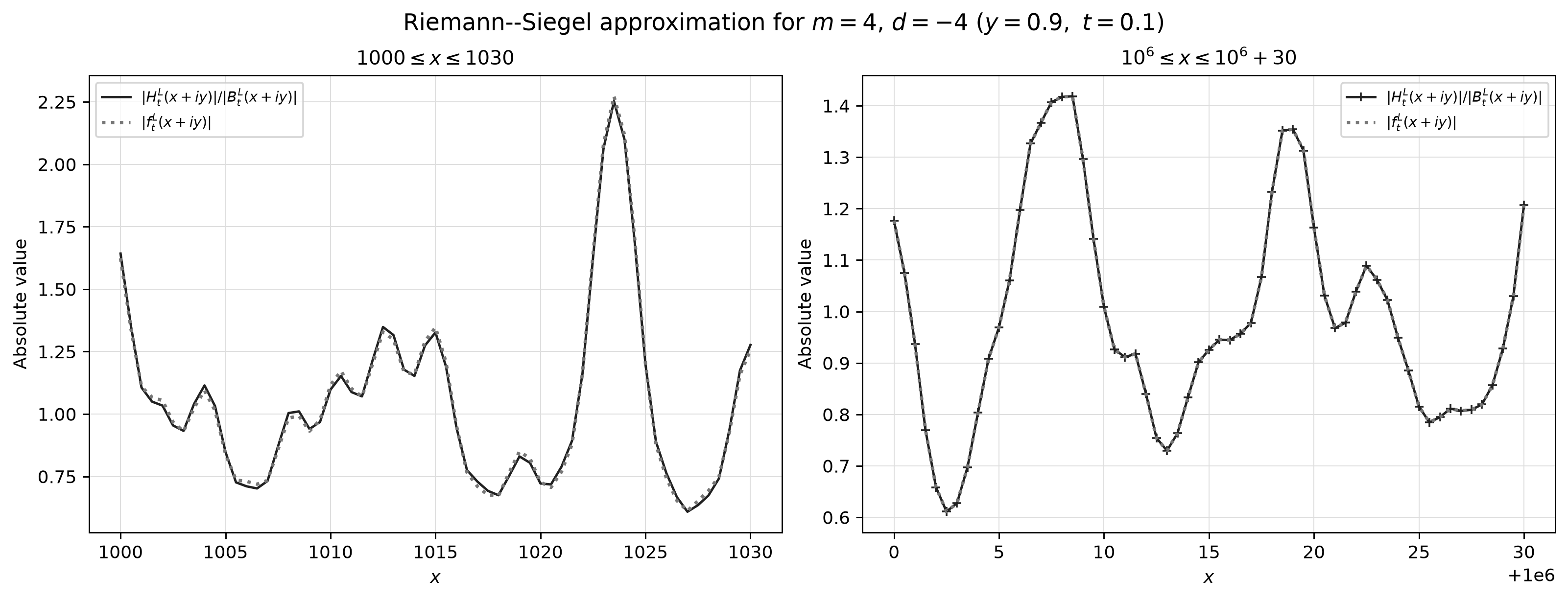}
			\caption{Comparison of $|f_t^L (x + iy)|$ and $|H_t^L (x + iy)|/|B_t^L (x + iy)|$ for $m=4$, $y=0.9$, and $t=0.1$, on $1000\leq x\leq1030$ (left) and $10^6\leq x\leq10^6+30$ (right).}
		\end{figure}

		\begin{figure}
			\centering
			\includegraphics[width=0.8\linewidth]{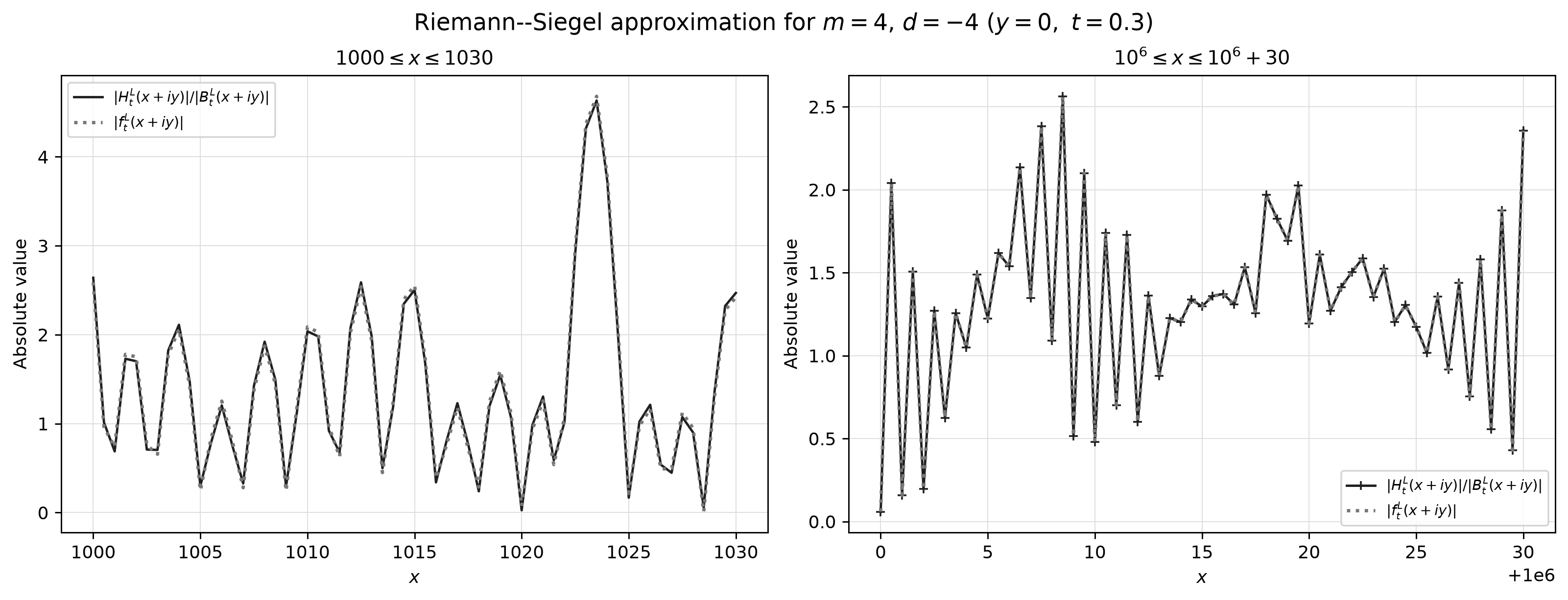}
			\caption{Comparison of $|f_t^L (x + iy)|$ and $|H_t^L (x + iy)|/|B_t^L (x + iy)|$ for $m=4$, $y=0$, and $t=0.3$, on $1000\leq x\leq1030$ (left) and $10^6\leq x\leq10^6+30$ (right).}
		\end{figure}
	
	\subsection{Proof of Theorem~\ref{EffectiveRSa}}
	
	We estimate $r_{t,n}^L$ and $R_{t,N}^L$ separately and then
	combine them in the symmetric decomposition.
	
	\subsubsection{The Dirichlet terms}
	
	By \eqref{alpha_L},
	\[
	\alpha_L'(s)
	=
	\frac{1}{2(s+a)}
	+
	\frac{1}{2(s+a)^2}.
	\]
	If $\ima(s)>1$, then
	\[
	\alpha_L'(s)
	=
	O_\le\!\left(\frac{1}{2\ima(s+a)}\right)
	+
	O_\le\!\left(\frac{1}{2\ima(s+a)^2}\right),
	\]
	and Lemma~\ref{elementaryball}(i) yields
	\begin{equation}\label{alphapl}
		\alpha_L'(s)
		=
		O_\le\!\left(
		\frac{1}{2(\ima(s+a)-1)}
		\right)
		=
		O_\le\!\left(
		\frac{1}{2(\ima(s)-1)}
		\right).
	\end{equation}
	
	Recall
	\[
	b_n^{t,L}
	=
	\exp\!\left(\frac{t}{4}\log^2n\right).
	\]
	\begin{lemma}\label{lem:rtn_estimate}
		Let $\sigma\in\R$, $T>2$, $n\in\N$, and $0<t\le\tfrac12$. Then
		\[
		r_{t,n}^L(\sigma+iT)
		=
		M_t^L(\sigma+iT)
		\frac{b_n^{t,L}\chi(n)}
		{n^{\sigma+iT+\frac t2\alpha_L(\sigma+iT)}}
		\left(1+O_\le(\varepsilon_{t,n}(\sigma+iT))\right),
		\]
		where
		\[
		\varepsilon_{t,n}(\sigma+iT)
		=
		\exp\!\left(
		\frac{
			\frac{t^2}{8}
			|\alpha_L(\sigma+iT)-\log(n)|^2
			+\frac t4+\frac12
		}{
			T-1.25
		}
		\right)-1.
		\]
	\end{lemma}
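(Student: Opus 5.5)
We follow the proof of the analogous estimate in \cite[Proposition~6.1]{Poly}. We start from the shifted representation \eqref{rtnchange} with the choice $\alpha_L^n:=\alpha_L(s)-\log n$, where $s=\sigma+iT$. This choice makes the exponent in the integrand stationary to first order at $v=0$. The admissibility condition holds: since $T>2$, $\ima\alpha_L(s)=O(1)$ is small, and $t\le\tfrac12$, the imaginary part of $s+\tfrac t2\alpha_L^n$ stays positive, indeed at least $T-\tfrac t2\cdot\tfrac\pi4>1$.

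For the shifted point $s'=s+\sqrt t\,v+\tfrac t2\alpha_L^n$ we have $\ima s'\ge T-0.25$ uniformly in $v$. So we may apply \eqref{eq:Stirling_uniform_horizontal} (with $T$ replaced by $\ima s'$) to write
\[
r_{0,n}^L(s')=M_0^L(s')\,\chi_m(n)\,n^{-s'}\exp\bigl(O_\le(1/(6(T-1.25)))\bigr).
\]
This is the point where the uniform Stirling bound for arbitrarily negative real parts is needed, since $\re s'$ runs over all of $\R$.

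Next we Taylor expand $\log M_0^L(s')$ about $s$ with integral remainder, using $(\log M_0^L)'=\alpha_L$. Set $h=\sqrt t\,v+\tfrac t2\alpha_L^n$. Then
\[
\log M_0^L(s')-h\log n=\log M_0^L(s)-s'\log n+h\,\alpha_L(s)+\tfrac12h^2\,O_\le\Bigl(\sup\bigl|\alpha_L'\bigr|\Bigr).
\]
By \eqref{alphapl} the supremum of $|\alpha_L'|$ along the segment is at most $1/(2(T-1.25))$. Inserting this into \eqref{rtnchange}, the linear terms in $v$ cancel against $\exp(-\sqrt t\,v\,\alpha_L^n)$. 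The constant terms combine as
\[
-\tfrac t4(\alpha_L^n)^2+\tfrac t2\alpha_L^n(\alpha_L(s)-\log n)=\tfrac t4(\alpha_L(s)-\log n)^2 .
\]
Expanding this square gives $\tfrac t4\alpha_L(s)^2-\tfrac t2\alpha_L(s)\log n+\tfrac t4\log^2 n$. These three pieces produce, respectively, $M_t^L(s)/M_0^L(s)$ from \eqref{MtL}, the factor $n^{-\frac t2\alpha_L(s)}$, and $b_n^{t,L}$. What remains is the Gaussian average
\[
\int\exp\Bigl(O_\le\Bigl(\frac{|h|^2/4+1/6}{T-1.25}\Bigr)\Bigr)\frac{e^{-v^2}}{\sqrt\pi}\,dv .
\]

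The main obstacle is bounding this average by $1+\varepsilon_{t,n}$ with exactly the stated constants. We use $|h|^2\le 2tv^2+\tfrac{t^2}{2}|\alpha_L^n|^2$ and Lemma~\ref{elementaryball}(iv). The $v^2$ part gives $\int e^{tv^2/(2(T-1.25))}e^{-v^2}\,dv/\sqrt\pi=(1-t/(2(T-1.25)))^{-1/2}$. Since $T>2$ and $t\le\tfrac12$, this is bounded by $\exp\bigl((t/4)/(T-1.25)\bigr)$ up to absorbable slack. The $|\alpha_L^n|^2$ part contributes $\tfrac{t^2}{8}|\alpha_L(s)-\log n|^2/(T-1.25)$. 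The Stirling errors, which enter with factor at most $2\cdot\tfrac16$, together with the slack are absorbed into the constant $\tfrac12$. Finally, exchanging the $O_\le$ with the integral is justified by the growth bound \eqref{r0nexpb}, which also justifies the contour shift.
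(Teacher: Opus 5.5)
Your argument has the same structure as the paper's proof. You use the same choice $\alpha_L^n=\alpha_L(s)-\log n$ in \eqref{rtnchange}, the uniform Stirling estimate \eqref{eq:Stirling_uniform_horizontal}, a second-order Taylor expansion of $\log M_0^L$ controlled by \eqref{alphapl}, the bound $|h|^2\le 2tv^2+\frac{t^2}{2}|\alpha_L^n|^2$, and the explicit Gaussian integral. Your bookkeeping differs only cosmetically. You put $T-1.25$ in every denominator from the outset and absorb the excess of $(1-z)^{-1/2}$ over $e^{t/(4(T-1.25))}$ into the constant, with $z=\frac{t}{2(T-1.25)}<\frac13$. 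This closes: there is only one Gamma factor, but even with your count $2\cdot\frac16$ one has $\frac13+\frac t4\cdot\frac{z}{1-z}\le\frac13+\frac1{16}<\frac12$. The paper instead works with $T-1$ and uses $(1-\frac{t}{2(T-1)})^{-1/2}\le\exp(\frac{t}{4(T-5/4)})$ at the end. Separately, the left side of your Taylor display should read $\log M_0^L(s')-s'\log n$.

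The step that does not hold as written is the lower bound on $\operatorname{Im}s'$. You derive $\operatorname{Im}(s+\frac t2\alpha_L^n)\ge T-\frac t2\cdot\frac\pi4$ from ``$\operatorname{Im}\alpha_L(s)=O(1)$ is small'', i.e.\ implicitly from $|\operatorname{Im}\alpha_L(s)|\le\pi/4$. But $\operatorname{Im}\alpha_L(\sigma+iT)=\frac12\arg(s+a)+\frac{T}{2|s+a|^2}$ exceeds $\pi/4$ for every $\sigma<-a$ and tends to $\pi/2$ as $\sigma\to-\infty$, and the lemma is asserted for all real $\sigma$. An honest absolute bound only gives roughly $\operatorname{Im}s'\ge T-0.46$. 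That would put $T-1.46$ in your denominators, which cannot be absorbed, because the numerator contains the unbounded term $\frac{t^2}{8}|\alpha_L-\log n|^2$. Moreover, even your bound $\operatorname{Im}s'\ge T-0.25$ does not place $s'$ in the range $\operatorname{Im}\ge 2$ where \eqref{eq:Stirling_uniform_horizontal} is proved, once $2<T<2.25$.

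The missing observation is the sign. Both terms in the formula above are positive, so $\operatorname{Im}\alpha_L^n=\operatorname{Im}\alpha_L(s)>0$ and $\operatorname{Im}s'=T+\frac t2\operatorname{Im}\alpha_L^n\ge T>2$ for every $v$. This is exactly what the paper uses. It gives the admissibility of the shift in \eqref{rtnchange}, and it puts the whole segment from $s$ to $s'$ in the range of both \eqref{alphapl} and the Stirling estimate. With it, the rest of your argument goes through unchanged.
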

	
	\begin{proof}
		By Lemma~\ref{elementaryball}(v),
		\[
		r_{0,n}^L(s)
		=
		M_0^L(s)\chi(n)n^{-s}
		\exp\!\left(
		O_\le\!\left(
		\frac{1}{6(\ima(s)-1)}
		\right)
		\right)
		\]
		whenever $\ima(s)>2$.
		
		Set
		\[
		\alpha_L^n
		=
		\alpha_L(\sigma+iT)-\log(n).
		\]
		This is exactly the logarithmic derivative of
		$M_0^L(s)n^{-s}$ at $s=\sigma+iT$.
		
		For $T\ge1$,
		\[
		\ima(\alpha_L^n)
		=
		\frac{T}{2((\sigma+a)^2+T^2)}
		+
		\frac12
		\arg\!\left(\frac{m(s+a)}{2\pi}\right)
		\ge0.
		\]
		In particular,
		$\sigma+iT$ and
		$\sigma+iT+\tfrac{t}{2}\alpha_L^n$ have imaginary parts of the same sign.
		Applying \eqref{rtnchange},
		\begin{equation}\label{5p9}
			\begin{aligned}
				r_{t,n}^L(\sigma+iT)
				={}&
				\exp\!\left(-\frac t4(\alpha_L^n)^2\right)
				\int_{-\infty}^{\infty}
				\exp\!\left(-\sqrt t\,v\,\alpha_L^n\right)
				\\
				&\times
				M_0^L\!\left(
				\sigma+iT+\sqrt t\,v+\frac t2\alpha_L^n
				\right)
				\\
				&\times
				\exp\!\left(
				-\left(
				\sigma+iT+\sqrt t\,v+\frac t2\alpha_L^n
				\right)\log(n)
				\right)\chi(n)
				\\
				&\times
				\exp\!\left(
				O_\le\!\left(
				\frac{
					1
				}{
					6(T-1)
				}
				\right)
				\right)
				\frac{e^{-v^2}}{\sqrt{\pi}}\,dv.
			\end{aligned}
		\end{equation}
		
		Since $\operatorname{Im}\alpha_L^n\ge0$, the shifted argument
		has imaginary part at least $T$. Thus
		\eqref{eq:Stirling_uniform_horizontal} justifies the uniform
		error $1/(6(T-1))$ used in \eqref{5p9}.
		
		By \eqref{alphapl},
		\[
		\alpha_L'(s)
		=
		O_\le\!\left(
		\frac{1}{2(\ima(s)-1)}
		\right)
		\]
		for every $s$ on the line segment joining
		$\sigma+iT$ and
		$\sigma+iT+\sqrt t\,v+\tfrac t2\alpha_L^n$.
		
		Applying Taylor's theorem with remainder to a branch of
		$\log M_0^L$, we obtain
		\[
		\begin{aligned}
			M_0^L\!\left(
			\sigma+iT+\sqrt t\,v+\frac t2\alpha_L^n
			\right)
			={}&
			M_0^L(\sigma+iT)
			\\
			&\times
			\exp\!\left(
			\alpha_L(\sigma+iT)
			\left(
			\sqrt t\,v+\frac t2\alpha_L^n
			\right)
			\right)
			\\
			&\times
			\exp\!\left(
			O_\le\!\left(
			\frac{
				|\sqrt t\,v+\frac t2\alpha_L^n|^2
			}{
				4(T-1)
			}
			\right)
			\right).
		\end{aligned}
		\]
		Combining these estimates, using $
		\alpha_L(\sigma+iT)
		=
		\alpha_L^n+\log(n)$		and
		$|\sqrt t\,v+\tfrac t2\alpha_L^n|^2
		\le
		2tv^2+\frac{t^2}{2}|\alpha_L^n|^2$, 
		we conclude that
		\[
		\begin{aligned}
			r_{t,n}^L(\sigma+iT)
			={}&
			M_0^L(\sigma+iT)
			\exp\!\left(
			\frac t4(\alpha_L^n)^2-(\sigma+iT)\log(n)
			\right)\chi(n)
			\\
			&\times
			\int_{-\infty}^{\infty}
			\exp\!\left(
			O_\le\!\left(
			\frac{
				\frac{tv^2}{2}
				+
				\frac{t^2}{8}|\alpha_L^n|^2
				+\frac12
			}{
				T-1
			}
			\right)
			\right)
			\frac{e^{-v^2}}{\sqrt{\pi}}\,dv.
		\end{aligned}
		\]
		The prefactor simplifies to
		\[
		M_t^L(\sigma+iT)
		\frac{b_n^{t,L}\chi(n)}
		{n^{\sigma+iT+\frac t2\alpha_L(\sigma+iT)}}.
		\]
		It remains to bound the Gaussian integral. With
$A=t^2|\alpha_L^n|^2/8+1/2$, its positive majorant is
\[
\int_{\R}\exp\left(\frac{tv^2/2+A}{T-1}\right)
\frac{e^{-v^2}}{\sqrt\pi}\,dv
=e^{A/(T-1)}\left(1-\frac{t}{2(T-1)}\right)^{-1/2}.
\]
For $z=t/(2(T-1))$, the inequality
$-\log(1-z)\le z/(1-z)$ and $t\le1/2$ give
\[
\left(1-\frac{t}{2(T-1)}\right)^{-1/2}
\le\exp\left(\frac{t}{4(T-1-t/2)}\right)
\le\exp\left(\frac{t}{4(T-5/4)}\right).
\]
Since $A\ge0$, the majorant is at most
$\exp((A+t/4)/(T-5/4))$. Subtracting one, using
Lemma~\ref{elementaryball}(iv), proves the stated error bound.
\end{proof}
	
	\subsubsection{The integral remainder}
	
	In \cite{Poly}, estimates of Arias de Reyna \cite{AR} are used for the
	Riemann--Siegel integral involving the kernel
	\[
	\frac{e^{i\pi w^2}}{e^{i\pi w}-e^{-i\pi w}}.
	\]
	We need an analogous description for the $L$-function kernel
	\[
	\frac{1}{2\pi i}
	\int_{N\swarrow N+1}
	e^{\frac{i\pi}{m}x^2}x^{-s}W^L(x)\,dx,
	\]
	where
	\begin{equation}\label{WL}
		W^L(x)=
		\frac{\pi}{2m}
		\sum_{k=1}^{2m}
		\chi_m(k)e^{-i\pi k^2/m}
		\cot\left(\frac{\pi(x-k)}{2m}\right).
	\end{equation}
	
	\begin{prop}\label{prop:RtN_decomp}
		Let $\sigma\in\mathbb R$, $T>0$, and let $J$ be a nonnegative integer.
		Set		$s=\sigma+iT$, $a_{\rm sp}		=
		\sqrt{\frac{T}{2\pi m}}$, $	\delta
		=
		m a_{\rm sp}$, $N=
		m\lfloor a_{\rm sp}\rfloor$, $K	=
		\lfloor\delta-N\rfloor$, $M=N+K$, $		U
		=
		\exp\!\left\{
		-i\left(
		T\log\delta
		-
		\pi\frac{\delta^2}{m}
		-
		\frac{\pi}{8}
		\right)
		\right\}
		$, and 	$p=
		m-2(\delta-N)$.
		Then
		\[
		0\le K\le m-1,
		\qquad
		N\le M<N+m,
		\qquad
		p\in(-m,m].
		\]
		Moreover, we have the expansion
		\begin{equation}\label{deco_int}
			\ba	\frac{1}{2\pi i}
			& \int_{N\swarrow N+1}
			e^{\frac{i\pi}{m}x^2}x^{-s}W^L(x)\,dx
			\\
			&\quad =
			\sum_{n=N+1}^{M}\frac{\chi_m(n)}{n^s}
			+
			U\delta^{-\sigma}
			\left(
			\sum_{j=0}^{J}
			\frac{C_j^{(m)}(p,\sigma)}{\delta^j}
			+
			RS_J^{(m)}(s)
			\right).
			\ea 
		\end{equation}
		At integral $\delta$, this expansion is interpreted by continuation from the branch with $M=\lfloor\delta\rfloor$. The convention for the two lateral values is specified in the subsection on $C_0^{(m)}$ below.
	\end{prop}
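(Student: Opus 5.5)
The inequalities $0\le K\le m-1$, $N\le M<N+m$ and $p\in(-m,m]$ are immediate from the definitions, so I will dispose of them first. Writing $\delta=ma_{\rm sp}$ and $N=m\lfloor a_{\rm sp}\rfloor$, we have $0\le \delta-N=m(a_{\rm sp}-\lfloor a_{\rm sp}\rfloor)<m$. Hence $K=\lfloor\delta-N\rfloor\in\{0,\dots,m-1\}$, $M=N+K\in[N,N+m)$, and $p=m-2(\delta-N)\in(-m,m]$.

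The expansion \eqref{deco_int} I would prove by following the scheme of Arias de Reyna \cite{AR}, with the cotangent sum \eqref{WL} in place of the zeta kernel. The contour runs from $N\swarrow N+1$ to $M\swarrow M+1$.

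\textbf{Pole crossings.} First I will record the poles and residues of $W^L$. The function $\cot(\pi(x-k)/(2m))$ has simple poles at $x\equiv k \pmod{2m}$ with residue $2m/\pi$. So $W^L$ has a simple pole at each integer $n$ with residue $\chi_m(n)e^{-i\pi n^2/m}$; here the $2m$-periodicity of the coefficient $\chi_m(k)e^{-i\pi k^2/m}$ follows from $\chi_m$ being $m$-periodic and $e^{-i\pi(k+m)^2/m}=(-1)^m e^{-i\pi k^2/m}e^{-2\pi i k}$, which should be checked carefully together with the parity of $m$. Consequently the integrand $e^{i\pi x^2/m}x^{-s}W^L(x)$ has residue $\chi_m(n)n^{-s}$ at $x=n$. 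Moving the line $N\swarrow N+1$ to $M\swarrow M+1$ crosses the poles $n=N+1,\dots,M$, and the residue theorem produces exactly $\sum_{N<n\le M}\chi_m(n)n^{-s}$. The horizontal connecting pieces vanish because $e^{i\pi x^2/m}$ decays in the directions $\pm e^{i\pi/4}$ and $W^L$ is bounded away from its poles.

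\textbf{Saddle point and Taylor expansion.} Next, on the line through the saddle, I will write $x^{-s}e^{i\pi x^2/m}=\exp(-s\log x+i\pi x^2/m)$. The phase is stationary where $-iT/x+2\pi i x/m=0$, that is at $x=\delta=\sqrt{mT/(2\pi)}$, consistent with the definitions. I will substitute $x=\delta+w$, take the line $M\swarrow M+1$ through $\delta$, and factor out the value at the saddle, $\delta^{-s}e^{i\pi\delta^2/m}$. Together with the $e^{-i\pi/8}$ coming from the Gaussian (Fresnel) normalisation along the direction $e^{i\pi/4}$, this gives the factor $U\delta^{-\sigma}$. The remaining factor is
\[
\exp\bigl(i\pi w^2/m - s(\log(1+w/\delta)-w/\delta)\bigr).
\]
Expanding $-s(\log(1+w/\delta)-w/\delta)$ in powers of $w/\delta$, with $s=\sigma+iT$ and $T=2\pi\delta^2/m$, gives terms of order $\delta^{-j}$ whose coefficients are polynomials in $w$ and $\sigma$. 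The periodicity of $W^L$ (up to sign under $x\mapsto x+m$) then makes $W^L(\delta+w)$ depend on $\delta$ only through $\delta-N$, that is through $p$, after the shift by the multiple $N$ of $m$. The Gaussian moments $\int w^k e^{i\pi w^2/m}W^L(\cdot)\,dw$ along the slanted line are therefore functions of $p$ alone (and of $\sigma$), and I define these as $C_j^{(m)}(p,\sigma)$.

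\textbf{Remainder.} Truncating the expansion at order $J$, I define $RS_J^{(m)}(s)$ as the exact difference. This makes \eqref{deco_int} an identity; bounding $RS_J^{(m)}$ is left to the later estimates.

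\textbf{Integer $\delta$.} When $\delta$ is an integer the line passes through a pole, and I handle this as stated, by analytic continuation from the branch $M=\lfloor\delta\rfloor$.

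\textbf{Main obstacle.} The delicate step is the periodicity reduction showing that the coefficients depend only on $p$. This requires tracking the sign $(-1)^m$ produced by the shift $x\mapsto x+N$, together with the cross term $e^{2\pi i Nw/m}$ that arises from $e^{i\pi(N+w)^2/m}$. That cross term must be absorbed using $N\in m\mathbb Z$ and the identity $e^{i\pi N^2/m}=\pm1$. I would check this computation first, since it determines the exact normalisation of $U$ and of $p$.
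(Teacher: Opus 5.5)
Your overall route is the paper's: residues at $N+1,\dots,M$ as the line is moved to the saddle $\delta$, extraction of the saddle value, a Taylor expansion in $1/\delta$ (the paper uses $\tau=1/(4\sqrt\pi\,\delta)$), and $C_j^{(m)}$, $RS_J^{(m)}$ defined as the resulting integrals against $W^L$. Your inequalities and residue computation are correct, but the saddle-point step is miscomputed as written.

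\textbf{The Gaussian width.} The phase $-s\log(\delta+w)+i\pi(\delta+w)^2/m$ has second derivative about $4\pi i/m$ at $w=0$. The two pieces $-iT\log x$ and $i\pi x^2/m$ contribute equally, so the Gaussian is $e^{2\pi i w^2/m}$, not $e^{i\pi w^2/m}$. You must move the $\delta^0$ term $\pi i w^2/m$ inside $-s(\log(1+w/\delta)-w/\delta)$ into the Gaussian. Otherwise the coefficients of $\delta^{-j}$ carry a factor $e^{\pi i w^2/m}$ and are not polynomials, and your moments $\int w^k e^{i\pi w^2/m}W^L\,dw$ are not the ones that occur.

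\textbf{The linear term.} The linear term coming from $i\pi(\delta+w)^2/m$ is $iTw/\delta$, not $sw/\delta$, so you have lost a factor $e^{-\sigma w/\delta}$. The correct remaining factor is the paper's
\[
\widetilde g(\delta,w)=\exp\Bigl(-s\log\bigl(1+\tfrac{w}{\delta}\bigr)+\tfrac{2\pi i\delta w}{m}-\tfrac{\pi i w^2}{m}\Bigr).
\]
Under $w/\delta=2i\tau z$ its exponent is $-\tau\bigl(z^3/(3m)+2i\sigma z\bigr)+O(\tau^2)$. The term you dropped is exactly the $\sigma$-part of $P_1$, so even after fixing the Gaussian your $C_j^{(m)}$, $j\ge1$, would be wrong.

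\textbf{The phase.} The factor $e^{-i\pi/8}$ is not a Fresnel constant; the Gaussian integral gives $-e^{i\pi/4}\sqrt{m/2}$, see \eqref{eq:gaussian_swarrow}. One simply has $\delta^{-s}e^{i\pi\delta^2/m}=e^{-i\pi/8}U\delta^{-\sigma}$, and the leftover $e^{-i\pi/8}$ is absorbed into the $C_j^{(m)}$.

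Your ``main obstacle'' is misdiagnosed. When you expand about $\delta$, no factor $e^{2\pi iNw/m}$ ever arises. A $w$-dependent factor of that kind could not be absorbed by the constant $e^{i\pi N^2/m}=\pm1$ anyway. The only dependence on $N$ comes from $W^L(x+m)=(-1)^mW^L(x)$, which gives $W^L(\delta+w)=(-1)^N\,W^L\bigl(\tfrac{m-p}{2}+w\bigr)$; the sign is $(-1)^N$, not $(-1)^m$. Since $U$ and $p$ are prescribed by the statement, this sign cannot be moved into them and has to stay in the coefficients. The paper does this by keeping the complete kernel $W^L\bigl(N+\tfrac{m}{2}+\tfrac{imv}{2}\bigr)$ in \eqref{C_j}; compare the remark after \eqref{eq:C0_J_direct}, $C_0^{(m)}(p;N)=(-1)^NC_0^{(m)}(p;0)$. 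So your coefficients are functions of $p$ and $\sigma$ only up to $(-1)^N$. If you define them from the shifted kernel and drop the sign, the saddle term in \eqref{deco_int} has the wrong sign whenever $N$ is odd.
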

	
	\begin{proof}
		From \eqref{xi_L},
		\[
		L(s)
		=
		R_L(s)+\eta(s)\overline{R_L(1-\overline{s})},
		\]
		where
		\[
		\eta(s)
		=
		\left(\frac{\pi}{m}\right)^{s-\frac12}
		\frac{
			\Gamma\!\left(\frac{1-s+a}{2}\right)
		}{
			\Gamma\!\left(\frac{s+a}{2}\right)
		}
		\text{ and }
		R_L(s)
		=
		\int_{0\swarrow1}
		\frac{x^{-s}e^{\frac{i\pi}{m}x^2}W^L(x)}
		{2\pi i}\,dx.
		\]
		By Cauchy's theorem,
		\[
		R_L(s)
		=
		\sum_{n=1}^{N}\frac{\chi_m(n)}{n^s}
		+
		\frac{1}{2\pi i}
		\int_{N\swarrow N+1}
		e^{\frac{i\pi}{m}x^2}x^{-s}W^L(x)\,dx.
		\]
		The possible poles of $W^L$ are the integers, and the residue at an integer
		$n$ is
		\[
			\operatorname*{Res}_{x=n}W^L(x)
			=\chi_m(n)e^{-i\pi n^2/m}.
		\]
		During the saddle-point deformation, the contour crosses the real axis
		at $x=\delta$. Hence the poles $N+k$, with $
		1\le k\le K$ are crossed. Their residues give
		\[
		\sum_{k=1}^{K}
		\frac{\chi_m(N+k)}{(N+k)^s}.
		\]
		Together with the initial sum this becomes $\sum_{n=1}^{M}\frac{\chi_m(n)}{n^s}$.
		We now isolate the saddle-point contribution. Writing $x=\delta+z$,
		we have
		\[
		x^{-s}e^{\frac{i\pi}{m}x^2}
		=
		e^{-s\log\delta+\frac{i\pi\delta^2}{m}}
		e^{\frac{2\pi i}{m}z^2}
		\widetilde g(\delta,z),
		\]
		where $\widetilde g(\delta,z)
		=
		\exp\!\left(
		-s\log\left(1+\frac{z}{\delta}\right)
		+
		\frac{2\pi i\delta z}{m}
		-
		\frac{\pi i z^2}{m}
		\right)$.
		The factor independent of $z$ may be written as $e^{-s\log\delta+\frac{i\pi\delta^2}{m}}=
		e^{-i\pi/8}U\delta^{-\sigma}$.
		
		Thus the saddle-point contribution can be written as $U\delta^{-\sigma}S$, where
		\[
		S
		=
		\frac{e^{-i\pi/8}}{2\pi i}
		\int_{\mathcal C_\delta^{\rm reg}}
		e^{\frac{2\pi i}{m}(x-\delta)^2}
		\widetilde g(\delta,x-\delta)
		W^L(x)\,dx.
		\]
		No extra sign such as $(-1)^{N/m-1}$ occurs here: the complete kernel
		$W^L$ is kept inside $S$, and the sign predicted by the individual
		cotangent summands is already contained in it through the periodicity
		relation \eqref{eq:WL_shift_m_C0}.
		
		Here $\mathcal C_\delta^{\rm reg}$ is the saddle-point contour through
		$x=\delta$. Set
		\[
		\tau=\frac{1}{2\sqrt{2mT}},
		\qquad p=m-2(\delta-N).
		\]
		Then
		\[
		T=\frac{1}{8m\tau^2},\qquad
		\delta=\frac{1}{4\sqrt\pi\tau},\qquad
		N+\frac m2-\delta=\frac p2.
		\]
		We first parameterize the contour by
		\[
		x=N+\frac m2-\frac{mu}{2},\qquad
		u=\frac pm+t e^{i\pi/4},\quad t\in\mathbb R,
		\]
		with $t$ increasing. Thus $x-\delta=-(mu-p)/2$, and the
		$x$-contour has direction $-e^{i\pi/4}=e^{5i\pi/4}$, a
		steepest-descent direction for $e^{2\pi i(x-\delta)^2/m}$.
		Now put $v=iu$, so $u=-iv$. We have
		\[
		x=N+\frac m2+\frac{imv}{2},\qquad
		x-\delta=\frac i2(mv-ip),\qquad
		\frac{dx}{2\pi i}=\frac m{4\pi}\,dv.
		\]
		The line becomes $v=ip/m+t e^{3i\pi/4}$, traversed from
		southeast to northwest; we denote it by $\nwarrow ip/m$.
		The same substitution applies to any prescribed indentations
		of the regularized contour. Moreover,
		\[
		\frac{2\pi i}{m}(x-\delta)^2
		=-\frac{\pi i}{2m}(mv-ip)^2,
		\]
		and $\widetilde g(\delta,i(mv-ip)/2)
		=g(\tau,\sqrt\pi(mv-ip))$, with $g$ as defined below.
		Consequently,
		\[
		\begin{aligned}
			S
			&=
			\frac{m e^{-i\pi/8}}{4\pi}
			\int_{\nwarrow ip/m}
			e^{-\frac{\pi i}{2m}(mv-ip)^2}
			g\!\left(
			\tau,\sqrt{\pi}(mv-ip)
			\right)
			W^L\left(
			N+\frac m2+\frac{imv}{2}
			\right)\,dv.
		\end{aligned}
		\]
		Here
		\[
		g(\tau,z)
		=
		\exp\!\left(
		-\left(\sigma+\frac{i}{8m\tau^2}\right)
		\log(1+2i\tau z)
		-\frac{z}{4\tau m}
		+\frac{iz^2}{4m}
		\right).
		\]
		The terms of order $\tau^{-1}$ and $\tau^0$ in the exponent cancel:
		they are respectively cancelled by $-z/(4\tau m)$ and $iz^2/(4m)$.
		The exponent therefore extends analytically to $\tau=0$. Hence, for
		fixed $J$,
		\begin{equation}\label{g_kserie}
			g(\tau,z)
			=\sum_{j=0}^{J}P_j(z)\tau^j
			+
			Rg_J(\tau,z).
		\end{equation}
		Since $\tau=\frac{1}{4\sqrt{\pi}\delta}$ and
		$dx/(2\pi i)=m\,dv/(4\pi)$ under
		$x=N+\tfrac m2+\tfrac{imv}{2}$, we obtain
		\begin{equation}\label{S_k2}
			S
			=
			\sum_{j=0}^{J}
			\frac{C_j^{(m)}(p,\sigma)}{\delta^j}+
			RS_J^{(m)}(s),
		\end{equation}
		where
		\begin{equation}\label{C_j}
\begin{aligned}
C_j^{(m)}(p,\sigma):={}&\frac{m e^{-i\pi/8}}{4\pi}
\left(\frac1{4\sqrt\pi}\right)^j
\int_{\nwarrow ip/m}e^{-\frac{\pi i}{2m}(mv-ip)^2}\\
&\quad\times P_j(\sqrt\pi(mv-ip))
W^L\left(N+\frac m2+\frac{imv}2\right)\,dv,
\end{aligned}
\end{equation}
		and
		\begin{equation}\label{RS_new}
\begin{aligned}
RS_J^{(m)}(s):={}&\frac{m e^{-i\pi/8}}{4\pi}
\int_{\nwarrow ip/m}e^{-\frac{\pi i}{2m}(mv-ip)^2}\\
&\quad\times Rg_J(\tau,\sqrt\pi(mv-ip))
W^L\left(N+\frac m2+\frac{imv}2\right)\,dv.
\end{aligned}
\end{equation}
		Combining the residue contribution with \eqref{S_k2} gives
		\eqref{deco_int}.
	\end{proof}

	\begin{theorem}
    \label{thm:Cjk_bound}
		Let $m>1$ be the conductor of a primitive real Dirichlet character,
		$\sigma\in\mathbb R$, $T>0$, and $j\ge1$. Put
		$\delta=\sqrt{mT/(2\pi)}$ and $h_0=3/2-\log2$. Then
		\[
		\frac{|C_j^{(m)}(p,\sigma)|}{\delta^j}
		\le c(\sigma) \, \frac{\Gamma((j+1)/2)}{(\delta b)^j},
		\]
		where
		\begin{equation}\label{eq:cbdef}
        \begin{aligned}
			&
            c(\sigma)=\frac{\varphi(m)}{4\pi}
			\left(\sqrt{\frac{2\pi}{m h_0}}+4\sqrt{2\pi}\right) \times 
			\begin{cases}
				2^{-\sigma},&\sigma\le0,\\
				9^\sigma,&\sigma>0,
			\end{cases}
			\\
            &
			b=\sqrt{\frac{2\pi h_0}{m}} \times 
			\begin{cases}
				1,&\sigma\le0,\\
				8/9,&\sigma>0.
			\end{cases}
        \end{aligned}
		\end{equation}
		For use with the remainder estimate, the following weaker common
		bound is also valid. With $b_1=\sqrt{2\pi/(7.49m)}$,
		\begin{equation}\label{eq:Cjk_common_bound}
			\frac{|C_j^{(m)}(p,\sigma)|}{\delta^j}
			\le\frac{2\varphi(m)}5\,\Gamma((j+1)/2)
			\begin{cases}
				2^{-\sigma}(\delta b_1)^{-j},&\sigma\le0,\\
				9^\sigma(8\delta b_1/9)^{-j},&\sigma>0.
			\end{cases}
		\end{equation}
		Both estimates hold for either continued lateral value at a change
		of branch.
	\end{theorem}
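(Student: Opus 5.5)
Following Arias de Reyna \cite{AR}, the plan is to bound $C_j^{(m)}$ through the Taylor coefficients $P_j$ of $g(\tau,\cdot)$ by Cauchy's estimate on a circle in $\tau$, and then to integrate the result against the Gaussian and a majorant for $W^L$ along the line $\nwarrow ip/m$.

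\emph{Bounding $P_j$.} Since $g(\tau,z)$ is analytic in $\tau$ near $0$ (the singular terms cancel, as noted in the proof of Proposition~\ref{prop:RtN_decomp}), Cauchy's formula on $|\tau|=r$ gives
\[
|P_j(z)|\le r^{-j}\max_{|\tau|=r}|g(\tau,z)|.
\]
We write the exponent of $g$ as
\[
-\sigma\log(1+2i\tau z)-\frac{i}{8m\tau^2}\Bigl(\log(1+w)-w+\frac{w^2}{2}\Bigr),\qquad w=2i\tau z .
\]
The bracket is $O(|w|^3)$. We choose $r=\rho/|z|$, with $\rho$ a fixed fraction such as $|w|\le 1/2$. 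The factor $(1+w)^{-\sigma}$ is then at most $2^{-\sigma}$ for $\sigma\le0$ and at most a constant to the power $\sigma$ for $\sigma>0$. The $8/9$ and $9^\sigma$ in \eqref{eq:cbdef} suggest that one uses a slightly smaller radius in the case $\sigma>0$. The cubic term contributes at most $\exp(c\,|z|^3\rho/(m|z|^2)\cdots)$. Following \cite{AR}, the optimal radius should be taken depending on $|z|$, so that the real part of the cubic term is absorbed into the Gaussian $e^{-\pi i(mv-ip)^2/(2m)}$. On the steepest-descent line this Gaussian equals $e^{-|z|^2/(2m)}$-type decay with $z=\sqrt\pi(mv-ip)$. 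After this absorption we are left with a bound of the form
\[
|P_j(z)|\le K_\sigma\,(|z|/\rho)^j\exp\!\bigl(-h_0|z|^2/(2m)\cdots\bigr),
\]
where the constant $h_0=3/2-\log 2$ arises as $\min$ of $\tfrac12|w|^2-\mathrm{Re}(\cdots)$ at the chosen radius.

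\emph{Integrating.} We insert this bound into \eqref{C_j}. Along the line we have
\[
\bigl|W^L(N+\tfrac m2+\tfrac{imv}{2})\bigr|\le\Omega_m,
\]
where $\Omega_m$ is a majorant obtained from \eqref{WL}. Bounding $|W^L|$ needs care near the real crossing. We split $W^L$ into its constant/asymptotic part and the cotangent sum. Since $\cot\to\mp i$ away from the real axis, $|W^L|\lesssim \frac{\pi}{2m}\sum_k|\chi_m(k)|$, which produces the factor $\varphi(m)$. Near the crossing at $ip/m$ the cotangent may be large, and we use instead the fact that the line passes between poles, at distance comparable to $m$ in the cotangent variable. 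The two summands $\sqrt{2\pi/(mh_0)}$ and $4\sqrt{2\pi}$ in $c(\sigma)$ should correspond to these two regions, namely the Gaussian mass and the near-crossing contribution. The remaining integral is a Gaussian moment,
\[
\int |z|^j e^{-h_0|z|^2/(2m)\cdot(\dots)}\,|dv|\asymp \Gamma\!\Bigl(\frac{j+1}{2}\Bigr)\Bigl(\frac{2m}{h_0\cdot 2\pi}\Bigr)^{j/2}.
\]
Combined with $\delta^{-j}$ and with the factor $(4\sqrt\pi)^{-j}$, this gives $(\delta b)^{-j}$ with $b=\sqrt{2\pi h_0/m}$.

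\emph{Remaining points and main obstacle.} The weaker bound \eqref{eq:Cjk_common_bound} follows by numerically bounding $c(\sigma)\le\frac{2\varphi(m)}{5}$. This holds because the second summand dominates: $\frac{1}{4\pi}(4\sqrt{2\pi}+\sqrt{2\pi/(3h_0)})\approx0.4$. One also uses $h_0\ge 1/7.49\cdot\ldots$ to replace $b$ by $b_1$. The lateral values at a change of branch are handled by noting that all estimates are uniform in $p\in(-m,m]$ and pass to the limit. I expect the main obstacle to be the uniform majorant for $|W^L|$ on the shifted line together with the choice of Cauchy radius. This is the step where the precise constants $h_0$, $8/9$ and $9^\sigma$ must come out. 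I would first try reproducing Arias de Reyna's computation verbatim with $\pi$ replaced by $\pi/m$.
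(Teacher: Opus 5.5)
Your architecture is the paper's: Cauchy's estimate for $P_j$ on a circle of radius proportional to $1/|z|$, absorption of the cubic part of the exponent into the Gaussian, then Gaussian moments. The paper writes this with $\zeta=-2iz\xi=-w$ on a fixed circle $|\zeta|=r$ and uses Fubini, which is equivalent.

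The genuine gap is the kernel. In the argument of $W^L$, the line $v=ip/m+e^{-i\pi/4}u$ becomes $\delta+\frac{im}{2}e^{-i\pi/4}u$. So it crosses the real axis at the saddle point $\delta$, which moves continuously with $T$ and passes through integers $n$ with $\chi_m(n)\ne0$, i.e.\ through poles of $W^L$. It is therefore false that the line passes between the poles at a distance comparable to $m$. Any majorant built on the distance from the crossing to the poles blows up as $\delta$ approaches an integer, so you get neither the uniformity in $p$ you invoke at the end nor the lateral values at a change of branch.

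The missing ingredient is $|\cot(a+ib)|\le\coth|b|\le1+1/|b|$. It depends only on the imaginary part $\pi u/(4\sqrt2)$ of the cotangent's argument, and gives, wherever $\delta$ lies,
\[
\Bigl|W^L\bigl(\delta+\tfrac{im}{2}e^{-i\pi/4}u\bigr)\Bigr|\le\frac{\varphi(m)}{m}\Bigl(\pi+\frac{4\sqrt2}{|u|}\Bigr),\qquad u\ne0.
\]
The $1/|u|$ singularity is integrable against $|mv-ip|^j=m^j|u|^j$ exactly because $j\ge1$. This is where that hypothesis enters; your sketch never uses it. It is also why $C_0^{(m)}$ needs separate treatment on a line through $M+\frac12$. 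The two summands of $c(\sigma)$ are not contributions from two regions of the line. They are the two terms of this single majorant integrated over the whole line, the second converted via $\Gamma(j/2)\le\sqrt\pi\,\Gamma((j+1)/2)$.

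Two further corrections are needed.

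First, the radius. Since $r^{-j}$ enters $b$ directly, the stated $b$ forces $|w|=r\uparrow1$ for $\sigma\le0$ (take $r<1$ and let $r\to1$ in the final inequality) and $r=8/9$ for $\sigma>0$, not $|w|\le1/2$. What makes such radii admissible is the uniform bound $1+\operatorname{Re}f(\zeta)\ge\frac12+\frac1r-\frac{\log(1+r)}{r^2}\ge h_0$ on $|\zeta|=r<1$, where $f(\zeta)=-\log(1-\zeta)/\zeta^2-1/\zeta-\frac12$. It follows from $1+f(\zeta)=\frac12+\int_0^1 t\,(1-t\zeta)^{-1}\,dt$ and $\operatorname{Re}(1-t\zeta)^{-1}\ge(1+tr)^{-1}$. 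You should prove this rather than describe $h_0$ loosely as a minimum.

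Second, the common bound. We have $\frac1{4\pi}\bigl(4\sqrt{2\pi}+\sqrt{2\pi/(3h_0)}\bigr)\approx0.93$, not $0.4$, so $c(\sigma)\le\frac25\varphi(m)$ is false. You must spend one power of $b_1/b=(7.49h_0)^{-1/2}<1$, which is allowed because $j\ge1$: $(\delta b)^{-j}\le(b_1/b)(\delta b_1)^{-j}$. The prefactor is then about $0.93/2.46<\frac25$, and the same ratio works for $\sigma>0$.
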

	
	\begin{proof}
		We follow the Cauchy-contour argument in \cite[Theorem~4.1]{AR},
		keeping the two Gaussian moments arising from the kernel $W^L$.
		For $z\ne0$, Cauchy's formula gives
		\[
			P_j(z)=\frac1{2\pi i}\int_C\frac{g(\xi,z)}{\xi^{j+1}}\,d\xi.
		\]
		Under $\zeta=-2iz\xi$,
		\[
		g(\xi,z)=(1-\zeta)^{-\sigma}
		\exp\!\left(-\frac{iz^2}{2m}f(\zeta)\right),
		\qquad
		f(\zeta)=-\frac{\log(1-\zeta)}{\zeta^2}
		-\frac1\zeta-\frac12.
		\]
		The singularity at zero is removable, and the logarithm has its cut
		on $[1,\infty)$. Substitution in \eqref{C_j} yields
		\begin{equation}\label{C_j2}
			\begin{split}
				C_j^{(m)}(p,\sigma)
				={}&\frac{m e^{-i\pi/8}}{8\pi^2 i}\left(\frac1{2i}\right)^j
				\int_{|\zeta|=r}\frac{(1-\zeta)^{-\sigma}}{\zeta^{j+1}}\\
				&\quad\times\int_{\nwarrow ip/m}
				e^{-\frac{i\pi(mv-ip)^2}{2m}(1+f(\zeta))}
				(mv-ip)^j
				W^L\!\left(N+\frac m2+\frac{imv}{2}\right)\,dv\,d\zeta ,
			\end{split}
		\end{equation}
		where $0<r<1$ and the circle is positively oriented. From the identity
		\[
		1+f(\zeta)=\frac12+\int_0^1\frac{t}{1-t\zeta}\,dt,
		\]
		we have that, for $|\zeta|=r<1$,
		\[
		h(\zeta):=1+\operatorname{Re}f(\zeta)
		\ge h_r:=\frac12+\frac1r-\frac{\log(1+r)}{r^2}
		\ge h_0.
		\]
		Indeed, $\operatorname{Re}(1-t\zeta)^{-1}\ge(1+tr)^{-1}$,
		with equality at $\zeta=-r$, and the last integral decreases with $r$.
		Thus the lower bound is uniform up to $r=1$.
		
		Parameterize the outer line by $v=ip/m+e^{-i\pi/4}x$.
		Then $mv-ip=me^{-i\pi/4}x$, and the exponential in
		\eqref{C_j2} has modulus $e^{-\pi m h(\zeta)x^2/2}$.
		For real $a$ and $b\ne0$, $|\cot(a+ib)|\le\coth|b|$.
		Since $\sum_{k=1}^{2m}|\chi_m(k)|=2\varphi(m)$, \eqref{WL} gives
		\begin{equation}\label{eq:Cjk_kernel_majorant}
			\left|W^L\!\left(\delta+\frac{im}{2}e^{-i\pi/4}x\right)\right|
			\le\frac{\varphi(m)}m
			\left(\pi+\frac{4\sqrt2}{|x|}\right),
			\qquad x\ne0.
		\end{equation}
		Here we used $\coth u\le1+u^{-1}$ for $u>0$.
		
		Write $w_\sigma(r)=(1+r)^{-\sigma}$ if $\sigma\le0$, and
		$w_\sigma(r)=(1-r)^{-\sigma}$ if $\sigma>0$.
		Evaluation of the two Gaussian moments in \eqref{C_j2} gives the
		more precise estimate
		\[
			\begin{split}
				\frac{|C_j^{(m)}(p,\sigma)|}{\delta^j}
				\le{}&\frac{\varphi(m)}{4\pi}w_\sigma(r)
				\left(\frac{\sqrt{m/(2\pi h_r)}}{r\delta}\right)^j\\
				&\times\left[
				\sqrt{\frac{2\pi}{m h_r}}\,
				\Gamma\!\left(\frac{j+1}{2}\right)
				+4\sqrt2\,\Gamma\!\left(\frac j2\right)\right].
			\end{split}
		\]
		The same majorants prove absolute convergence and justify Fubini's
		theorem. For $j\ge1$,
		\[
		\frac{\Gamma(j/2)}{\Gamma((j+1)/2)}\le\sqrt\pi.
		\]
		This follows from the monotonicity of
		$\Gamma(u)/\Gamma(u+1/2)$ for $u>0$.
		For $\sigma\le0$, let $r\uparrow1$ in the inequality. This avoids
		putting a Cauchy contour through the logarithmic branch point.
		For $\sigma>0$, choose $r=8/9$.
		Replacing $h_r$ by $h_0$ proves \eqref{eq:cbdef}.
		
		To obtain \eqref{eq:Cjk_common_bound}, note that $m\ge3$ and
		$h_0>4/5$. Since $b_1/b=(7.49h_0)^{-1/2}<1$ in the nonpositive
		case, and the same ratio holds after including $8/9$ in the
		positive case, it is enough to check the first power. Its
		prefactor, divided by $\varphi(m)$, is at most
		\[
		\frac{\sqrt{2\pi/(3h_0)}+4\sqrt{2\pi}}
		{4\pi\sqrt{7.49h_0}}
		<\frac{1.62+4(2.51)}{12(2.44)}<\frac25.
		\]
		These inequalities follow from $3<\pi<22/7$ and $h_0>4/5$.
		Finally, the majorant near $x=0$ is $O(|x|^{j-1})$.
		This also justifies continuation to either endpoint of a branch,
		including when the saddle coincides with a pole of $W^L$.
	\end{proof}
	
	\subsubsection{The leading coefficient $C_0^{(m)}$}
	
	Let $\chi_m$ be a primitive real Dirichlet character of conductor $m>1$.
	We work with the complete kernel
	\[
	W^L(x)=\frac{\pi}{2m}\sum_{k=1}^{2m}
	\chi_m(k)e^{-\pi i k^2/m}
	\cot\!\left(\frac{\pi(x-k)}{2m}\right).
	\]
	Its possible poles are simple and lie at the integers, with 
	$\operatorname*{Res}_{x=n}W^L(x)=\chi_m(n)e^{-\pi i n^2/m}$, when $\chi_m(n)=0$, the singularity is removable. Reindexing the sum gives
	\begin{equation}\label{eq:WL_shift_m_C0}
		W^L(x+m)=(-1)^mW^L(x).
	\end{equation}
	Fix now $N\in m\mathbb Z$ and write $\delta=N+\frac{m-p}{2}$, $E_\delta(x)=\exp\!\left(\frac{2\pi i}{m}(x-\delta)^2\right)$.
	The contour $\swarrow\delta$ is the line
	$x=\delta+e^{i\pi/4}u$, traversed from $u=+\infty$ to $u=-\infty$.
	Initially $\delta\notin\mathbb Z$. At $\delta=d\in\mathbb Z$, the convention
	$M=\lfloor\delta\rfloor=d$ means continuation from $\delta>d$. The other
	lateral value will be recorded separately.
	
	\begin{lemma}
		Consider the function $g$ in  \eqref{g_kserie}. Then $P_0\equiv1$. Moreover, 
		the leading coefficient $C_0^{(m)}$ is independent of $\sigma$ and is given by
		\begin{equation}\label{eq:C0_J_direct}
			C_0^{(m)}(p)=e^{-i\pi/8}\mathcal J_m(\delta),
		\end{equation}
		where 	$\mathcal J_m(\delta)=\frac1{2\pi i}\int_{\swarrow\delta}E_\delta(x)W^L(x)\,dx.$	\end{lemma}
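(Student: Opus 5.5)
First, $P_0\equiv1$. By \eqref{g_kserie}, $P_0(z)=g(0,z)$, where $g(0,\cdot)$ is the analytic continuation of the exponent to $\tau=0$. Expand $\log(1+2i\tau z)=2i\tau z+2\tau^2z^2+O(\tau^3z^3)$. The term $-\sigma\log(1+2i\tau z)$ is $O(\tau)$. The term $-\frac{i}{8m\tau^2}\log(1+2i\tau z)$ equals $\frac{z}{4m\tau}-\frac{iz^2}{4m}+O(\tau)$. Its first two pieces cancel exactly against $-z/(4\tau m)$ and $iz^2/(4m)$, as already noted in the proof of Proposition~\ref{prop:RtN_decomp}. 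Hence the exponent is $O(\tau)$ locally uniformly in $z$, it vanishes at $\tau=0$, and $P_0\equiv1$. Since $\sigma$ enters $g$ only through the factor $(1+2i\tau z)^{-\sigma}=1+O(\tau)$, the coefficient $P_0$ does not depend on $\sigma$. Therefore $C_0^{(m)}$, given by \eqref{C_j} with $j=0$, is independent of $\sigma$.

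Next, I will undo the change of variables in \eqref{C_j} with $j=0$, $P_0=1$. The substitution $x=N+\frac m2+\frac{imv}{2}$ was used in the proof of Proposition~\ref{prop:RtN_decomp}. It gives $x-\delta=\frac i2(mv-ip)$ and
\[
-\frac{\pi i}{2m}(mv-ip)^2=\frac{2\pi i}{m}(x-\delta)^2,
\]
so the Gaussian becomes $E_\delta(x)$. It also gives $\frac{m}{4\pi}\,dv=\frac{dx}{2\pi i}$. The line $\nwarrow ip/m$ maps to the line through $x=\delta$ with direction $e^{5i\pi/4}$, so it runs from $u=+\infty$ to $u=-\infty$ in $x=\delta+e^{i\pi/4}u$. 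This is the contour $\swarrow\delta$ with the stated orientation. Substituting yields
\[
C_0^{(m)}(p)=e^{-i\pi/8}\,\frac1{2\pi i}\int_{\swarrow\delta}E_\delta(x)W^L(x)\,dx=e^{-i\pi/8}\mathcal J_m(\delta),
\]
which is \eqref{eq:C0_J_direct}.

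The main point needing care is convergence and the branch convention. I would handle it as follows.
\begin{enumerate}
\item On the line, $|E_\delta(x)|=e^{-2\pi u^2/m}$, and the majorant \eqref{eq:Cjk_kernel_majorant} gives $|W^L|\ll 1+|u|^{-1}$ there.
\item For $\delta\notin\mathbb Z$ the line avoids the poles of $W^L$, so the integral converges absolutely and every manipulation above is legitimate.
\item At $\delta=d\in\mathbb Z$ with $\chi_m(d)\ne0$, the line passes through a simple pole, and $\mathcal J_m(d)$ is defined by continuation from $\delta>d$, consistent with $M=\lfloor\delta\rfloor$.
\item Both sides of \eqref{eq:C0_J_direct} are obtained from the same integrand by the same substitution. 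They therefore agree on each branch with $\delta\notin\mathbb Z$, and hence also for their one-sided limits, which gives the stated lateral values.
\end{enumerate}
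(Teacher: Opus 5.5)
Your proposal is correct and follows the paper's proof. The paper Taylor-expands $\log(1+2i\tau z)$ so that the $\tau^{-1}$ and $\tau^{0}$ terms cancel, which gives $P_0\equiv1$. It then undoes the substitution $x=N+\frac m2+\frac{imv}{2}$ in \eqref{C_j} with $j=0$, using the Jacobian $\frac{m}{4\pi}\,dv=\frac{dx}{2\pi i}$ and the induced orientation matching $\swarrow\delta$, to obtain $e^{-i\pi/8}\mathcal J_m(\delta)$. Your extra remarks on absolute convergence and on continuation from $\delta>d$ at integral $\delta$ agree with the conventions the paper adopts.
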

	
	\begin{proof}
		Taylor expansion in the definition of $g$ gives
		\[
		-\frac{i}{8m\tau^2}\log(1+2i\tau z)
		=\frac{z}{4m\tau}-\frac{iz^2}{4m}
		-\frac{\tau z^3}{3m}+O(\tau^2).
		\]
		The first two terms cancel the explicit terms in $\log g$, so that
		$\log g=-\tau(z^3/(3m)+2i\sigma z)+O(\tau^2)$ and $P_0=1$.
		Substituting $P_0=1$ into the saddle-point integral gives
		\eqref{eq:C0_J_direct}. Equivalently, the change of variables
		$x=N+m/2+imv/2$ in \eqref{C_j} has Jacobian
		$dx/(2\pi i)=m\,dv/(4\pi)$ and induces a $v$-contour oriented from
		south-east to north-west. This induced orientation is understood when
		comparing the two formulas.
	\end{proof}
	
	\begin{remark}
		Equation~\eqref{eq:WL_shift_m_C0} gives
		$\mathcal J_m(\delta+m)=(-1)^m\mathcal J_m(\delta)$, hence
		$C_0^{(m)}(p;N)=(-1)^N C_0^{(m)}(p;0)$ for $N\in m\mathbb Z$.
		Thus, the modulus is independent of $N$. No additional external sign
		is included in the normalization \eqref{eq:C0_J_direct}.
		The symbol $\mathcal J_m$ denotes an integral, not the integer $J_m$
		used later in the choice of the truncation parameter.
	\end{remark}
	
	Set $\rho(p)=(-1)^m e^{-4\pi i\delta}=(-1)^m e^{2\pi ip}$. For a fixed integer $M$, define
	\begin{equation}\label{eq:R_direct_def}
		\mathcal R_{m,M}(\delta)=\sum_{n=M+1}^{M+m}
		\chi_m(n)\exp\!\left(\frac{\pi i}{m}(2\delta^2-4\delta n+n^2)\right).
	\end{equation}
	We write $\mathcal R_m=\mathcal R_{m,M}$ when $M=\lfloor\delta\rfloor$
	is understood. Define
\[
\begin{aligned}
\mathcal G_m(r)&=\sum_{k=1}^{2m}\chi_m(k)e^{-\pi i(k^2+rk)/m},\\
\mathcal T_m(\delta)&=\sum_{r=0}^{4m}\beta_r\mathcal G_m(r)
e^{\pi ir\delta/m-\pi ir^2/(8m)},\\
\mathcal K_m(\delta)&=\frac{e^{i\pi/4}}{2\sqrt{2m}}\mathcal T_m(\delta),
\end{aligned}
\]
where $\beta_0=\beta_{4m}=1/2$ and $\beta_r=1$ otherwise.
	
	\begin{prop}
		Assume $M<\delta<M+1$ and $\rho(p)\ne1$. Then
		\[
			\mathcal J_m(\delta)=
			\frac{\mathcal R_{m,M}(\delta)-\rho(p)\mathcal K_m(\delta)}{1-\rho(p)}.
		\]
		In particular,
		\[
				\displaystyle
				C_0^{(m)}(p)=\frac{e^{-i\pi/8}}{1-(-1)^m e^{2\pi ip}}
				\left[\mathcal R_{m,M}(\delta)
				-\frac{e^{i\pi/4}(-1)^m e^{2\pi ip}}{2\sqrt{2m}}
				\mathcal T_m(\delta)\right].
		\]
	\end{prop}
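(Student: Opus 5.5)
The plan is to exploit the quasi-periodicity of the integrand $F(x):=E_\delta(x)W^L(x)$ under $x\mapsto x+m$. By \eqref{eq:WL_shift_m_C0} and the identity
\[
E_\delta(x+m)=E_\delta(x)\,e^{4\pi i(x-\delta)}e^{2\pi i m}=E_\delta(x)\,e^{4\pi i(x-\delta)},
\]
we get
\[
F(x+m)=(-1)^m e^{-4\pi i\delta}e^{4\pi i x}F(x)=\rho(p)\,e^{4\pi i x}F(x).
\]
This relation compares the integral over the line $\swarrow(\delta+m)$ with a modified integral over $\swarrow\delta$. Residues account for the region between the two lines, and what remains is a Gaussian integral against a trigonometric polynomial, which can be evaluated in closed form.

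\emph{Step 1 (residues).} I would move the line $\swarrow\delta$ to the parallel line $\swarrow(\delta+m)$. The Gaussian factor $E_\delta$ decays along both ends, so the connecting pieces vanish at infinity. Since $M<\delta<M+1$, the poles crossed are exactly $n=M+1,\dots,M+m$. The residue of $F$ at $n$ is $\chi_m(n)e^{-\pi i n^2/m}E_\delta(n)$, and
\[
\chi_m(n)e^{-\pi i n^2/m}E_\delta(n)=\chi_m(n)\exp\!\left(\frac{\pi i}{m}(2\delta^2-4\delta n+n^2)\right).
\]
With the orientation of $\swarrow$ (from $+\infty$ to $-\infty$ along $e^{i\pi/4}$), this should give
\[
\mathcal J_m(\delta)-\frac1{2\pi i}\int_{\swarrow(\delta+m)}F=\mathcal R_{m,M}(\delta).
\]
I will check the sign carefully against the residue convention used in Proposition~\ref{prop:RtN_decomp}.

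\emph{Step 2 (periodicity).} Substituting $x=y+m$ gives
\[
\frac1{2\pi i}\int_{\swarrow(\delta+m)}F=\rho(p)\,\frac1{2\pi i}\int_{\swarrow\delta}e^{4\pi i y}F(y)\,dy=\rho(p)\bigl(\mathcal J_m(\delta)+\mathcal I\bigr),
\]
where
\[
\mathcal I:=\frac1{2\pi i}\int_{\swarrow\delta}\bigl(e^{4\pi i y}-1\bigr)E_\delta(y)W^L(y)\,dy.
\]
Hence $(1-\rho)\mathcal J_m=\mathcal R_{m,M}+\rho\,\mathcal I$, and it remains to show that $\mathcal I=-\mathcal K_m(\delta)$.

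\emph{Step 3 (the entire part).} Put $u=\pi(y-k)/(2m)$. Then $e^{4\pi i y}=e^{8miu}$, because $e^{4\pi ik}=1$. The product $(e^{8miu}-1)\cot u$ equals
\[
i\,\frac{(e^{2iu}+1)(e^{8miu}-1)}{e^{2iu}-1},
\]
and since $e^{2iu}-1$ divides $e^{8miu}-1$, this is the trigonometric polynomial
\[
i\sum_{r=0}^{4m}c_r e^{2iru},
\]
with $c_r=2$ for $0<r<4m$ and $c_0=c_{4m}=1$, that is, $c_r=2\beta_r$. Thus $(e^{4\pi iy}-1)W^L(y)$ is entire, and equals
\[
\frac{\pi i}{m}\sum_{r=0}^{4m}\beta_r\sum_{k=1}^{2m}\chi_m(k)e^{-\pi i(k^2+rk)/m}e^{\pi i r y/m}=\frac{\pi i}{m}\sum_{r=0}^{4m}\beta_r\mathcal G_m(r)\,e^{\pi i r y/m}.
\]
Each term then contributes a Fresnel-type Gaussian integral along the steepest-descent line:
\[
\int_{\swarrow\delta}\exp\!\left(\frac{2\pi i}{m}(y-\delta)^2+\frac{\pi i r y}{m}\right)dy=-e^{i\pi/4}\sqrt{\frac m2}\;e^{\pi i r\delta/m-\pi i r^2/(8m)}.
\]
This follows by completing the square at $y=\delta-r/4$, with the minus sign coming from the orientation. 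Collecting the factors $\frac{1}{2\pi i}\cdot\frac{\pi i}{m}\cdot e^{i\pi/4}\sqrt{m/2}=\frac{e^{i\pi/4}}{2\sqrt{2m}}$ should give exactly $\mathcal I=-\mathcal K_m(\delta)$. Dividing by $1-\rho(p)\ne0$ then yields the formula for $\mathcal J_m$. The formula for $C_0^{(m)}(p)$ follows from \eqref{eq:C0_J_direct} together with $\rho(p)=(-1)^m e^{2\pi ip}$.

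The main obstacle is not conceptual but lies in the bookkeeping of signs. Three conventions must be consistent: the orientation of $\swarrow$, which fixes the sign of both the residue sum and the Fresnel constant; the branch $e^{i\pi/4}$ of $\sqrt{i}$ along the steepest-descent direction; and the endpoint weights $\beta_0=\beta_{4m}=1/2$ in the expansion of $(e^{8miu}-1)\cot u$. I would first verify the polynomial identity and the single Gaussian integral in isolation. Before trusting the assembled identity, I would also sanity-check it at a convenient point, for instance by numerically evaluating both sides for $m=3$.
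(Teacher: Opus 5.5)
Your proposal is correct and follows essentially the same route as the paper. Both arguments use the residues at $M+1,\dots,M+m$ from shifting to $\swarrow(\delta+m)$, then the quasi-periodicity $F(x+m)=\rho\,e^{4\pi i x}F(x)$ to get $(1-\rho)\mathcal J_m=\mathcal R_{m,M}+\rho\,\mathcal I$, then the expansion $\tfrac12(z^{4m}-1)\tfrac{z+1}{z-1}=\sum_r\beta_r z^r$, and finally the completed-square Gaussian $\int_{\swarrow\delta}E_\delta=-e^{i\pi/4}\sqrt{m/2}$. The signs you flagged for checking all come out as you wrote them: $\swarrow\delta$ followed by $\swarrow(\delta+m)$ reversed bounds the strip counterclockwise, which fixes the residue sign.
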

	
	\begin{proof}
		Moving the contour to $\swarrow(\delta+m)$ crosses the integers
		$M+1,\ldots,M+m$. With our orientation the residue theorem gives
		\[
		\frac1{2\pi i}\int_{\swarrow(\delta+m)}E_\delta W^L\,dx
		=\mathcal J_m(\delta)-\mathcal R_{m,M}(\delta).
		\]
		Since $E_\delta(x+m)=e^{-4\pi i\delta}e^{4\pi ix}E_\delta(x)$,
		translation by $m$ also gives
		\begin{equation}\label{eq:J_before_K}
			(1-\rho)\mathcal J_m=\mathcal R_{m,M}+\rho K_m^{\rm int},\qquad
			K_m^{\rm int}=\frac1{2\pi i}\int_{\swarrow\delta}
			E_\delta(x)(e^{4\pi ix}-1)W^L(x)\,dx.
		\end{equation}
		These deformations are justified by Gaussian decay on the ends of the
		contours and on their connecting segments.
		
		For $z=e^{\pi i(x-k)/m}$, the identity
		$\frac12(z^{4m}-1)\frac{z+1}{z-1}
		=\sum_{r=0}^{4m}\beta_r z^r$
		and $\cot(\pi(x-k)/(2m))=i(z+1)/(z-1)$ yield
		\begin{equation}\label{eq:WL_direct_fourier}
			\frac{(e^{4\pi ix}-1)W^L(x)}{2\pi i}
			=\frac1{2m}\sum_{r=0}^{4m}\beta_r\mathcal G_m(r)e^{\pi irx/m}.
		\end{equation}
		On the oriented line,
		\begin{equation}\label{eq:gaussian_swarrow}
			\int_{\swarrow\delta}E_\delta(x)\,dx
			=-e^{i\pi/4}\sqrt{m/2}.
		\end{equation}
		Completing the square gives
		\[
		\frac{2\pi i}{m}(x-\delta)^2+\frac{\pi irx}{m}
		=\frac{2\pi i}{m}(x-\delta+r/4)^2
		+\frac{\pi ir\delta}{m}-\frac{\pi ir^2}{8m}.
		\]
		The resulting Gaussian is entire, so its contour can be translated.
		Equations~\eqref{eq:WL_direct_fourier}--\eqref{eq:gaussian_swarrow} imply
		$K_m^{\rm int}=-\mathcal K_m$. Substitution into
		\eqref{eq:J_before_K} proves the formula.
	\end{proof}

	The resonant points satisfy $\rho(p_0)=1$, or equivalently
	\[
	p_0\in\begin{cases}\mathbb Z,&m\text{ even},\\
		\mathbb Z+1/2,&m\text{ odd}.
	\end{cases}\iff
	4\delta_0-m\in2\mathbb Z,
	\quad \delta_0=N+(m-p_0)/2.
	\]
	Fix $M\in\mathbb Z$ and set $c=M+1/2$. The integral $\mathcal J_{m,M}(\delta)=\frac1{2\pi i}
	\int_{\swarrow c}E_\delta(x)W^L(x)dx$ 
	is entire in $\delta$, the line avoids all poles, and its Gaussian decay
	is locally uniform in complex $\delta$. It agrees with $\mathcal J_m$
	for $M<\delta<M+1$. The identity theorem therefore extends
	\eqref{eq:J_before_K} to $(1-\rho)\mathcal J_{m,M}=\mathcal R_{m,M}-\rho\mathcal K_m$. 
	In particular, on every branch, $\mathcal R_{m,M}(\delta_0)=\mathcal K_m(\delta_0).$
	This proves cancelation with the correct phases and endpoint weights,
	without a separate evaluation of a quadratic Gauss sum.
	
	For implementation, two useful identities are:
	\[
	\mathcal G_m(r+2m)=\mathcal G_m(r),\qquad
	\mathcal G_m(r)=0\quad\text{if }r\not\equiv m\pmod2.
	\]
	Periodicity is immediate, and the second identity follows by pairing $k$
	with $k+m$, whose contributions differ by $(-1)^{m+r}$.
	
	On a fixed branch, put $\mathcal F_{m,M}(p)=\mathcal R_{m,M}(\delta(p))-\mathcal K_m(\delta(p)).$ Then $\mathcal F_{m,M}(p_0)=0$ and	 $C_0^{(m)}(p)=e^{-i\pi/8}
	\left(\mathcal K_m(\delta(p))+
	\frac{\mathcal F_{m,M}(p)}{1-\rho(p)}\right).$
	Since $1-\rho(p)=-2\pi i(p-p_0)+O((p-p_0)^2)$, its continuation at
	resonance is
	\[	\displaystyle
	C_{0,M}^{(m)}(p_0)=e^{-i\pi/8}
	\left(\mathcal K_m(\delta_0)-\frac{\mathcal F'_{m,M}(p_0)}{2\pi i}\right).
	\]
	Here and below a prime denotes differentiation with respect to $p$.
	Writing $A_n(\delta)$ for the summand in \eqref{eq:R_direct_def},
	the required derivatives are explicitly
	\[
	\begin{aligned}
		\frac{d\mathcal R_{m,M}}{dp}
		&=\frac{2\pi i}{m}\sum_{n=M+1}^{M+m}(n-\delta)A_n(\delta),\\
		\frac{d\mathcal K_m}{dp}
		&=-\frac{\pi i}{2m}\frac{e^{i\pi/4}}{2\sqrt{2m}}
		\sum_{r=0}^{4m}r\beta_r\mathcal G_m(r)
		e^{\pi ir\delta/m-\pi ir^2/(8m)}.
	\end{aligned}
	\]	
	\begin{remark}
		A removable singularity on a fixed branch must be distinguished from
		a change of branch. If $\delta(p_b)=d\in\mathbb Z$, moving the contour
		from $\swarrow(d-1/2)$ to $\swarrow(d+1/2)$ crosses only the possible
		pole at $d$. Thus $
		\mathcal J_{m,d}(d)-\mathcal J_{m,d-1}(d)
		=-\chi_m(d)e^{-\pi i d^2/m}.$ Since $d\delta/dp=-1/2$, the left $p$-limit uses branch $d$ and the
		right $p$-limit uses branch $d-1$. Consequently, for either parity of $m$,
		\[
			\displaystyle
			C_{0,-}^{(m)}(p_b)-C_{0,+}^{(m)}(p_b)
			=-e^{-i\pi/8}\chi_m(d)e^{-\pi i d^2/m}.
		\]
		The modulus of this complex difference is $|\chi_m(d)|\le1$. Note that 
		the difference between the two moduli may be smaller.
		For odd $m$, \emph{resonances} are half-integers and are continuous, while block
		changes occur at odd integers. For even $m$, ``block changes'' occur at the
		even-integer resonances, and the odd-integer resonances are continuous.
	\end{remark}
    For the character $\chi_{-4}$, the Dirichlet beta function, take $N=0$
and $-4<p<4$, so that $\delta=2-p/2$ and $M=\lfloor\delta\rfloor$.
Figure~\ref{fig:C0_m4_branch_behavior} plots the real and imaginary parts
and the modulus of $C_0^{(4)}(p)$. The two lateral values may differ: at
$p=\pm2$ their difference has modulus $1$, while at $p=0$ they agree because
$\chi_{-4}(2)=0$.
\begin{figure}[htbp]
\centering
\includegraphics[width=0.5\linewidth]{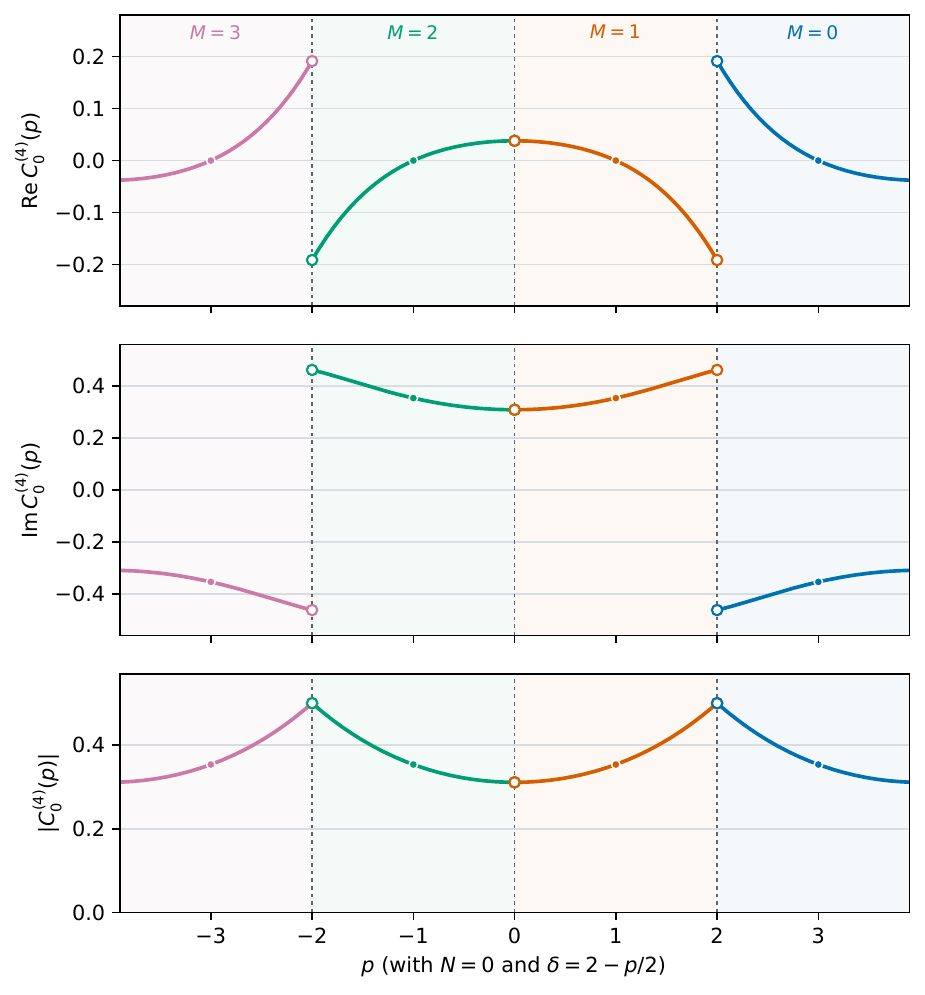}
\caption{The real part, imaginary part and modulus of $C_0^{(4)}(p)$ for
$\chi_{-4}$, with $N=0$ and $-4<p<4$.}
\label{fig:C0_m4_branch_behavior}
\end{figure}

	We keep the dependence on the distance to the real axis inside the
	Gaussian integral. This yields an explicit bound without estimates for
	incomplete character sums or numerical maximization.
	
	\begin{lemma}
		Let $c=M+1/2$. On the line $L_c=\{c+e^{i\pi/4}u:u\in\mathbb R\}$,
		\begin{equation}\label{eq:Omega1_def}
			|W^L(x)|\le\Omega_m(|\operatorname{Im}x|),\qquad
			\Omega_m(y)=\frac{\pi\varphi(m)}m+
			2\sum_{j=0}^{m-1}\frac1{\max\{y,(j+1/2)/\sqrt2\}}.
		\end{equation}
		In particular, $\Omega_m$ is non-increasing.
	\end{lemma}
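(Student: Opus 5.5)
We bound $W^L$ on $L_c$ term by term from its definition \eqref{WL}. Then we split the $2m$ cotangent summands into a part that is uniformly bounded and a part that carries the singular behaviour near the poles. Write $x=c+e^{i\pi/4}u$, so that $\operatorname{Im}x=u/\sqrt2$ and $\operatorname{Re}x=c+u/\sqrt2$. For each $k$, put $w_k=\pi(x-k)/(2m)=A_k+iB$ with $B=\pi\operatorname{Im}x/(2m)$.

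We use the partial fraction expansion
\[
\cot w=\sum_{\ell\in\mathbb Z}\Bigl(\frac1{w-\ell\pi}\Bigr)
\]
in symmetric summation. An equivalent route is to write
\[
\cot w=\frac1{w}+\bigl(\cot w-\tfrac1w\bigr)
\]
after translating $w$ by a multiple of $\pi$ to the strip $|\operatorname{Re}w|\le\pi/2$. We then regroup over $k=1,\dots,2m$ together with the period $2m$ of the cotangent argument. This gives
\[
\frac{\pi}{2m}\cot\Bigl(\frac{\pi(x-k)}{2m}\Bigr)
=\sum_{\ell}\frac{1}{x-k-2m\ell},
\]
so that
\[
W^L(x)=\sum_{n\in\mathbb Z}\frac{\chi_m(n)e^{-i\pi n^2/m}}{x-n}
\]
in symmetric summation. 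This is the residue description already noted, and it is justified because $\chi_m(n)e^{-i\pi n^2/m}$ is $2m$-periodic.

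Plan: Split into the $2m$ nearest integers $n$ to $\operatorname{Re}x$ (equivalently, one full period) and the rest. Here is where $\Omega_m$ should arise, but the coefficient $2\sum_{j=0}^{m-1}$ suggests a different split. We instead keep the cotangent form. For each $k$ we choose the representative of $w_k$ mod $\pi$ with $|A_k|\le\pi/2$, and we use a bound of the form
\[
|\cot(A+iB)|\le \coth|B|\quad\text{and}\quad |\cot(A+iB)|\le \frac{1}{|A+iB|}+C.
\]
A cleaner route: use the inequality $|\cot w|\le 1+ 1/|w|$ for $|\operatorname{Re}w|\le\pi/2$, valid since $\cot w-1/w$ is bounded there by roughly $1$. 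The constant part summed over the nonzero characters gives $\frac{\pi}{2m}\cdot 2\varphi(m)\cdot 1=\pi\varphi(m)/m$, which is the first term of $\Omega_m$. The singular part gives $\frac{\pi}{2m}\cdot\frac{2m}{\pi}\cdot\frac1{|x-n_k|}=1/|x-n_k|$, where $n_k\equiv k\pmod{2m}$ is the integer nearest to $\operatorname{Re}x$ in that class.

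Then estimate $|x-n_k|$ from below. We have $|\operatorname{Im}x|=|u|/\sqrt2=y$ and $|\operatorname{Re}x-n_k|=|c+y\operatorname{sgn}u-n_k|$. Since $c$ is a half-integer, the $2m$ values $n_k$ are distinct integers in a window of length $2m$. Hence, ordered by distance, the $j$-th pair (for $j=0,\dots,m-1$, two on each side) has $|\operatorname{Re}(x-n)|\ge j+\tfrac12-|u|/\sqrt2$ at worst.

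This is the main obstacle. Because the line is diagonal, the real and imaginary parts move together, so we need
\[
|x-n|\ge\max\Bigl\{y,\frac{j+1/2}{\sqrt2}\Bigr\}.
\]
The bound $|x-n|\ge|\operatorname{Im}x|=y$ is immediate. For the second bound, use that the distance from $n$ to the line $L_c$ equals $|c-n|/\sqrt2\ge (j+1/2)/\sqrt2$, since $n-c$ is a half-integer with $|n-c|\ge j+\tfrac12$ for the $j$-th pair. This uses the geometry of the $45^\circ$ line rather than the real part directly. Each index $j$ is attained by at most two integers, which gives the factor $2$.

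Summing over $j=0,\dots,m-1$ gives $\Omega_m(y)$. Monotonicity is clear, since each summand is non-increasing in $y$. We also need the bound on $\cot w-1/w$ to be at most $1$ in the strip $|\operatorname{Re}w|\le\pi/2$ after the shift. We would verify this by the maximum principle on the boundary: the supremum is approached at $|\operatorname{Re}w|=\pi/2$, where $|\tan(iB)|\le1$ and the $1/w$ term is controlled.
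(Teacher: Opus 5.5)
Your overall argument is the paper's. You reduce each cotangent argument mod $\pi$ to the strip $|\operatorname{Re}w|\le\pi/2$ and apply $|\cot w|\le 1+1/|w|$. The constant part then sums to $\pi\varphi(m)/m$ and the singular part to $\sum_k|\chi_m(k)|/|x-n_k|$. You then use $|x-n_k|\ge\max\{|\operatorname{Im}x|,\,|c-n_k|/\sqrt2\}$; your distance-to-the-line observation is exactly the paper's inequality $(c+y-n)^2+y^2\ge(c-n)^2/2$. Finally, $2m$ distinct integers can occupy the half-integer distances $j+\tfrac12$ from $c$ at most two at a time. The paper phrases this last step through $d_k=\operatorname{dist}(c-k,2m\mathbb Z)\le|c-n_k|$, which is equivalent. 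The partial-fraction detour is unnecessary.

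What is missing is a proof of the cotangent inequality itself. It is the only analytic input, and it has no slack: any constant larger than $1$ in $|\cot w-1/w|\le 1$ would change the first term of $\Omega_m$. Your sketch does not deliver it as written.
\begin{itemize}
\item On $\operatorname{Re}w=\pm\pi/2$ one has $\cot w=-i\tanh B$. The bounds $|\tanh B|\le 1$ and $|1/w|\le 2/\pi$ give only $1+2/\pi$.
\item The value $1$ is not approached on these vertical edges; at $B=0$ the modulus is $2/\pi$. It is approached only as $|\operatorname{Im}w|\to\infty$.
\item So on the unbounded strip the maximum principle must be applied on rectangles whose horizontal edges carry the bound $\coth R+1/R\to 1$.
\end{itemize}
Your route can be completed by the exact computation
\[
|\cot w-1/w|^2=\tanh^2 B+\frac{1-2B\tanh B}{\pi^2/4+B^2}\qquad(\operatorname{Re}w=\pm\pi/2).
\]
This is at most $1$ because $1-2B\tanh B\le(\pi^2/4+B^2)\operatorname{sech}^2B$. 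That is trivial when $2B\tanh B\ge 1$; otherwise $|B|<0.8$, and the right side exceeds $1.37$.

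The paper avoids all of this with the identity $|\cot(a+ib)|^2=1+\cos(2a)/(\sin^2a+\sinh^2b)$. If $\cos 2a\le 0$, the modulus is at most $1$. Otherwise $\sin^2a\ge a^2\cos^2a\ge a^2\cos 2a$ and $\sinh^2b\ge b^2\ge b^2\cos 2a$. Together these give $|\cot(a+ib)|\le\sqrt{1+1/(a^2+b^2)}\le 1+1/\sqrt{a^2+b^2}$.
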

	
	\begin{proof}
		First, for $|a|\le\pi/2$ and real $b$, away from a pole,
		\begin{equation}\label{eq:C0_sharp_cot}
			|\cot(a+ib)|\le\sqrt{1+\frac1{a^2+b^2}}
			\le1+\frac1{\sqrt{a^2+b^2}}.
		\end{equation}
		Indeed,
		$|\cot(a+ib)|^2=1+\cos(2a)/(\sin^2a+\sinh^2b)$.
		If $\cos(2a)\le0$, this is at most $1$. Otherwise,
		$\sin^2a\ge a^2\cos^2a\ge a^2\cos(2a)$ and
		$\sinh^2b\ge b^2\ge b^2\cos(2a)$, which prove the first inequality.
		
		Write $x=v+iy$, $d=\operatorname{dist}(v-k,2m\mathbb Z)$, and
		$d_k=\operatorname{dist}(c-k,2m\mathbb Z)$. Applying
		\eqref{eq:C0_sharp_cot} after reducing the real argument modulo $\pi$ gives
		\[
		\left|\cot\frac{\pi(x-k)}{2m}\right|
		\le1+\frac{2m}{\pi\sqrt{d^2+y^2}}.
		\]
		On $L_c$, $v=c+y$, and
		$(c+y-k-2m\ell)^2+y^2\ge(c-k-2m\ell)^2/2$ for every integer $\ell$.
		Hence $\sqrt{d^2+y^2}\ge\max\{|y|,d_k/\sqrt2\}$ and
		\[
			|W^L(x)|\le\frac{\pi\varphi(m)}m+
			\sum_{k=1}^{2m}\frac{|\chi_m(k)|}{\max\{|y|,d_k/\sqrt2\}}.
		\]
		Here $\sum_k|\chi_m(k)|=2\varphi(m)$. The distances $d_k$ are
		$1/2,3/2,\ldots,m-1/2$, each occurring twice. Bounding
		$|\chi_m(k)|\le1$ in the last sum proves \eqref{eq:Omega1_def}.
	\end{proof}
	
	\begin{lemma}
		On every branch $M\le\delta\le M+1$, including its continued endpoint
		values, any nonnegative, non-increasing majorant $\Omega$ on $L_c$ gives
		\begin{equation}\label{eq:C0_master_bound}
			|C_0^{(m)}(p)|\le\frac{e^{\pi/(4m)}}{\pi}
			\int_0^\infty e^{-2\pi u^2/m}\Omega(u/\sqrt2)\,du.
		\end{equation}
	\end{lemma}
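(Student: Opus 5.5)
We bound the contour integral directly on the fixed line $L_c$, $c=M+1/2$, with no residue bookkeeping. By \eqref{eq:C0_J_direct} and the discussion after the resonance formula, on the branch $M$ we have $|C_0^{(m)}(p)|=|\mathcal J_{m,M}(\delta)|$, where
\[
\mathcal J_{m,M}(\delta)=\frac1{2\pi i}\int_{\swarrow c}E_\delta(x)W^L(x)\,dx .
\]
This representation agrees with $\mathcal J_m$ for $M<\delta<M+1$. It is entire in $\delta$, so it also gives the continued endpoint values at $\delta=M$ and $\delta=M+1$. Hence it suffices to bound $|\mathcal J_{m,M}(\delta)|$ uniformly for real $\delta\in[M,M+1]$.

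Parametrize $x=c+e^{i\pi/4}u$, $u\in\R$, so that $|dx|=du$ and $\operatorname{Im}x=u/\sqrt2$. Put $\eta=c-\delta\in[-1/2,1/2]$. Then
\[
(x-\delta)^2=\eta^2+2\eta e^{i\pi/4}u+iu^2,
\]
and taking real parts of $\frac{2\pi i}{m}(x-\delta)^2$ gives
\[
|E_\delta(x)|=\exp\!\Big(-\frac{2\pi}{m}\big(u^2+\sqrt2\,\eta u\big)\Big)
=\exp\!\Big(-\frac{2\pi}{m}\big(u+\tfrac{\eta}{\sqrt2}\big)^2+\frac{\pi\eta^2}{m}\Big).
\]
Since $\eta^2\le1/4$, the second factor is at most $e^{\pi/(4m)}$. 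Combining this with the hypothesis $|W^L(x)|\le\Omega(|\operatorname{Im}x|)=\Omega(|u|/\sqrt2)$ on $L_c$, we get
\[
|C_0^{(m)}(p)|\le\frac{e^{\pi/(4m)}}{2\pi}\int_{\R}e^{-\frac{2\pi}{m}(u+s)^2}\,\Omega(|u|/\sqrt2)\,du,\qquad s=\eta/\sqrt2 .
\]
Integrability is not an issue: $\Omega$ is bounded, being nonnegative and non-increasing with $\Omega(0)<\infty$ (as in \eqref{eq:Omega1_def}), and the Gaussian decays.

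The main step is to remove the shift $s$. The function $u\mapsto\Omega(|u|/\sqrt2)$ is nonnegative, even and non-increasing in $|u|$, so it equals its own symmetric decreasing rearrangement. The shifted Gaussian $e^{-2\pi(u+s)^2/m}$ has the centred Gaussian as its symmetric decreasing rearrangement. The Hardy--Littlewood rearrangement inequality, $\int fg\le\int f^*g^*$, therefore gives
\[
\int_{\R}e^{-\frac{2\pi}{m}(u+s)^2}\,\Omega(|u|/\sqrt2)\,du\le\int_{\R}e^{-\frac{2\pi u^2}{m}}\,\Omega(|u|/\sqrt2)\,du=2\int_0^\infty e^{-\frac{2\pi u^2}{m}}\,\Omega(u/\sqrt2)\,du .
\]
Alternatively, one can write $\Omega$ as a superposition of indicators of symmetric intervals $[-R,R]$ and use that a Gaussian puts maximal mass on $[-R,R]$ when it is centred. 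Substituting this into the previous bound cancels the factor $2$ against $1/(2\pi)$ and yields \eqref{eq:C0_master_bound}.

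The bound is uniform in $\eta\in[-1/2,1/2]$, including the endpoints. It therefore covers both continued lateral values at a change of branch, and it also covers the resonant points, where only the quotient formula for $C_0^{(m)}$ was singular but the contour integral is not. The only delicate point is the rearrangement step; everything else is a direct modulus estimate.
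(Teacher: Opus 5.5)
Your proof is correct and follows essentially the same route as the paper: you pass to the line $L_c$, compute $|E_\delta|$ exactly, bound $e^{\pi\eta^2/m}\le e^{\pi/(4m)}$, and remove the shift by a rearrangement/layer-cake argument, which is the paper's superposition of centred indicators combined with Tonelli. One small point: $\Omega$ need not vanish at infinity (the paper's $\Omega_m$ tends to $\pi\varphi(m)/m$), so apply Hardy--Littlewood to $\Omega-\lim_{y\to\infty}\Omega(y)$; the constant part then gives a translation-invariant Gaussian integral, and your indicator-superposition alternative already handles this directly.
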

	
	\begin{proof}
		Move the contour to $L_c$, which crosses no pole in the interior of
		the branch, and continue to its endpoints. Put $a=c-\delta$, so
		$|a|\le1/2$. On this line,
		\[
			|E_\delta(c+e^{i\pi/4}u)|
			=e^{\pi a^2/m}e^{-(2\pi/m)(u+a/\sqrt2)^2}.
		\]
		The convolution of two nonnegative even functions that decrease on
		$[0,\infty)$ is maximal when their centers coincide. In the present
		case, this follows directly by writing $\Omega(|u|/\sqrt2)$ as a
		superposition of indicators of centered intervals: the integral of
		$e^{-(2\pi/m)(u+b)^2}$ over each such interval is largest at $b=0$.
		Tonelli's theorem gives the same conclusion for $\Omega$. Notice that its constant
		part causes no difficulty. The triangle inequality, followed by
		$e^{\pi a^2/m}\le e^{\pi/(4m)}$, now gives \eqref{eq:C0_master_bound}.
	\end{proof}
	
	For $a>0$, define
	\[
	g_m(a)=\int_0^\infty\frac{e^{-2\pi u^2/m}}{\max\{u,a\}}du=\frac{\sqrt m}{2\sqrt2\,a}\operatorname{erf}\left(\sqrt{\frac{2\pi}{m}}a\right)
	+\frac12 E_1\!\left(\frac{2\pi a^2}{m}\right),
	\]
	where $E_1(z)=\int_z^\infty e^{-t}\,dt/t$ for $z>0$.
	
	\begin{theorem}[Uniform bound for $C_0^{(m)}$]
		Define
		\begin{equation}\label{eq:C0_uniform_best}
			\mathcal B_m^\star=e^{\pi/(4m)}
			\left[\frac{\varphi(m)}{2\sqrt{2m}}
			+\frac{2\sqrt2}{\pi}\sum_{j=0}^{m-1}g_m(j+1/2)\right].
		\end{equation}
		For every real $p$, with either lateral convention at a change of branch,
		$|C_0^{(m)}(p)|\le\mathcal B_m^\star$. More explicitly, for $m\ge3$,
		\begin{equation}\label{eq:C0_uniform_log}
			\frac{\mathcal B_m^\star}{\sqrt m}
			\le e^{\pi/(4m)}\left[
			\frac{\varphi(m)}{2\sqrt2\,m}
			+\frac{1+\tfrac12\log(2\pi m)+\tfrac12\gamma+\log2}{\pi}
			+\frac{\sqrt2\,e^{-2\pi m}}{2\pi^2\sqrt m}\right],
		\end{equation}
		where $\gamma$ is Euler's constant. In particular,
		\begin{equation}\label{eq:C0_400k_universal}
			|C_0^{(m)}(p)|<3.33\sqrt m\qquad(3\le m\le400000).
		\end{equation}
	\end{theorem}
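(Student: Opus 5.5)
The plan is to apply the master bound \eqref{eq:C0_master_bound} with the explicit non-increasing majorant $\Omega_m$ from \eqref{eq:Omega1_def}, and then evaluate the resulting Gaussian integrals exactly. This gives \eqref{eq:C0_uniform_best} directly, since no further estimation is needed beyond the two lemmas.

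\textbf{Step 1: the exact bound \eqref{eq:C0_uniform_best}.} Substituting $\Omega=\Omega_m$ in \eqref{eq:C0_master_bound} splits the integral into a constant part and the sum over $j$. For the constant $\pi\varphi(m)/m$, the Gaussian integral $\int_0^\infty e^{-2\pi u^2/m}du=\tfrac12\sqrt{m/2}$ contributes $\tfrac1\pi\cdot\tfrac{\pi\varphi(m)}{m}\cdot\tfrac{\sqrt m}{2\sqrt2}=\varphi(m)/(2\sqrt{2m})$. For each $j$, the term is $\max\{u/\sqrt2,(j+1/2)/\sqrt2\}^{-1}=\sqrt2/\max\{u,j+1/2\}$. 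Together with the factor $2/\pi$, this gives $\tfrac{2\sqrt2}{\pi}g_m(j+1/2)$. The closed form of $g_m$ follows by splitting at $u=a$: the piece $\int_0^a$ produces the erf term, and $\int_a^\infty e^{-2\pi u^2/m}u^{-1}du=\tfrac12E_1(2\pi a^2/m)$ after the substitution $t=2\pi u^2/m$. The two lateral conventions are already covered, because the master lemma holds on closed branches with continued endpoint values.

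\textbf{Step 2: the explicit majorant \eqref{eq:C0_uniform_log}.} Divide by $\sqrt m$. For the erf pieces, use $\operatorname{erf}\le1$, which bounds each term by $\sqrt m/(2\sqrt2 a)$. Then $\tfrac{2\sqrt2}{\pi}\sum_j \tfrac{\sqrt m}{2\sqrt2(j+1/2)}=\tfrac{\sqrt m}{\pi}\sum_{j<m}(j+\tfrac12)^{-1}$. The identity $\sum_{j<m}(j+\tfrac12)^{-1}=2H_{2m}-H_m$, combined with standard harmonic-number bounds, gives at most $\log m+\gamma+2\log2+\text{small}$. For the $E_1$ pieces, use $E_1(z)\le e^{-z}\log(1+1/z)$, or compare the sum with an integral in $a$. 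Since $E_1(2\pi a^2/m)$ decreases in $a$, the sum over $a=j+1/2$ is at most $E_1(\pi/(2m))/2$ plus $\int_{1/2}^{\infty}E_1(2\pi a^2/m)\,da$, and the latter integral is $O(\sqrt m)$ with an explicit constant; on the other hand $E_1(\pi/(2m))\le\log(2m/\pi)-\gamma+\pi/(2m)$.

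The bookkeeping has to be arranged so that the logarithmic terms combine into $\tfrac12\log(2\pi m)$ with the constant $1+\tfrac12\gamma+\log2$, and the leftover tail becomes the tiny $e^{-2\pi m}$ term, which presumably arises from the tail $\int_{m}^\infty$ beyond the last $a$. I expect this constant-matching to be the main obstacle. What I would try first is to treat the whole sum $\sum_j g_m(j+1/2)$ at once, via Euler--Maclaurin or a direct integral comparison of the non-increasing function $a\mapsto g_m(a)$ on $[0,m]$, using $\int_0^m g_m(a)\,da$ computed by Fubini: $\int_0^\infty e^{-2\pi u^2/m}\bigl(1+\log^+(m/u)\bigr)du$-type expressions. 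These yield $\sqrt m(\tfrac12\log(2\pi m)+\tfrac\gamma2+\ldots)$ exactly through the Gaussian log-moment $\int_0^\infty e^{-x^2}\log x\,dx=-\tfrac{\sqrt\pi}{4}(\gamma+2\log2)$.

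\textbf{Step 3: the numerical bound \eqref{eq:C0_400k_universal}.} The right side of \eqref{eq:C0_uniform_log} is increasing in $m$ apart from the bounded terms $\varphi(m)/m\le1$ and $e^{\pi/(4m)}$. So for large $m$ it suffices to evaluate it at $m=400000$. There $\tfrac12\log(2\pi m)\approx7.41$, so the middle term is about $(1+7.41+0.29+0.69)/\pi\approx2.99$. Adding $1/(2\sqrt2)\approx0.35$ and the factor $e^{\pi/(4m)}\approx1$ gives about $3.34$, which is tight. To get below $3.33$ one would either use the exact $\mathcal B_m^\star$ computed numerically near the top of the range, or use $\varphi(m)/m<1$ there. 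For small $m$, where $e^{\pi/(4m)}$ is larger, a direct evaluation of \eqref{eq:C0_uniform_best} on a finite list settles the remaining cases.
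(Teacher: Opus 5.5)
Your Step 1 is the paper's derivation of \eqref{eq:C0_uniform_best}. The route you finally settle on in Step 2 is also the paper's: compare the midpoint sum with $\int_0^m g_m$, evaluate that integral by Tonelli, use the Gaussian logarithmic moment, and absorb the range $u>m$ into $\int_m^\infty e^{-2\pi u^2/m}(u/m)\,du=e^{-2\pi m}/(4\pi)$.

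\textbf{The gap is the comparison $\sum_{j=0}^{m-1}g_m(j+\tfrac12)\le\int_0^m g_m(a)\,da$ itself.} You justify it only by $g_m$ being non-increasing, and that is not enough. For a decreasing function the midpoint value can exceed the average: $1-a^2$ on $[0,1]$ gives $3/4>2/3$. Monotonicity alone yields only something like $g_m(\tfrac12)+\int_{1/2}^{m}g_m$. This leaves an extra term of order $m^{-1/2}\log m$ in $\mathcal B_m^\star/\sqrt m$ that is absent from \eqref{eq:C0_uniform_log}. At the top of the range that term is about $4\cdot10^{-3}$, far more than the margin of roughly $1.5\cdot10^{-4}$ below $3.33$.

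The missing idea is \emph{convexity} of $g_m$. With $\alpha=2\pi/m$ and $A(a)=\int_0^a e^{-\alpha u^2}\,du$, one has $g_m'(a)=-A(a)/a^2$ and $g_m''(a)=(2A(a)-ae^{-\alpha a^2})/a^3>0$, since $A(a)\ge ae^{-\alpha a^2}$. The midpoint (Hermite--Hadamard) inequality $g_m(j+\tfrac12)\le\int_j^{j+1}g_m$ then holds on each unit interval; the logarithmic singularity at $a=0$ is integrable. This cannot be done pointwise in $u$ before integrating, because $a\mapsto 1/\max\{u,a\}$ is not convex; the Gaussian average is what makes $g_m$ convex. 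An Euler--Maclaurin argument would need exactly this sign of $g_m''$.

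In Step 3 your arithmetic is off: $\tfrac12\log(2\pi\cdot4\cdot10^5)\approx7.3685$, not $7.41$. With the correct value and $\varphi(m)\le m$, the right side of \eqref{eq:C0_uniform_log} at $m=4\cdot10^5$ is about $3.32985<3.33$, so none of your fallbacks is needed. The fallback $\varphi(m)/m<1$ would in any case be useless for prime conductors near the top of the range. What you still need is a genuine monotonicity check, because the factor $e^{\pi/(4x)}$ decreases. Put $A_0=\frac1{2\sqrt2}+\frac{1+\frac12\log(2\pi)+\frac12\gamma+\log2}{\pi}$ and $f(x)=e^{\pi/(4x)}\bigl(\log x/(2\pi)+A_0+10^{-9}\bigr)$. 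The paper shows $f$ is increasing on $[8,\infty)$ by checking the sign of $2x-\pi^2\bigl(\log x/(2\pi)+A_0+10^{-9}\bigr)$, and bounds $f$ by $2.089$ on $[3,8]$. So no case-by-case evaluation of $\mathcal B_m^\star$ for small $m$ is required.
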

	
	\begin{proof}
		Inserting \eqref{eq:Omega1_def} in \eqref{eq:C0_master_bound} gives
		\eqref{eq:C0_uniform_best}. To estimate the sum, put $\alpha=2\pi/m$
		and $A(a)=\int_0^a e^{-\alpha u^2}\,du$. Differentiation gives
		\[
		g_m'(a)=-\frac{A(a)}{a^2},\qquad
		g_m''(a)=\frac{2A(a)-ae^{-\alpha a^2}}{a^3}>0.
		\]
		Thus the midpoint inequality for convex functions gives $\sum_{j=0}^{m-1}g_m(j+1/2)\le\int_0^m g_m(a)\,da.$
		The logarithmic singularity at $a=0$ is integrable. Tonelli's theorem
		evaluates the last integral as
		\[
		\int_0^m e^{-\alpha u^2}\bigl(1+\log(m/u)\bigr)\,du
		+m\int_m^\infty\frac{e^{-\alpha u^2}}u\,du.
		\]
		For $x\ge1$, $0\le x^{-1}-1+\log x\le x$. It follows that this is at most
		\[
		\begin{aligned}
			&\int_0^\infty e^{-\alpha u^2}\bigl(1+\log(m/u)\bigr)\,du
			+\int_m^\infty e^{-\alpha u^2}\frac{u}{m}\,du\\
			&=\frac{\sqrt m}{2\sqrt2}
			\left(1+\frac12\log(2\pi m)+\frac\gamma2+\log2\right)
			+\frac{e^{-2\pi m}}{4\pi}.
		\end{aligned}
		\]
		The Gaussian logarithmic moment follows by differentiating
\[
\int_0^\infty u^{s-1}e^{-\alpha u^2}\,du
=\frac12\alpha^{-s/2}\Gamma(s/2)
\]
at $s=1$ and using $\Gamma'(1/2)/\Gamma(1/2)=-\gamma-2\log2$.
Finally, substitution proves \eqref{eq:C0_uniform_log}.
		
		For clarity, the numerical constant in \eqref{eq:C0_400k_universal}
		needs only elementary bounds, not numerical quadrature. Put
		\[
		A_0=\frac1{2\sqrt2}
		+\frac{1+\tfrac12\log(2\pi)+\tfrac12\gamma+\log2}{\pi},
		\qquad 1.2768728<A_0<1.276873.
		\]
		The last term in the brackets in \eqref{eq:C0_uniform_log} is less than
		$10^{-9}$ for $m\ge3$. Since $\varphi(m)\le m$, it suffices to bound
		\[
		f(x)=e^{\pi/(4x)}\left(\frac{\log x}{2\pi}+A_0+10^{-9}\right).
		\]
		For $3\le x\le8$,
		$f(x)\le e^{\pi/12}(\log8/(2\pi)+A_0+10^{-9})<2.089$.
		This includes the cases $m=3,5,7$.
		For $x\ge8$, the sign of $f'(x)$ is the sign of
		$2x-\pi^2(\log x/(2\pi)+A_0+10^{-9})$. Additionally, this expression is positive
		at $8$ (it exceeds $0.131$), and its derivative is
		$2-\pi/(2x)>0$. Hence, $f$ is increasing on $[8,\infty)$, and
		$f(400000)<3.32986<3.33$, as required.
	\end{proof}

	\begin{theorem}\label{thm:RSJk_bound}
		Let $m>1$ be the conductor of a primitive real Dirichlet character.
		Let $s=\sigma+iT$, with $T>0$, and let $J\ge1$ be an integer.
		Assume that $\sigma\ge0$ or $J+\sigma\ge1$. With
		$\delta=\sqrt{mT/(2\pi)}$, one has
		\[
		|RS_J^{(m)}(p,\sigma)|
		\le c_1(\sigma)\frac{\Gamma((J+2)/2)}{(\delta b_1)^{J+1}},
		\qquad b_1=\sqrt{\frac{2\pi}{7.49m}},
		\]
		where
		\[
		c_1(\sigma)=\frac{2\varphi(m)}5
		\begin{cases}
			2^{-\sigma},&\sigma\le0,\\
			9^\sigma,&\sigma>0.
		\end{cases}
		\]
		At a change of branch, the estimate holds for either continued lateral
		value.
	\end{theorem}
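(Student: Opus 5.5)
The plan is to follow the proof of Theorem~\ref{thm:Cjk_bound}, which is itself modelled on the remainder estimate in \cite[Theorem~4.2]{AR}, now applied to the Taylor remainder $Rg_J$ instead of the coefficients $P_j$. The starting point is \eqref{RS_new}. I write $Rg_J(\tau,z)$ through Cauchy's formula for the tail of a power series: for $|\tau|<\rho$,
\[
Rg_J(\tau,z)=\frac{\tau^{J+1}}{2\pi i}\int_{|\xi|=\rho}\frac{g(\xi,z)}{\xi^{J+1}(\xi-\tau)}\,d\xi .
\]
After the substitution $\zeta=-2iz\xi$ this gives the same representation as \eqref{C_j2}. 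The factors are $(1-\zeta)^{-\sigma}$ and the Gaussian $e^{-i\pi(mv-ip)^2(1+f(\zeta))/(2m)}$, with the extra factor $\tau^{J+1}/(\xi-\tau)$ in place of $\xi^{-1}$. Since $\tau=1/(4\sqrt\pi\,\delta)$, the prefactor $\tau^{J+1}$ supplies $\delta^{-(J+1)}$.

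The $v$-integral is handled as before. On the line $v=ip/m+e^{-i\pi/4}x$ the Gaussian has modulus $e^{-\pi m h(\zeta)x^2/2}$ with $h\ge h_0$. The kernel is controlled by \eqref{eq:Cjk_kernel_majorant}. The power $(mv-ip)^{J+1}$ arises once the $\zeta$-radius is tied to $|z|$. The weight $(1-\zeta)^{-\sigma}$ is bounded by $2^{-\sigma}$ when $\sigma\le0$ (take $r\uparrow1$) and by $9^\sigma$ when $\sigma>0$ (take $r=8/9$). Combined with the Gaussian moments, the right choice of $\rho$ should produce exactly the quantity already bounded in \eqref{eq:Cjk_common_bound}, but with $J+1$ in place of $j$. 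I expect the estimate to take the form
\[
|RS_J^{(m)}|\le \sum_{j\ge J+1}\frac{|C_j\text{-type majorant}|}{\delta^j},
\]
or a direct bound with an extra factor $1/(1-|\tau/\xi|)$. The weaker constant $b_1=\sqrt{2\pi/(7.49m)}$, compared with $b$, is there precisely to absorb this geometric loss. The ratio $b_1/b=(7.49h_0)^{-1/2}$ leaves enough room that the factor $\varphi(m)\bigl(\sqrt{2\pi/(mh_0)}+4\sqrt{2\pi}\bigr)/(4\pi)$, times the tail sum, stays below $2\varphi(m)/5$. The route through the tail is the simpler one: write $RS_J=\sum_{j>J}C_j\delta^{-j}$, which is valid once the Cauchy bound gives absolute convergence. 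Then sum \eqref{eq:cbdef} and use $\Gamma((j+1)/2)\le\Gamma((J+2)/2)\,(j/2)^{(j-J-1)/2}$-type growth. That growth is not summable, however, so the geometric route is preferable. I will use the integral Cauchy form and keep the Gamma factor from the single moment of order $J+1$.

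The hypothesis ``$\sigma\ge0$ or $J+\sigma\ge1$'' is where I expect the main obstacle. Near $x=0$ the integrand behaves like $|x|^{J}\cdot|x|^{-1}$ from the $\coth$ bound, which is harmless. For negative $\sigma$, however, $(1-\zeta)^{-\sigma}$ at radius $r\to1$ and the choice of $\rho$ proportional to $1/|z|$ interact, and they produce a moment $|x|^{J+1+\sigma-1}$-type factor. Integrability of this factor at $0$, and the justification of Fubini, require $J+\sigma\ge1$. I would check that this condition is exactly what makes the small-$|x|$ region (where $|\tau z|$ is not small, so the Cauchy circle must be fixed rather than scaled) contribute at most the same Gaussian moment. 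Finally, the lateral-value statement follows as in Theorem~\ref{thm:Cjk_bound}: the majorant is $O(|x|^{J})$ near $x=0$, so continuation through a pole of $W^L$ on the saddle is dominated.
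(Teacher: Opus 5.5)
There is a genuine gap, and it lies in the choice of Cauchy contour. Your formula for $Rg_J$ is right, but it requires the contour to enclose both $\xi=0$ and $\xi=\tau$. After $\zeta=-2iz\xi$ these become $0$ and $\zeta_\tau:=-2iz\tau=-e^{i\pi/4}\,mx/(2\delta)$, whose modulus $m|x|/(2\delta)$ is unbounded along the $v$-line.

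The two facts you import from Theorem~\ref{thm:Cjk_bound}, namely $h\ge h_0$ and $|1-\zeta|^{-\sigma}\le 2^{-\sigma}$ or $9^\sigma$, hold only on circles $|\zeta|=r<1$; a circle with $r\ge1$ meets the cut $[1,\infty)$. Such a circle contains $\zeta_\tau$ only when $|x|<2r\delta/m$, and your factor $1/(1-|\tau/\xi|)=1/(1-|\zeta_\tau|/r)$ blows up at that threshold. So no choice of $\rho$ covers the rest of the line. As you observe yourself, the tail series $\sum_{j>J}C_j\delta^{-j}$ diverges, so it gives no fallback. The obstruction is therefore at \emph{large} $|x|$, not small $|x|$ as you suggest.

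The missing idea, taken from \cite[Theorem~4.2]{AR}, is to deform to the line $\mathcal L=\{\tfrac12+e^{i\pi/4}y:y\in\mathbb R\}$. For every real $x$, the points $0$ and $\zeta_\tau$ lie on the parallel line $e^{i\pi/4}\mathbb R$. Hence $\mathcal L$ separates them from the cut uniformly in $x$, with $|\zeta-\zeta_\tau|\ge1/(2\sqrt2)$, and the arc at infinity contributes $O(R^{-k-\sigma})$ with $k=J+1$.

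On $\mathcal L$ the constants are not those of the circle argument. After the $x$-moments one is left with $\int_{\mathcal L}|1-\zeta|^{-\sigma}G(\zeta)^{k/2}\,|d\zeta|$, where $G=1/(|\zeta|^2h)$. Arias de Reyna's line estimates then finish the proof: $h>0.43$, $G<7.49$, $\int_{\mathcal L}G\,|d\zeta|<9.58$, and $Q:=|1-\zeta|^2G<5.79$.

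In particular, the $7.49$ in $b_1$ is $\sup_{\mathcal L}G$, not slack reserved for a geometric loss. The weights also come from the line, not from the radii $8/9$ and $r\uparrow1$. For $\sigma>0$, the bound $|1-\zeta|\ge1/(2\sqrt2)$ gives $(2\sqrt2)^\sigma\le9^\sigma$. For $\sigma<0$, $Q<5.79$ gives $(5.79/7.49)^{-\sigma/2}\le2^{-\sigma}$.

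This also corrects your account of the hypothesis. For $\sigma<0$ one writes $|1-\zeta|^{-\sigma}G^{k/2}=Q^{-\sigma/2}G^{(k+\sigma)/2}$. The condition $J+\sigma\ge1$ is exactly $(k+\sigma)/2\ge1$, which makes this integrable along the \emph{unbounded} line $\mathcal L$ and makes the arc at infinity vanish. Near $x=0$ the majorant is $O(|x|^{J})$ for every $\sigma$, so the hypothesis plays no role there.
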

	
	\begin{proof}
		Let us put $k=J+1$, $\tau=(4\sqrt\pi\,\delta)^{-1}$, and
		$z=\sqrt\pi(mv-ip)$. For fixed $z\ne0$, Cauchy's formula gives
		\[
		Rg_J(\tau,z)=\frac{1}{2\pi i}\int_C
		\frac{g(\xi,z)}{\xi-\tau}
		\left(\frac{\tau}{\xi}\right)^k\,d\xi,
		\]
		where $C$ encloses $0$ and $\tau$ in the domain of the logarithm.
		Under $\zeta=-2iz\xi$, the integrand becomes
		\[
		\frac{(-2iz\tau)^k}{2\pi i}
		\frac{(1-\zeta)^{-\sigma}\exp(-iz^2f(\zeta)/(2m))}
		{(\zeta+2iz\tau)\zeta^k}\,d\zeta,
		\]
		with the same $f$ as in the proof of Theorem~\ref{thm:Cjk_bound}.
		
		We parameterize the outer contour by
		$v=ip/m+e^{-i\pi/4}x$, and let
		\[
		\mathcal L=\left\{\frac12+e^{i\pi/4}y:y\in\mathbb R\right\},
		\]
		oriented in the direction of increasing $y$.
		Both $0$ and $-2iz\tau=-im e^{-i\pi/4}x/(2\delta)$ lie on the line
		through zero parallel to $\mathcal L$, to its left.
		The cut $[1,\infty)$ lies to its right.
		The deformation in \cite[proof of Theorem~4.2]{AR} therefore gives
		\begin{equation}\label{eq:RSJ_line_remainder}
			Rg_J(\tau,z)=\frac{(-2iz\tau)^k}{2\pi i}
			\int_{\mathcal L}
			\frac{(1-\zeta)^{-\sigma}\exp(-iz^2f(\zeta)/(2m))}
			{(\zeta+2iz\tau)\zeta^k}\,d\zeta.
		\end{equation}
		Indeed, $f(\zeta)=-1/2+O(1/|\zeta|)$ at infinity in the
		half-plane of closure. The closing integral is
		$O(R^{-k-\sigma})$ and tends to zero because $k+\sigma>0$.
		The distance between the parallel lines gives
		\[
		|\zeta+2iz\tau|\ge\frac1{2\sqrt2}.
		\]
		Write
		\[
		h(\zeta)=1+\operatorname{Re}f(\zeta),\qquad
		G(\zeta)=\frac1{|\zeta|^2h(\zeta)},\qquad
		Q(\zeta)=|1-\zeta|^2G(\zeta).
		\]
		We use the auxiliary estimates established in the proof of
		\cite[Theorem~4.2, following (4.9)]{AR}, with outward rounding:
		\begin{equation}\label{eq:RSJ_AR_line_estimates}
			h>0.43,\qquad G<7.49,\qquad
			\int_{\mathcal L}G(\zeta)\,|d\zeta|<9.58,\qquad Q<5.79.
		\end{equation}
		These are the estimates for precisely this $f$ and this line, notice that 
		they do not depend on $m$. Also
		$|1-\zeta|\ge1/(2\sqrt2)$ on $\mathcal L$.
		
		By \eqref{eq:Cjk_kernel_majorant},
		\[
			\left|W^L\!\left(\delta+\frac{im}{2}e^{-i\pi/4}x\right)\right|
			\le\frac{\varphi(m)}m
			\left(\pi+\frac{4\sqrt2}{|x|}\right).
		\]
		Since $z^2=-i\pi m^2x^2$, the product of the exponential factors
		in \eqref{RS_new} and \eqref{eq:RSJ_line_remainder} has modulus
		$e^{-\pi m h(\zeta)x^2/2}$. Hence
		\[
		\begin{split}
			|RS_J^{(m)}(p,\sigma)|
			\le{}&\frac{\varphi(m)\sqrt2}{4\pi^2}
			\int_{\mathcal L}|1-\zeta|^{-\sigma}|\zeta|^{-k}\\
			&\times\int_{\mathbb R}
			\left(\frac{m|x|}{2\delta}\right)^k
			\left(\pi+\frac{4\sqrt2}{|x|}\right)
			e^{-\pi m h(\zeta)x^2/2}\,dx\,|d\zeta|.
		\end{split}
		\]
		Evaluation of the two Gaussian moments, followed by
		\[
		\frac{\Gamma(k/2)}{\Gamma((k+1)/2)}\le\frac2{\sqrt\pi}
		\qquad(k\ge2),
		\]
		bounds the inner integral by
		\[
		\left(\frac{\sqrt{m/(2\pi)}}{\delta}\right)^k
		\left(\sqrt{\frac{2\pi}{0.43m}}+8\sqrt{\frac2\pi}\right)
		\Gamma\!\left(\frac{k+1}{2}\right)h(\zeta)^{-k/2}.
		\]
		It remains to estimate
		\[
		I_{k,\sigma}=\int_{\mathcal L}
		|1-\zeta|^{-\sigma}G(\zeta)^{k/2}\,|d\zeta|.
		\]
		For $\sigma\ge0$, \eqref{eq:RSJ_AR_line_estimates} gives
		\[
		I_{k,\sigma}\le
		(2\sqrt2)^\sigma\,9.58(7.49)^{(k-2)/2}.
		\]
		For $\sigma<0$, put $r=-\sigma$. The hypothesis gives $k-r\ge2$,
		and
		\[
		|1-\zeta|^rG^{k/2}=Q^{r/2}G^{(k-r)/2}.
		\]
		Thus
		\[
		I_{k,\sigma}\le
		9.58(7.49)^{(k-2)/2}
		\left(\frac{5.79}{7.49}\right)^{r/2}.
		\]
		Keeping $r$ real is useful since it avoids rounding $-\sigma$ to an integer. Combining these estimates gives the stronger bound
		\[
			\begin{split}
				|RS_J^{(m)}(p,\sigma)|
				\le{}&\frac{9.58\,\varphi(m)}{7.49}
				\left(\frac1{2\pi^{3/2}\sqrt{0.43m}}+\frac4{\pi^{5/2}}\right)
				\frac{\Gamma((J+2)/2)}{(\delta b_1)^{J+1}}\\
				&\times
				\begin{cases}
					(5.79/7.49)^{-\sigma/2},&\sigma\le0,\\
					(2\sqrt2)^\sigma,&\sigma>0.
				\end{cases}
			\end{split}
		\]
		For the simpler constant in the statement, use $m\ge3$ and
		\[
		\frac{9.58}{7.49}<1.28,\qquad
		\frac1{2\pi^{3/2}\sqrt{0.43m}}<0.08,\qquad
		\frac4{\pi^{5/2}}<0.23.
		\]
		The last two inequalities follow, for example, from
		$\pi>3.14$, $\sqrt\pi>1.77$, and $\sqrt{1.29}>1.13$.
		Their product is less than $1.28(0.08+0.23)<2/5$.
		Finally, $(2\sqrt2)^\sigma\le9^\sigma$ for $\sigma>0$, and
		$(5.79/7.49)^{-\sigma/2}\le2^{-\sigma}$ for $\sigma\le0$.
		This proves the stated common bound.
		
		The preceding majorants prove absolute convergence and justify
		the interchange of integrals. Near $x=0$ they are
		$O(|x|^{k-1})$, so dominated convergence gives the same estimate
		for both continued lateral values.
	\end{proof}
	
	\begin{theorem}[Riemann--Siegel expansion]
		Let $s=\sigma+iT$ with $T>0$ and $J\ge0$. Then
		\[
		R_L(s)
		=
		\sum_{n=1}^{M}\frac{\chi_m(n)}{n^s}+
		U\delta^{-\sigma}
		\left(
		\sum_{j=0}^{J}
		\frac{C_{j}^{(m)}(p,\sigma)}{\delta^j}
		+
		RS_{J}^{(m)}
		\right),
		\]
		where
		\[
		M=\lfloor\delta\rfloor=N+\lfloor\delta-N\rfloor.
		\]
	\end{theorem}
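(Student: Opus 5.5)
The plan is to assemble this expansion from Proposition~\ref{prop:RtN_decomp} together with the Cauchy-theorem step in its proof. We start from the definition of $R_L(s)$ as the integral over $0\swarrow1$. Fixing $N=m\lfloor a_{\rm sp}\rfloor$, we shift the contour to $N\swarrow N+1$. Because of the Gaussian factor $e^{i\pi x^2/m}$ the integrand decays along the ends of both lines, so this shift is legitimate. It picks up the residues of $x^{-s}e^{i\pi x^2/m}W^L(x)$ at the integers $n=1,\dots,N$. Since $\operatorname{Res}_{x=n}W^L=\chi_m(n)e^{-i\pi n^2/m}$, each residue equals $\chi_m(n)n^{-s}$. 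With the stated orientation the sign is positive, and we obtain $R_L(s)=\sum_{n\le N}\chi_m(n)n^{-s}+\frac1{2\pi i}\int_{N\swarrow N+1}(\cdots)$.

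Next we insert \eqref{deco_int} for the remaining integral with the given $J$. It contributes the residues at $N+1,\dots,M$, where $M=N+K$ and $K=\lfloor\delta-N\rfloor$, together with the saddle term $U\delta^{-\sigma}\bigl(\sum_{j\le J}C_j^{(m)}\delta^{-j}+RS_J^{(m)}\bigr)$. The two finite sums combine into $\sum_{n\le M}\chi_m(n)n^{-s}$. Since $N\in\mathbb Z$, we have $N+\lfloor\delta-N\rfloor=\lfloor\delta\rfloor$, which identifies $M$ as stated.

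The only delicate point is the exceptional case where $\delta$ is an integer, or the saddle sits on a pole of $W^L$. We handle it by the convention already fixed in Proposition~\ref{prop:RtN_decomp}: take the continuation from the branch with $M=\lfloor\delta\rfloor$. The left side $R_L(s)$ is continuous in $T$, and the coefficients and remainder extend continuously to the branch endpoints by the majorant arguments in Theorems~\ref{thm:Cjk_bound} and \ref{thm:RSJk_bound}. Passing to the limit from $\delta>d$ therefore gives the identity at the endpoint as well.

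We also need to justify the saddle-point deformation, namely that no poles are crossed other than $N+1,\dots,M$. The new contour crosses the real axis at $\delta\in[M,M+1)$, and Gaussian decay of $E_\delta$ on the connecting segments makes the deformation valid. This is the step we expect to need the most care, because it fixes both the orientation and the residue signs. It is already established in the proof of Proposition~\ref{prop:RtN_decomp}, so we only invoke it here.
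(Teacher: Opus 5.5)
Your proposal is correct and follows the paper's route. The paper's proof simply cites Proposition~\ref{prop:RtN_decomp}, whose proof contains exactly your Cauchy shift from $0\swarrow1$ to $N\swarrow N+1$ followed by the expansion \eqref{deco_int}, with integral $\delta$ handled by the same continuation convention from the branch $M=\lfloor\delta\rfloor$.
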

	
	\begin{proof}
		The result follows from Proposition~\ref{prop:RtN_decomp}.
	\end{proof}

	\begin{prop}[Approximation for $R_{t,N}^L$]
		\label{prop:RtN_rigorous}
		Let $0\le\sigma\le1$, $T\ge240$, and $0<t\le1/2$. Put
		\[
		T'=T+\frac{\pi t}{8},\qquad
		\delta=\sqrt{\frac{mT'}{2\pi}},\qquad
		N=m\left\lfloor\sqrt{\frac{T'}{2\pi m}}\right\rfloor,\qquad
		K=\lfloor\delta\rfloor-N,\qquad M=N+K.
		\]
		Let $p$ and $U$ be the quantities in Proposition~\ref{prop:RtN_decomp},
		evaluated at height $T'$. We use the common constants in
		\eqref{eq:Cjk_common_bound} and Theorem~\ref{thm:RSJk_bound},
		\[
		b_1=\sqrt{\frac{2\pi}{7.49m}},\qquad
		c_1(0)=\frac{2\varphi(m)}5.
		\]
		Then
		\[
		\begin{aligned}
			R_{t,N}^L(\sigma+iT)
			={}&\sum_{k=1}^K r_{t,N+k}^L(\sigma+iT)\\
			&+e^{t\pi^2/64}M_0^L(iT')Ue^{\pi i\sigma/4}
			\left(C_0^{(m)}(p)
			+O_\le\!\left(\widetilde\varepsilon(\sigma,T,t,m)\right)\right),
		\end{aligned}
		\]
		where the following explicit bound is admissible:
		\begin{equation}\label{eq:RtN_epsilon_explicit}
			\begin{split}
				\widetilde\varepsilon(\sigma,T,t,m)
				={}&
				\mathcal B_m^\star
				\left[
				\exp\!\left(\frac{12\sigma^2+13}{24(T-5/4)}\right)-1
				\right]\\
				&+\frac{c_1(0)}{\delta b_1-3/2}
				\left(\frac94\,9^\sigma+\frac54\,2^{-\sigma}\right)
				\exp\!\left(\frac{2}{T-5/4}\right).
			\end{split}
		\end{equation}
		Moreover
		\begin{equation}\label{eq:boundepsilon_explicit}
			|\widetilde\varepsilon(\sigma,T,t,m)|
			\leq{} \mathcal B_m^\star \left(\frac{12\sigma^2+14}{24(T-5/4)}\right)+\frac{c_1(0)}{\delta b_1-3/2}
			\left(\frac94\,9^\sigma+\frac54\,2^{-\sigma}\right)
			\exp\!\left(\frac{2}{T-5/4}\right).
		\end{equation}
		
		Here $\mathcal B_m^\star$ is the bound in
		\eqref{eq:C0_uniform_best}. At integral $\delta$, the continuous
		term is interpreted by continuation from $\delta>M$, with $M$
		fixed, consistently with the residue range $n\le M$.
		The assumption $T\ge240$ corresponds to $x\ge480$ when $T=x/2$.
	\end{prop}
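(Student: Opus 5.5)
We follow the scheme of \cite[Proposition~6.3]{Poly}, replacing the zeta kernel by $W^L$. We start from the Gaussian representation \eqref{Rtnchange}. The shift is chosen so that the evaluation point moves to the saddle $\beta_N\approx\alpha_L(\sigma+iT)-\log\delta$. We take $\beta_N=\pi i/4$ exactly, since $\alpha_L(s)\approx\tfrac12\log(mT/(2\pi))+\tfrac{\pi i}{4}$ and $\log\delta=\tfrac12\log(mT'/2\pi)$. The real parts then nearly cancel, and the shift $\tfrac t2\beta_N=\pi i t/8$ produces the height $T'$. With this choice the prefactor $\exp(-\tfrac t4\beta_N^2)=e^{t\pi^2/64}$ appears, and the integrand is $R_{0,N}^L(\sigma+\sqrt t v+iT')e^{-\pi i\sqrt t v/4}$. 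For each $v$ we apply Proposition~\ref{prop:RtN_decomp} at $s_v=\sigma+\sqrt t\,v+iT'$ with $J=1$. The modulus $\delta$ depends only on $T'$, so $N,K,M,p,U$ are independent of $v$, which is exactly why the statement uses $T'$.

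This gives three pieces. The residue terms $\sum_{k\le K}\chi_m(N+k)(m/\pi)^{\cdot}\Gamma(\cdot)(N+k)^{-s_v}$, integrated against the Gaussian, reproduce $\sum_k r_{t,N+k}^L(\sigma+iT)$ by \eqref{rtnchange} read backwards; this is an exact identity, since both sides are Gaussian averages of the same entire function along shifted lines. The saddle term is $(m/\pi)^{(s_v+a)/2}\Gamma((s_v+a)/2)\,U\delta^{-\sigma_v}C_0^{(m)}(p)$. Here $C_0^{(m)}$ is independent of $\sigma$, and the remaining pieces are $\delta^{-\sigma_v}C_1^{(m)}(p,\sigma_v)/\delta+RS_1$.

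For the main term we use \eqref{eq:Stirling_uniform_horizontal} to write the Gamma factor as $M_0^L(s_v)e^{O_\le(1/(6(T'-1)))}$. We then Taylor expand $\log M_0^L(s_v)-\log M_0^L(iT')$ around $iT'$ in the real direction $w=\sigma+\sqrt t v$, using \eqref{alpha_L}. Combined with $\delta^{-w}$, the first-order term is $w(\alpha_L(iT')-\log\delta)=w(\pi i/4+O(1/T))$. This yields the factor $e^{\pi i\sigma/4}$, while the $\sqrt t v$ part cancels against $e^{-\pi i\sqrt tv/4}$. The second-order remainder is bounded via \eqref{alphapl} by $|w|^2/(4(T-1))$. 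Integrating $\exp(O_\le((\sigma^2+tv^2)/(2(T-5/4))+\dots))$ against $e^{-v^2}/\sqrt\pi$ is handled exactly as in Lemma~\ref{lem:rtn_estimate}. The result is the factor $\exp((12\sigma^2+13)/(24(T-5/4)))$; multiplying its deviation from $1$ by $|C_0^{(m)}|\le\mathcal B_m^\star$ gives the first term of \eqref{eq:RtN_epsilon_explicit}.

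For the tail we bound $|C_1|/\delta$ by \eqref{eq:Cjk_common_bound} and $RS_1$ by Theorem~\ref{thm:RSJk_bound}. In both, the hypothesis $J+\sigma_v\ge1$ may fail for very negative $\sigma_v$, so we instead choose a truncation order $J(\sigma_v)$ large enough that $J+\sigma_v\ge1$. We then sum the geometric-type series $\sum_j\Gamma((j+1)/2)(\delta b_1)^{-j}$ using Lemma~\ref{elementaryball}(i). This produces the $1/(\delta b_1-3/2)$ shape, with weights $9^{\sigma_v}$ and $2^{-\sigma_v}$. The Gaussian average of $9^{\sqrt t v}$ and $2^{-\sqrt tv}$ is at most $e^{t\log^2 9/4}$, which together with the Stirling and $\delta^{-\sigma_v}$ normalisation is absorbed into $\tfrac94$, $\tfrac54$ and $\exp(2/(T-5/4))$, using $t\le1/2$ and $T\ge240$. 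This uniform handling of negative real parts is the main obstacle: the factor $|\delta^{-\sqrt tv}|$ must be compensated by the Gaussian and by the ratio $|M_0^L(s_v)|/|M_0^L(iT')|$. We expect these to combine into $\delta^{0}$ times a bounded exponential, since $\operatorname{Re}(\alpha_L-\log\delta)=O(1/T)$, and we would verify this first.

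Finally, \eqref{eq:boundepsilon_explicit} follows from $e^x-1\le x/(1-x)$ and $T\ge240$, which turn $13$ into $14$. The endpoint statement follows by continuity of each term on the branch, using the lateral continuation in Theorems~\ref{thm:Cjk_bound} and~\ref{thm:RSJk_bound}.
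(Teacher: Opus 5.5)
Your architecture is the paper's. It uses the shift $\beta_N=\pi i/4$, which gives the height $T'$ and the factor $e^{t\pi^2/64}$. It applies Proposition~\ref{prop:RtN_decomp} pointwise at $s_v=u+iT'$, $u=\sigma+\sqrt t\,v$, with $N,K,M,p,U$ independent of $v$, and recovers $\sum_k r^L_{t,N+k}(\sigma+iT)$ exactly. It normalises through \eqref{eq:Stirling_uniform_horizontal} and a Taylor expansion of $\log M_0^L$ along $\operatorname{Im}s=T'$, takes a $u$-dependent truncation order, and averages $9^u$ and $2^{-u}$ against the Gaussian.

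The factor you single out as the main obstacle, $|\delta^{-\sqrt t v}|$, is harmless. Together with $e^{-i\pi\sqrt t v/4}$ and the Gamma factor at $s_v$, it equals $M_0^L(iT')e^{i\pi\sigma/4}e^{\eta_v}$ with $|\eta_v|\le(u^2/2+5/12)/(T'-1)$, and the Gaussian absorbs this.

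The step that actually fails is your tail estimate. Because you need $J(u)+u\ge1$, the number of terms grows like $|u|$. The majorant is $c_1(0)2^{-u}\sum_{j=1}^{J(u)+1}\Gamma\bigl(\tfrac{j+1}{2}\bigr)q^{-j}$ with $q=\delta b_1$, and this is not a geometric-type series. The ratio of terms two indices apart is $(j+1)/(2q^2)$, which is at most $1/2$ only for $j+1\le q^2$ and exceeds $1$ once $j>2q^2-1$. So the series in $j$ diverges and its partial sums are unbounded as $u\to-\infty$. No bound $C/(q-3/2)$ uniform in $u$ is available; Lemma~\ref{elementaryball}(i) controls only a fixed number of terms. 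You therefore cannot average $2^{-u}$ against the Gaussian with a $u$-independent factor $1/(q-3/2)$.

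The missing idea is to split the $j$-sum at $L+1$, with $L=\lfloor q^2\rfloor\ge30$, and let the Gaussian pay for the divergent part.
\begin{itemize}
\item For $j\le L+1$ the two-step ratio is at most $1/2$. Separating the first odd and even terms gives $\sum_{j=1}^{L+1}\Gamma\bigl(\tfrac{j+1}{2}\bigr)q^{-j}\le q^{-1}+q^{-2}+2q^{-3}+3q^{-4}\le 1/(q-3/2)$, uniformly in $u$. Completing the square against $e^{\lambda u}$, $\lambda\in\{\log9,-\log2\}$, then gives the contributions $\tfrac94\,9^\sigma$ and $\tfrac87\,2^{-\sigma}$.
\item The terms with $j\ge L+2$ occur only for $u\le4-j\le-28$, taking $J(u)=\max\{\lfloor-u\rfloor+3,L\}$ for $u<0$. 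Interchange sum and integral by Tonelli and bound the weighted integrand by $e^{-u^2/(2t)-u\log2}$. For each such $j$ this gives $\frac{1}{\sqrt{\pi t}}\int_{-\infty}^{4-j}e^{-u^2/(2t)-u\log2}\,du\le e^{-(j-4)^2+(j-4)}$, which beats $\Gamma\bigl(\tfrac{j+1}{2}\bigr)$. What remains is at most $c_1(0)\sum_{j\ge L+2}q^{-j}\le c_1(0)2^{-\sigma}/(16(q-3/2))$.
\end{itemize}
This is exactly why the coefficient of $2^{-\sigma}$ in \eqref{eq:RtN_epsilon_explicit} is $\tfrac54>\tfrac87+\tfrac1{16}$.

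A smaller point concerns the constant $12\sigma^2+13$. It comes from integrating $e^{(u^2/2+5/12)/(T'-1)}$ exactly against the Gaussian, which yields $\sigma^2/(2(T'-1)-t)$. The splitting $|w|^2\le2\sigma^2+2tv^2$ from Lemma~\ref{lem:rtn_estimate} that you propose, combined with the linear term $|w|/(2T')$, loses constants and does not give $13$.
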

	
	\begin{proof}
		We follow the separation of errors in
		\cite[proof of Proposition~6.3]{Poly}. For use in the proof, put
		\[
		u=\sigma+\sqrt t\,v,\qquad s_v=u+iT',\qquad
		q=\delta b_1=\sqrt{\frac{T'}{7.49}},\qquad
		L=\lfloor q^2\rfloor,\qquad S=T'-1.
		\]
		Thus $q>5$, $L\ge30$, and $S\ge239$.
		Choose the truncation order
		\[
		J(u)=
		\begin{cases}
			1,&u\ge0,\\
			\max\{\lfloor-u\rfloor+3,L\},&u<0.
		\end{cases}
		\]
		For $u<0$ this gives $J(u)+u>2$, so
		Theorem~\ref{thm:RSJk_bound} applies on the whole integration line.
		The cutoff $L=\lfloor q^2\rfloor$ is chosen for the scale
		$q=\delta b_1$ in our coefficient bounds.
		
		The shifted Gaussian identity is
		\begin{equation}\label{eq:RtN_shifted_gaussian}
			R_{t,N}^L(\sigma+iT)
			=e^{t\pi^2/64}\int_{\mathbb R}
			e^{-i\pi\sqrt t\,v/4}R_{0,N}^L(s_v)
			\frac{e^{-v^2}}{\sqrt\pi}\,dv.
		\end{equation}
		Indeed, in \eqref{intrR} translate the contour from $\mathbb R$ to
		$\mathbb R+i\pi\sqrt t/8$. The unsmoothed terms are analytic in the
		intervening strip. Lemma~\ref{lem:GaussianBound}, with
		$T\le\operatorname{Im}s\le T'$, bounds their horizontal growth by
		$\exp(O((1+|\operatorname{Re}s|)\log(2+|\operatorname{Re}s|)))$.
		The Gaussian therefore makes the integrals over the connecting
		segments tend to zero. The same shift applies to each $r_{0,n}^L$.
		
		Write
		\[
		\mathcal G_L(s)=
		\left(\frac m\pi\right)^{(s+a)/2}
		\Gamma\!\left(\frac{s+a}{2}\right).
		\]
		Multiplying \eqref{deco_int} at $s_v$ by $\mathcal G_L(s_v)$ gives
		\[
        \ba 
		R_{0,N}^L(s_v)
		= &~{} \sum_{k=1}^K r_{0,N+k}^L(s_v)
		\\
        &~{} +\mathcal G_L(s_v)U\delta^{-u}
		\left(
		C_0^{(m)}(p)+
		\sum_{j=1}^{J(u)}\frac{C_j^{(m)}(p,u)}{\delta^j}
		+RS_{J(u)}^{(m)}(u+iT')
		\right).
        \ea 
		\]
		The finite sum in \eqref{eq:RtN_shifted_gaussian} becomes exactly
		$\sum_{k=1}^K r_{t,N+k}^L(\sigma+iT)$.
		
		We first control the factor multiplying the continuous part.
		The uniform Stirling estimate
		\eqref{eq:Stirling_uniform_horizontal} gives
		\[
		\mathcal G_L(s_v)=M_0^L(s_v)e^{\theta_v},
		\qquad |\theta_v|\le\frac{1}{6(T'-1)}.
		\]
		Moreover, \eqref{alpha_L} implies
		\[
		\left|\alpha_L(iT')-\log\delta-\frac{i\pi}{4}\right|
		\le\frac{1}{2T'},\qquad
		|\alpha_L'(x+iT')|\le\frac{1}{2(T'-1)}
		\quad(x\in\mathbb R).
		\]
		For the first inequality, the difference is $-1/(2iT')$ if $a=0$.
		If $a=1$, it is
		\[
		\frac12\left(\log(1+w)-\frac{w}{1+w}\right)
		=\frac12\int_0^w\frac{\zeta}{(1+\zeta)^2}\,d\zeta,
		\qquad w=\frac{1}{iT'},
		\]
		whose modulus is at most $1/(4T'^2)$. The derivative estimate
		follows from
		$|\alpha_L'|\le1/(2T')+1/(2T'^2)$.
		Taylor's formula for the analytic logarithm specified in
		\eqref{M0L}, along a horizontal segment, now yields
		\[
        \ba 
		e^{-i\pi\sqrt t\,v/4}\mathcal G_L(s_v)\delta^{-u}
		= &~{} M_0^L(iT')e^{i\pi\sigma/4}e^{\eta_v},
		\\
		|\eta_v|
		\le &~{} \frac{u^2}{4S}+\frac{|u|}{2T'}+\frac1{6S}
		\le\frac{u^2/2+5/12}{S}.
		\ea
        \]
		The final inequality uses $2|u|\le u^2+1$.
		
		Since $C_0^{(m)}(p)$ is independent of $u=\sigma+\sqrt tv$, it follows that the
		normalized continuous contribution is
		\[
		C_0^{(m)}(p)\bigl(1+O_\le(\varepsilon)\bigr)
		+O_\le(\mathcal E),
		\]
		where $\varepsilon=
		\int_{\mathbb R}
		\left(e^{(u^2/2+5/12)/S}-1\right)
		\frac{e^{-v^2}}{\sqrt\pi} dv$
		and
		\[
		\mathcal E=
		\int_{\mathbb R}e^{(u^2/2+5/12)/S}
		\left(
		\sum_{j=1}^{J(u)}\frac{|C_j^{(m)}(p,u)|}{\delta^j}
		+|RS_{J(u)}^{(m)}(u+iT')|
		\right)
		\frac{e^{-(u-\sigma)^2/t}}{\sqrt{\pi t}}du.
		\]
		Completing the square gives
		\[
		\varepsilon=
		\frac{\exp\!\left(5/(12S)+\sigma^2/(2S-t)\right)}
		{\sqrt{1-t/(2S)}}-1
		\le
		\exp\!\left(\frac{12\sigma^2+13}{24(T-5/4)}\right)-1.
		\]
		To obtain the inequality, use $-\log(1-z)\le z/(1-z)$ and
		\[
		\frac{5}{12S}+\frac{\sigma^2}{2S-t}
		-\frac12\log\!\left(1-\frac{t}{2S}\right)
		\le\frac{5/12+\sigma^2/2+t/4}{S-t/2}
		\le\frac{12\sigma^2+13}{24(T-5/4)}.
		\]
		This uses the subtraction of $1$ in the contribution of
		$C_0^{(m)}(p)$. For the rational bound
		\eqref{eq:boundepsilon_explicit}, put $A=12\sigma^2+13$ and
		$\mathfrak d=T-5/4$. Since $A\le25$ and $24\mathfrak d\ge A(A+1)$,
		\[
		e^{A/(24\mathfrak d)}-1
		\le\frac{A}{24\mathfrak d-A}\le\frac{A+1}{24\mathfrak d}.
		\]
		We next estimate $\mathcal E$. For $u\ge0$, take $J=1$ in the
		common coefficient bound \eqref{eq:Cjk_common_bound} and the remainder bound to get
		\[
		\frac{|C_1^{(m)}(p,u)|}{\delta}
		+|RS_1^{(m)}(u+iT')|
		\le c_1(0)9^u
		\left(\frac{9}{8q}+\frac{\sqrt\pi}{2q^2}\right)
		\le \frac{9c_1(0)9^u}{8(q-3/2)}.
		\]
		For $u<0$, both estimates have the factor $c_1(0)2^{-u}$, and
		their sum is at most
		\[
		c_1(0)2^{-u}
		\sum_{j=1}^{J(u)+1}\frac{\Gamma((j+1)/2)}{q^j}.
		\]
		For $j+2\le L+1$ the ratio of terms two indices apart is
		\[
		\frac{\Gamma((j+3)/2)/q^{j+2}}
		{\Gamma((j+1)/2)/q^j}
		=\frac{j+1}{2q^2}\le\frac12.
		\]
		Separate the first odd and even terms and sum the remaining
		geometric majorants. Since $\sqrt\pi/2<1$, this gives
		\[
		\begin{aligned}
			\sum_{j=1}^{L+1}\frac{\Gamma((j+1)/2)}{q^j}
			&\le\frac1q+\frac{\sqrt\pi}{2q^2}
			+\frac2{q^3}+\frac{3\sqrt\pi}{2q^4}\\
			&\le\frac1q+\frac1{q^2}+\frac2{q^3}+\frac3{q^4}
			\le\frac1{q-3/2}.
		\end{aligned}
		\]
		In the last step, compare with the geometric expansion of
		$(q-3/2)^{-1}$. Consequently, for every real $u$,
		\[
			\begin{split}
				&\sum_{j=1}^{J(u)}\frac{|C_j^{(m)}(p,u)|}{\delta^j}
				+|RS_{J(u)}^{(m)}(u+iT')|\\
				&\quad\le
				\frac{c_1(0)}{q-3/2}\left(\frac98\,9^u+2^{-u}\right)
				+c_1(0)2^{-u}
				\sum_{\substack{L+2\le j\le -u+4\\j\in\mathbb Z}}
				\frac{\Gamma((j+1)/2)}{q^j}.
			\end{split}
		\]
		An empty sum is zero. Integrating the three nonnegative terms
		on the right gives $\mathcal E\le\mathcal E_1+\mathcal E_2+\mathcal E_3$,
		as in the corresponding step of \cite{Poly}.
		
		For the first two terms, completing the square gives, for real
		$\lambda$,
		\[
		\begin{aligned}
			&\int_{\mathbb R}
			e^{(u^2/2+5/12)/S+\lambda u}
			\frac{e^{-(u-\sigma)^2/t}}{\sqrt{\pi t}}\,du\\
			&\quad=
			e^{\lambda\sigma+t\lambda^2/4}
			\frac{\exp\!\left(5/(12S)+
				(\sigma+t\lambda/2)^2/(2S-t)\right)}
			{\sqrt{1-t/(2S)}}\\
			&\quad\le
			e^{\lambda\sigma+t\lambda^2/4}
			\exp\!\left(
			\frac{5/12+(\sigma+t\lambda/2)^2/2+t/4}{S-t/2}
			\right).
		\end{aligned}
		\]
		For $\lambda=\log9$, the numerator in the last exponent is at most
		$13/24+625/512<2$, using $\log9<9/4$. For
		$\lambda=-\log2$, it is at most $25/24<2$, since
		$|\sigma-t\log2/2|\le1$.
		The elementary inequalities
		\[
		e^{(\log9)^2/8}<2,\qquad
		e^{(\log2)^2/8}<e^{1/8}\le\frac87
		\]
		therefore imply
		\[
		\mathcal E_1\le
		\frac{9c_1(0)9^\sigma}{4(q-3/2)}
		e^{2/(T-5/4)},\qquad
		\mathcal E_2\le
		\frac{8c_1(0)2^{-\sigma}}{7(q-3/2)}
		e^{2/(T-5/4)}.
		\]
		For the first elementary inequality, it suffices to use
		$(\log9)^2/8<81/128<2/3<\log2$.
		
		Finally, Tonelli's theorem gives
		\[
		\mathcal E_3=
		\frac{c_1(0)}{\sqrt{\pi t}}
		\sum_{j=L+2}^{\infty}\frac{\Gamma((j+1)/2)}{q^j}
		\int_{-\infty}^{4-j}
		e^{(u^2/2+5/12)/S-(u-\sigma)^2/t-u\log2}\,du.
		\]
		Here $j\ge32$, $u\le-28$, and $\sigma\ge0$. In this range
		$(u-\sigma)^2\ge u^2$ and
		$(u^2/2+5/12)/S\le u^2/(2t)$, so the integrand is bounded by
		$e^{-u^2/(2t)-u\log2}$. Since
		$u^2\ge-(j-4)u$ on the integration interval,
		\[
		\begin{aligned}
			\frac1{\sqrt{\pi t}}\int_{-\infty}^{4-j}
			e^{-u^2/(2t)-u\log2}\,du
			&\le
			\frac{2\sqrt t}{\sqrt\pi(j-4-2t\log2)}
			e^{-(j-4)^2/(2t)+(j-4)\log2}\\
			&\le e^{-(j-4)^2+(j-4)}.
		\end{aligned}
		\]
		For $j\ge32$, log-convexity of $\Gamma$ gives
		$\Gamma((j+1)/2)\le j!\le j^j$. Also $\log j\le j/4$, and hence
		\[
		\Gamma((j+1)/2)e^{-(j-4)^2+(j-4)}
		\le e^{-3j^2/4+9j-20}\le1.
		\]
		It follows that
		\[
		\mathcal E_3\le
		c_1(0)\sum_{j=L+2}^{\infty}q^{-j}
		=\frac{c_1(0)q^{-(L+1)}}{q-1}
		\le\frac{c_1(0)}{32(q-3/2)}
		\le
		\frac{c_1(0)2^{-\sigma}}{16(q-3/2)}
		e^{2/(T-5/4)}.
		\]
		We used $q>5$, $L\ge30$, and $2^{-\sigma}\ge1/2$.
		All estimates follow from inequalities; therefore, no numerical evaluation
		of the tail is needed in this case.
		Combining the three bounds and using $8/7+1/16<5/4$ gives
		\[
		\mathcal E\le
		\frac{c_1(0)}{q-3/2}
		\left(\frac94\,9^\sigma+\frac54\,2^{-\sigma}\right)
		e^{2/(T-5/4)}.
		\]
		These finite upper bounds also justify the absolute convergence
		of the integrals used above. Finally,
		$|C_0^{(m)}(p)|\le\mathcal B_m^\star$ and the bound for
		$\varepsilon$ prove \eqref{eq:RtN_epsilon_explicit}.
	\end{proof}
	
	\subsubsection{Symmetric decomposition}
	\label{subsec:setup_sym}
	
	Throughout this section we fix $0<t\le\tfrac12$ and a point
	$x+iy\in\Com$ with $x\ge480$ and $|y|\le1$. Define
	$s_+
	=
	\frac{1+y-ix}{2},
	s_-
	=
	\frac{1-y+ix}{2}.
	$
	Then
	$1-s_-=s_+,
	1-s_+=s_-,$
	and
	$\ima(s_+)=-\frac x2,
	\ima(s_-)=\frac x2.$
	At height
	$T'
	=
	\frac x2+\frac{\pi t}{8},$
	define
	\[
	\delta
	=
	\sqrt{\frac{mT'}{2\pi}},
	\qquad
	N
	=
	m
	\left\lfloor
	\sqrt{\frac{T'}{2\pi m}}
	\right\rfloor,\qquad
	K
	=
	\lfloor\delta-N\rfloor,
	\qquad
	M
	=
	N+K
	=
	\lfloor\delta\rfloor.
	\]
	By \eqref{expH0}, evaluated at $z=x+iy$, we have
	$s=\frac{1+iz}{2}=s_-.$
	
	Applying the generalized Riemann--Siegel formula and changing
	$v\mapsto-v$ in the conjugate term gives
	\begin{equation}\label{eq:symdecomp}
		H_t^L(x+iy)
		=
		\sum_{n=1}^{N}
		r_{t,n}^L(s_-)
		+
		\sum_{n=1}^{N}
		\overline{
			r_{t,n}^L(\overline{s_+})
		}
		+
		R_{t,N}^L(s_-)
		+
		\overline{
			R_{t,N}^L(\overline{s_+})
		}.
	\end{equation}
	
	Because $\chi_m$ is real-valued, the finite Dirichlet terms satisfy
	\[
	\overline{r_{t,n}^L(\overline w)}
	=
	r_{t,n}^L(w).
	\]
	We keep the conjugated integral remainder in the form supplied by
	\eqref{eq:symdecomp}. This is because the reality of $\chi_m$ alone does not imply a reflection
	identity for that contour integral.
	Using Proposition~\ref{prop:RtN_rigorous}, the extra residue contributions
	are exactly
	\[
	\sum_{n=N+1}^{M}r_{t,n}^L(s).
	\]
	Therefore
	\[
		H_t^L(x+iy)
		=
		\sum_{n=1}^{M}
		r_{t,n}^L(s_+)
		+
		\sum_{n=1}^{M}
		r_{t,n}^L(s_-)
		+
		\overline{\widehat R_{t,N}^L(\overline{s_+})}
		+
		\widehat R_{t,N}^L(s_-),
	\]
	where
	\[
	\widehat R_{t,N}^L(s)
	=
	R_{t,N}^L(s)
	-
	\sum_{n=N+1}^{M}r_{t,n}^L(s).
	\]
	To normalize, divide by
	\[
	B_t^L(x+iy)=M_t^L(s_+).
	\]
	Then
	\begin{equation}\label{eq:ABC_decomp}
		\frac{H_t^L(x+iy)}{B_t^L(x+iy)}
		=
		A^L(x+iy)+B^L(x+iy)-C^L(x+iy),
	\end{equation}
	where
	\[
	A^L(x+iy)
	=
	\frac{1}{M_t^L(s_+)}
	\sum_{n=1}^{M}r_{t,n}^L(s_-),
	\]
	\[
	B^L(x+iy)
	=
	\frac{1}{M_t^L(s_+)}
	\sum_{n=1}^{M}r_{t,n}^L(s_+),
	\]
	and
	\[
	C^L(x+iy)
	=
	-\frac{1}{M_t^L(s_+)}
	\left(
	\overline{\widehat R_{t,N}^L(\overline{s_+})}
	+
	\widehat R_{t,N}^L(s_-)
	\right).
	\]
	\subsubsection{Error estimates}
	Define
	$s_*^L(-)
	=
	s_-+\frac t2\alpha_L(s_-)$, 
	$s_*^L(+)
	=
	s_++\frac t2\alpha_L(s_+)$,
	and
	$\gamma^L(x+iy)
	=
	\frac{M_t^L(s_-)}{M_t^L(s_+)}.$
	\begin{prop}\label{prop:eA}
		The conjugate component satisfies
		\[
		A^L(x+iy)
		=
		\gamma^L(x+iy)
		\sum_{n=1}^{M}
		\frac{b_n^{t,L}\chi_m(n)}
		{n^{s_*^L(-)}}
		+
		O_\le(e_A(x+iy)),
		\]
		where
		\[
		e_A(x+iy)
		=
		|\gamma^L(x+iy)|
		\sum_{n=1}^{M}
		\frac{b_n^{t,L}}
		{n^{\re s_*^L(-)}}
		\left[
		\exp\!\left(
		\frac{
			\frac{t^2}{8}
			|\alpha_L(s_-)-\log(n)|^2
			+\frac t4+\frac12
		}{
			x/2-1.25
		}
		\right)-1
		\right].
		\]
	\end{prop}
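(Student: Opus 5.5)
The plan is to apply Lemma~\ref{lem:rtn_estimate} term by term to $A^L(x+iy)=M_t^L(s_+)^{-1}\sum_{n\le M}r_{t,n}^L(s_-)$ at the point $s_-=\frac{1-y}{2}+i\frac x2$. Its imaginary part is $T=x/2\ge240>2$, its real part $\sigma=(1-y)/2$ is real, and $0<t\le\tfrac12$. The lemma then gives, for each $1\le n\le M$,
\[
r_{t,n}^L(s_-)=M_t^L(s_-)\,\frac{b_n^{t,L}\chi_m(n)}{n^{s_-+\frac t2\alpha_L(s_-)}}
\bigl(1+O_\le(\varepsilon_{t,n}(s_-))\bigr),
\]
where $\varepsilon_{t,n}(s_-)$ is exactly the bracket in the definition of $e_A$, with $T-1.25=x/2-1.25$. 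The exponent $s_-+\frac t2\alpha_L(s_-)$ is $s_*^L(-)$ by definition. Dividing by $M_t^L(s_+)$ turns the prefactor into $\gamma^L(x+iy)=M_t^L(s_-)/M_t^L(s_+)$.

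Summing over $n$ then separates the main term $\gamma^L\sum_{n\le M}b_n^{t,L}\chi_m(n)n^{-s_*^L(-)}$ from the error $\gamma^L\sum_n b_n^{t,L}\chi_m(n)n^{-s_*^L(-)}O_\le(\varepsilon_{t,n})$. I will bound the error by the triangle inequality, using three facts:
\begin{itemize}
\item $|\chi_m(n)|\le1$;
\item $|n^{-s_*^L(-)}|=n^{-\operatorname{Re}s_*^L(-)}$, valid because $n\ge1$ is a positive real and the principal branch is used;
\item $b_n^{t,L}>0$.
\end{itemize}
This yields precisely $e_A(x+iy)$. Terms with $\chi_m(n)=0$ contribute zero on both sides, so keeping them in the majorant is harmless.

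The only points needing care are bookkeeping. First, I must check that the branch of $\alpha_L$ and of $M_t^L$ in Lemma~\ref{lem:rtn_estimate} matches the one in the definition of $\gamma^L$. This holds because both use the principal logarithm from \eqref{M0L} on $\mathbb C\setminus(-\infty,0]$, and $s_\pm+a$ lies off the cut since $x>0$. Second, the case $t=0$ is formally excluded from the lemma. I do not expect it to be an obstacle: at $t=0$ one has $r_{0,n}^L=M_0^L\chi_m(n)n^{-s}\exp(O_\le(1/(6(T-1))))$ directly from \eqref{eq:Stirling_uniform_horizontal}, and $e^{1/(6(T-1))}-1\le e^{(1/2)/(T-1.25)}-1$, so the same bound holds.
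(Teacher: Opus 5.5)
Your proposal is correct and is essentially the paper's proof: apply Lemma~\ref{lem:rtn_estimate} at $s_-$ with $T=x/2$ for each $1\le n\le M$, divide by $M_t^L(s_+)$ to obtain $\gamma^L$, and sum using $|\chi_m(n)|\le1$ and $|n^{-s}|=n^{-\operatorname{Re}s}$. Your separate treatment of $t=0$ is a harmless addition, since this subsection fixes $0<t\le\tfrac12$.
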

	
	\begin{proof}
		Apply Lemma~\ref{lem:rtn_estimate} to
		$r_{t,n}^L(s_-)$ for $1\le n\le M$ with $T=x/2$ and divide by
		$M_t^L(s_+)$.
	\end{proof}
	
	\begin{prop}\label{prop:eB}
		The principal component satisfies
		\[
		B^L(x+iy)
		=
		\sum_{n=1}^{M}
		\frac{b_n^{t,L}\chi_m(n)}
		{n^{s_*^L(+)}}
		+
		O_\le(e_B(x+iy)),
		\]
		where
		\[
		e_B(x+iy)
		=
		\sum_{n=1}^{M}
		\frac{b_n^{t,L}}
		{n^{\re s_*^L(+)}}
		\left[
		\exp\!\left(
		\frac{
			\frac{t^2}{8}
			|\alpha_L(s_+)-\log(n)|^2
			+\frac t4+\frac12
		}{
			x/2-1.25
		}
		\right)-1
		\right].
		\]
	\end{prop}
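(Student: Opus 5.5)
The estimate for $B^L$ follows the template of Proposition~\ref{prop:eA}. Here no ratio $\gamma^L$ appears, because the normalizing factor $B_t^L(x+iy)=M_t^L(s_+)$ is already evaluated at $s_+$.

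The plan is to apply Lemma~\ref{lem:rtn_estimate} to each term $r_{t,n}^L(s_+)$, $1\le n\le M$. One point needs care: $\operatorname{Im}s_+=-x/2<0$, while the lemma is stated for $T>2$. So I will first pass to the conjugate point. Since $\chi_m$ is real, we have $\overline{r_{t,n}^L(\overline w)}=r_{t,n}^L(w)$, as recorded before \eqref{eq:ABC_decomp}. Hence
\[
r_{t,n}^L(s_+)=\overline{r_{t,n}^L(\overline{s_+})},
\qquad
\overline{s_+}=\tfrac{1+y}{2}+i\tfrac x2 ,
\]
and Lemma~\ref{lem:rtn_estimate} applies at $\sigma=(1+y)/2$, $T=x/2\ge240>2$. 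Taking complex conjugates of its conclusion then uses three identities. First, $\overline{M_t^L(\overline w)}=M_t^L(w)$ and $\overline{\alpha_L(\overline w)}=\alpha_L(w)$; these hold because \eqref{M0L} and \eqref{alpha_L} are built from principal logarithms of real-coefficient expressions, valid off $(-\infty,0]$, and $s_++a$ is not on that cut. Second, $\overline{n^{-\overline w}}=n^{-w}$. Third, $b_n^{t,L}$ and $\chi_m(n)$ are real.

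Together these give
\[
r_{t,n}^L(s_+)=M_t^L(s_+)\,\frac{b_n^{t,L}\chi_m(n)}{n^{s_*^L(+)}}
\bigl(1+O_\le(\varepsilon_{t,n}(\overline{s_+}))\bigr).
\]
Moreover $|\alpha_L(\overline{s_+})-\log n|=|\alpha_L(s_+)-\log n|$, so $\varepsilon_{t,n}(\overline{s_+})$ is exactly the bracketed quantity in $e_B$, with denominator $x/2-1.25$.

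Next I divide by $M_t^L(s_+)$ and sum over $n\le M$. The main terms give $\sum_{n\le M}b_n^{t,L}\chi_m(n)n^{-s_*^L(+)}$. For the error terms, I bound each one by
\[
|\chi_m(n)|\,b_n^{t,L}\,n^{-\operatorname{Re}s_*^L(+)}\,\varepsilon_{t,n}.
\]
Then $|\chi_m(n)|\le1$ yields $e_B$. The case $t=0$ is not in the lemma's range ($0<t\le\tfrac12$). At $t=0$ one has $r_{0,n}^L=M_0^L\chi_m(n)n^{-s}e^{O(1/(6(T-1)))}$ directly by \eqref{eq:Stirling_uniform_horizontal}, which is dominated by the same expression. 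So that case can be handled separately, or it follows by continuity in $t$.

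I expect the only delicate step to be the conjugation symmetry of $M_t^L$ and $\alpha_L$ under the chosen branches. The remaining steps are bookkeeping identical to Proposition~\ref{prop:eA}.
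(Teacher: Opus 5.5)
Your proposal is correct and follows the paper's proof. You apply Lemma~\ref{lem:rtn_estimate} at $\overline{s_+}$ with $T=x/2$, conjugate using that $\chi_m$ is real and that $M_t^L$ and $\alpha_L$ commute with conjugation, and then divide by $M_t^L(s_+)$ and sum over $n\le M$; your separate treatment of $t=0$ is a harmless extra, since the paper fixes $0<t\le\tfrac12$ in this subsection and recovers $t=0$ afterwards by a limit.
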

	
	\begin{proof}
		Apply Lemma~\ref{lem:rtn_estimate} to
		$r_{t,n}^L(\overline{s_+})$ for $1\le n\le M$ with $T=x/2$,
		and conjugate the resulting identity. The character is real, and
		$M_t^L(\overline s)=\overline{M_t^L(s)}$,
		$\alpha_L(\overline s)=\overline{\alpha_L(s)}$.
	\end{proof}
	
	\begin{prop}\label{prop:eC}
		Set $T'
		=
		\frac x2+\frac{\pi t}{8}.$
		Then
		\[
		|C^L(x+iy)|
		\le
		e_{C,0}(x+iy),
		\]
		where
		\[
		e_{C,0}(x+iy)
		=
		\frac{
			\exp\!\left(\frac{t\pi^2}{64}\right)
			|M_0^L(iT')|
		}{
			|M_t^L(s_+)|
		}
		\left(
		2\,\mathcal B_m^{\star}
		+
		\widetilde\varepsilon\!\left(\re s_+,\frac x2,t,m\right)
		+
		\widetilde\varepsilon\!\left(\re s_-,\frac x2,t,m\right)
		\right).
		\]
    Here $\mathcal B_m^{\star}$ is any admissible bound for $|C_0^{(m)}(p)|$ of \eqref{eq:C0_uniform_best}; very importantly, the factor $2$
		accounts for the two leading coefficients and cannot be dropped.
	\end{prop}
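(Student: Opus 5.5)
The plan is to bound each of the two summands in $C^L$ separately by applying Proposition~\ref{prop:RtN_rigorous} twice: once at $s_-$, and once at $\overline{s_+}$ before conjugating. Both points have imaginary part $T=x/2\ge240$ and real parts $\re s_-=(1-y)/2$ and $\re\overline{s_+}=(1+y)/2$. Since $|y|\le1$, both real parts lie in $[0,1]$, so the hypotheses $0\le\sigma\le1$, $T\ge240$, $0<t\le1/2$ of that proposition hold. The parameters $T'$, $\delta$, $N$, $K$, $M$ in Proposition~\ref{prop:RtN_rigorous} depend only on $T$ and $t$. They therefore coincide for the two evaluations and agree with those fixed in the symmetric decomposition. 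In particular the residue range $N<n\le M$ is the same in both.

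The first key step is to note that $\widehat R_{t,N}^L(s)=R_{t,N}^L(s)-\sum_{n=N+1}^{M}r_{t,n}^L(s)$ is exactly the continuous term in Proposition~\ref{prop:RtN_rigorous}. This gives
\[
\widehat R_{t,N}^L(\sigma+iT)=e^{t\pi^2/64}M_0^L(iT')Ue^{\pi i\sigma/4}\bigl(C_0^{(m)}(p)+O_\le(\widetilde\varepsilon(\sigma,T,t,m))\bigr).
\]
Here $|U|=1$ and $|e^{\pi i\sigma/4}|=1$ because $\sigma$ is real. Taking moduli and using $|C_0^{(m)}(p)|\le\mathcal B_m^\star$ from the uniform bound \eqref{eq:C0_uniform_best}, I obtain
\[
|\widehat R_{t,N}^L(\sigma+iT)|\le e^{t\pi^2/64}|M_0^L(iT')|\bigl(\mathcal B_m^\star+\widetilde\varepsilon(\sigma,x/2,t,m)\bigr).
\]
This bound holds for either lateral value when $\delta$ is an integer. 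The reason is that both the coefficient bound and the $\widetilde\varepsilon$ bound were proved for either continued value.

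The second step is to apply this with $\sigma=\re s_-$ and with $\sigma=\re\overline{s_+}=\re s_+$. Conjugation does not change the modulus, so the conjugated term $\overline{\widehat R_{t,N}^L(\overline{s_+})}$ obeys the same bound with $\widetilde\varepsilon(\re s_+,x/2,t,m)$. The triangle inequality then adds the two bounds, and dividing by $|M_t^L(s_+)|$ gives $e_{C,0}$ exactly. The factor $2\mathcal B_m^\star$ comes from the two leading coefficients. These coefficients are evaluated at the same $p$, but I will not use any cancellation between them, since their phases need not agree.

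The main obstacle is bookkeeping rather than analysis. I expect to have to check that Proposition~\ref{prop:RtN_rigorous} is stated at $s=\sigma+iT$ with $T>0$, so that evaluating at $\overline{s_+}$, whose imaginary part is $+x/2$, is legitimate. I also need to confirm that the case $t=0$, excluded there, is harmless. For $t=0$ I would instead use Proposition~\ref{prop:RtN_decomp} directly with $J=1$. Alternatively I would argue by continuity in $t$, since both sides are continuous in $t\ge0$.
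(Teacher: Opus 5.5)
Your proposal is correct and follows the paper's own proof: you apply Proposition~\ref{prop:RtN_rigorous} to $\widehat R_{t,N}^L(s_-)$ and to $\widehat R_{t,N}^L(\overline{s_+})$, both at height $x/2$ with the same $T'$, $N$, $M$, $p$. For each term you then use $|U|=|e^{\pi i\sigma/4}|=1$, the invariance of the modulus under conjugation and $|C_0^{(m)}(p)|\le\mathcal B_m^\star$, and finally divide by $|M_t^L(s_+)|$. Your concern about $t=0$ is unnecessary here: this subsection fixes $0<t\le\tfrac12$, and the case $t=0$ is handled afterwards, for Theorem~\ref{EffectiveRSa} as a whole, by letting $t\downarrow0$.
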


	\begin{proof}
		We apply Proposition~\ref{prop:RtN_rigorous} directly to 
		$\widehat R_{t,N}^L(s_-)$ (since $\ima(s_-) = x/2 > 0$). 
		For the conjugated term, we apply Proposition~\ref{prop:RtN_rigorous} to
		$\widehat R_{t,N}^L(\overline{s_+})$, whose argument has a positive
imaginary part $x/2$. For the record, we notice that conjugation does not change its modulus. No
		reflection identity for the contour integral is needed. The higher-order corrections are controlled by $\widetilde\varepsilon$.
	\end{proof}
	
	Define
	\begin{equation}\label{ftL_def}
		f_t^L(x+iy)
		=
		\sum_{n=1}^{M}
		\frac{b_n^{t,L}\chi_m(n)}
		{n^{s_*^L(+)}}
		+
		\gamma^L(x+iy)
		\sum_{n=1}^{M}
		\frac{b_n^{t,L}\chi_m(n)}
		{n^{s_*^L(-)}}.
	\end{equation}
	
	Substituting the estimates from Propositions
	\ref{prop:eA}, \ref{prop:eB}, and \ref{prop:eC} into
	\eqref{eq:ABC_decomp}, we obtain
	\[
	\frac{H_t^L(x+iy)}{B_t^L(x+iy)}
	=
	f_t^L(x+iy)
	+
	O_\le
	\left(
	e_A(x+iy)
	+
	e_B(x+iy)
	+
	e_{C,0}(x+iy)
	\right).
	\]
	This proves Theorem~\ref{EffectiveRSa} for $t>0$. The case $t=0$
	follows by taking care with the limit $t\downarrow0$. If $\delta(x,0)$ is an integer,
	the truncation length is constant for all sufficiently small positive $t$. In this case, 
	the coefficient and remainder are taken at the corresponding continued
	endpoint. Proposition~\ref{prop:magnitudes} below gives simpler upper
	bounds for these three errors.
	\qed
	
	\begin{prop}
		\label{prop:magnitudes}
		Assume $x\ge480$, $|y|\le1$, $0\le t\le1/2$, and
		$3\le m\le400000$. Use the notation of the preceding propositions.
		Then:
		\begin{enumerate}
			\item
			\[
			|\gamma^L|
			\le
			\exp\!\left(\left(0.002+\frac{\log m}{3800}\right)|y|\right)
			\left(\frac{xm}{4\pi}\right)^{-y/2}.
			\]
			\item
			\[
			\operatorname{Re}s_*^L(+)
			\ge\frac{1+y}{2}+\frac t4\log\!\left(\frac{mx}{4\pi}\right)
			-\frac{(3+y)t}{2x^2}.
			\]
			\item
			\[
			\kappa:=\frac t2
			\left(\alpha_L(s_-)-\alpha_L(\overline{s_+})\right),
			\qquad
			|\kappa|\le\frac{t|y|}{2(x-2)}.
			\]
			\item
			\[
			\begin{split}
				e_A(x+iy)\le{}&
				|\gamma^L|M^{|\kappa|}
				\sum_{n=1}^{M}n^y\frac{b_n^{t,L}}{n^{\operatorname{Re}s_*^L(+)}}\\
				&\times\left[
				\exp\!\left(
				\frac{\frac{t^2}{16}\log^2\!\left(\frac{mx}{4\pi n^2}\right)
					+\frac{\log m}{2250}+1.3}{x-5/2}
				\right)-1\right].
			\end{split}
			\]
			\item
			\[
			\begin{split}
				e_B(x+iy)\le{}&
				\sum_{n=1}^{M}\frac{b_n^{t,L}}{n^{\operatorname{Re}s_*^L(+)}}\\
				&\times\left[
				\exp\!\left(
				\frac{\frac{t^2}{16}\log^2\!\left(\frac{mx}{4\pi n^2}\right)
					+\frac{\log m}{2250}+1.3}{x-5/2}
				\right)-1\right].
			\end{split}
			\]
			\item
			\[
			\begin{split}
				e_{C,0}(x+iy)\le{}&
				\left(\frac{mT'}{2\pi}\right)^{-(1+y)/4}\\
				&\times\exp\!\left(
				-\frac t{16}\log^2\!\left(\frac{mx}{4\pi}\right)
				+\frac{\frac14\left|\log\!\left(\frac{mx}{4\pi}\right)
					+i\frac\pi2\right|+1.54}{x-5/2}
				\right)\\
				&\times\left[2\mathcal B_m^\star+2\mathcal E_{RS}(x,t)\right],
			\end{split}
			\]
			where
			\begin{equation}\label{eq:ERS_magnitudes}
				\begin{split}
					\mathcal E_{RS}(x,t)={}&
					\mathcal B_m^\star
					\left[\exp\!\left(\frac{25}{12(x-5/2)}\right)-1\right]\\
					&+\frac{167\,c_1(0)}
					{8\left(\sqrt{T'/7.49}-3/2\right)}
					\exp\!\left(\frac4{x-5/2}\right),\\
					&\hspace{1mm}T'=\frac x2+\frac{\pi t}{8},
					\qquad c_1(0)=\frac{2\varphi(m)}5.
				\end{split}
			\end{equation}
		\end{enumerate}
		We keep $\mathcal B_m^\star$ in item~(6). The simpler estimate
		$\mathcal B_m^\star<3.33\sqrt m$ may be used in the stated conductor range.
	\end{prop}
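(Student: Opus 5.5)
The six items reduce to elementary estimates for the explicit functions $\alpha_L$, $M_0^L$ and $M_t^L$ at the two points $s_\pm=(1\pm y\mp ix)/2$, together with the bounds already established in Lemma~\ref{lem:rtn_estimate}, Propositions~\ref{prop:eA}--\ref{prop:eC} and Proposition~\ref{prop:RtN_rigorous}. Throughout we use $x\ge480$, $|y|\le1$, $0\le t\le1/2$, $3\le m\le400000$, and the expansion
\[
\alpha_L(s)=\tfrac12\log\bigl(m(s+a)/(2\pi)\bigr)-1/(2(s+a))
\]
from \eqref{alpha_L}.

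For (1), we write
\[
\log\gamma^L=\log M_0^L(s_-)-\log M_0^L(s_+)+\tfrac t4\bigl(\alpha_L(s_-)^2-\alpha_L(s_+)^2\bigr).
\]
The points $s_-$ and $\overline{s_+}$ differ by $-y$ in real part and share imaginary part $x/2$. Since $|M_0^L(s_+)|=|M_0^L(\overline{s_+})|$, the modulus comes from $\operatorname{Re}\int_{\overline{s_+}}^{s_-}\alpha_L$. The leading term is $-\tfrac y2\log(mx/(4\pi))$, because $|s+a|\approx x/2$. The correction is $O(|y|/x)$, plus the $t$-term, whose real part is bounded by $t|y||\alpha_L||\alpha_L'|\lesssim |y|\log(mx)/x$. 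With $x\ge480$, this gives the factor $\exp((0.002+\log m/3800)|y|)$. We will check the numerics at the worst endpoint $x=480$ by monotonicity, as in Lemma~\ref{elementaryball}(vi).

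For (2), $\operatorname{Re}\alpha_L(s_+)=\tfrac12\log(m|s_++a|/(2\pi))-\operatorname{Re}(1/(2(s_++a)))$. We bound $|s_++a|\ge x/2$ and estimate the $1/(s+a)$ term by $(3+y)/(2x^2)$ in real part. This yields the lower bound. For (3), the two arguments of $\alpha_L$ differ by the real amount $y$ along a horizontal segment, so
\[
|\kappa|\le \tfrac t2|y|\sup|\alpha_L'|\le t|y|/(2(x-2))
\]
by \eqref{alphapl}, with $\operatorname{Im}s=x/2$.

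For (4) and (5), we start from Propositions~\ref{prop:eA} and \ref{prop:eB}. In (5) we replace $|\alpha_L(s_+)-\log n|^2$ by
\[
\tfrac14\log^2\bigl(mx/(4\pi n^2)\bigr)+\tfrac14\bigl(\pi/2+O(1/x)\bigr)^2+O(\log(mx)/x),
\]
using the real and imaginary parts of $\alpha_L$. The constant terms, multiplied by $t^2/8\le1/32$, are absorbed together with $t/4+1/2$ into $1.3+\log m/2250$. We then rewrite the denominator as $x/2-1.25=(x-5/2)/2$. For (4) we also need $n^{-\operatorname{Re}s_*^L(-)}$. We write $s_*^L(-)=\overline{s_*^L(+)}-y+\kappa$, which gives $n^{-\operatorname{Re}s_*(-)}\le n^{y}n^{|\kappa|}n^{-\operatorname{Re}s_*(+)}$, and then bound $n^{|\kappa|}\le M^{|\kappa|}$.

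For (6), we start from Proposition~\ref{prop:eC} and bound the prefactor
\[
e^{t\pi^2/64}|M_0^L(iT')|/|M_t^L(s_+)|.
\]
We compare $\log|M_0^L(iT')|$ with $\log|M_0^L(s_+)|$ by integrating $\alpha_L$ from $\overline{s_+}$ to $iT'$: horizontally over a length $(1+y)/2$, and vertically over $\pi t/8$. This produces $\delta^{-(1+y)/2}$ up to $O(1/x)$; note that $(mT'/2\pi)^{-(1+y)/4}$ is exactly this power of $\delta$. The vertical piece produces a factor $e^{-\frac{\pi t}{8}\operatorname{Im}\alpha}$. Together with $-\tfrac t4\operatorname{Re}\alpha_L(s_+)^2$ and $e^{t\pi^2/64}$, this completes to $-\tfrac t{16}\log^2(mx/(4\pi))$ plus an error bounded by $\tfrac14|\log(mx/(4\pi))+i\pi/2|/(x-5/2)$ plus a constant. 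Finally, $\widetilde\varepsilon$ is bounded by \eqref{eq:RtN_epsilon_explicit} with $\sigma=\operatorname{Re}s_\pm\in[0,1]$, using $9^\sigma\le9$, $2^{-\sigma}\le1$ and $12\sigma^2+13\le25$. This gives $\tfrac94\cdot9+\tfrac54=167/8$ and $T-5/4=(x-5/2)/2$, hence $\mathcal E_{RS}$.

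The main obstacle is (6). There the bookkeeping of the completed square in $t$, with $\alpha_L(s_+)^2$ versus the shift $i\pi t/8$ in height, must yield exactly $-\tfrac t{16}\log^2$ while all the remainders fit under the constant $1.54$. I would first do this with the exact formula for $\alpha_L$ and a Taylor remainder controlled by $|\alpha_L'|\le1/(2(T-1))$, and then verify the constant at $x=480$, $m=400000$.
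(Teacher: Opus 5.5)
Your plan follows the paper's proof item by item. It uses horizontal integration at height $x/2$ for (1) and (3), the exact real part of $\alpha_L(s_+)$ for (2), the expansion $\alpha_L=\frac\ell2+\frac{i\pi}4+R$ inserted into Propositions~\ref{prop:eA}--\ref{prop:eB} for (4)--(5), and for (6) a comparison of $|M_0^L(iT')|$ with $|M_0^L(s_+)|$ followed by \eqref{eq:RtN_epsilon_explicit}. In (6) your L-shaped path (horizontal at height $T$, then vertical) replaces the paper's single Taylor step with $h=\sigma-id$. That change is fine, and the remainders it produces are far below $1.54/(x-2)$. However, the content of the proposition is its explicit constants, and two of your steps do not deliver them as written.

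\textbf{Item (6).} The identity $\frac94\cdot9+\frac54=\frac{167}8$ is false: the left side is $\frac{43}2=\frac{172}8$. Bounding $9^\sigma\le9$ and $2^{-\sigma}\le1$ separately uses maxima attained at opposite endpoints, so it cannot give $167/8$. What you need is that $F(\sigma)=\frac94\,9^\sigma+\frac54\,2^{-\sigma}$ is increasing on $[0,1]$. This holds because $F'(\sigma)\ge\frac94\log9-\frac54\log2>0$. Hence $F\le F(1)=\frac{81}4+\frac58=\frac{167}8$, which is the step the paper uses.

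\textbf{Item (1).} The budget $0.002+\log m/3800$ is too tight for the bounds you wrote. The $t$-part of $\frac{d}{d\sigma}\log|M_t^L|$ is $\frac t2\operatorname{Re}(\alpha_L\alpha_L')$, not $t\,\alpha_L\alpha_L'$. With $t\le\frac12$, $|\alpha_L'|\le 1/(x-2)$ and $|\alpha_L|\le\frac\ell2+\frac\pi4+\frac2{x-2}$, this gives $\frac{\ell}{8(x-2)}+\dots$, that is $\log m/3824$ at $x=480$. Your factor $t$ would give about $\log m/1912$, which exceeds $\log m/3800$.

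Similarly, an $O(|y|/x)$ bound for the $M_0^L$-correction is not enough. The crude estimate $|1/(2(s+a))|\le 1/x$ alone equals $0.00208>0.002$ at $x=480$. You need the exact real part
$\operatorname{Re}\alpha_L(\sigma+iT)-\frac\ell2=\frac14\log(1+w^2/T^2)-\frac{w}{2(T^2+w^2)}$, with $w=\sigma+a\in[0,2]$, which is at most $4/x^2$ in modulus. This is the same observation you already use in (2). With both refinements, $\frac4{x^2}+\frac{\ell}{8(x-2)}+\frac{\pi}{16(x-2)}+\frac1{2(x-2)^2}\le 0.002+\frac{\log m}{3800}$, as in the paper.

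A minor point in (2): the correct bound is $\operatorname{Re}\frac{1}{2(s_++a)}\le\frac{3+y}{x^2}$. The factor $\frac12$ in $\frac{(3+y)t}{2x^2}$ appears only after multiplying by the $\frac t2$ in $s_*^L(+)$.
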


	\begin{proof}
		Put $T=x/2$ and $\ell=\log(mx/(4\pi))$. For $0\le\sigma\le1$,
		the formula for $\alpha_L$, together with \eqref{alphapl}, gives
		\begin{equation}\label{eq:magnitudes_alpha}
			\alpha_L(\sigma+iT)=\frac\ell2+\frac{i\pi}{4}+R,
			\qquad
			|R|\le\frac{1+\sigma}{x-2}\le\frac2{x-2}.
		\end{equation}
		To check the constant, first evaluate at $iT$, where the error
		has modulus at most $1/(2T)$, and then integrate $\alpha_L'$
		along the horizontal segment.
		Writing $w=\sigma+a\in[0,2]$ also gives
		\[
		\operatorname{Re}R
		=\frac14\log\!\left(1+\frac{w^2}{T^2}\right)
		-\frac{w}{2(T^2+w^2)},
		\qquad
		|\operatorname{Re}R|\le\frac4{x^2}.
		\]
		Consequently,
		\[
		\begin{split}
			\left|\frac{d}{d\sigma}\log|M_t^L(\sigma+iT)|
			-\frac\ell2\right|
			&\le\frac4{x^2}+\frac{\ell}{8(x-2)}
			+\frac{\pi}{16(x-2)}+\frac1{2(x-2)^2}\\
			&\le0.002+\frac{\log m}{3800}.
		\end{split}
		\]
		For the last inequality, use $x\ge480$,
		$\log(480/(4\pi))<4$, $\pi<4$, and the fact that
		$\log(x/(4\pi))/(x-2)$ is decreasing on this range.
		Since $|M_t^L(s_+)|=|M_t^L(\overline{s_+})|$, integration from
		$(1+y)/2$ to $(1-y)/2$ proves item~(1), for either sign of $y$.
		The exact real part is
		\[
		\operatorname{Re}\alpha_L(s_+)
		=\frac12\log\!\left(
		\frac{m\sqrt{(1+y+2a)^2+x^2}}{4\pi}\right)
		-\frac{1+y+2a}{(1+y+2a)^2+x^2}.
		\]
		It is at least $\ell/2-(3+y)/x^2$, proving item~(2).
		Since $\overline{s_+}=s_-+y$,
		\[
		\alpha_L(s_-)-\alpha_L(\overline{s_+})
		=-y\int_0^1\alpha_L'(s_-+uy)\,du.
		\]
		On this segment \eqref{alphapl} is at most $1/(x-2)$.
		This proves item~(3).
		
		For items~(4) and~(5), put
		$L_n=\log(mx/(4\pi n^2))$. Since $1\le n\le M\le\delta$,
		\[
		\log(T/T')\le L_n\le\ell,\qquad |L_n|\le\ell.
		\]
		Indeed, $\log(T'/T)\le(T'-T)/T<1<\ell$.
		Equation~\eqref{eq:magnitudes_alpha} gives
		\[
		|\alpha_L(\sigma+iT)-\log n|^2
		\le\frac{L_n^2}{4}+\frac{\pi^2}{16}
		+\frac{2|L_n|+\pi}{x-2}+\frac4{(x-2)^2}.
		\]
		The same bound holds at the conjugate point. Therefore
		\[
		\begin{split}
			&\frac{t^2}{4}|\alpha_L(\sigma+iT)-\log n|^2+\frac t2+1\\
			&\quad\le
			\frac{t^2}{16}L_n^2+\frac54+\frac{\pi^2}{256}
			+\frac{|L_n|+\pi/2}{8(x-2)}
			+\frac1{4(x-2)^2}\\
			&\quad\le
			\frac{t^2}{16}L_n^2+\frac{\log m}{2250}+1.3.
		\end{split}
		\]
		The last inequality follows from the same decreasing logarithmic
		ratio, $x\ge480$, and $\pi^2<10$.
		Insert this in Propositions~\ref{prop:eA} and~\ref{prop:eB}.
		Finally,
		\[
		s_*^L(-)-\overline{s_*^L(+)}=-y+\kappa,\qquad
		|n^{-s_*^L(-)}|
		\le n^yM^{|\kappa|}n^{-\operatorname{Re}s_*^L(+)}.
		\]
		This proves both estimates.
		
		For item~(6), write $\sigma=(1+y)/2$ and
		$d=T'-T=\pi t/8$. Taylor's formula for $\log M_0^L$ from
		$iT'$ to $\sigma+iT$ yields
		\[
		\log\frac{|M_0^L(iT')|}{|M_0^L(s_+)|}
		=-\sigma\log\delta-\frac{\pi^2t}{32}
		+O_\le\!\left(\frac{1.54}{x-2}\right).
		\]
		Indeed, with $h=\sigma-id$, the remainder has modulus at most
		\[
		\frac{|h|}{2T'}+\frac{|h|^2}{4(T-1)}
		\le\frac{|h|+|h|^2/2}{x-2}
		<\frac{1.54}{x-2}.
		\]
		Here $|h|^2\le1+\pi^2/256<133/128$ and $|h|<51/50$.
		Conjugation permits the use of $\sigma+iT=\overline{s_+}$.
		
		By \eqref{eq:magnitudes_alpha},
		\[
		\operatorname{Re}(\alpha_L(s_+)^2)
		\ge\frac{\ell^2}{4}-\frac{\pi^2}{16}
		-\frac{2|\ell+i\pi/2|}{x-2}-\frac4{(x-2)^2}.
		\]
		Combining this with $M_t^L=e^{t\alpha_L^2/4}M_0^L$ gives
		\[
		\begin{split}
			\frac{e^{t\pi^2/64}|M_0^L(iT')|}{|M_t^L(s_+)|}
			\le{}&\delta^{-\sigma}
			\exp\!\left(
			-\frac{t\ell^2}{16}
			+\frac{(t/2)|\ell+i\pi/2|+1.54}{x-2}
			+\frac{t}{(x-2)^2}\right)\\
			\le{}&\delta^{-\sigma}
			\exp\!\left(
			-\frac{t\ell^2}{16}
			+\frac{\frac14|\ell+i\pi/2|+1.54}{x-5/2}\right).
		\end{split}
		\]
		The constant terms involving $t\pi^2$ cancel exactly.
		For the last inequality use $t\le1/2$ and
		$A/(x-2)+(1/2)/(x-2)^2\le A/(x-5/2)$ for $A\ge1$.
		
		It remains to bound the two errors in Proposition~\ref{prop:eC}.
		On $0\le\sigma\le1$, both $\sigma^2$ and
		$(9/4)9^\sigma+(5/4)2^{-\sigma}$ are increasing, while the latter has
		maximum $167/8$. Thus \eqref{eq:RtN_epsilon_explicit}, with
		$T=x/2$, gives
		\[
		\widetilde\varepsilon(\sigma,x/2,t,m)\le\mathcal E_{RS}(x,t).
		\]
		We used $\delta b_1=\sqrt{T'/7.49}$ and retained
		the bound $\mathcal B_m^\star$ for the leading coefficient.
		Proposition~\ref{prop:eC} now proves item~(6).
		The estimate \eqref{eq:boundepsilon_explicit} also permits the
		replacement
		\[
		\exp\!\left(\frac{25}{12(x-5/2)}\right)-1
		\le\frac{26}{12(x-5/2)}
		\]
		if a rational upper bound for the first term of
		$\mathcal E_{RS}$ is preferred.
	\end{proof}

\begin{figure}[htbp]
			\centering
			\includegraphics[width=0.8\linewidth]{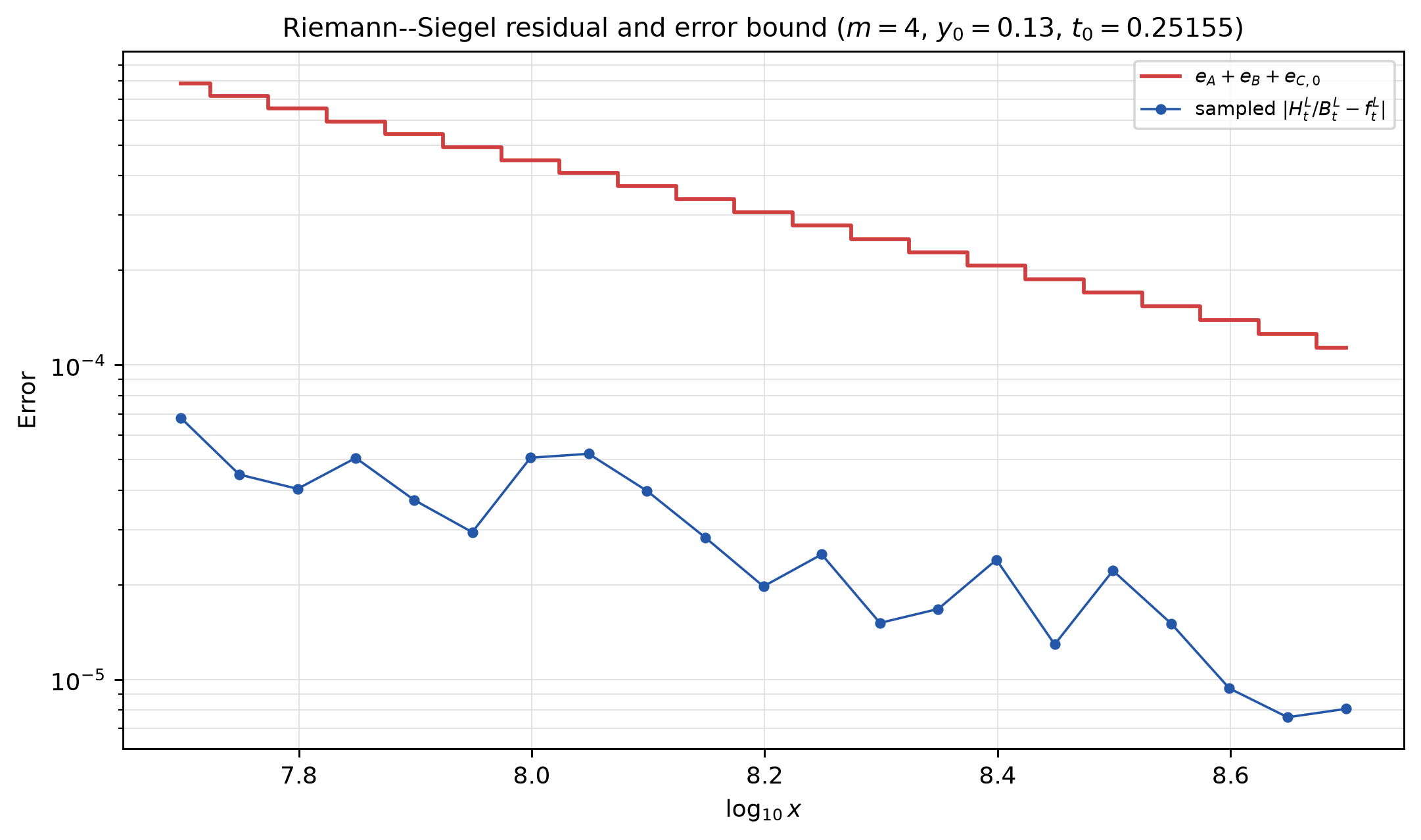}
			\caption{The error bound $e_A+e_B+e_{C,0}$ and the numerical residual $|f_t^L-H_t^L/B_t^L|$ for the Dirichlet beta function ($m=4$, $d=-4$), with $y=y_0=0.13$ and $t=t_0=0.25155$. The residual is sampled at 21 points.}
		\end{figure}

\begin{figure}[htbp]
			\centering
			\includegraphics[width=0.8\linewidth]{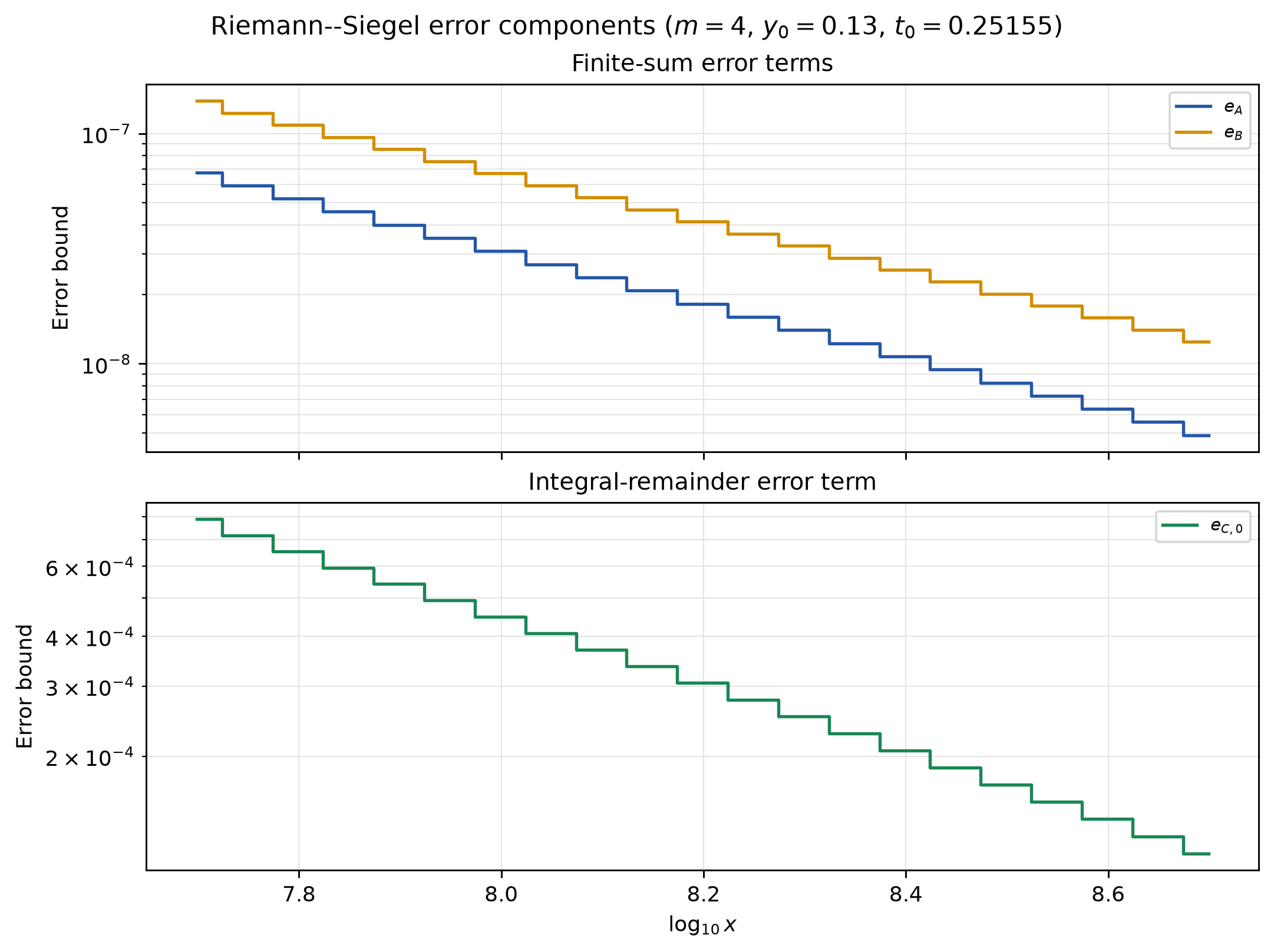}
			\caption{Individual error bounds for $e_A$, $e_B$, and $e_{C,0}$ for the Dirichlet beta function ($m=4$, $d=-4$), with $y=y_0=0.13$ and $t=t_0=0.25155$.}
		\end{figure}
	
	\begin{remark}
Fix $0<y_0\le1$. In the range of
Proposition~\ref{prop:magnitudes}, equation~\eqref{eq:ERS_magnitudes}
gives
\[
 \mathcal E_{RS}(x,t)
 \ll\frac{\sqrt m}{T'}+\frac{\varphi(m)}{\sqrt{T'}},
 \qquad T'=\frac x2+\frac{\pi t}{8}.
\]
The positive correction in the exponential of item~(6) is
uniformly bounded for $x\ge480$ and $3\le m\le400000$.
Consequently, uniformly for $y_0\le y\le1$, we have the upper bounds
\[
\begin{split}
 e_{C,0}(x+iy)
 &\ll e^{-t\ell^2/16}(mT')^{-(1+y_0)/4}
 \left(\sqrt m+\frac{\varphi(m)}{\sqrt{T'}}
       +\frac{\sqrt m}{T'}\right)\\
 &\ll(mT')^{-(1+y_0)/4}
 \left(\sqrt m+\frac{\varphi(m)}{\sqrt{T'}}\right).
\end{split}
\]
\end{remark}

Now we describe a precise dependence on $D$ of the error term $e_{C,0}$.

	\begin{cor}[Dependence on $D$]
		Assume the hypotheses of Proposition~\ref{prop:magnitudes}, except that
		$|y|\le D$ for some $0<D\le1$. Put
		\[
		T'=\frac x2+\frac{\pi t}{8},\qquad
		V=\sqrt{\frac{T'}{7.49}}-\frac32,
		\qquad \ell=\log\!\left(\frac{mx}{4\pi}\right),
		\]
		and
		\[
		S_D:=\frac94\,9^{(1+D)/2}+\frac54\,2^{-(1+D)/2}.
		\]
		Then the bounds for $e_A$ and $e_B$ in Proposition~\ref{prop:magnitudes}
		remain valid, while
		\[
		\begin{aligned}
			e_{C,0}(x+iy)\le{}&
			\left(\frac{mT'}{2\pi}\right)^{-(1+y)/4}
			\exp\!\left(
			-\frac{t\ell^2}{16}
			+\frac{\frac14\sqrt{\ell^2+\pi^2/4}+1.54}{x-5/2}
			\right)\\
			&\times\left[2\mathcal B_m^\star+2\mathcal E_{RS,D}(x,t)\right],
		\end{aligned}
		\]
		where
		\[
		\begin{aligned}
			\mathcal E_{RS,D}(x,t):={}&
			\mathcal B_m^\star
			\left[\exp\!\left(
			\frac{12((1+D)/2)^2+13}{24(x/2-5/4)}\right)-1\right]\\
			&+\frac{c_1(0)S_D}{V}
			\exp\!\left(\frac4{x-5/2}\right).
		\end{aligned}
		\]
		For $D=1$, this is exactly the constant in
		\eqref{eq:ERS_magnitudes}, since $S_1=167/8$.
	\end{cor}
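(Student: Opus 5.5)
The plan is to rerun the proof of Proposition~\ref{prop:magnitudes}, changing only the places where $|y|\le1$ entered. First, items (4) and (5) used only $|y|\le1$, $x\ge480$ and $0\le t\le1/2$. Their derivations go through \eqref{eq:magnitudes_alpha}, the bounds on $L_n$, and Propositions~\ref{prop:eA}--\ref{prop:eB}. Each of these needs only $0\le\sigma\le1$, and $|y|\le D\le1$ keeps $\sigma=(1\pm y)/2$ in $[(1-D)/2,(1+D)/2]\subset[0,1]$. So the bounds for $e_A$ and $e_B$ carry over verbatim.

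For $e_{C,0}$, I will start from Proposition~\ref{prop:eC}. The prefactor is treated exactly as in item (6): Taylor expansion of $\log M_0^L$ from $iT'$ to $\overline{s_+}$ with the error $1.54/(x-2)$, and the lower bound on $\operatorname{Re}(\alpha_L(s_+)^2)$. This produces the factor $(mT'/2\pi)^{-(1+y)/4}$ and the displayed exponential. I will keep $|\ell+i\pi/2|=\sqrt{\ell^2+\pi^2/4}$ exactly, and bound the remaining terms as before. Since $\delta^{-\sigma}=(mT'/2\pi)^{-(1+y)/4}$ with $\sigma=\operatorname{Re}s_+$, nothing here depends on $D$.

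The only $D$-dependent step is bounding the two quantities $\widetilde\varepsilon(\operatorname{Re}s_\pm,x/2,t,m)$ from \eqref{eq:RtN_epsilon_explicit}. Now $\operatorname{Re}s_\pm\in[(1-D)/2,(1+D)/2]$. The map $\sigma\mapsto\sigma^2$ is increasing on $[0,1]$, and so is $\sigma\mapsto\frac94 9^\sigma+\frac54 2^{-\sigma}$: its derivative $\frac94\log 9\cdot 9^\sigma-\frac54\log2\cdot2^{-\sigma}$ is positive for $\sigma\ge0$. Hence both are maximized at $\sigma=(1+D)/2$, giving the first term with $12((1+D)/2)^2+13$ and the second term with $S_D$. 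With $T=x/2$, we have $24(T-5/4)=24(x/2-5/4)$. Moreover $\exp(2/(T-5/4))=\exp(4/(x-5/2))$ and $\delta b_1-3/2=\sqrt{T'/7.49}-3/2=V$. This gives $\widetilde\varepsilon\le\mathcal E_{RS,D}$ for both signs, and hence the bracket $2\mathcal B_m^\star+2\mathcal E_{RS,D}$.

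Finally, for $D=1$ one checks that $12+13=25$ and $25/(24(x/2-5/4))=25/(12(x-5/2))$, and that $S_1=\frac94\cdot9+\frac54\cdot\frac12=\frac{81}4+\frac58=\frac{167}8$, so the bound matches \eqref{eq:ERS_magnitudes}. The only step I expect to need care is the monotonicity check in $\sigma$ just described, because it is what lets us evaluate at the endpoint $(1+D)/2$. It is elementary, so I anticipate no real obstacle.
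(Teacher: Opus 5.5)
Your proposal is correct and follows the paper's own argument. The $e_A,e_B$ bounds carry over because $|y|\le D\le1$, the prefactor from item~(6) is unchanged, and the only $D$-dependent step is bounding $\widetilde\varepsilon(\operatorname{Re}s_\pm,x/2,t,m)$: $\sigma^2$ and $\frac94 9^\sigma+\frac54 2^{-\sigma}$ are increasing on $[0,1]$, so both are evaluated at $\sigma=(1+D)/2$, with $\delta b_1-3/2=V$ and $e^{2/(T-5/4)}=e^{4/(x-5/2)}$ giving the stated form.
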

	
	\begin{proof}
		Let us come back to the proof of Proposition~\ref{prop:RtN_rigorous}. In \eqref{eq:RtN_epsilon_explicit}, the two real parts that occur are
		$(1+y)/2$ and $(1-y)/2$. For $|y|\le D$, both lie in
		$[(1-D)/2,(1+D)/2]$. Thus their squares are at most $((1+D)/2)^2$.
		The function
		$F(\sigma)=(9/4)9^\sigma+(5/4)2^{-\sigma}$ is increasing on
		$[0,1]$, since
		$F'(\sigma)=(9/4)(\log 9)9^\sigma-(5/4)(\log 2)2^{-\sigma}>0$.
		Thus $F(\sigma)\leq F((1+D)/2)=S_D$ on this interval.
		Substitution into \eqref{eq:RtN_epsilon_explicit}, followed by the
		prefactor estimate in the proof of Proposition~\ref{prop:magnitudes},
		gives the asserted bound. The estimates for $e_A$ and $e_B$ already
		hold throughout $|y|\le1$, so they also hold on $|y|\le D$.
	\end{proof}

	\section{Fast evaluation of multiple sums}\label{sec:numerics}
	
	The verification of the barrier hypothesis in Lemma~\ref{lemmabox}
	requires evaluating $f_t^L(x+iy)$ at many nearby points.
	Direct evaluation of the two sums in \eqref{ftL_def} takes
	$O(MM_{\mathrm{pts}})$ arithmetic operations, where
	\[
	M=\left\lfloor
	\sqrt{\frac{m}{4\pi}\left(x+\frac{\pi t}{4}\right)}
	\right\rfloor
	\]
	and $M_{\mathrm{pts}}$ is the number of evaluation points.
	We use the Taylor expansion of \cite[\S7]{Poly} to express these
	sums in terms of coefficients that can be computed in advance.
	The character $\chi_m$ enters as a fixed weight; the coefficient
	sums can also be accumulated by residue classes modulo $m$.
	
	Fix $t\ge0$ and a reference point $X+iy_0$.
	In this section $y_0$ denotes the reference ordinate; it need not be
	the lower ordinate of the barrier used in Section~\ref{sec:verification}.
	Consider a group of nearby evaluation points for which the truncation
	length $M\ge1$ is constant. If the evaluation region crosses a change
	of $M$, the points are grouped according to their truncation length.
	
	Recall
	\[
	s_+(x,y)=\frac{1+y-ix}{2},\qquad
	s_-(x,y)=\frac{1-y+ix}{2},
	\]
	and set
	\[
	s_*^L(\pm)=s_\pm(x,y)+\frac t2\alpha_L(s_\pm(x,y)),
	\qquad
	\gamma^L(x+iy)=\frac{M_t^L(s_-(x,y))}{M_t^L(s_+(x,y))}.
	\]
	Then \eqref{ftL_def} reads
	\[
		\ba
		~{} & f_t^L(x+iy)
		=\sum_{n=1}^{M}\frac{\chi_m(n)b_n^{t,L}}{n^{s_*^L(+)}}
		+\gamma^L(x+iy)
		\sum_{n=1}^{M}\frac{\chi_m(n)b_n^{t,L}}{n^{s_*^L(-)}},
		\\
		& b_n^{t,L}=\exp\!\left(\frac t4\log^2n\right).
		\ea
	\]
	In particular, with
	\[
	\kappa^L(x,y)=\frac t2
	\left(\alpha_L(s_-(x,y))-\alpha_L(\overline{s_+(x,y)})\right),
	\]
	we have
	\[
	s_*^L(-)=\overline{s_*^L(+)}-y+\kappa^L(x,y),
	\qquad
	n^{-s_*^L(-)}
	=\frac{n^y}{n^{\overline{s_*^L(+)}+\kappa^L(x,y)}}.
	\]
	To make the coefficients independent of the evaluation point, put
	\[
	s_+^0=s_+(X,y_0),\qquad s_-^0=s_-(X,y_0),
	\]
	and define the exponent shifts
	\[
	\begin{split}
		b(x,y)&=s_+^0-s_*^L(+),\\
		a(x,y)&=s_-^0-s_*^L(-)
		=s_-^0+y-\overline{s_*^L(+)}-\kappa^L(x,y).
	\end{split}
	\]
	Here $a(x,y)$ is an exponent shift, distinct from the constant
	character parity in the definition of $M_t^L$.
	Thus
	\[
	f_t^L(x+iy)
	=\sum_{n=1}^{M}\frac{n^b\chi_m(n)b_n^{t,L}}{n^{s_+^0}}
	+\gamma^L(x+iy)
	\sum_{n=1}^{M}\frac{n^a\chi_m(n)b_n^{t,L}}{n^{s_-^0}}.
	\]
	Choose the integer centre $n_0=\left\lceil\frac M2\right\rceil$, $h=n-n_0$. Then $1-n_0\le h\le M-n_0$ and $|h/n_0|\le1$.
	Writing
	\[
	L_h=\log\!\left(1+\frac h{n_0}\right)
	=\log\!\left(\frac{n_0+h}{n_0}\right),
	\]
	we obtain
	\[
	f_t^L(x+iy)=B(b)+\gamma^L(x+iy)A(a),
	\]
	where
	\[
	\begin{split}
		B(b)&=\sum_{h=1-n_0}^{M-n_0}
		\frac{(n_0+h)^b\chi_m(n_0+h)b_{n_0+h}^{t,L}}
		{(n_0+h)^{s_+^0}},\\
		A(a)&=\sum_{h=1-n_0}^{M-n_0}
		\frac{(n_0+h)^a\chi_m(n_0+h)b_{n_0+h}^{t,L}}
		{(n_0+h)^{s_-^0}}.
	\end{split}
	\]
	Since $\log n=\log n_0+L_h$,
	\begin{equation}\label{eq:factor_first}
		n^b b_n^{t,L}
		=n_0^{b+\frac t4\log n_0}
		\exp\!\left(\frac t4L_h^2\right)
		\exp\!\left(\left(b+\frac t2\log n_0\right)L_h\right).
	\end{equation}
	Expanding the two exponentials gives
	\[
	n^b b_n^{t,L}
	=n_0^{b+\frac t4\log n_0}
	\sum_{i=0}^{\infty}\sum_{j=0}^{\infty}
	\frac{(tL_h^2/4)^i}{i!}\,
	\frac{(b+\frac t2\log n_0)^jL_h^j}{j!}.
	\]
	Both exponential series converge absolutely. Since the sum over $h$
	is finite, they may be interchanged with that sum. Therefore
	\[
	B(b)=n_0^{b+\frac t4\log n_0}
	\sum_{i=0}^{\infty}\sum_{j=0}^{\infty}
	B_{i,j}^L\frac{(b+\frac t2\log n_0)^j}{j!},
	\]
	with
	\[
	B_{i,j}^L:=
	\sum_{h=1-n_0}^{M-n_0}
	\frac{\chi_m(n_0+h)}{(n_0+h)^{s_+^0}}
	\frac{(tL_h^2/4)^i}{i!}L_h^j.
	\]
	Similarly,
	\[
	\ba
	& A(a)=n_0^{a+\frac t4\log n_0}
	\sum_{i=0}^{\infty}\sum_{j=0}^{\infty}
	A_{i,j}^L\frac{(a+\frac t2\log n_0)^j}{j!},
	\\
	& A_{i,j}^L:=
	\sum_{h=1-n_0}^{M-n_0}
	\frac{\chi_m(n_0+h)}{(n_0+h)^{s_-^0}}
	\frac{(tL_h^2/4)^i}{i!}L_h^j.
	\ea
	\]
	These coefficients depend on $t,M$, the character $\chi_m$ and the reference point, but
	not on $(x,y)$.
	
	For an integer $E\ge1$, define the truncated evaluations
	\[
	\begin{split}
		B_E(b)&=n_0^{b+\frac t4\log n_0}
		\sum_{i=0}^{E-1}\sum_{j=0}^{E-1}
		B_{i,j}^L\frac{(b+\frac t2\log n_0)^j}{j!},\\
		A_E(a)&=n_0^{a+\frac t4\log n_0}
		\sum_{i=0}^{E-1}\sum_{j=0}^{E-1}
		A_{i,j}^L\frac{(a+\frac t2\log n_0)^j}{j!}.
	\end{split}
	\]
	Computing both tables takes $O(ME^2)$ arithmetic operations.
	Evaluating these expressions at $M_{\mathrm{pts}}$ points then takes
	$O(M_{\mathrm{pts}}E^2)$ operations, giving a total cost
	$O((M+M_{\mathrm{pts}})E^2)$ for each fixed $t$ and $M$.
	
	The cutoff is chosen to meet the required error tolerance.
	An explicit truncation bound follows by setting
	\[
	\rho_M=\max_{1\le n\le M}\left|\log\frac n{n_0}\right|,
	\qquad \lambda_M=\frac t4\rho_M^2,\qquad
	\mu_M(u)=\left|u+\frac t2\log n_0\right|\rho_M
	\]
	and
	\[
	\mathcal R_E(u;s_0):=
	n_0^{\operatorname{Re}u+\frac t4\log n_0}
	e^{\lambda_M+\mu_M(u)}
	\frac{\lambda_M^E+\mu_M(u)^E}{E!}
	\sum_{n=1}^{M}|\chi_m(n)|n^{-\operatorname{Re}s_0}.
	\]
	Indeed,
	\[
	\left|e^z-\sum_{j=0}^{E-1}\frac{z^j}{j!}\right|
	\le e^{|z|}\frac{|z|^E}{E!}.
	\]
	Apply this to the two factors in \eqref{eq:factor_first}, using
	the bound $e^{|z|}$ for either a full exponential or its truncated
	series. It follows that
	\[
	|B(b)-B_E(b)|\le\mathcal R_E(b;s_+^0),\qquad
	|A(a)-A_E(a)|\le\mathcal R_E(a;s_-^0).
	\]
	Consequently,
	\[
	\left|f_t^L(x+iy)
	-\bigl(B_E(b)+\gamma^L(x+iy)A_E(a)\bigr)\right|
	\le\mathcal R_E(b;s_+^0)
	+|\gamma^L(x+iy)|\mathcal R_E(a;s_-^0).
	\]
	As in \cite[\S7]{Poly}, we take $E=50$ in the range considered below.
	\begin{remark}[Taylor truncation]
		\label{rem:fast_E50}
		We record a range in which the choice $E=50$ gives a sufficiently small
		Taylor remainder. Suppose that
		\[
\begin{gathered}
3\le m\le400000,\qquad x\ge480,\qquad 0\le t\le\frac12,\\
0\le y,y_0\le D\le1,\qquad |x-X|\le1,\qquad 10\le M\le5000.
\end{gathered}
\]
		Here $M$ is fixed when forming each coefficient table. Then
		\[
		\left|f_t^L(x+iy)
		-\bigl(B_{50}(b)+\gamma^L(x+iy)A_{50}(a)\bigr)\right|
		<3\cdot10^{-10}.
		\]
		If $t\le1/5$, the same argument gives the stronger bound $10^{-14}$.
		
		In particular, $M\le5000$ for the heights $x\le2h_m+1$, with $h_m$
		as in \eqref{hm_def}. Indeed, for $m\le400000$ and $t\le1/2$,
		\[
		m\left(x+\frac{\pi t}{4}\right)
		\le\max\{2\cdot10^8,\,400m+1.5\cdot10^8\}
		+m(1+\pi/8)
		<3.106\cdot10^8<4\pi(5000)^2.
		\]
		For the error estimate, put $n_0=\lceil M/2\rceil$ and
		$\delta=\sqrt{m(x+\pi t/4)/(4\pi)}$. Since $10\le M\le5000$, we have $5\le n_0\le2500$ and
		$M/n_0\le2$. Hence
		\[
		\rho_M=\max\{\log n_0,\log(M/n_0)\}=\log n_0<8.
		\]
		Writing $\delta_0=\sqrt{mx/(4\pi)}$, the inequalities
		$M\le\delta<M+1$ and
		$\delta=\delta_0\sqrt{1+\pi t/(4x)}$ give
		\[
		\frac32<\frac{\delta_0}{n_0}<\frac{11}{5}.
		\]
		By \eqref{eq:magnitudes_alpha} and conjugation,
		\[
		\begin{split}
			|\alpha_L(s_\pm)-\log n_0|
			&\le
			\sqrt{\log^2(11/5)+\pi^2/16}+\frac2{x-2}<\frac98,\\
			\operatorname{Re}\alpha_L(s_\pm)-\log n_0
			&\ge\log(3/2)-\frac2{x-2}>0.
		\end{split}
		\]
		The definitions of the shifts $a,b$ therefore imply
		\[
		\ba
		~{} & \left|u+\frac t2\log n_0\right|
		\le\frac{\sqrt{(x-X)^2+(y-y_0)^2}}2+\frac{9t}{16}
		\le\frac{\sqrt2}{2}+\frac9{32}<1,
		\\
		& \operatorname{Re}\left(u+\frac t2\log n_0\right)\le\frac12.
		\ea
		\]
		Thus $\lambda_M\le8$ and $\mu_M(u)\le8$. Since
		$\rho_M=\log n_0$, the factor involving $n_0$ in $\mathcal R_E$
		combines with $e^{\lambda_M}$ as
		\[
		n_0^{\operatorname{Re}u+\frac t4\log n_0}e^{\lambda_M}
		=n_0^{\operatorname{Re}(u+\frac t2\log n_0)}
		\le\sqrt{n_0}.
		\]
		The weight sums in $\mathcal R_E$ are at most $M$, because
		$\operatorname{Re}s_\pm^0\ge0$ and $|\chi_m(n)|\le1$.
		Proposition~\ref{prop:magnitudes} gives $|\gamma^L|<2$.
		The remainder bound now yields
		\[
		3M\sqrt{n_0}\,e^8\,\frac{2\cdot8^{50}}{50!}
		<3\cdot10^{-10}.
		\]
		
		When $t\le1/5$, the shifted-exponent bound improves to
		\[
		\left|u+\frac t2\log n_0\right|
		\le\frac{\sqrt2}{2}+\frac9{80}<\frac56.
		\]
		Thus $\lambda_M\le16/5$ and $\mu_M(u)\le20/3$, and the same argument gives
		\[
		3M\sqrt{n_0}\,e^{20/3}
		\frac{2(20/3)^{50}}{50!}<10^{-14},
		\]
		using $e^{20/3}<800$ and $(20/3)^{50}<1.6\cdot10^{41}$.
		For a fixed-$M$ box, the same estimates apply provided
		$3/2<\delta_0/n_0<11/5$ throughout the box. Outside this range,
		we choose $E$ using the general bound $\mathcal R_E$.
		
		We use this full-strip estimate for every value of $D$. A smaller cap can improve the Taylor remainder, but this refinement is not needed for the stated tolerance.

	\end{remark}
	
	For computations at several times, the coefficient tables can also be
	made independent of $t$. Define
	\[
	\widetilde B_{i,j}^L:=
	\sum_{h=1-n_0}^{M-n_0}
	\frac{\chi_m(n_0+h)}{(n_0+h)^{s_+^0}}
	\frac{(L_h^2/4)^i}{i!}\frac{L_h^j}{j!},
	\]
	and define $\widetilde A_{i,j}^L$ with $s_-^0$ in place of $s_+^0$.
	Then
	\[
	B_E(b)=n_0^{b+\frac t4\log n_0}
	\sum_{j=0}^{E-1}
	\left(\sum_{i=0}^{E-1}t^i\widetilde B_{i,j}^L\right)
	\left(b+\frac t2\log n_0\right)^j,
	\]
	with the analogous expression for $A_E(a)$. These are the same finite
	Taylor sums, so the remainder estimates are unchanged. At reference
	ordinate zero, $s_-^0=\overline{s_+^0}$ and the character is real; thus
	$\widetilde A_{i,j}^L=\overline{\widetilde B_{i,j}^L}$.
	For the barrier computations one may take reference abscissa
	$X_{\rm barrier}+1/2$ and reference ordinate zero. Every point in
	$[X_{\rm barrier},X_{\rm barrier}+1]\times[0,1]$ then satisfies the
	reference-point conditions of Remark~\ref{rem:fast_E50}.
	
	As in \cite[\S7, footnote~5]{Poly}, a coarse truncation bound is
	sufficient when it is well below the margins in the verification.
	For each computed value of $f_{t,M}^L$ used below, we reserve the
	fixed absolute tolerance and require
	\begin{align}
		&\eta_{\mathrm{num}}:=10^{-8},
\notag\\
		&|\widehat f_{t,M}^L(z)-f_{t,M}^L(z)|
		\leq\eta_{\mathrm{num}}.
		\label{eq:numerical_accuracy_requirement}
	\end{align}
	In the range of Remark~\ref{rem:fast_E50}, the Taylor truncation
	contributes less than $3\cdot10^{-10}$, leaving $9.7\cdot10^{-9}$
	for the remaining evaluation errors, including rounding and the
	calculation of the coefficients and prefactors. Outside that range,
	$E$ is chosen using $\mathcal R_E$ to meet the same tolerance.
	To verify this allowance, the finite Taylor sums, their coefficients
	and their prefactors are evaluated with interval arithmetic. If the
	resulting enclosure lies in a disk of radius at most $9.7\cdot10^{-9}$
	about the reported value, adding the Taylor remainder proves
	\eqref{eq:numerical_accuracy_requirement}. The working precision is
	increased if this test fails.
	Alternatively, the subsequent inequalities may be checked directly
	with complex enclosures that already include the Taylor remainder.
	In that case their modulus lower bounds and argument enclosures include
	the evaluation error; any further subtraction of $\eta_{\mathrm{num}}$
	is conservative. Comparisons with direct sums or higher-precision
	calculations provide useful checks, but are not used to bound rounding
	errors. The verification will record the precision, the Taylor remainder
	bound, and the resulting interval inequalities.
	
	\section{Verification of the zero-free regions}
	\label{sec:verification}
	
	In this section we follow the strategy of Polymath~\cite[\S 8]{Poly}, with the Riemann--Siegel truncation length $M$ depending on the conductor. The effective approximation from Section~\ref{sec:4}, together with Platt's GRH verification~\cite{platt}, reduces the proof of $\Lambda_L\le t_0+y_0^2/2$ to the three zero-free regions in Lemma~\ref{lemmabox}.
	
	Throughout the section, $\chi_m$ is a fixed primitive real character with $3\le m\le400000$. Let $0<D\le1$ be an admissible strip bound from Section~\ref{DynSection}, so every zero of $H_0^L$ satisfies $|\operatorname{Im}z|\le D$. We choose $t_0>0$ and $y_0>0$ with $y_0^2+2t_0<D^2$; if only $\Lambda_L\le D^2/2$ is needed, this follows directly from de Bruijn's theorem.
	The cases used below are $D\in\{3/4,1\}$, under their respective strip hypotheses.
	The geometry of the regions depends on $D$. The $E=50$ Taylor remainder is bounded uniformly for $0\leq y\leq1$ as in Remark~\ref{rem:fast_E50}. The remaining Riemann–Siegel majorants are kept in their pointwise form and evaluated using the endpoint monotonicities below; this also permits a full case $D=1$ to be reused on a smaller cap whenever the same pair $(t_0,y_0)$ is used.
	
	\subsection{Selection of parameters}
	
	For the conductor $m$, Platt's verification height~\cite[, Theorem~10.1]{platt} is
	\begin{equation}\label{hm_def}
		h_m=\max\left\{\frac{10^8}{m},\,200+\frac{A(m)10^7}{m}\right\},
		\qquad A(m)=\begin{cases}7.5,&m\text{ even},\\3.75,&m\text{ odd}.\end{cases}
	\end{equation}
	Here $A(m)$ depends on the parity of the conductor, whereas
	$a=(1-\chi_m(-1))/2$ denotes the parity of the character.
	For $m\le400$, both choices of $A(m)$ give $h_m=10^8/m$.
	We use the search width $\Delta_m:=100+4500/m$.
	We search for the barrier location in
	\begin{equation}\label{Xm_search}
		X\in[2h_m-\Delta_m,2h_m].
	\end{equation}
	
	The search is guided by the truncated Euler product
	\[
\operatorname{ep}_r^L(X):=\left|\prod_{j=1}^{r}\left(1-\chi_m(p_j)p_j^{-1+iX/2}\right)^{-1}\right|.
\]
	At $t=0$ and $y=1$, $s_+(x,1)=1-ix/2$, so this product gives a heuristic for the first Dirichlet sum in $f_0^L(x+i)$. For $p\nmid m$, its phases are favourable when
	\[
	\chi_m(p)e^{ix\log p/2}\approx1,
	\qquad\text{equivalently}\qquad
	\left\|\frac{x\log p}{4\pi}-\frac{1-\chi_m(p)}4\right\|_{\mathbb Z}\approx0.
	\]
	We write $X_m^\star$ for a candidate in the search interval with a large value of this product and set $X:=X_m^\star$. The proof only requires the later verification at this $X$.
	
	Using both terms in the definition of $h_m$ gives
	\[
		X\ge x_m:=2h_m-\Delta_m
		=\max\left\{\frac{199995500}{m}-100,\,300+\frac{2A(m)10^7-4500}{m}\right\}.
	\]
	For each fixed value of $A(m)$, the expression on the right decreases
	with $m$. Evaluating the two comparison functions at $m=400000$
	gives $674.98875$ and $487.48875$, respectively. Thus
	\[
		X\ge x_m\ge487.48875>480.
	\]
	
	For the beta function, $m=4$, $h_4=25{,}000{,}000$ and $\Delta_4=1225$, so the search interval is $[49{,}998{,}775,50{,}000{,}000]$. We use the candidate $X_4^\star=49{,}999{,}509.0746$ selected by the truncated Euler product with $r=20$. Figure~\ref{fig:eulerLm4} compares the products for $r=3,5,7,20$ over the search interval and near this candidate.
	\begin{figure}[!ht]
		\centering
		\includegraphics[width=0.8\linewidth]{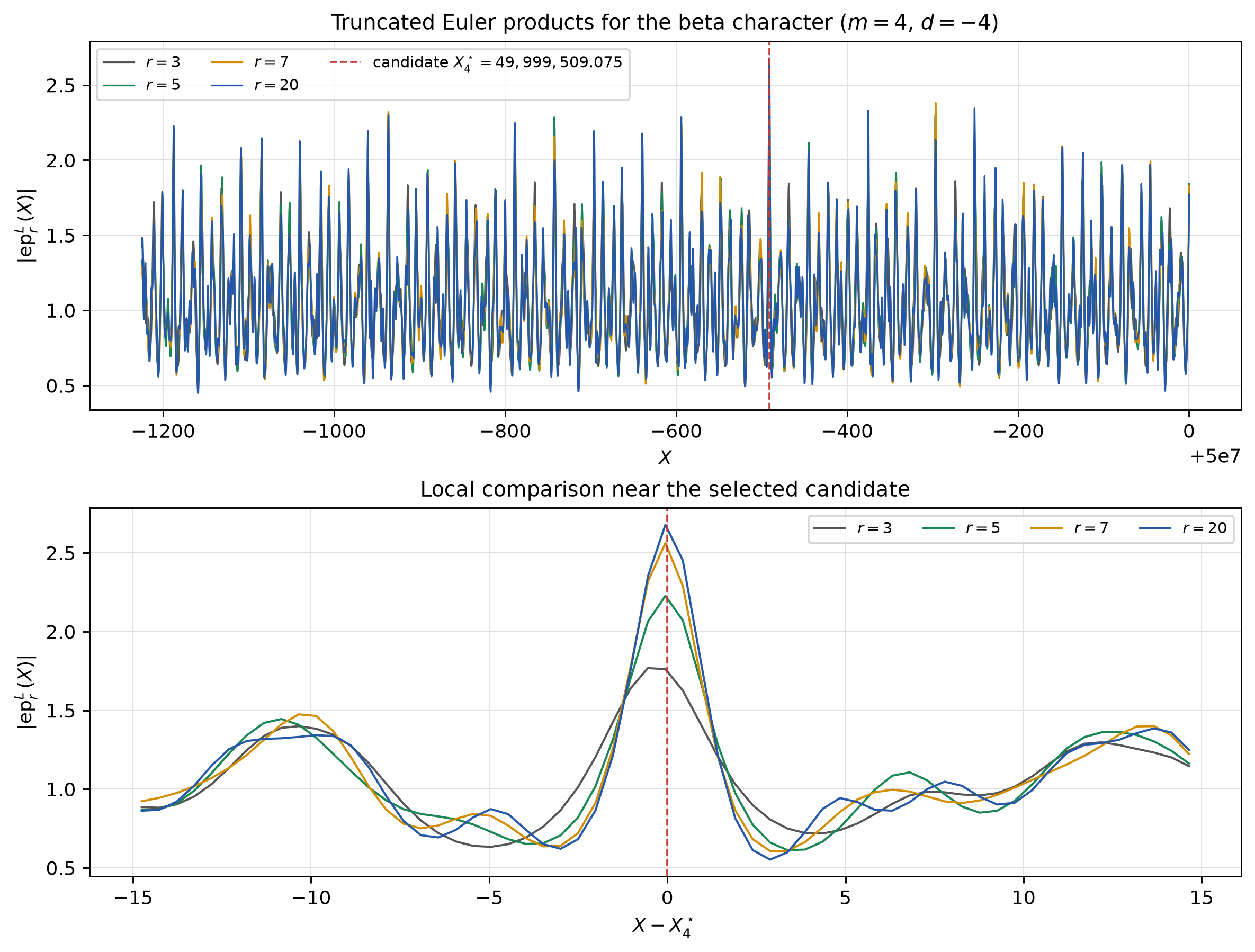}
		\caption{Truncated Euler products for the beta function ($m=4$, $d=-4$), with $r=3,5,7,20$, on the search window (top) and near the candidate $X_4^\star$ selected with $r=20$ (bottom).}
		\label{fig:eulerLm4}
	\end{figure}
	
	\subsection{The initial rectangle}
Hypothesis (1) of Lemma~\ref{lemmabox} requires
$H_0^L(x+iy)\ne0$ for $0\le x\le X$ and
$\sqrt{y_0^2+2t_0}\le y\le D$.
Under $s=(1-y+ix)/2$, a zero in this region would give a
zero of $\xi_L(s)$ satisfying
\[
\begin{aligned}
 \operatorname{Re}s
 &=\frac{1-y}{2}
 \le\frac{1-\sqrt{y_0^2+2t_0}}2<\frac12,\\
 |\operatorname{Im}s|
 &=\frac x2\le\frac X2\le h_m.
\end{aligned}
\]
Platt's verification of GRH up to height $h_m$ excludes such a
zero. It remains to verify hypotheses (2) and (3) of
Lemma~\ref{lemmabox}.

\subsection{The truncation parameters}
	
	For \(x\geq2h_m-\Delta_m\geq480\) and \(0\leq t\leq t_0\), define
\begin{align*}
	 M(x,t)&:=\left\lfloor
	 \sqrt{\frac{m}{4\pi}\left(x+\frac{\pi t}{4}\right)}
	 \right\rfloor,
	 \\
	 \delta(x,t)&:=
	 \sqrt{\frac{m}{4\pi}\left(x+\frac{\pi t}{4}\right)}.
\end{align*}
	Thus \(M(x,t)=\lfloor\delta(x,t)\rfloor\).
	For every integer \(M\geq1\), define
	\[
		x_M(t)
		:=
		\frac{4\pi M^2}{m}-\frac{\pi t}{4}.
	\]
	Then,
	\[
		M(x,t)=M
		\quad\Longleftrightarrow\quad
		x_M(t)\leq x<x_{M+1}(t).
	\]
	The function \(\delta(x,t)\) is increasing in both variables. Hence on
	the enlarged barrier
	\[
	X\leq x\leq X+1,
	\qquad
	0\leq t\leq t_0,
	\]
	we have
	\[
		\delta_0
		\leq
		\delta(x,t)
		\leq
		\delta_1,
	\]
	where
	\[
		\delta_0
		:=
		\sqrt{\frac{mX}{4\pi}},
		\qquad
		\delta_1
		:=
		\sqrt{
			\frac{m}{4\pi}
			\left(
			X+1+\frac{\pi t_0}{4}
			\right)
		}.
	\]
	We define
	\[
		M_0:=\lfloor\delta_0\rfloor.
	\]
	Thus
	\[
	M_0\leq M(x,t)\leq \lfloor\delta_1\rfloor
	\]
	throughout the enlarged barrier.
	
	The variation of $\delta$ on this box is
	\[
		\delta_1-\delta_0
		=\sqrt{\frac{m}{4\pi}}\,
		\frac{1+\pi t_0/4}{\sqrt{X+1+\pi t_0/4}+\sqrt X}
		\leq\frac{(1+\pi t_0/4)\sqrt m}{4\sqrt{\pi X}}
		\leq\frac{(1+\pi/8)\sqrt m}{4\sqrt{\pi x_m}}.
	\]
	For each fixed value of $A(m)$, the last expression increases with
	$m$. Evaluating the two comparison functions at $m=400000$ gives,
	respectively, values below $4.79$ and $5.64$. Hence
	\[
		\delta_1-\delta_0<\begin{cases}4.79,&m\text{ even},\\5.64,&m\text{ odd}.\end{cases}
	\]
	We put
	\[
		J_m:=\lfloor\delta_1\rfloor-\lfloor\delta_0\rfloor.
	\]
	Then $J_m\le5$ for even $m$ and $J_m\le6$ for odd $m$, so
	\[
		M_0\leq M(x,t)\leq M_0+J_m\leq M_0+6.
	\]
	Since the moving barrier has width $\Delta_D\le1$, this same range contains all its active truncation lengths; empty stripes are omitted. We apply the derivative bounds separately on each fixed-$M$ stripe $\mathcal R_{a,M}$.
	
	\begin{remark}
		For the beta function, $m=4$, with $t_0=0.25155$ and
		$X_4^\star=49{,}999{,}509.0746$, we obtain
		$\delta_0=3989.4032$ and $\delta_1=3989.4033$. Thus
		$M_0=\lfloor\delta_0\rfloor=3989$ and
		$J_4=\lfloor\delta_1\rfloor-\lfloor\delta_0\rfloor=0$;
		accordingly, $M(x,t)=3989$ throughout the enlarged barrier.
	\end{remark}

	\medskip
	
	The integer \(M_0\) is determined by the selected barrier location.
	As in \cite[\S 8.2]{Poly}, we now introduce a second truncation parameter
	\(M_1\), whose role is to separate the finite region requiring numerical
	verification from the far-right region that can be treated analytically.
	
	Choose an integer
	\[
		M_1>M_0+J_m
	\]
	and define
	\[
		X_1
		:=
		x_{M_1}(t_0)
		=
		\frac{4\pi M_1^2}{m}
		-\frac{\pi t_0}{4}.
	\]
	Then
	\begin{equation}\label{M1_equivalence}
		M(x,t_0)\geq M_1
		\quad\Longleftrightarrow\quad
		x\geq X_1.
	\end{equation}
	The parameter \(M_1\) is chosen sufficiently large that the analytic
	estimate in region (c) below holds for every
	\(M(x,t_0)\geq M_1\). Thus \(M_1\) is an auxiliary cutoff and is not part
	of the final definition of \(\Lambda_*(m,\chi_m;D)\).
	
	\subsection{The verification regions}
	
	Assume $t_0>0$, $y_0>0$ and $y_0^2+2t_0<D^2$. Put
	\[
		\Delta_D=\sqrt{D^2-y_0^2-2t_0},\qquad
		Y(t)=\sqrt{y_0^2+2(t_0-t)},\qquad U_D(t)=\sqrt{D^2-2t}.
	\]
	Region (a) is the moving barrier in Lemma~\ref{lemmabox}.
	The larger box $[X,X+1]\times[y_0,D]\times[0,t_0]$ is used only
	for convenient bounds on truncation lengths, derivatives and selected
	error terms. In particular, $M_0$ and $J_m$ computed on that box
	contain every active truncation length of the moving barrier.
	The non-vanishing test for (a) is performed on the smaller region below.
	
	\paragraph{Region (a).}
	For each
	\[
	M\in\left\{M_0,\ldots,M_0+J_m\right\}
	\]
	define
	\[
		\mathcal R_{a,M}^L=\left\{(x,y,t):
		X\leq x\leq X+\Delta_D,\quad
		Y(t)\leq y\leq U_D(t),\quad0\leq t\leq t_0,\quad M(x,t)=M\right\}.
	\]
	Then,
	\[
	\mathcal R_a^L
	=
	\bigcup_{M=M_0}^{M_0+J_m}
	\mathcal R_{a,M}^L.
	\]
	On every \(\mathcal R_{a,M}^L\), the truncation length is constant. When \(J_m=0\), this reduces to the single region
	\[
	M(x,t)=M_0,
	\]
	as in the Polymath computation.
	
	\medskip
	
	\paragraph{Region (b).}
	Define
	\[
		\mathcal R_b^L
		:=
		\left\{
		(x,y):
		x\geq X+\Delta_D,
		y_0\leq y\leq D,
		M_0\leq M(x,t_0)<M_1
		\right\}.
	\]
	Equivalently,
	\[
	\mathcal R_b^L
	=
	\bigcup_{M=M_0}^{M_1-1}
	\mathcal R_{b,M}^L,
	\]
	where
	\[
		\mathcal R_{b,M}^L
		:=
		\left\{
		(x,y):
		\max\left\{X+\Delta_D,x_{M}(t_0)\right\}\leq x<x_{M+1}(t_0),
		y_0\leq y\leq D
		\right\}.
	\]
	On each \(\mathcal R_{b,M}^L\), the truncation length is constant.
	
	\medskip
	
	\paragraph{Region (c).}
	Define
	\[
		\mathcal R_c^L
		:=
		\left\{
		(x,y):
		x\geq X_1,
		y_0\leq y\leq D
		\right\}.
	\]
	By \eqref{M1_equivalence}, this is precisely the region in which
	\[
	M(x,t_0)\geq M_1.
	\]
	The regions (b) and (c) cover the enlarged right tail:
	\[
		\left\{
		x\geq X+\Delta_D,
		y_0\leq y\leq D
		\right\}
		=
		\mathcal R_b^L\cup\mathcal R_c^L.
	\]

	\subsection{Bounds for the errors}
	
	Set
	\[
		\mathcal E_t^L(x,y):=e_A(x+iy)+e_B(x+iy)+e_{C,0}(x+iy).
	\]
	By Theorem~\ref{EffectiveRSa},
	\[
		\frac{H_t^L(x+iy)}{B_t^L(x+iy)}
		=f_t^L(x+iy)+O_\leq\!\left(\mathcal E_t^L(x,y)\right).
	\]
	Since $B_t^L\neq0$ in the region under consideration, it is enough to prove
	\[
		|f_t^L(x+iy)|>\mathcal E_t^L(x,y).
	\]
	Throughout this subsection, assume
	\[
	3\leq m\leq400000,\qquad x\geq480,\qquad
	0\leq t\leq\frac12,\qquad 0\leq y\leq D\leq1.
	\]
	The strip cap $D$ is fixed and suppressed in the notation for the
	majorants below. We keep the dependence on $x,y,t$. Put
	\[
	q:=\frac{mx}{4\pi},\qquad \ell:=\log q,\qquad
	\delta_+(x,t):=\sqrt{q\left(1+\frac{\pi t}{4x}\right)}.
	\]
	Then $M\leq\delta_+(x,t)\leq q$. Proposition~\ref{prop:magnitudes}(1)
	and (3) gives
	\[
		|\gamma^L(x+iy)|\leq e^{a_my}q^{-y/2},
		\qquad a_m:=0.002+\frac{\log m}{3800},
	\]
	and
	\[
		|\kappa^L|\leq\frac{ty}{2(x-2)}.
	\]
	Consequently,
	\[
		|\gamma^L(x+iy)|M^{y+|\kappa^L|}\leq G_m(x,y,t),
	\]
	where
	\[
	G_m(x,y,t):=
	\exp\!\left(
	a_my+\frac y2\log\!\left(1+\frac{\pi t}{4x}\right)
	+\frac{ty}{2(x-2)}\log\delta_+(x,t)
	\right).
	\]
	The exponent defining $G_m$ is a nonnegative multiple of $y$, so
	$G_m(x,y,t)\leq G_m(x,D,t)$. For $n\geq1$, define
	\[
	L_n:=\log(q/n^2),\qquad
	H_n(x,t):=\exp\!\left(-\frac t4\log n\,(\ell-\log n)\right),
	\]
	\[
	\eta_n(x,t):=
	\exp\!\left(
	\frac{\frac{t^2}{16}L_n^2+\frac{\log m}{2250}+1.3}{x-5/2}
	\right)-1,
	\qquad
	\theta_y(x,t):=\frac{1-y}{2}+\frac{(3+y)t}{2x^2}.
	\]
	Proposition~\ref{prop:magnitudes}(2) gives
	\[
	\frac{b_n^{t,L}}{n^{\operatorname{Re}s_*^L(+)}}
	\leq n^{-1+\theta_y(x,t)}H_n(x,t).
	\]
	The proof of items~(4)--(5) bounds each error factor by $\eta_n(x,t)$.
	Since $s_*^L(-)-\overline{s_*^L(+)}=-y+\kappa^L$, we have
	$|n^{-s_*^L(-)}|\leq n^{y+|\kappa^L|}n^{-\operatorname{Re}s_*^L(+)}$.
	Keeping this factor inside the sum gives
	\[
		\begin{aligned}
			e_A(x+iy)+e_B(x+iy)\leq{}&
			\sum_{n=1}^{M}n^{-1+\theta_y(x,t)}H_n(x,t)\eta_n(x,t)\\
			&\times\left[
			1+e^{a_my}\left(\frac n{\sqrt q}\right)^y
			n^{ty/(2(x-2))}\right].
		\end{aligned}
	\]
	For $n\leq M$, the expression in square brackets is at most
	$1+G_m(x,D,t)$. Moreover, $H_n\leq1$ and $|L_n|\leq\ell$.
	Set
	\[
		d_m(x,t):=
		\frac{\frac{t^2}{16}\ell^2+\frac{\log m}{2250}+1.3}{x-5/2}.
	\]
	Since $0\leq\theta_y<1$, the integral test yields
	\[
		e_A(x+iy)+e_B(x+iy)\leq\varepsilon_{AB,t}^L(x,y),
	\]
	where
	\begin{equation}\label{eq:epsilon_AB_local}
		\begin{aligned}
			\varepsilon_{AB,t}^L(x,y):={}&
			\bigl(e^{d_m(x,t)}-1\bigr)\bigl(1+G_m(x,D,t)\bigr)\\
			&\times\left[
			1+\frac{\delta_+(x,t)^{\theta_y(x,t)}-1}{\theta_y(x,t)}
			\right].
		\end{aligned}
	\end{equation}
	At $\theta_y=0$ (that is, $t=0$ and $y=1$), the last factor is
	interpreted as $1+\log\delta_+$, the continuous limit of the displayed quotient.
	Indeed,
	\[
	\sum_{n=1}^{M}n^{-1+\theta_y}
	\leq1+\int_1^{\delta_+}u^{-1+\theta_y}\,du.
	\]
	
	For fixed $m,t,D$, the bound \eqref{eq:epsilon_AB_local} is
	non-increasing in $y$ and decreasing in $x$. The assertion for $y$
	follows from $\partial_y\theta_y=-1/2+t/(2x^2)<0$.
	For the assertion concerning $x$, put $a=t^2/16$ and
	$c=\log m/2250+1.3$. With $d=d_m(x,t)$,
	\[
	\frac{d}{d\log x}\log(e^d-1)
	=\frac{d}{1-e^{-d}}
	\left(\frac{2a\ell}{a\ell^2+c}-\frac{x}{x-5/2}\right)<-0.89,
	\]
	because $2a\ell/(a\ell^2+c)\leq\sqrt{a/c}<0.110$.
	Write $I(x)=1+\int_1^{\delta_+}u^{-1+\theta_y}\,du$.
	Since $\partial_x\theta_y\leq0$ and
	$\delta_+^{\theta_y}\leq I(x)$ for $0\leq\theta_y\leq1$,
	\[
	\frac{xI'(x)}{I(x)}
	\leq\frac{\delta_+^{\theta_y}}{I(x)}
	\frac{x}{2(x+\pi t/4)}\leq\frac12.
	\]
	Finally, $G_m(x,D,t)$ is non-increasing in $x$. In particular,
	$\log\delta_+>2$ and
	\[
	\frac{d}{dx}\frac{\log\delta_+}{x-2}
	=\frac{(x-2)/(2(x+\pi t/4))-\log\delta_+}{(x-2)^2}<0.
	\]
	These estimates prove the claimed monotonicity. At fixed $m,x,y,D$,
	$\varepsilon_{AB,t}^L(x,y)$ is non-decreasing in $t$, since each
	factor in \eqref{eq:epsilon_AB_local} is non-decreasing.
	
	For the remainder term we keep the two values of $\sigma$ separately.
	Put
	\[
	T':=\frac x2+\frac{\pi t}{8},\qquad
	\mathfrak d(x):=\frac x2-\frac54,\qquad
	V(x,t):=\sqrt{\frac{T'}{7.49}}-\frac32,
	\]
	\[
	K_m(x):=\exp\!\left(
	\frac{\frac14\sqrt{\ell^2+\pi^2/4}+1.54}{x-5/2}
	\right),\qquad
	\lambda_\pm(x,y):=
	\frac{12((1\pm y)/2)^2+13}{24\mathfrak d(x)},
	\]
	and
	\[
	S(y):=\frac{27}{4}(3^y+3^{-y})
	+\frac{5}{4\sqrt2}(2^{y/2}+2^{-y/2}).
	\]
	Propositions~\ref{prop:RtN_rigorous} and~\ref{prop:eC}, together
	with the prefactor estimate in the proof of
	Proposition~\ref{prop:magnitudes}(6), give
	\[
		e_{C,0}(x+iy)\leq\varepsilon_{C,t}^L(x,y),
	\]
	where
	\begin{equation}\label{eq:epsilon_C_local}
		\begin{aligned}
			\varepsilon_{C,t}^L(x,y):={}&
			K_m(x)\left(\frac{mT'}{2\pi}\right)^{-(1+y)/4}
			e^{-t\ell^2/16}\\
			&\times\left[
			\mathcal B_m^\star
			\left(e^{\lambda_+(x,y)}+e^{\lambda_-(x,y)}\right)
			+\frac{2\varphi(m)}{5V(x,t)}e^{2/\mathfrak d(x)}S(y)
			\right].
		\end{aligned}
	\end{equation}
	Here $\mathcal B_m^\star$ is as in \eqref{eq:C0_uniform_best}.
	It may be replaced throughout by the logarithmic upper bound
	\eqref{eq:C0_uniform_log}, or by $3.33\sqrt m$ in the stated conductor
	range. Both replacements depend only on $m$, so the monotonicities
	below are unchanged.
	The identity
	\[
	\sum_{\pm}
	\left(\frac94\,9^{(1\pm y)/2}+\frac54\,2^{-(1\pm y)/2}\right)=S(y)
	\]
	accounts for the two remainder contributions, counted with multiplicity
	when $y=0$. The leading terms combine as
	$2+(e^{\lambda_+}-1)+(e^{\lambda_-}-1)=e^{\lambda_+}+e^{\lambda_-}$ and
	both contributions from Proposition~\ref{prop:eC} are kept.
	
	The bound \eqref{eq:epsilon_C_local} decreases in $x$, $y$, and $t$,
	with the other variables fixed. For $y$, use
	\[
	\frac{S'(y)}{S(y)}\leq\log3,\qquad
	\frac{d}{dy}\log(e^{\lambda_+}+e^{\lambda_-})
	\leq\frac1{2\mathfrak d(x)}.
	\]
	The logarithmic derivative of \eqref{eq:epsilon_C_local} is therefore
	at most
	\[
	-\frac14\log\left(\frac{mT'}{2\pi}\right)
	+\max\left\{\log3,\frac1{2\mathfrak d(x)}\right\}<0,
	\]
	since $mT'/(2\pi)\ge360/\pi>81$ and
	$1/(2\mathfrak d(x))<\log3$.
	For $t$, the quantities $T'$ and $V(x,t)$ increase and
	$e^{-t\ell^2/16}$ decreases; the remaining factors are independent of $t$.
	For $x$, all factors are non-increasing. To check the only remaining
	factor $K_m(x)$, put
	$N(x)=\frac14\sqrt{\ell^2+\pi^2/4}+1.54$.
	Then $N(x)\geq1.54$ and $N'(x)\leq1/(4x)$, so
	$\frac{d}{dx}(N(x)/(x-5/2))<0$.
	
	We shall use the explicit majorant
	\begin{equation}\label{eq:epsilon_RS_local}
		\varepsilon_t^L(x,y):=
		\varepsilon_{AB,t}^L(x,y)+\varepsilon_{C,t}^L(x,y).
	\end{equation}
	Thus $\mathcal E_t^L(x,y)\leq\varepsilon_t^L(x,y)$, and for fixed
	$m,t$ the latter is non-increasing in $x$ and $y$.
	In particular, for $x\geq x_*$, $y\geq y_*$, and
	$0\leq t_-\leq t\leq t_+\leq1/2$,
	\[
		\mathcal E_t^L(x,y)
		\leq\varepsilon_{AB,t_+}^L(x_*,y_*)
		+\varepsilon_{C,t_-}^L(x_*,y_*).
	\]
	No monotonicity in $t$ is asserted for the sum
	$\varepsilon_t^L$.
	
	The improvement in $\varepsilon_{AB,t}^L$ from the upper ordinate is
	contained in $G_m(x,D,t)$. The type-$C$ bound keeps its dependence on
	the actual ordinate: its maximum on $[y_*,D]$ is still attained at
	$y_*$. Thus, decreasing the upper ordinate alone does not reduce that
	maximum. In particular, replacing a top contour at $y=1$ by one at
	$y=D$ requires checking the top-edge inequality again.
	
	For the finite blocks used below, one can also retain the factors
	$H_n$ and $\eta_n$. For a fixed integer $N\geq1$, define
	\begin{equation}\label{eq:epsilon_AB_finite}
		\varepsilon_{AB,t}^{L,[N]}(x,y):=
		(1+G_m(x,D,t))\sum_{n=1}^{N}
		n^{-1+\theta_y(x,t)}H_n(x,t)\eta_n(x,t).
	\end{equation}
	Whenever $M(x,t)\leq N$, this bounds $e_A+e_B$, since the summands
	are nonnegative. For fixed $N,m,t,D$, it is non-increasing in $x$ and $y$.
	No monotonicity in $t$ is used for this finite sum: the time-dependent
	factors are bounded separately on the barrier boxes below.
	To verify the assertion for $x$, note that
	$\partial_x\log H_n=-t\log n/(4x)\le0$ and
	$\partial_x\theta_y\le0$. It remains to check $\eta_n$.
	With $a,c$ as above and $L=\log(q/n^2)$,
	\[
	\frac{d}{dx}\frac{aL^2+c}{x-5/2}
	=\frac{2aL(1-5/(2x))-aL^2-c}{(x-5/2)^2}<0,
	\]
	because the numerator is at most $a(1-5/(2x))^2-c<0$.
	This argument holds for every $n\geq1$, including the terms added
	when $N>M$.
	It follows that
	\begin{equation}\label{eq:epsilon_RS_finite}
		\varepsilon_t^{L,[N]}(x,y):=
		\varepsilon_{C,t}^L(x,y)
		+\min\left\{\varepsilon_{AB,t}^L(x,y),
		\varepsilon_{AB,t}^{L,[N]}(x,y)\right\}
	\end{equation}
	is a non-increasing majorant in $x,y$ on any region where $M(x,t)\leq N$.
	At a truncation boundary, these estimates apply to either adjacent
	branch by taking its one-sided limit, provided its truncation length
	is at most $N$. Here a continued value always means the limit of a
	branch from a region where that truncation is the natural one; no
	remainder estimate for an arbitrary truncation length is asserted.
	
	\subsection{Region (c)}
	
	We work at the fixed time $t=t_0$, where $0<t_0\leq1/2$,
	and assume $0<y_0<1$. Put $K:=M_0+J_m\geq2$.
	Choose an integer $M_1>K$ and set
	$X_1:=\frac{4\pi M_1^2}{m}-\frac{\pi t_0}{4}.$
	We require $X_1\geq480$. Then $M=M(x,t_0)\geq M_1$
	throughout
	\[
	\mathcal R_c^L
	=
	\{x+iy:x\geq X_1,\ y_0\leq y\leq D\}.
	\]
	The choice of $M_1$, and hence of $X_1$, will be made below.
	
	As in \cite[\S 8.3]{Poly}, we isolate the first term of
	the first Dirichlet sum. Put
	$\sigma_+:=\operatorname{Re}s_*^L(+)$.
	Since $\chi_m(1)=1$ and $b_1^{t_0,L}=1$, we have \[f_{t_0}^L(x+iy)-1=
	\sum_{n=2}^{M}
	\frac{\chi_m(n)b_n^{t_0,L}}{n^{s_*^L(+)}}
	+
	\gamma^L(x+iy)
	\sum_{n=1}^{M}
	\frac{\chi_m(n)b_n^{t_0,L}}{n^{s_*^L(-)}}.\]
	Using $|\chi_m(n)|\leq1$ and $s_*^L(-)=\overline{s_*^L(+)}-y+\kappa^L,$
	we obtain
	\begin{equation}\label{fminusonebound}
		|f_{t_0}^L(x+iy)-1|
		\leq
		\sum_{n=2}^{M}\frac{b_n^{t_0,L}}{n^{\sigma_+}}
		+
		|\gamma^L|
		\sum_{n=1}^{M}
		\frac{n^y b_n^{t_0,L}}{n^{\sigma_+-|\kappa^L|}}.
	\end{equation}
	
	The coefficients $b_n^{t_0,L}
	=
	\exp\left(\frac{t_0}{4}\log^2 n\right)$
	are the same as those in \cite{Poly}. We may therefore
	apply \cite[Lemma 8.2]{Poly}: if $M\geq K\geq1$ and
	$\sigma>(t_0/2)\log M$, then
	\begin{equation}\label{tail_sum_lemma}
		\sum_{n=1}^{M}\frac{b_n^{t_0,L}}{n^\sigma}
		\leq
		\sum_{n=1}^{K}\frac{b_n^{t_0,L}}{n^\sigma}
		+
		\max\left\{
		K^{1-\sigma}b_K^{t_0,L},
		M^{1-\sigma}b_M^{t_0,L}
		\right\}
		\log\frac{M}{K}.
	\end{equation}
	
	We next make the parameters in \eqref{fminusonebound}
	uniform on $\mathcal R_c^L$.
	Proposition~\ref{prop:magnitudes} gives
	$|\kappa^L|
	\leq
	\frac{t_0y}{2(x-2)}
	\leq
	\kappa_1:=\frac{t_0D}{2(X_1-2)}$,
	and
	$|\gamma^L|
	\leq
	\exp\left[
	\left(0.002+\frac{\log m}{3800}\right)y
	\right]
	\left(\frac{mx}{4\pi}\right)^{-y/2}.$
	
	Since
	$M^2\leq
	\frac{mx}{4\pi}
	\left(1+\frac{\pi t_0}{4x}\right),$ 
	it follows, for $1\leq n\leq M$, that
	\begin{equation}\label{gamma_n_tail}
		\begin{aligned}
			|\gamma^L|n^y
			&\leq
			\exp\left[
			\left(0.002+\frac{\log m}{3800}\right)y
			\right]
			\left(1+\frac{\pi t_0}{4x}\right)^{y/2}
			\left(\frac{n}{M}\right)^y
			\\
			&\leq
			\mathfrak G_m M^{-y_0}n^{y_0},
		\end{aligned}
	\end{equation}
	where $
	\mathfrak G_m
	:=
	\exp\left[\left(0.002+\frac{\log m}{3800}\right)D\right]
	\left(1+\frac{\pi t_0}{4X_1}\right)^{D/2}.$
	
	By Proposition~\ref{prop:magnitudes}(ii),
	$\sigma_+
	\geq
	\frac{1+y}{2}
	+
	\frac{t_0}{4}\log\left(\frac{mx}{4\pi}\right)
	-
	\frac{(3+y)t_0}{2x^2}.$ Consequently,
	\begin{equation}\label{sigma_plus_tail}
		\sigma_+
		\geq
		\frac{1+y_0}{2}
		+
		\frac{t_0}{2}\log M-\rho_1,
	\end{equation}
	where $\rho_1:=
	\frac{t_0}{4}
	\log\left(1+\frac{\pi t_0}{4X_1}\right)
	+\frac{(3+D)t_0}{2X_1^2}
	\leq\frac{31t_0}{240X_1}.$ Define
	\[
	\begin{aligned}
		\sigma_m'(M)
		&:=
		\frac{1+y_0}{2}
		+\frac{t_0}{2}\log M-\rho_1,
		\\
		\sigma_m''(M)
		&:=
		\sigma_m'(M)-y_0-\kappa_1
		=
		\frac{1-y_0}{2}
		+\frac{t_0}{2}\log M-\rho_1-\kappa_1.
	\end{aligned}
	\]
	Combining \eqref{gamma_n_tail}
	and \eqref{sigma_plus_tail}, we obtain
	\begin{equation}\label{tail_bound_general}
		|f_{t_0}^L(x+iy)-1|
		\leq
		\sum_{n=2}^{M}
		\frac{b_n^{t_0,L}}{n^{\sigma_m'(M)}}
		+
		\mathfrak G_m M^{-y_0}
		\sum_{n=1}^{M}
		\frac{b_n^{t_0,L}}{n^{\sigma_m''(M)}}.
	\end{equation}
	
	Assume $\rho_1+\kappa_1<\frac{1-y_0}{2}.$
	Then both exponents in \eqref{tail_bound_general}
	exceed $(t_0/2)\log M$, as required in
	\eqref{tail_sum_lemma}.
	Restoring the $n=1$ term in the first sum and applying
	that estimate twice gives
	\[
	|f_{t_0}^L(x+iy)-1|
	\leq A_m(M)+B_m(M)-1,
	\]
	where 
\[
\begin{aligned}
A_m(M):={}&\sum_{n=1}^K\frac{b_n^{t_0,L}}{n^{\sigma_m'(M)}}
+\mathfrak G_m M^{-y_0}\sum_{n=1}^K
\frac{b_n^{t_0,L}}{n^{\sigma_m''(M)}},\\
B_m(M):={}&\log\left(\frac MK\right)
\Bigl[\max\{K^{1-\sigma_m'(M)}b_K^{t_0,L},
M^{1-\sigma_m'(M)}b_M^{t_0,L}\}\\
&\quad+\mathfrak G_m M^{-y_0}
\max\{K^{1-\sigma_m''(M)}b_K^{t_0,L},
M^{1-\sigma_m''(M)}b_M^{t_0,L}\}\Bigr].
\end{aligned}
\]
	
	The function $A_m(M)$ is decreasing in $M$.
	Logarithmic differentiation of the terms defining
	$B_m(M)$ shows that it is non-increasing for
	$M\geq M_1$ provided
	\begin{equation}\label{tail_monotonicity}
		\frac{1}{\log(M_1/K)}
		\leq
		\min\left\{
		\frac{t_0}{2}\log K,\,
		\sigma_m''(M_1)+y_0-1
		\right\}.
	\end{equation}
	To check this directly, put $u=\log M$ and $v=\log K$.
	With $M_1$ fixed, the constants $\rho_1$, $\kappa_1$ and
	$\mathfrak G_m$ are independent of $M$. The four positive terms whose
	maxima define $B_m(M)$ have logarithmic derivatives with respect to
	$u$ equal to
	\[
	\begin{split}
		&\frac1{u-v}-\frac{t_0v}{2},\qquad
		\frac1{u-v}+1-\sigma_m'(M),\\
		&\frac1{u-v}-y_0-\frac{t_0v}{2},\qquad
		\frac1{u-v}+1-y_0-\sigma_m''(M).
	\end{split}
	\]
	Each is nonpositive under \eqref{tail_monotonicity}: $1/(u-v)$
	decreases with $M$, both exponents increase, and
	$\sigma_m''(M)+y_0=\sigma_m'(M)-\kappa_1\le\sigma_m'(M)$.
	The maximum of two non-increasing functions is non-increasing,
	which proves the assertion for $B_m$.
	Hence
	\[
	A_m(M)+B_m(M)
	\leq A_m(M_1)+B_m(M_1)
	\qquad(M\geq M_1).
	\]
	
	By \eqref{eq:epsilon_RS_local} and its monotonicity in $x,y$, we may take
	\[
		\mathcal E_m^{(c)}:=\varepsilon_{t_0}^L(X_1,y_0).
	\]
	Then
	\[
	\sup_{\substack{x\geq X_1\\y_0\leq y\leq D}}
	\mathcal E_{t_0}^L(x,y)\leq\mathcal E_m^{(c)}.
	\]
	
	We choose $M_1$, and hence $X_1$, so that
	\[A_m(M_1)+B_m(M_1)+\mathcal E_m^{(c)}<2
	\]
	holds. For every $x+iy\in\mathcal R_c^L$,
	Theorem~\ref{EffectiveRSa} then yields
	\[
	\begin{aligned}
		\left|
		\frac{H_{t_0}^L(x+iy)}{B_{t_0}^L(x+iy)}
		\right|
		&\geq
		1-|f_{t_0}^L(x+iy)-1|
		-\mathcal E_{t_0}^L(x,y)
		\\
		&\geq
		2-A_m(M_1)-B_m(M_1)-\mathcal E_m^{(c)}
		>0.
	\end{aligned}
	\]
	Thus non-vanishing in region (c) holds for any choice of $M_1$
	satisfying these conditions.
	
	\subsection{Derivative estimates}
We use the notation $s_\pm,s_*^L(\pm),\gamma^L$ already
introduced above. Set
\[
\begin{aligned}
\mathcal A_L(x)&:=\log\left(\frac{mx}{4\pi}\right)
+\frac\pi2+\frac4{x-2},\\
\mathcal C_L(x,t)&:=\frac12+\frac{t}{4(x-2)},
\qquad
\mathcal D_L(x):=\frac\pi2+\frac4{x-2}.
\end{aligned}
\]
\begin{lemma}
		\label{lem:derivative_bounds_L}
		
        Let $3\le m\le400000$ and $0<D\le1$. For a fixed integer $M$,
        let $f_{t,M}^L$ denote the expression in \eqref{ftL_def} with both
        sums truncated at $M$. Assume that
        \[
        x\ge480,\qquad 0\le y\le D,\qquad
        0\le t\le\frac12,\qquad
        1\le M\le\frac{mx}{4\pi}.
        \]
        Then
		\begin{equation}
			\begin{aligned}
				|\partial_z f_{t,M}^L(x+iy)|
				\le{}&
				\mathcal C_L(x,t)
				\sum_{n=1}^{M}
				\frac{b_n^{t,L}}
				{n^{\re s_*^L(+)}}
				\log(n)
				\\
				&+
				|\gamma^L(x+iy)|M^{|\kappa^L|}
				\sum_{n=1}^{M}
				\frac{n^y b_n^{t,L}}
				{n^{\re s_*^L(+)}}
				\left(
				\frac{t}{4(x-2)}\log(n)
				+
				\mathcal C_L(x,t)\mathcal A_L(x)
				\right),
			\end{aligned}
			\label{eq:dz_bound_L}
		\end{equation}
		and
		\begin{equation}\label{eq:dt_bound_L}
\begin{aligned}
|\partial_t f_{t,M}^L(x+iy)|\le{}&
\frac14\sum_{n=1}^M\frac{b_n^{t,L}}{n^{\re s_*^L(+)}}
\log n\bigl(\mathcal A_L(x)-\log n\bigr)\\
&+\frac14|\gamma^L|M^{|\kappa^L|}
\sum_{n=1}^M\frac{n^y b_n^{t,L}}{n^{\re s_*^L(+)}}\\
&\quad\times\left[\log n\bigl(\mathcal A_L(x)-\log n\bigr)
+\left(\mathcal A_L(x)-\frac\pi2\right)\mathcal D_L(x)\right].
\end{aligned}
\end{equation}
	\end{lemma}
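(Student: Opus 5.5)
We differentiate the two sums in \eqref{ftL_def} term by term, using the chain rule through $s_\pm=s_\pm(z)$, and bound each resulting factor with the estimates already collected in Proposition~\ref{prop:magnitudes} and \eqref{eq:magnitudes_alpha}. With $z=x+iy$ we have $s_+=(1-iz)/2$ and $s_-=(1+iz)/2$ as holomorphic functions of $z$, so $\partial_z s_\pm=\mp i/2$.

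\textbf{The first sum.} The term $n^{-s_*^L(+)}$ has
\[
\partial_z n^{-s_*^L(+)}=-\log n\,\partial_z s_*^L(+)\,n^{-s_*^L(+)},
\qquad
\partial_z s_*^L(+)=\Bigl(1+\tfrac t2\alpha_L'(s_+)\Bigr)\partial_z s_+ .
\]
By \eqref{alphapl} with $\operatorname{Im}$ of modulus $x/2$, we get $|\partial_z s_*^L(+)|\le \tfrac12+\tfrac{t}{4(x-2)}=\mathcal C_L(x,t)$. This produces the first line of \eqref{eq:dz_bound_L}.

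\textbf{The second sum.} Here we also differentiate $\gamma^L=M_t^L(s_-)/M_t^L(s_+)$. Its logarithmic derivative is
\[
\partial_z\log M_t^L(s_-)-\partial_z\log M_t^L(s_+),
\qquad
\partial_s\log M_t^L=\alpha_L+\tfrac t2\alpha_L\alpha_L'.
\]
Using \eqref{eq:magnitudes_alpha}, we have $|\alpha_L|\le \tfrac\ell2+\tfrac\pi4+\tfrac2{x-2}$. Each of the two pieces then contributes at most $\tfrac12\bigl(1+\tfrac t{2(x-2)}\bigr)\bigl(\tfrac\ell2+\tfrac\pi4+\tfrac2{x-2}\bigr)$. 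Their sum gives exactly $\mathcal C_L\mathcal A_L$.

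The derivative of $n^{-s_*^L(-)}$ gives $\log n\cdot\bigl|\tfrac12+\tfrac t4\alpha_L'(s_-)\bigr|$. The paper's bound splits this as $\tfrac{t}{4(x-2)}\log n$ plus a $\tfrac12\log n$ piece. I would try to absorb the $\tfrac12\log n$ piece into $\mathcal C_L\mathcal A_L$, since $\log n\le\log M\le\ell/2+O(1)$ and $\mathcal C_L\mathcal A_L\ge \tfrac12\ell+\dots$. This is the delicate bookkeeping point. The cleaner route is to combine the two derivatives, noting that $\gamma^L n^{-s_*^L(-)}$ has total phase derivative $\tfrac12(\alpha_L(s_-)-\log n)+\dots$ and that $0\le\log n\le \mathcal A_L$. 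One then bounds the combined factor by $\mathcal C_L\mathcal A_L+\tfrac{t}{4(x-2)}\log n$. The modulus $|n^{-s_*^L(-)}|\le n^yM^{|\kappa^L|}n^{-\operatorname{Re}s_*^L(+)}$ was established in the proof of Proposition~\ref{prop:magnitudes}(4).

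\textbf{The $t$-derivative.} We use
\[
\partial_t\bigl(b_n^{t,L}n^{-s_*^L(\pm)}\bigr)=\tfrac14\log n\bigl(\log n-2\alpha_L(s_\pm)\bigr)\,b_n^{t,L}n^{-s_*^L(\pm)} .
\]
Its modulus is at most $\tfrac14\log n\,(\mathcal A_L-\log n)$, provided one shows $|2\alpha_L-\log n|\le\mathcal A_L-\log n$. This is the step I expect to be the main obstacle. Writing $2\alpha_L=\ell+i\pi/2+2R$ and $\log n\le\log M\le\ell/2+o(1)$, the real part $\ell-\log n\ge0$ lets us use $|u+iv|\le u+|v|$. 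We also have $\partial_t\log\gamma^L=\tfrac14(\alpha_L(s_-)^2-\alpha_L(s_+)^2)$. Since $\alpha_L(s_-)$ and $\alpha_L(s_+)$ are nearly conjugate, this difference is essentially $(\alpha_L(s_-)-\alpha_L(s_+))(\alpha_L(s_-)+\alpha_L(s_+))$. The first factor is dominated by the imaginary part $\pi/2+O(1/x)$, which is at most $\mathcal D_L$, and the second by $\ell+4/(x-2)=\mathcal A_L-\pi/2$. This yields \eqref{eq:dt_bound_L}. Throughout, the hypothesis $M\le mx/(4\pi)$ guarantees $\log n\le\ell$, so that every bracket is nonnegative.
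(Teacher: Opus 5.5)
Your route is the paper's. You differentiate the finite sums term by term; the paper uses $\partial_x$ and $|\partial_z f|=|\partial_x f|$, and your $s_\pm=(1\mp iz)/2$ is equivalent. You bound $|\partial_z s_*^L(\pm)|\le\mathcal C_L$ from $|\alpha_L'(s_\pm)|\le 1/(x-2)$, and pass to the weights via $|n^{-s_*^L(-)}|\le n^yM^{|\kappa^L|}n^{-\operatorname{Re}s_*^L(+)}$. For $\partial_t$ you use the factors $\tfrac{1}{4}\log n\,(\log n-2\alpha_\pm)$ and $\tfrac{1}{4}(\alpha_-^2-\alpha_+^2)$. Here $\alpha_\pm=\alpha_L(s_\pm)=\tfrac{\ell}{2}\mp\tfrac{i\pi}{4}+R_\pm$ with $|R_\pm|\le 2/(x-2)$, and $\alpha_\pm'=\alpha_L'(s_\pm)$. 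The first sum and the whole time derivative are fine as written. Note that $\alpha_-^2-\alpha_+^2=(\alpha_--\alpha_+)(\alpha_-+\alpha_+)$ is exact, and that only $\log n\le\ell$ is used there.

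What is not yet a proof is the spatial factor of the second sum, where you hedge between two ideas. Drop the absorption idea. Once $\partial_z\log\gamma^L$ and $\log n\,\partial_z s_*^L(-)$ are bounded separately, the extra $\tfrac{1}{2}\log n$ cannot be hidden in $\mathcal C_L\mathcal A_L$. Even after using that the imaginary parts cancel in $\alpha_-+\alpha_+$, the slack is only $\pi/4$. Moreover, the lemma only assumes $M\le mx/(4\pi)$, so $\log n$ may be as large as $\ell$, not $\ell/2+O(1)$.

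The combined route is correct, but the operative fact is not $0\le\log n\le\mathcal A_L$. It is that $\log n$ is subtracted from a quantity whose real part dominates it. Explicitly,
\[
\partial_z\log\gamma^L-\log n\,\partial_z s_*^L(-)
=\tfrac{i}{2}\bigl(\alpha_-+\alpha_+-\log n\bigr)
+\tfrac{it}{4}\bigl(\alpha_-\alpha_-'+\alpha_+\alpha_+'-\alpha_-'\log n\bigr).
\]
Since $\alpha_-+\alpha_+-\log n=\tfrac{1}{2}(2\alpha_--\log n)+\tfrac{1}{2}(2\alpha_+-\log n)$, it has modulus at most $\mathcal A_L-\log n\le\mathcal A_L$. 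This follows from the inequality $|2\alpha_\pm-\log n|\le\mathcal A_L-\log n$ that you already prove for $\partial_t$. The $t$-terms contribute at most $\tfrac{t}{4(x-2)}(\mathcal A_L+\log n)$. So the total is $\mathcal C_L\mathcal A_L+\tfrac{t\log n}{4(x-2)}$, exactly as in the paper.

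Your expression $\tfrac{1}{2}(\alpha_L(s_-)-\log n)$ also points to an equally valid factorization. The same combination equals $\tfrac{i}{2}(1+\tfrac{t}{2}\alpha_-')(\alpha_--\log n)+\tfrac{i}{2}(1+\tfrac{t}{2}\alpha_+')\alpha_+$. Since $|\alpha_--\log n|\le\mathcal A_L/2$ as soon as $0\le\log n\le\ell$, this even removes the $\tfrac{t\log n}{4(x-2)}$ term.
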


\begin{proof}
We follow the product-rule calculation of \cite[Lemma~8.4]{Poly},
with the logarithmic derivative $\alpha_L$. Put $\alpha_\pm=
\alpha_L(s_\pm)$ and $\alpha_\pm'=\alpha_L'(s_\pm)$.
Since $M$ is fixed, the finite sums are holomorphic and
$|\partial_zf_{t,M}^L|=|\partial_xf_{t,M}^L|$.
The formula for $\alpha_L'$ gives
$|\alpha_\pm'|\le1/x+2/x^2\le1/(x-2)$, whence
\[
|\partial_xs_*^L(\pm)|
\le\frac12+\frac{t}{4(x-2)}=\mathcal C_L(x,t).
\]
Also,
\begin{equation}\label{eq:dloggamma_dx_L}
\partial_x\log\gamma^L
=\frac i2(\alpha_-+\alpha_+)
+\frac{it}{4}(\alpha_-\alpha_-'+\alpha_+\alpha_+').
\end{equation}
Write $q=mx/(4\pi)$.
By \eqref{eq:magnitudes_alpha} and conjugation,
$\alpha_\pm=\frac12\log q\mp i\pi/4+R_\pm$, with
$|R_\pm|\le2/(x-2)$. Since $n\le M\le q$,
these estimates imply
\[
\begin{aligned}
|\log n-2\alpha_\pm|&\le\mathcal A_L-\log n,
&|\alpha_\pm|&\le\mathcal A_L/2,\\
|\alpha_-+\alpha_+|&\le\mathcal A_L-\pi/2,
&|\alpha_--\alpha_+|&\le\mathcal D_L.
\end{aligned}
\]
Combining these bounds with \eqref{eq:dloggamma_dx_L} gives
\begin{equation}\label{eq:combined_spatial_L}
|\partial_x\log\gamma^L
-\log n\partial_xs_*^L(-)|
\le\mathcal C_L\mathcal A_L+\frac{t\log n}{4(x-2)}.
\end{equation}
The derivative of a first-sum term has modulus at most its
weight times $\mathcal C_L\log n$; a second-sum term has the
factor in \eqref{eq:combined_spatial_L}.
Moreover, $\operatorname{Re}s_*^L(-)=
\operatorname{Re}s_*^L(+)-y+\operatorname{Re}\kappa^L$ gives
$n^{-\operatorname{Re}s_*^L(-)}\le
n^yM^{|\kappa^L|}n^{-\operatorname{Re}s_*^L(+)}$.
Taking absolute values term by term and using $|\chi_m(n)|\le1$
proves \eqref{eq:dz_bound_L}.

For the time derivative, the logarithmic factors for the two
sums are respectively
\[
\frac14\log n\,(\log n-2\alpha_+),
\qquad
\frac14\left[\log n\,(\log n-2\alpha_-)
+(\alpha_-^2-\alpha_+^2)\right].
\]
Their moduli are bounded by
$\log n\,(\mathcal A_L-\log n)/4$ and
$[\log n\,(\mathcal A_L-\log n)
+(\mathcal A_L-\pi/2)\mathcal D_L]/4$.
The same comparison of weights proves \eqref{eq:dt_bound_L}.
\end{proof}

	The lemma applies on the enclosing boxes used for interpolation,
	even when the natural truncation changes there. Indeed, every active
	truncation length satisfies
	\[
	M\le\sqrt{\frac{m(X+1+\pi t_0/4)}{4\pi}}<\frac{mX}{4\pi}
	\qquad(X\ge480,\ m\ge3).
	\]
	Thus its hypothesis $M\le mx/(4\pi)$ holds throughout the box.
	The simple replacements $n^y\le n^D$ and
	$|\kappa^L|\le tD/(2(x-2))$ are also available there; keeping the
	actual endpoint ordinates generally gives smaller bounds.

\subsection{Region (a)}
	
	We verify the moving barrier required by Lemma~\ref{lemmabox}.
	Put $Y(t)=\sqrt{y_0^2+2(t_0-t)}$, $U_D(t)=\sqrt{D^2-2t}$ and
	$\Delta_D=\sqrt{D^2-y_0^2-2t_0}$. We assume $t_0>0$, $y_0>0$ and
	$y_0^2+2t_0<D^2$, so that $0<Y(t)<U_D(t)$ for $0\leq t\leq t_0$.
	For a fixed integer $M$, let $f_{t,M}^L$ be the expression in
	\eqref{ftL_def} with both sums truncated at $M$, and define
	\[
	R_{a,M}(t)=\left\{x+iy:
	\max\{X,x_M(t)\}\leq x\leq\min\{X+\Delta_D,x_{M+1}(t)\},\quad
	Y(t)\leq y\leq U_D(t)\right\}.
	\]
	Only nonempty rectangles are used. On their interiors,
	$f_{t,M}^L=f_t^L$; on their edges, we use the continued fixed-$M$ values.
	For fixed $t$, $f_{t,M}^L$ is holomorphic in a neighborhood of the rectangle.
	
	The closure of the times at which a stripe has positive width is its
	active interval $[a_M,b_M]$, where
	\[
	a_M=\max\left\{0,\frac{16M^2}{m}-\frac{4(X+\Delta_D)}\pi\right\},
	\qquad
	b_M=\min\left\{t_0,\frac{16(M+1)^2}{m}-\frac{4X}\pi\right\}.
	\]
	We discard intervals with $a_M\geq b_M$ and choose a mesh
	$a_M=\tau_0<\cdots<\tau_J=b_M$ on each remaining stripe.
	Changes in which expression defines a clipped vertical edge do not
	require additional mesh points: the endpoints are Lipschitz functions
	of time, with horizontal speed at most $\pi/4$ almost everywhere.
	All active stripes are included, even when the truncation length varies.
	
	We first bound the error on a time interval.
	Write $I_j=[\tau_j,\tau_{j+1}]$ and $v_j=Y(\tau_{j+1})$.
	For $e_A+e_B$, we bound $\theta_y$ and $\eta_n$ at the upper endpoint
	of the active stripe and $H_n$ at its corners. For $e_{C,0}$, we use
	the lower time endpoint and the lowest ordinate of $I_j$.
	No monotonicity of $t\mapsto\varepsilon_{C,t}^L(X,Y(t))$ is needed.
	
	With the notation of \eqref{eq:epsilon_C_local}, put
	\[
		\begin{split}
			\widehat\varepsilon_{C,t}^L(x,v):={}&
			K_m(x)\left(\frac{mT'}{2\pi}\right)^{-(1+v)/4}
			e^{-t\ell^2/16}\\
			&\times\left[\mathcal B_m^\star
			\left(e^{\lambda_+(x,D)}+e^{\lambda_-(x,v)}\right)
			+\frac{2\varphi(m)}{5V(x,t)}e^{2/\mathfrak d(x)}S(D)\right].
		\end{split}
	\]
	This is an ordinate-uniform envelope, rather than the pointwise
	bound evaluated at $y=v$. For $0\leq v\leq y\leq D$, it bounds
	$\varepsilon_{C,t}^L(x,y)$: replace $\lambda_+(x,y)$ by
	$\lambda_+(x,D)$, $S(y)$ by $S(D)$, and then use
	$\lambda_-(x,y)\leq\lambda_-(x,v)$ and the decreasing power of
	$mT'/(2\pi)>1$. Each factor shows that this envelope is non-increasing
	in $x$, $v$ and $t$ separately. The logarithmic
	upper bound \eqref{eq:C0_uniform_log} may be used for $\mathcal B_m^\star$.
	
	For a stripe active on $[a_M,b_M]$, define
	\[
\begin{aligned}
\delta_{\rm box}&=\sqrt{\frac m{4\pi}
\left(X+1+\frac{\pi b_M}4\right)},
\qquad \Theta=\theta_{y_0}(X,b_M),\\
\widehat G&=\exp\left(a_mD+\frac D2\log\left(1+\frac{\pi b_M}{4X}\right)
+\frac{b_MD\log\delta_{\rm box}}{2(X-2)}\right).
\end{aligned}
\]
	and
	\[
	\widehat H_n=\max_{\substack{x\in\{X,X+1\}\\t\in\{a_M,b_M\}}}
	\exp\left(-\frac t4\log n\left(\log\frac{mx}{4\pi}-\log n\right)\right).
	\]
	A uniform bound for $e_A+e_B$ on the stripe is
	\[
		\widehat\varepsilon_{AB,M}=(1+\widehat G)\sum_{n=1}^{M}
		n^{-1+\Theta}\widehat H_n\eta_n(X,b_M).
	\]
	Indeed, $\theta_y(x,t)\le\Theta$, $\eta_n(x,t)\le\eta_n(X,b_M)$, and $|\gamma^L|M^{y+|\kappa^L|}\le\widehat G$ on the enclosing box. For each $n$, $\log H_n(x,t)$ is affine in $t$ and monotone in $x$, so its maximum occurs at one of the four listed corners.
	
	For $I_j=[\tau_j,\tau_{j+1}]$, set $v_j=Y(\tau_{j+1})$ and take
	\[
		\mathcal E_{a,M,j}=\widehat\varepsilon_{AB,M}
		+\widehat\varepsilon_{C,\tau_j}^L(X,v_j).
	\]
	The first term is uniform over the active stripe; the second uses the earliest time $\tau_j$ and the lowest ordinate $v_j$ on the interval. The effective approximation gives
	\begin{equation}\label{barrier_RS_error}
		\left|H_t^L(z)/B_t^L(z)-f_{t,M}^L(z)\right|
		\leq\mathcal E_{a,M,j},\qquad
		t\in I_j,\quad z\in\partial R_{a,M}(t).
	\end{equation}
	At the last node $b_M$, evaluate the type-$C$ term at $(t,y)=(b_M,Y(b_M))$.
	
	We next control spatial and temporal interpolation.
	Let $D_{z,M}$ bound $|\partial_z f_{t,M}^L|$ on the enclosing box
	$[X,X+1]\times[y_0,D]$ for $a_M\le t\le b_M$.
	For each $I_j=[\tau_j,\tau_{j+1}]$, let $D_{z,M,j}$ and
	$D_{t,M,j}$ bound $|\partial_z f_{t,M}^L|$ and
	$|\partial_t f_{t,M}^L|$ on the same box for $t\in I_j$.
	The truncation remains fixed at $M$. These bounds follow from
	Lemma~\ref{lem:derivative_bounds_L} by evaluating its positive
	factors on the corresponding time interval.
	For example, on $I_j$ we use
	\[
	\mathcal A_L(x)\leq
	\log\frac{m(X+1)}{4\pi}+\frac\pi2+\frac4{X-2},
	\qquad
	\mathcal C_L(x,t)\leq\frac12+\frac{\tau_{j+1}}{4(X-2)}.
	\]
	Put $q_X=mX/(4\pi)$ and let $\delta_{\rm box}$ be as above.
	The two weights in the derivative bounds are bounded by
	\[
	\begin{split}
		W_n(y,t)&:=\exp\left(\frac t4\log^2 n
		-\left(\frac{1+y}{2}+\frac t4\log q_X
		-\frac{(3+y)t}{2X^2}\right)\log n\right),\\
		V_n(y,t)&:=e^{a_my}q_X^{-y/2}
		\delta_{\rm box}^{ty/(2(X-2))}n^yW_n(y,t).
	\end{split}
	\]
	Each logarithm is the bilinear interpolation of its four corner
	values on $[y_0,D]\times I_j$, so its maximum occurs at a corner. The remaining positive factors
	are bounded using $t\le\tau_{j+1}$, $x\ge X$ and the displayed bound
	for $\mathcal A_L$ (and the analogous bound for $\mathcal D_L$).
	Summing these upper bounds gives $D_{z,M,j}$ and $D_{t,M,j}$.
	Applying the same estimates on $[a_M,b_M]$, with $b_M$ in place
	of $\tau_{j+1}$, gives the global spatial bound $D_{z,M}$.
	For a fixed-$M$ sum, $M$ itself may replace $\delta_{\rm box}$
	in the factor accounting for $M^{|\kappa^L|}$.
	This is the same corner rule used in the code. On each time cell,
	the smaller ordinate interval $[Y(\tau_{j+1}),U_D(\tau_j)]$
	may also be used, since it still encloses the moving stripe.
	
	Divide each side into $n_M$ equal segments and denote the resulting
	counterclockwise vertices by $z_\ell(t)$, $1\leq\ell\leq4n_M$.
	All side lengths are at most one, so a boundary point has a nearest
	vertex at distance at most $1/(2n_M)$. Consequently,
	\[
		|f_{t,M}^L(z)-f_{t,M}^L(z_\ell(t))|
		\leq r_{z,M}:=\frac{D_{z,M}}{2n_M}.
	\]
	Both horizontal edges move:
	\[
	Y'(t)=-\frac1{Y(t)},\qquad U_D'(t)=-\frac1{U_D(t)}.
	\]
	Since $U_D(t)\ge Y(t)\ge y_0$ and both ordinates decrease,
	the affine parametrizations of the four sides satisfy, almost
	everywhere on $I_j$,
	\[
	|z_\ell'(t)|\le V_j:=\frac\pi4+\frac1{Y(\tau_{j+1})}
	\le V_0:=\frac\pi4+\frac1{y_0}.
	\]
	The sum of the horizontal and vertical speeds bounds the Euclidean
	speed. The chain rule and integration give
	\[
		|f_{t,M}^L(z_\ell(t))-f_{\tau_j,M}^L(z_\ell(\tau_j))|
		\leq (D_{t,M,j}+V_jD_{z,M,j})(\tau_{j+1}-\tau_j).
	\]
	The global bound $D_{z,M}$ is used for the spatial mesh radius,
	whereas the bounds on $I_j$ are used for motion between time nodes.
	The vertical contribution is retained even when $M$ is constant and
	the horizontal edges of the rectangle have fixed endpoints in $x$.
	
	Let $\widehat f_{j,\ell}^L$ approximate the value at the $j$-th time
	node with error at most $\eta_{\rm num}$. We check
	\begin{equation}\label{barrier_certificate}
		\begin{split}
			\min_\ell|\widehat f_{j,\ell}^L|>{}&
			\mathcal E_{a,M,j}+\eta_{\rm num}+r_{z,M}\\
			&+(D_{t,M,j}+V_jD_{z,M,j})(\tau_{j+1}-\tau_j).
		\end{split}
	\end{equation}
	At the final node, the temporal term is omitted. In the Arb verification,
	each function value is enclosed by a complex ball including the Taylor
	remainder; its modulus lower bound is used in place of
	$|\widehat f_{j,\ell}^L|$. The additional subtraction of
	$\eta_{\rm num}$ in the implementation is conservative.
	
	We then check the winding numbers and the covered domains.
	Using cyclic indices, the polygon winding number is
	\[
		\operatorname{Wind}_n(\widehat f_{\tau_j,M}^L;R_{a,M}(\tau_j))
		=\frac1{2\pi}\sum_{\ell=1}^{4n_M}
		\operatorname{Arg}\frac{\widehat f_{j,\ell+1}^L}{\widehat f_{j,\ell}^L}.
	\]
	The interval calculation encloses every argument increment strictly
	between $-\pi/2$ and $\pi/2$ and encloses the total number of turns
	strictly between $-1/2$ and $1/2$. Since the winding number is an integer,
	this proves
	\[
		\operatorname{Wind}_n(\widehat f_{\tau_j,M}^L;R_{a,M}(\tau_j))=0.
	\]
	Consecutive computed values differ by at most
	$D_{z,M}/n_M+2\eta_{\rm num}$. Thus, on each half-segment, the true
	boundary curve and the corresponding polygon lie in the disk of
	radius $\eta_{\rm num}+r_{z,M}$ about the nearest computed vertex. The strict margin excludes the origin from
	these disks, proving that the two curves have the same winding number.
	Equation~\eqref{barrier_certificate} keeps the boundary image away from
	zero throughout $I_j$, so the winding number stays zero.
	
	The argument principle and \eqref{barrier_RS_error}, via Rouch\'e's
	theorem, imply that $H_t^L/B_t^L$ has no zeros in a nondegenerate
	rectangle, and the strict boundary margin excludes zeros on its edges.
	At a degenerate endpoint the two vertical sides coincide and are
	traversed in opposite directions; their winding number is zero.
	The same boundary estimate and continuity apply when a stripe opens
	or closes. The closures of all active stripes cover the moving barrier.
	Hence
	\[
	H_t^L(x+iy)\neq0\quad
	\left(X\leq x\leq X+\Delta_D,\quad Y(t)\leq y\leq U_D(t),
	\quad0\leq t\leq t_0\right),
	\]
	which is precisely hypothesis (3) of Lemma~\ref{lemmabox}.
	No non-vanishing claim on the full enclosing box is needed here.
	For the retained configurations in Section~\ref{sec:numerical_implementation_main},
	the natural truncation is constant on each enclosing barrier box:
	$M=3989$, except for the two configurations of $\chi_{-395}$,
	where $M=3988$. The argument by stripes allows changes of $M$,
	but none is crossed inside those boxes.
	
	For region (b) we make a separate final-time calculation on the full
	vertical segment $\{X+iy:y_0\leq y\leq D\}$. Its truncation is
	$M(X,t_0)$ throughout. The resulting constants $L_a,\psi_a$ are
	verified on this full segment, as required in
	\eqref{eq:b_left_condition}; data restricted to $y\leq U_D(t_0)$
	are not used for that purpose.

		\begin{figure}
			\centering
			\includegraphics[width=0.8 \linewidth]{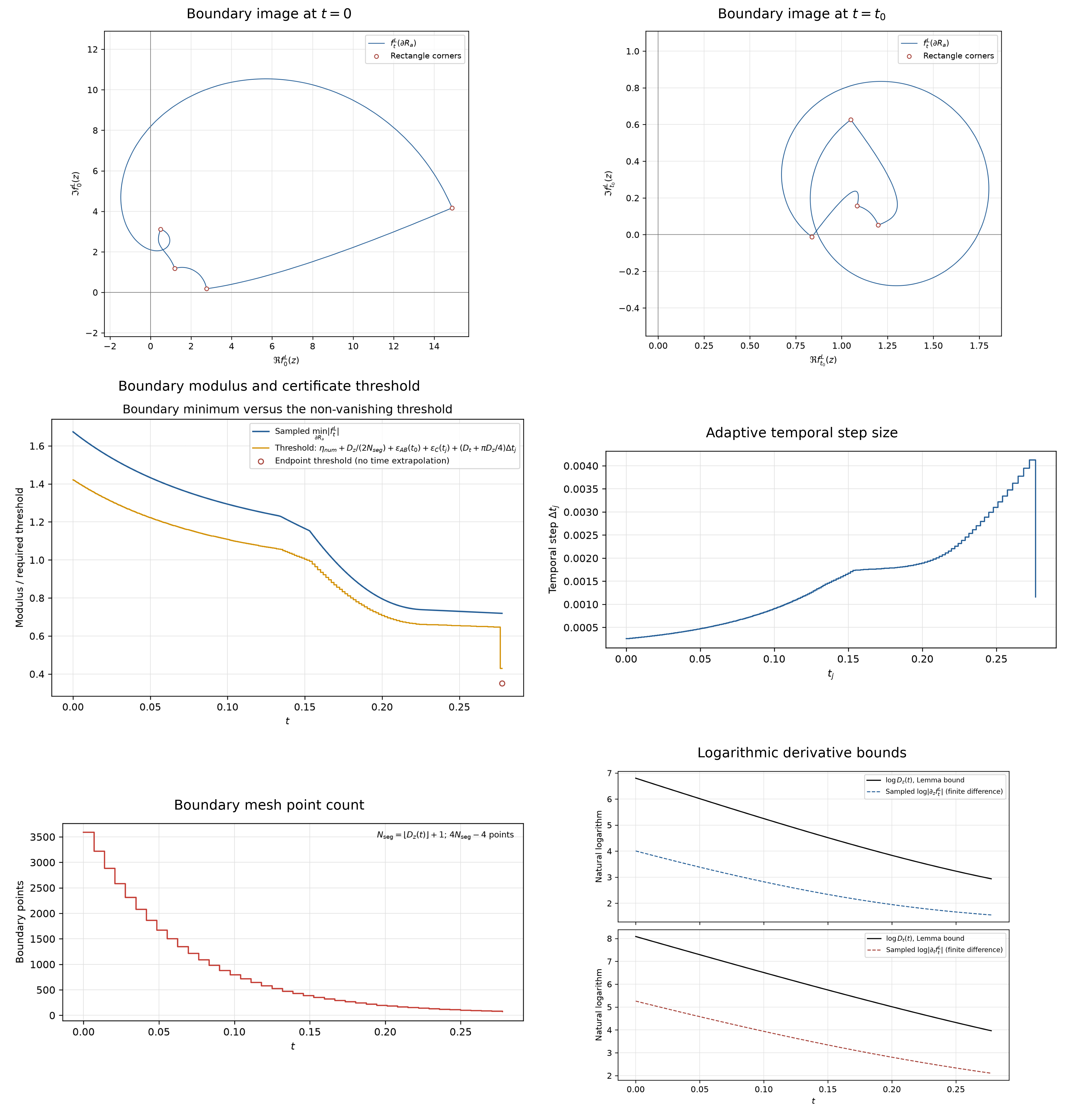}
			\caption{Illustrative barrier computation for the Dirichlet beta
function ($m=4$) on the enclosing box
$X_4^*\le x\le X_4^*+1$, $0.1165\le y\le1$ and
$0\le t\le0.2776$, with $M=3989$. The panels show the boundary
images, sampled boundary minima and verification thresholds,
temporal steps, mesh sizes, and derivative bounds plotted on a
logarithmic scale. These illustrative parameters differ from the
verified choices in Table~\ref{tab:numerical_selected_parameters}.}
		\end{figure}

	\subsection{Region (b)}
	
	We work at the fixed time $t=t_0$, with the same parameters as in
	region (c), and choose $M_1$ so that $X_1>X+1$. It is enough to
	prove non-vanishing on the larger rectangle
	\[
	R_b:=\{x+iy:X\leq x\leq X_1,\ y_0\leq y\leq D\}.
	\]
	Throughout this subsection, $M=M(x,t_0)$ and
	$b_n=b_n^{t_0,L}=\exp((t_0/4)\log^2n)$. We write
	\[
	s=s_*^L(+),\qquad \sigma=\operatorname{Re}s,\qquad
	\mathcal E(z)=e_A(z)+e_B(z)+e_{C,0}(z),
	\]
	where all error terms are evaluated at $t=t_0$.
	Theorem~\ref{EffectiveRSa} gives
	\[
	\left|\frac{H_{t_0}^L(z)}{B_{t_0}^L(z)}-f_{t_0}^L(z)\right|
	\leq\mathcal E(z).
	\]
	At a change of $M$, we use the corresponding continued endpoint
	values. The quotient on the left is holomorphic on a neighbourhood
	of $R_b$, and $B_{t_0}^L$ does not vanish there.
	
	As in \cite[\S8.5]{Poly}, we introduce a finite Euler product.
	Choose $2\leq P\leq M(X,t_0)$ and put
	\[
	\mathcal P=\{p\leq P:p\text{ is prime},\ p\nmid m\},
	\qquad Q_P=\prod_{p\in\mathcal P}p.
	\]
	Define
	\[
		E_{t_0,P}^L(z)
		:=\prod_{p\in\mathcal P}
		\left(1-\frac{\chi_m(p)b_p}{p^{s_*^L(+)}}\right).
	\]
	The choice of $P$ is left to the numerical verification.
	In particular, primes dividing $m$ contribute no factor.
	For brevity, write $E(z)=E_{t_0,P}^L(z)$.
	
	By Proposition~\ref{prop:magnitudes}, a lower bound for $\sigma$
	throughout $R_b$ is
	\[
	\sigma_b:=\frac{1+y_0}{2}
	+\frac{t_0}{4}\log\left(\frac{mX}{4\pi}\right)
	-\frac{(3+y_0)t_0}{2X^2}.
	\]
	Indeed, the lower bound in that proposition is increasing in both
	$x$ and $y$ on the present range. Put
	\[
	u_p:=\frac{b_p}{p^{\sigma_b}},\qquad
	L_P:=\prod_{p\in\mathcal P}(1-u_p),\qquad
	U_P:=\prod_{p\in\mathcal P}(1+u_p),\qquad
	\Theta_P:=\sum_{p\in\mathcal P}\arcsin u_p.
	\]
	We require $u_p<1$ for every $p\in\mathcal P$. Then
	\[
		L_P\leq |E(z)|\leq U_P,\qquad
		|\arg E(z)|\leq\Theta_P,
	\]
	where the argument is the sum of the arguments of the individual
	factors, each lying in the right half-plane. In particular, $E$
	is holomorphic and nonzero on a neighbourhood of $R_b$.
	
	Let $\mathcal L=(-\infty,0]$. It suffices to verify
	\begin{equation}\label{eq:b_cut_condition}
		\operatorname{dist}\bigl(E(z)f_{t_0}^L(z),\mathcal L\bigr)
		>U_P\mathcal E(z)
		\qquad(z\in\partial R_b).
	\end{equation}
	Indeed, this implies that the holomorphic function
	$E H_{t_0}^L/B_{t_0}^L$ avoids $\mathcal L$ on $\partial R_b$.
	Its winding number about the origin is therefore zero, and the
	argument principle gives non-vanishing in the interior as well.
	This argument uses the exact quotient, so changes in the truncation
	length do not affect its holomorphy.
	
	We first record the elementary geometric estimate used on three
	sides of the rectangle. If
	$|f_{t_0}^L(z)|\geq L>0$ and
	$|\arg f_{t_0}^L(z)|\leq\psi$, with $\Theta_P+\psi<\pi$, then
	\[
		\operatorname{dist}\bigl(E(z)f_{t_0}^L(z),\mathcal L\bigr)
		\geq L_P L
		\cos\left(\left(\Theta_P+\psi-\frac\pi2\right)_+\right),
	\]
	where $v_+=\max\{v,0\}$. This follows by considering separately
	the right and left half-planes. In particular, if
	$|f_{t_0}^L(z)-1|\leq Q<1$, we may take
	$L=1-Q$ and $\psi=\arcsin Q$.
	
	On the right edge, the estimate for region (c) gives
	\[
	|f_{t_0}^L(X_1+iy)-1|\leq Q_c,
	\qquad Q_c:=A_m(M_1)+B_m(M_1)-1<1.
	\]
	By \eqref{eq:epsilon_RS_local}, we may take
	\[
		\varepsilon_{\mathrm{right}}:=\varepsilon_{t_0}^L(X_1,y_0).
	\]
	This bounds $\mathcal E(X_1+iy)$ for $y_0\leq y\leq D$.
	We require
	\begin{equation}\label{eq:b_right_condition}
		\begin{split}
			&\Theta_P+\arcsin Q_c<\pi,\\
			&L_P(1-Q_c)
			\cos\left(\left(\Theta_P+\arcsin Q_c-\frac\pi2\right)_+\right)
			>U_P\varepsilon_{\mathrm{right}}.
		\end{split}
	\end{equation}
	The cutoff $M_1$ must satisfy these inequalities as well as the
	conditions for region (c).
	
	On the left edge, use an independent final-time evaluation on the full
	segment $y_0\leq y\leq D$. The final-time boundary of region (a)
	ends at $U_D(t_0)=\sqrt{D^2-2t_0}$ and does not supply this full segment.
	Fix $M=M(X,t_0)$ and let $D_{z,a,M}$ bound
	$|\partial_zf_{t_0,M}^L|$ on this edge. Put
	\[
	h_y:=\frac{D-y_0}{n},\qquad
	y_j:=y_0+jh_y,\qquad z_j:=X+iy_j,
	\qquad j=0,\ldots,n.
	\]
	For every $y\in[y_0,D]$, choose a nearest mesh point $y_j$.
	Then $|y-y_j|\leq h_y/2$.
	Let $\widehat f_j$ be the computed value of $f_{t_0,M}^L(z_j)$,
	with error at most $\eta_{\mathrm{num}}$, and set
	\[
		r_j:=\eta_{\mathrm{num}}+
		\frac{(D-y_0)D_{z,a,M}}{2n}.
	\]
	Thus $|f_{t_0}^L(X+iy)-\widehat f_j|\leq r_j$ whenever
	$y_j$ is a nearest mesh point. Require $r_j<|\widehat f_j|$ for
	all $j$, and define
	\[
		\begin{split}
			L_a&:=\min_{0\leq j\leq n}(|\widehat f_j|-r_j),\\
			\psi_a&:=\max_{0\leq j\leq n}
			\left(|\operatorname{Arg}\widehat f_j|
			+\arcsin\frac{r_j}{|\widehat f_j|}\right).
		\end{split}
	\]
	Provided $\psi_a<\pi$, these give
	\[
	|f_{t_0}^L(X+iy)|\geq L_a>0,\qquad
	|\arg f_{t_0}^L(X+iy)|\leq\psi_a
	\qquad(y_0\leq y\leq D).
	\]
	At a truncation boundary, all values on this edge use the branch
	with truncation $M(X,t_0)$.
	Similarly, set
	\[
		\varepsilon_{\mathrm{left}}:=\varepsilon_{t_0}^L(X,y_0).
	\]
	This bounds $\mathcal E(X+iy)$ for $y_0\leq y\leq D$.
	The required inequalities are
	\begin{equation}\label{eq:b_left_condition}
		\begin{split}
			&\Theta_P+\psi_a<\pi,\\
			&L_P L_a
			\cos\left(\left(\Theta_P+\psi_a-\frac\pi2\right)_+\right)
			>U_P\varepsilon_{\mathrm{left}}.
		\end{split}
	\end{equation}
	The argument bound is an additional check on the barrier
	evaluations; it does not follow from non-vanishing alone.
	
	To treat the horizontal edges, divide the integer range
	$M(X,t_0)\leq M\leq M_1$ into blocks $[M_-,M_+]$.
	For each block put
	\[
	\begin{split}
		x_-&:=\max\{X,x_{M_-}(t_0)\},\qquad
		x_+:=\min\{X_1,x_{M_++1}(t_0)\},\\
		q_-&:=\frac{mx_-}{4\pi},\qquad
		\nu:=\frac{mt_0}{16},\qquad
		a_m:=0.002+\frac{\log m}{3800},
	\end{split}
	\]
	where $x_k(t_0)=4\pi k^2/m-\pi t_0/4$.
	Empty blocks are discarded. Proposition~\ref{prop:magnitudes}
	gives the following bounds throughout the block:
	\[
		\begin{split}
			\sigma&\geq\sigma_-(y)
			:=\frac{1+y}{2}+\frac{t_0}{4}\log q_-
			-\frac{(3+y)t_0}{2x_-^2},\\
			|\gamma^L|&\leq g_-(y):=e^{a_my}q_-^{-y/2},\\
			|\kappa^L|&\leq k_-(y):=\frac{t_0y}{2(x_--2)}.
		\end{split}
	\]
	The block endpoints are interpreted by continuation on the relevant
	branch. Also, for $n\leq M$,
	\begin{equation}\label{eq:b_gamma_n_refined}
		|\gamma^L|n^y
		\leq
		\frac{e^{a_my}n^y}
		{\max\{q_-,n^2-\nu\}^{y/2}}.
	\end{equation}
	Indeed, $mx/(4\pi)\geq q_-$ and
	$mx/(4\pi)\geq M^2-\nu\geq n^2-\nu$.
	
	On the upper edge $y=D$, the triangle inequality and
	$s_*^L(-)=\overline{s_*^L(+)}-D+\kappa^L$ give
	\[
	|f_{t_0}^L(x+iD)-1|\leq Q_{M_-,M_+}^{\mathrm{top}},
	\]
	where
	$ Q_{M_-,M_+}^{\mathrm{top}}:=
	\sum_{n=2}^{M_+}\frac{|\chi_m(n)|b_n}{n^{\sigma_-(D)}}+e^{a_mD}\sum_{n=1}^{M_+}
	\frac{|\chi_m(n)|b_n\,n^{D+k_-(D)-\sigma_-(D)}}
	{\max\{q_-,n^2-\nu\}^{D/2}}.$
	
	For the approximation error on this block, take
	\begin{equation}\label{eq:b_top_error_local}
		\varepsilon_{M_-,M_+}^{\mathrm{top}}
		:=\varepsilon_{t_0}^{L,[M_+]}(x_-,D).
	\end{equation}
	Indeed, $M\leq M_+$ on the block, with endpoint values continued
	from its branches, and the majorant in \eqref{eq:epsilon_RS_finite}
	is non-increasing in $x$.
	Writing $Q=Q_{M_-,M_+}^{\mathrm{top}}$, it suffices
	to verify, on every block,
	\begin{equation}\label{eq:b_top_condition}
		\begin{split}
			&Q<1,\qquad\Theta_P+\arcsin Q<\pi,\\
			&L_P(1-Q)
			\cos\left(\left(\Theta_P+\arcsin Q-\frac\pi2\right)_+\right)
			>U_P\varepsilon_{M_-,M_+}^{\mathrm{top}}.
		\end{split}
	\end{equation}
	
	It remains to treat the lower edge $y=y_0$.
	Fix a block and abbreviate
	\[
	\sigma_-:=\sigma_-(y_0),\qquad
	r:=g_-(y_0),\qquad k_-:=k_-(y_0).
	\]
	We have $0<r<1$ in the present parameter range, since
	$q_->e^{2a_m}$ and $y_0>0$.
	For $d\mid Q_P$, define
	$ \lambda_d:=\prod_{p\mid d}(-b_p)$,
	then
	\[
	E(z)=\sum_{d\mid Q_P}\frac{\chi_m(d)\lambda_d}{d^s}.
	\]
	For each integer $M$ in the block, define the real coefficients
	\[
		\begin{split}
			\beta_n^{(M)}&:=\chi_m(n)
			\sum_{\substack{d\mid\gcd(n,Q_P)\\n/d\leq M}}
			\lambda_d b_{n/d},\\
			\alpha_n^{(M)}&:=\chi_m(n)
			\sum_{\substack{d\mid\gcd(n,Q_P)\\n/d\leq M}}
			\lambda_d(n/d)^{y_0}b_{n/d}.
		\end{split}
	\]
	Thus $\beta_1^{(M)}=\alpha_1^{(M)}=1$, and complete
	multiplicativity of $\chi_m$ gives
	\[
	\begin{split}
		E(z)\sum_{n=1}^M\frac{\chi_m(n)b_n}{n^s}
		&=\sum_{n=1}^{Q_P M}\frac{\beta_n^{(M)}}{n^s},\\
		E(z)\sum_{n=1}^M\frac{\chi_m(n)n^{y_0}b_n}{n^s}
		&=\sum_{n=1}^{Q_P M}\frac{\alpha_n^{(M)}}{n^s}.
	\end{split}
	\]
	In particular, $\beta_p^{(M)}=0$ for $p\in\mathcal P$.
	The restriction $n/d\leq M$ is kept in both coefficients.
	
	Using $|n^{-\kappa^L}-1|\leq n^{|\kappa^L|}-1$, we may write
	\[
	\begin{aligned}
		f_{t_0}^L(z)
		={}&\sum_{n=1}^M\frac{\chi_m(n)b_n}{n^s}
		+\gamma^L\sum_{n=1}^M
		\frac{\chi_m(n)n^{y_0}b_n}{n^{\overline s}}\\
		&+O_{\leq}\left(
		|\gamma^L|\sum_{n=1}^M
		\frac{|\chi_m(n)|n^{y_0}b_n}{n^\sigma}
		(n^{|\kappa^L|}-1)\right).
	\end{aligned}
	\]
	Since $\chi_m$ is real, the second sum is the conjugate of
	the corresponding sum with exponent $s$. If this latter sum is $A$,
	then $|E\overline A|=|E A|$. Thus multiplication by the possibly complex
	factor $E$ preserves the modulus estimate needed here. Consequently,
	\begin{equation}\label{eq:b_two_polynomials}
		\begin{split}
			E(z)f_{t_0}^L(z)
			={}&\sum_{n=1}^{Q_P M}\frac{\beta_n^{(M)}}{n^s}\\
			&+O_{\leq}\left(
			r\left|\sum_{n=1}^{Q_P M}\frac{\alpha_n^{(M)}}{n^s}\right|\right)
			+O_{\leq}(Z_{M_-,M_+}),
		\end{split}
	\end{equation}
	where \eqref{eq:b_gamma_n_refined} permits the uniform choice
	\[
		Z_{M_-,M_+}:=
		U_P e^{a_my_0}\sum_{n=1}^{M_+}
		\frac{|\chi_m(n)|b_n\,n^{y_0-\sigma_-}
			(n^{k_-}-1)}
		{\max\{q_-,n^2-\nu\}^{y_0/2}}.
	\]
	
	We now apply the argument of \cite[Lemma 8.5]{Poly}.
	For fixed $M$, set
	\[
	Y_M(\sigma):=
	\sum_{n=2}^{Q_P M}\frac1{n^\sigma}
	\max\left\{
	|\beta_n^{(M)}-r\alpha_n^{(M)}|,
	\frac{1-r}{1+r}|\beta_n^{(M)}+r\alpha_n^{(M)}|
	\right\}.
	\]
	Then
	\begin{equation}\label{eq:b_improved_triangle}
		\operatorname{dist}\bigl(E(z)f_{t_0}^L(z),\mathcal L\bigr)
		\geq1-r-Y_M(\sigma)-Z_{M_-,M_+}.
	\end{equation}
	For completeness, if $Y_M(\sigma)\geq1-r$, this is immediate.
	Otherwise, write the first two terms of
	\eqref{eq:b_two_polynomials} as
	\[
	\sum_{n=1}^{Q_P M}
	\frac{\beta_n^{(M)}+r\zeta\alpha_n^{(M)}}{n^s},
	\qquad |\zeta|\leq1.
	\]
	For real numbers $\beta,\alpha$, the maximum modulus principle
	and the cosine rule give
	\[
	\left|\frac{\beta+r\zeta\alpha}{1+r\zeta}\right|
	\leq\max\left\{
	\frac{|\beta-r\alpha|}{1-r},
	\frac{|\beta+r\alpha|}{1+r}\right\}.
	\]
	Indeed, on $|\zeta|=1$ the square of the quotient is a
	fractional linear function of $\operatorname{Re}\zeta$ with
	positive denominator, so its maximum occurs at $\zeta=\pm1$.
	Factoring out $1+r\zeta$, the polynomial has the form
	$(1+r\zeta)(1+e)$, where
	$|e|\le Y_M(\sigma)/(1-r)<1$.
	Since $\operatorname{Re}(1+r\zeta)>0$, we have
	$\operatorname{dist}(1+r\zeta,\mathcal L)=|1+r\zeta|$.
	The triangle inequality therefore gives
	\[
	\begin{aligned}
	\operatorname{dist}\bigl((1+r\zeta)(1+e),\mathcal L\bigr)
	&\ge |1+r\zeta|\,(1-|e|)\\
	&\ge |1+r\zeta|\left(1-\frac{Y_M(\sigma)}{1-r}\right)\\
	&\ge 1-r-Y_M(\sigma).
	\end{aligned}
	\]
	Subtracting the remaining error bound $Z_{M_-,M_+}$ proves
	\eqref{eq:b_improved_triangle}.

	To make this estimate uniform in $M$, extend the coefficients by
	zero for $n>Q_P M$ and put
	\[
		\begin{split}
			F_{M_-,M_+}:={}&~{} 1-r \\
			&~{}
			-\sum_{n=2}^{Q_P M_+}\frac1{n^{\sigma_-}}
			\max_{\substack{M\in\mathbb Z\\M_-\leq M\leq M_+}}
			\max\left\{
			|\beta_n^{(M)}-r\alpha_n^{(M)}|,
			\frac{1-r}{1+r}|\beta_n^{(M)}+r\alpha_n^{(M)}|
			\right\}.
		\end{split}
	\]
	For each $n$, the coefficients change only when $M$ crosses an
	integer $n/d$ with $d\mid\gcd(n,Q_P)$. Thus the maximum need only
	be evaluated at $M_-$ and those integers $n/d$ belonging to the
	block. Since $\sigma\geq\sigma_-$, we obtain
	\[
	\operatorname{dist}\bigl(E(z)f_{t_0}^L(z),\mathcal L\bigr)
	\geq F_{M_-,M_+}-Z_{M_-,M_+}.
	\]
	On the lower edge, take
	\begin{equation}\label{eq:b_bottom_error_local}
		\varepsilon_{M_-,M_+}^{\mathrm{bottom}}
		:=\varepsilon_{t_0}^{L,[M_+]}(x_-,y_0).
	\end{equation}
	The remaining condition is
	\begin{equation}\label{eq:b_bottom_condition}
		F_{M_-,M_+}-Z_{M_-,M_+}
		>U_P\varepsilon_{M_-,M_+}^{\mathrm{bottom}}.
	\end{equation}
	
	These choices keep the dependence on $m,t_0$ and the relevant
	boundary coordinates. In \eqref{eq:b_top_error_local} and
	\eqref{eq:b_bottom_error_local}, the simpler choices
	$\varepsilon_{t_0}^L(x_-,D)$ and $\varepsilon_{t_0}^L(x_-,y_0)$
	are also admissible; they avoid the additional finite sum in
	\eqref{eq:epsilon_AB_finite}. Numerical evaluation errors are
	included when checking the finite sums and inequalities.
	At an internal change of branch, the estimates are checked for
	the corresponding lateral values; either valid approximation
	then bounds the same continuous exact quotient.
	
	If \eqref{eq:b_right_condition}, \eqref{eq:b_left_condition},
	\eqref{eq:b_top_condition}, and \eqref{eq:b_bottom_condition}
	hold for the chosen parameters and blocks, then
	\eqref{eq:b_cut_condition} holds on all four sides of $R_b$.
	The argument principle therefore gives
	\[
	H_{t_0}^L(z)\neq0\qquad(z\in R_b).
	\]
	Together with the estimate for region (c), this proves non-vanishing for
	$x\geq X+\Delta_D$ and $y_0\leq y\leq D$, and hence
	on the smaller region in hypothesis (2) of Lemma~\ref{lemmabox}.
	Thus, these finite verification conditions establish hypothesis (2).
	\begin{remark}
		The argument for (b) may instead be carried out up to $y=1$, as in
		\cite[\S8.5]{Poly}. In that case, the estimates on that rectangle and
		on the corresponding tail are evaluated with upper cap $1$, including
		the factors $G_m$, $\mathfrak G_m$, and $\kappa_1$. This proves a
		stronger non-vanishing statement and is compatible with the smaller
		strip required by Lemma~\ref{lemmabox}. 
	\end{remark}
	
	\begin{remark}
		Let $q=y_0^2+2t_0$ and suppose $q<D^2\le1$. Then
		$\Delta_D=\sqrt{D^2-q}\le\Delta_1=\sqrt{1-q}$ and
		$U_D(t)=\sqrt{D^2-2t}\le U_1(t)$. Thus region~(a) for $D$
		is contained in its $D=1$ counterpart. At time $t_0$, the part
		$X+\Delta_D\le x\le X+\Delta_1$ of region~(b) for $D$ lies in
		the final-time slice of region~(a) for $D=1$, since
		$y_0\le y\le\sqrt{D^2-2t_0}\le\sqrt{1-2t_0}$. The remaining
		part, $x\ge X+\Delta_1$, is covered by the $D=1$ checks of
		regions~(b) and~(c). Region~(c) for $D$ is also contained in the
		 tail defined for $D=1$. Therefore, one full ``certificate'' at $D=1$, together
		with the exact check $q<D^2$, certifies the three regions for every
		smaller admissible cap. Direct $D$-specific endpoint checks may be
		used to report sharper margins, but they are not needed for this
		inclusion argument.
	\end{remark}
	
	\subsection{Proof of Theorem~\ref{MT}}

For fixed $m,\chi_m,D$, call $(t_0,y_0,X)$ admissible if it satisfies
the parameter restrictions above, with $X$ in \eqref{Xm_search}, and
the inequalities in regions (a)--(c) are verified for some meshes,
finite blocks and auxiliary cutoff $M_1$. Define
\begin{equation}\label{eq:lambda_star}
\Lambda_*(m,\chi_m;D):=
\inf\left\{t_0+\frac12y_0^2:
(t_0,y_0,X)\text{ is admissible}\right\},
\end{equation}
with $\inf\varnothing=+\infty$.

For any admissible triple, Platt's verification gives hypothesis (1)
of Lemma~\ref{lemmabox}. Regions (a), (b) and (c) give its other two
hypotheses: the fixed-$M$ barrier checks cover every time, the finite
blocks cover all $M(X,t_0)\le M<M_1$, and the monotone tail estimate
covers all $M\ge M_1$. Corollary~\ref{cor:upperbound} therefore gives
$\Lambda_L\le t_0+y_0^2/2$. Taking the infimum and combining this with
the initial-strip bound $\Lambda_L\le D^2/2$ proves Theorem~\ref{MT}.
\qed

\section{Numerical results}\label{sec:numerical_implementation_main}

We apply Lemma~\ref{lemmabox} to the real primitive characters
$\chi_d(n)=(d/n)$ with $3\le |d|\le400$, where $d$ is a fundamental
discriminant and $m=|d|$. Each character is treated separately, including
both signs when they give the same conductor. We consider
$D\in\{3/4,1\}$: the conclusion for $D=3/4$ assumes the corresponding
global strip from Section~\ref{DynSection}, whereas $D=1$ uses the
classical critical strip. We first describe six small-conductor examples
and then summarize the full experiment.

The search supplies $X$ and rational parameters $t_0,y_0$; it does not
prove that these choices minimize $t_0+y_0^2/2$. For a selected triple,
Arb/Acb ball arithmetic~\cite{JohanssonArb} encloses the finite sums,
arguments and error bounds. The region-(a) checks use 256-bit precision;
the region-(b) and (c) verifier records its own working precision.
The figures show sampled values and are used to illustrate the checks.
The positive interval margins, rather than the samples, establish
the numerical inequalities required by the proof.

\subsection{Choice of parameters}

For each character, we search the interval \eqref{Xm_search} using the
truncated Euler product. The candidate $X$ is retained as \emph{an exact decimal
input}. We then screen rational pairs,
first rejecting those for which $y_0^2+2t_0\ge D^2$, and testing the
finite-block and tail bounds before the more expensive barrier check.
The pointwise bounds from Section~\ref{sec:verification} are evaluated
at the endpoints prescribed by their monotonicities.

For the six characters with $3\le m\le8$, the chosen pair is the same
for $D=3/4$ and $D=1$. Table~\ref{tab:numerical_selected_parameters}
lists these six pairs. Here $\lambda_{\mathrm{pair}}=t_0+y_0^2/2$ and
$M_a=3989$ throughout the barrier. The largest value is $0.28097$,
for $d=-7$, so every pair is strictly admissible for both values of $D$
considered here. The table and the comparison figures use these same
refined parameters.

\begin{table}[ht]
\centering\small\setlength{\tabcolsep}{4pt}\renewcommand{\arraystretch}{1.15}
\begin{tabular}{c c c r r r}
\toprule
$(m,d)$ & $t_0$ & $y_0$ & $\lambda_{\mathrm{pair}}$ & $X/10^7$ & $M_1$\\
\midrule
$(3,-3)$ & $\frac{5211}{20000}$ & $\frac{27}{200}$ & 0.269662500 & 6.666576989 & 314182\\
$(4,-4)$ & $\frac{5031}{20000}$ & $\frac{13}{100}$ & 0.260000000 & 4.999950908 & 530969\\
$(5,5)$ & $\frac{1351}{5000}$ & $\frac{267}{2000}$ & 0.279111125 & 3.999939338 & 241678\\
$(7,-7)$ & $\frac{6743}{25000}$ & $\frac{3}{20}$ & 0.280970000 & 2.857074726 & 185906\\
$(8,-8)$ & $\frac{5111}{20000}$ & $\frac{19}{200}$ & 0.260062500 & 2.499946550 & 1166540\\
$(8,8)$ & $\frac{5111}{20000}$ & $\frac{251}{2000}$ & 0.263425125 & 2.499959779 & 408437\\
\bottomrule
\end{tabular}
\vspace{0.2cm}
\caption{Refined parameter choices for the six small-conductor characters, used for $D=3/4$ and $D=1$. The $X$ entries are abbreviated; the full decimal inputs are kept with the verification records.}
\label{tab:numerical_selected_parameters}
\end{table}

\subsection{Numerical verification}

The numerical checks follow Section~\ref{sec:verification}.
PARI/GP is used to construct trial meshes and boundary polygons;
the function values, error bounds and argument increments are then
recomputed with Arb/Acb. The verification includes the Taylor
remainder from Remark~\ref{rem:fast_E50}, the spatial interpolation
error and the variation over each complete time cell. The winding
enclosures isolate zero, and the strict boundary inequalities keep
the exact quotient away from zero throughout the cell.

In region (a), we use the stripewise corner bounds for $e_A+e_B$
from Section~\ref{sec:verification}, with the selected cap $D$.
For $e_{C,0}$, we use
\eqref{eq:C0_uniform_log} in the ordinate-uniform envelope.
On each time cell, this bound is evaluated at the earliest time
and the lowest ordinate, attained at the later endpoint.
For regions (b) and (c), we use the pointwise bound
\eqref{eq:epsilon_C_local} with the coarser choice $3.33\sqrt m$.
Thus the type-$C$ bounds in the different regions need not coincide.

At $t=t_0$, we also check the full left edge $x=X$,
$y_0\le y\le D$ to obtain the modulus and phase bounds needed
in region (b); the shorter moving edge alone would not suffice.
The other checks cover the right edge, the top and bottom of every
finite $M$-block, and the analytic tail. We keep a pair only after
all three regions have passed with the same exact inputs, including
the covered integer ranges and both tail monotonicity conditions.

When a valid verification for the same pair covers $D=1$,
we can reuse it for $D=3/4$ when the pair satisfies its strip constraint. Indeed,
$\Delta_D\le\Delta_1$ and the smaller moving barrier is contained
in the larger one. The part of region (b) between $X+\Delta_D$ and
$X+\Delta_1$ is covered by the final-time slice of the larger barrier;
the remaining part is covered by its regions (b) and (c).
This is an inclusion of verified domains, not a change in $f_t^L$.
A different pair requires a new check, or an explicitly verified
inclusion in a previously verified box.

\subsection{Region (a)}

Table~\ref{tab:numerical_region_a_inherited} gives the enclosing
region-(a) checks for the six selected pairs. All winding enclosures
isolate zero and all endpoint and time-cell margins are positive.
For $d=-7$, the barrier is contained in the previously verified fixed
box with $t_{\mathrm{old}}=0.3$ and $y_{\mathrm{old}}=0.1$; its margin
is therefore a margin for that enclosing box. The other five rows
are direct moving-barrier checks for the pairs in
Table~\ref{tab:numerical_selected_parameters}.

\begin{table}[ht]
\centering\small\setlength{\tabcolsep}{4pt}\renewcommand{\arraystretch}{1.15}
\begin{tabular}{c l r r r}
\toprule
$(m,d)$ & enclosure & time nodes & $N_{\mathrm{seg}}$ & interval margin\\
\midrule
$(3,-3)$ & moving barrier & 283 & 1180 & 0.102334\\
$(4,-4)$ & moving barrier & 382 & 1207 & 0.0772214\\
$(5,5)$ & moving barrier & 247 & 1188 & 0.0908628\\
$(7,-7)$ & fixed box & 233 & 1383 & 0.123669\\
$(8,-8)$ & moving barrier & 421 & 1414 & 0.0638487\\
$(8,8)$ & moving barrier & 391 & 1232 & 0.0764153\\
\bottomrule
\end{tabular}
\vspace{0.2cm}
\caption{Region-(a) ``certificates'' at $D=1$, with $M=3989$. Lower bounds are rounded down. The same enclosure covers $D=3/4$.}
\label{tab:numerical_region_a_inherited}
\end{table}

Figures~\ref{fig:minimum-ft-D-m4} and~\ref{fig:minimum-ft-D-m5}
show the beta function and the even real character of conductor $5$.
For a fixed character the function $f_{t,M}^L$ is the same for both
caps; the boundary on which it is sampled changes with $D$.
We sample 61 times and 21 points per side. These meshes are chosen for
the figures and need not be nested as $D$ varies, so their sampled
minima need not be monotone in $D$.
The lower panels show stepwise Arb error bounds on enclosing time
cells. The bound for $e_A+e_B$ uses the later endpoint, and that for
$e_{C,0}$ uses the earlier endpoint. The sampled minima and the error
maxima are listed in Table~\ref{tab:region_a_sampled_D_minima}.

\begin{table}[ht]
\centering\small\setlength{\tabcolsep}{4pt}\renewcommand{\arraystretch}{1.15}
\begin{tabular}{c c r r r}
\toprule
$(m,d)$ & $D$ & sampled minimum & $\max(e_A+e_B)$ & $\max e_{C,0}$\\
\midrule
$(3,-3)$ & $\frac{3}{4}$ & 2.019646 & \(3.190\cdot10^{-6}\) & 0.0543803\\
$(3,-3)$ & $1$ & 1.076577 & \(3.191\cdot10^{-6}\) & 0.0543947\\
$(4,-4)$ & $\frac{3}{4}$ & 0.943717 & \(4.320\cdot10^{-6}\) & 0.0594610\\
$(4,-4)$ & $1$ & 0.946920 & \(4.321\cdot10^{-6}\) & 0.0594780\\
$(5,5)$ & $\frac{3}{4}$ & 2.016876 & \(5.342\cdot10^{-6}\) & 0.0698363\\
$(5,5)$ & $1$ & 1.213751 & \(5.344\cdot10^{-6}\) & 0.0698738\\
$(7,-7)$ & $\frac{3}{4}$ & 2.345719 & \(7.110\cdot10^{-6}\) & 0.0776015\\
$(7,-7)$ & $1$ & 1.276207 & \(7.112\cdot10^{-6}\) & 0.0776636\\
$(8,-8)$ & $\frac{3}{4}$ & 1.597806 & \(9.635\cdot10^{-6}\) & 0.0956282\\
$(8,-8)$ & $1$ & 0.740726 & \(9.638\cdot10^{-6}\) & 0.0956838\\
$(8,8)$ & $\frac{3}{4}$ & 0.935318 & \(8.763\cdot10^{-6}\) & 0.0842697\\
$(8,8)$ & $1$ & 0.901197 & \(8.766\cdot10^{-6}\) & 0.0843187\\
\bottomrule
\end{tabular}
\vspace{0.2cm}
\caption{Sampled boundary minima and outward-rounded error maxima for the selected pairs. The minima are diagnostics, not interval lower bounds.}
\label{tab:region_a_sampled_D_minima}
\end{table}

\begin{figure}[p]
\centering
\includegraphics[width=0.88\linewidth]{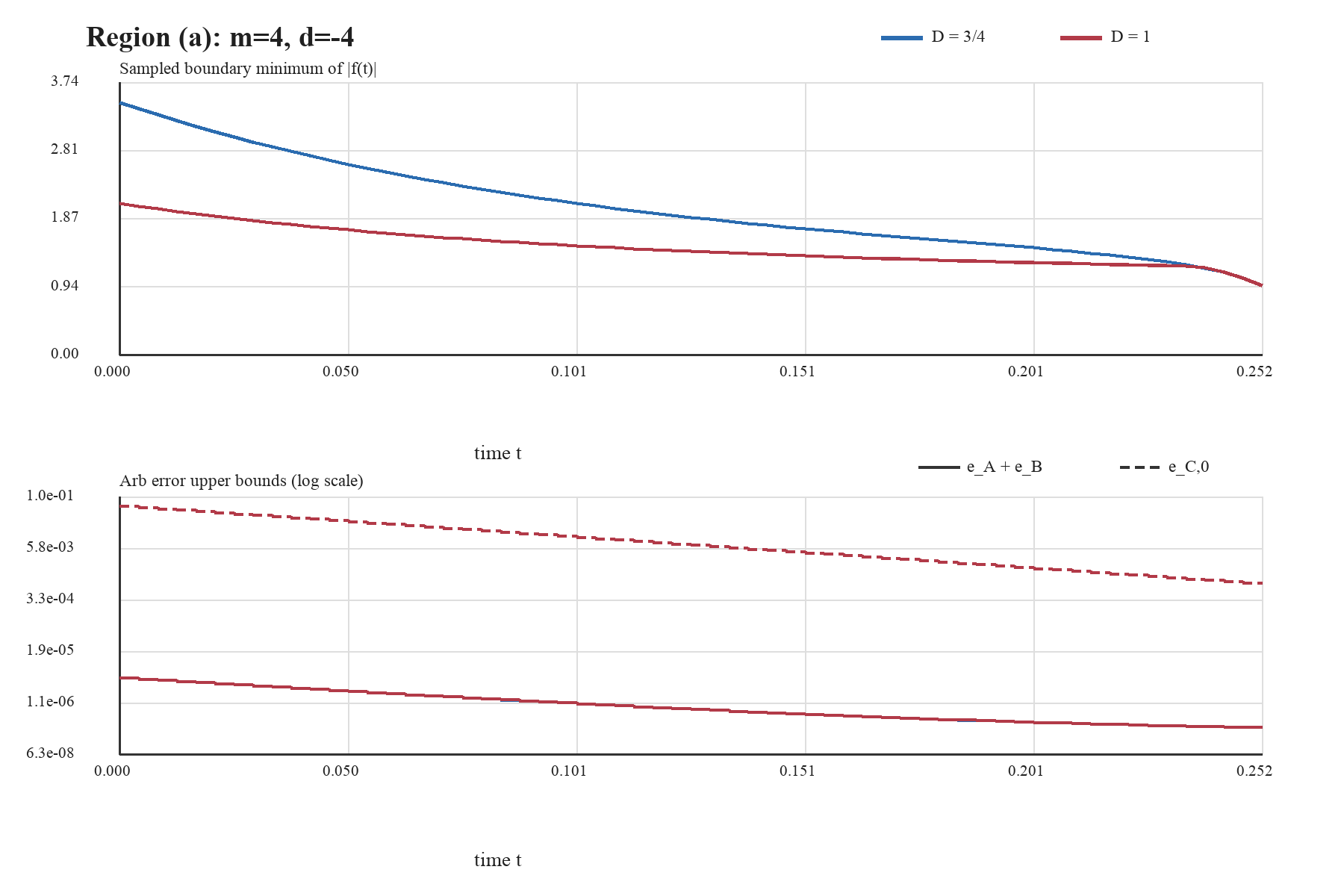}
\caption{Sampled boundary minima and stepwise error bounds for $m=4$, $d=-4$, with the parameters in Table~\ref{tab:numerical_selected_parameters}.}
\label{fig:minimum-ft-D-m4}
\end{figure}

\begin{figure}[p]
\centering
\includegraphics[width=0.88\linewidth]{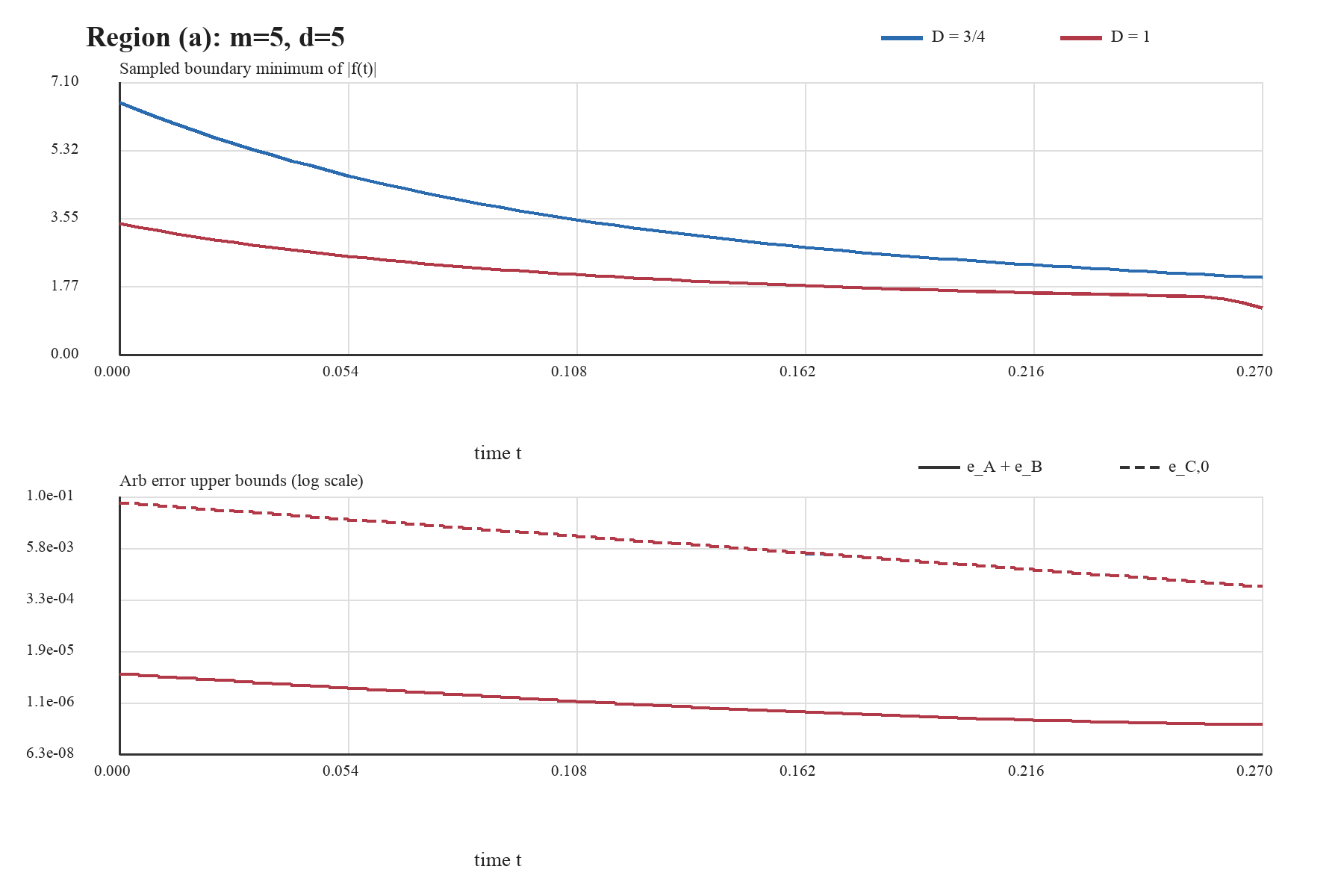}
\caption{Sampled boundary minima and stepwise error bounds for $m=5$, $d=5$, with the parameters in Table~\ref{tab:numerical_selected_parameters}.}
\label{fig:minimum-ft-D-m5}
\end{figure}

\subsection{Regions (b) and (c)}

For each selected pair, the full left edge $x=X$, $y_0\le y\le1$
is checked at $t=t_0$. Its modulus and phase bounds are shown in
Table~\ref{tab:region_b_left_edges_D}. Using the full $D=1$ edge permits
the same valid verification to cover $D=3/4$. The finite blocks then
verify all four boundary inequalities of region (b). Their minimum
clearances and largest errors are recorded in
Table~\ref{tab:region_b_D1_margins}.

\begin{table}[ht]
\centering\small\setlength{\tabcolsep}{4pt}\renewcommand{\arraystretch}{1.15}
\begin{tabular}{c r r r r}
\toprule
$(m,d)$ & $L_a$ lower & $\psi_a$ upper & $e_A+e_B$ upper & $e_{C,0}$ upper\\
\midrule
$(3,-3)$ & 1.43189 & 0.198833 & \(6.087\cdot10^{-6}\) & \(6.177\cdot10^{-4}\)\\
$(4,-4)$ & 1.23428 & 0.529492 & \(7.977\cdot10^{-6}\) & \(7.885\cdot10^{-4}\)\\
$(5,5)$ & 1.52686 & 0.572701 & \(1.056\cdot10^{-5}\) & \(6.723\cdot10^{-4}\)\\
$(7,-7)$ & 1.54203 & 0.0655680 & \(1.403\cdot10^{-5}\) & \(7.534\cdot10^{-4}\)\\
$(8,-8)$ & 1.23185 & 0.0745629 & \(1.805\cdot10^{-5}\) & 0.001185\\
$(8,8)$ & 1.22057 & 0.480456 & \(1.642\cdot10^{-5}\) & 0.001044\\
\bottomrule
\end{tabular}
\vspace{0.2cm}
\caption{Full-height Arb left-edge checks for region (b), at the selected final times.}
\label{tab:region_b_left_edges_D}
\end{table}

\begin{table}[ht]
\centering\small\setlength{\tabcolsep}{4pt}\renewcommand{\arraystretch}{1.15}
\resizebox{\textwidth}{!}{%
\begin{tabular}{c r r r r r r r}
\toprule
$(m,d)$ & blocks & left & right & top & bottom & $\max(e_A+e_B)$ & $\max e_{C,0}$\\
\midrule
$(3,-3)$ & 61 & 0.8327 & 0.02201 & 0.2706 & \(4.657\cdot10^{-4}\) & \(2.005\cdot10^{-7}\) & 0.001186\\
$(4,-4)$ & 66 & 0.8643 & 0.04420 & 0.4819 & 0.001247 & \(2.782\cdot10^{-7}\) & 0.001632\\
$(5,5)$ & 60 & 0.8168 & 0.02573 & 0.1992 & \(2.762\cdot10^{-5}\) & \(3.245\cdot10^{-7}\) & 0.001306\\
$(7,-7)$ & 53 & 0.7543 & 0.02327 & 0.1634 & \(2.470\cdot10^{-5}\) & \(4.479\cdot10^{-7}\) & 0.001456\\
$(8,-8)$ & 77 & 0.8604 & 0.05120 & 0.4882 & \(4.683\cdot10^{-4}\) & \(5.698\cdot10^{-7}\) & 0.002492\\
$(8,8)$ & 66 & 0.9092 & 0.009337 & 0.5204 & \(3.599\cdot10^{-5}\) & \(5.501\cdot10^{-7}\) & 0.002196\\
\bottomrule
\end{tabular}}
\vspace{0.2cm}
\caption{Verified region-(b) clearances and finite-block error bounds for the selected pairs. Positive clearances keep the exact boundary image away from the nonpositive real axis.}
\label{tab:region_b_D1_margins}
\end{table}

The finite blocks cover every integer from $M(X,t_0)=3989$ through
$M_1-1$. At each truncation boundary, the adjacent fixed-$M$ branches
are interpreted by continuation and bound the exact same quotient 
$H_{t_0}^L/B_{t_0}^L$. The analytic tail covers every $M\ge M_1$,
so there is no gap between the finite and infinite parts.
Table~\ref{tab:region_c_D1_margins} gives the tail data: $Q_c$ bounds
$|f_{t_0}^L-1|$, $E_c$ bounds the approximation error, and the
clearance is a lower bound for $1-Q_c-E_c$. Both tail monotonicity
conditions hold in every row.

\begin{table}[ht]
\centering\small\setlength{\tabcolsep}{4pt}\renewcommand{\arraystretch}{1.15}
\begin{tabular}{c r r r r}
\toprule
$(m,d)$ & $M_1$ & $Q_c$ upper & $E_c$ upper & tail clearance lower\\
\midrule
$(3,-3)$ & 314182 & 0.961523 & \(2.571\cdot10^{-7}\) & 0.0384772\\
$(4,-4)$ & 530969 & 0.937036 & \(1.393\cdot10^{-7}\) & 0.0629643\\
$(5,5)$ & 241678 & 0.950710 & \(4.118\cdot10^{-7}\) & 0.0492905\\
$(7,-7)$ & 185906 & 0.950111 & \(8.041\cdot10^{-7}\) & 0.0498886\\
$(8,-8)$ & 1166540 & 0.926977 & \(3.459\cdot10^{-8}\) & 0.0730234\\
$(8,8)$ & 408437 & 0.987428 & \(3.061\cdot10^{-7}\) & 0.0125726\\
\bottomrule
\end{tabular}
\vspace{0.2cm}
\caption{Arb bounds for region (c) at $D=1$. These bounds also cover $D=3/4$.}
\label{tab:region_c_D1_margins}
\end{table}

\subsection{Resulting bounds}

The six pairs provide unconditional improvements on $1/2$ at $D=1$.
They are also admissible under the $D=3/4$ strip hypothesis, with
the same bound. Corollary~\ref{cor:upperbound} gives
\[
\begin{aligned}
\Lambda_{L_{-3}}&\le0.2696625,&\Lambda_{L_{-4}}&\le0.26,\\
\Lambda_{L_5}&\le0.279111125,&\Lambda_{L_{-7}}&\le0.28097,\\
\Lambda_{L_{-8}}&\le0.2600625,&\Lambda_{L_8}&\le0.263425125.
\end{aligned}
\]
The largest of these values is $0.28097<9/32$.

\subsection{The character \texorpdfstring{$\chi_{-11}$}{chi(-11)}}

For this character, we choose $X=18181753.573965154588223$ and
$(t_0,y_0)=(7/25,11/100)$. Thus, $q=y_0^2+2t_0=5721/10000$
and $\lambda_{\mathrm{pair}}=5721/20000=0.28605$.
This pair is admissible for $D=1$, but not for $D=3/4$.
For $D=3/4$, the strip bound $D^2/2=9/32$ is retained; this does not
rule out another pair giving an improvement.

\begin{table}[ht]
\centering\small\setlength{\tabcolsep}{4pt}\renewcommand{\arraystretch}{1.15}
\begin{tabular}{c c c c l}
\toprule
$D$ & $D^2-q$ & admissible pair bound & upper bound for $\Lambda_L$ & result\\
\midrule
$3/4$ & $-6/625$ & -- & 0.28125 & strip bound\\
$1$ & $4279/10000$ & 0.28605 & 0.28605 & verified pair\\
\bottomrule
\end{tabular}
\vspace{0.2cm}
\caption{Conductor-$11$ bounds for the two initial strip hypotheses considered here.}
\end{table}

The $D=1$ check has $M=3989$ throughout region (a), 248 time nodes,
1321 spatial subdivisions, and a verified time-cell margin greater
than $0.0946608343$. Region (b) has 62 finite blocks and a bottom-edge
clearance greater than $5.0770\cdot10^{-5}$. Region (c) starts at
$M_1=314182$; $Q_c<0.932216$ and $E_c<2.647\cdot10^{-7}$ leave
a clearance greater than $0.0677839251$. As for the six preceding examples, the global
Taylor remainder is below $1.496\cdot10^{-10}$.

\subsection{Conductors up to \texorpdfstring{$400$}{400}}

There are 242 nonprincipal real primitive characters with $3\le m\le400$,
at 222 distinct conductors. We treat the two signs of a fundamental
discriminant separately whenever both occur at the same conductor.
The range $201\le m\le400$ was processed in three batches,
201--300, 301--350 and 351--400, containing 62, 32 and 26
characters, respectively.

A verification at $D=1$ can be restricted to a smaller admissible
cap as described above; otherwise, a separate search and verification
are required. Only pairs for which all three regions have passed,
either directly or by a verified inclusion, can enter the  results.

Table~\ref{tab:m400_summary} distinguishes the comparison made in
Appendix~\ref{app:certified_m400} from the separate searches at $D=3/4$.
The appendix uses the chosen pair at $D=1$ for both values of $D$.
Among these 242 reference pairs, 136 give a bound below $9/32$;
the other 106 give a bound above it. Allowing different pairs in the
$D=3/4$ search gives 149 improvements, leaving 93 characters for
which the search keeps the strip bound.

\begin{table}[ht]
\centering\small\setlength{\tabcolsep}{4pt}\renewcommand{\arraystretch}{1.5}
\begin{tabular}{c l r r r r}
\toprule
$D$ & pair choice & improvements & chosen strip & min. $\lambda_{\mathrm{pair}}$ & max. $\lambda_{\mathrm{pair}}$\\
\midrule
$1$ & $D=1$ reference & 242 & 0 & 0.22780 & 0.29782\\
$\frac{3}{4}$ & $D=1$ reference & 136 & 106 & 0.22780 & 0.28097\\
$\frac{3}{4}$ & $D$-specific search & 149 & 93 & 0.22780 & 0.28105\\
\bottomrule
\end{tabular}
\vspace{0.2cm}
\caption{Results for all 242 characters with the two choices of pairs. Improvements are strict relative to $D^2/2$; the last two columns range only over improving pairs and are rounded upwards to five decimal places. The strip bound at $D=3/4$ assumes the corresponding global strip.}
\label{tab:m400_summary}
\end{table}

For $D=1$, every character in this range has a verified improvement
on $1/2$. The largest chosen pair bound is exactly $953/3200$,
so the computations give
\[
\Lambda_{L_d}\le\frac{953}{3200}=0.2978125
\qquad(3\le |d|\le400,\ d\text{ fundamental}).
\]
The individual reference pairs are listed in
Table~\ref{tab:certified_pairs_m400}.

Some searches retain different pairs for different caps.
Table~\ref{tab:m365_D_pairs} gives an example. For $d=365$, the
$D=3/4$ search keeps a pair giving $0.280778125<9/32$, whereas
the $D=1$ reference pair gives $0.284278125>9/32$. The appendix
repeats the latter pair in both rows, so its negative difference
does not rule out the improvement from the separate search.

\begin{table}[ht]
\centering\small\setlength{\tabcolsep}{4pt}\renewcommand{\arraystretch}{1.5}
\begin{tabular}{c c c r}
\toprule
$D$ & $t_0$ & $y_0$ & $\lambda_{\mathrm{pair}}$\\
\midrule
$\frac{3}{4}$ & $\frac{11}{40}$ & $\frac{43}{400}$ & 0.280778125\\
$1$ & $\frac{7}{25}$ & $\frac{37}{400}$ & 0.284278125\\
\bottomrule
\end{tabular}\vspace{0.2cm}
\caption{Retained verified pairs for $\chi_{365}$ under the two initial-strip hypotheses considered here.}
\label{tab:m365_D_pairs}
\end{table}

The 484 rows of Table~\ref{tab:certified_pairs_m400} are comparisons
for 242 characters, rather than separate parameter choices for each cap.
Every row retains the rational parameters of the $D=1$ reference pair,
including when its bound exceeds $9/32$. The column $D^2/2$ gives the
strip bound. The final column gives $9/32-\lambda_{\mathrm{pair}}$
for $D=3/4$ and a dash for $D=1$. Pair bounds are rounded upwards
and differences downwards to five decimal places, using the exact
rational parameters.

The exact inputs and verification records are retained separately
for each character and cap. The separate searches provide
13 additional improvements at $D=3/4$ beyond those obtained by
restricting the reference pairs. These finite searches do not establish
global optimality or a limiting conductor for the method.

The source code, exact input parameters and detailed verification
records are retained in a private repository and can be made
available to referees during peer review.

\clearpage

\section{Distribution of zeros of \texorpdfstring{$H_t^L$}{H(t,L)}}\label{sec:Nyg}

The effective Riemann--Siegel approximation also gives information on the real zeros of $H_t^L$ for $t>0$. In this section, we use it to locate the zeros at large $x$ and derive a Riemann--von Mangoldt-type counting formula with error estimates that are uniform in the stated conductor range. 

\begin{theorem}[Distribution of zeros of $H_t^L$]\label{Distributionzero}
Let $\chi_m$ be a primitive real Dirichlet character with
$3\le m\le400000$, and let $0<t\le1/2$. Let $C>0$ be a
sufficiently large constant and $c>0$ a sufficiently small constant.
For $x\ge4\pi$, put
\[
 g_L(x,t)=\frac{x}{4\pi}\log\frac{mx}{4\pi}
 -\frac{x}{4\pi}+\frac{2a+3}{8}
 +\frac{t}{16}\log\frac{mx}{4\pi}.
\]
For every integer $n\ge C$, let $x_n>4\pi$ be the unique solution
of $g_L(x_n,t)=n$. Write $N_t^L(X)$ for the number of zeros
$\rho$ of $H_t^L$ with $0<\operatorname{Re}\rho\le X$, counted
with multiplicity. Then the following hold.
\begin{enumerate}
\item If $x\ge\exp(C/t)$ and $H_t^L(x+iy)=0$, then $y=0$ and
$x=x_n+O(x^{-ct})$ for some $n$.
\item For each $n\ge\exp(C/t)$, the disk
\[
 \left\{z:\ |z-x_n|\le\frac{c}{\log(mx_n)}\right\}
\]
contains exactly one zero of $H_t^L$, counted with multiplicity.
This zero is real and lies within $O(x_n^{-ct})$ of $x_n$.
\item If $X\ge\exp(C/t)$, then
\[
 N_t^L(X)=g_L(X,t)+O(1).
\]
\item For every $X\ge0$,
\[
 N_t^L(X+1)-N_t^L(X)\ll\log(2+X),
\]
and for every $X\ge4\pi$,
\[
 N_t^L(X)=g_L(X,t)+O(\log(2+X)).
\]
\end{enumerate}
The constants are uniform in $t$ and in the character in the
stated conductor range.
\end{theorem}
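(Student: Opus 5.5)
We will follow the argument of Rodgers--Tao and Polymath for $\zeta$, using Theorem~\ref{EffectiveRSa} as the main input. For $x\ge480$ and $|y|\le1$ it gives
\[
H_t^L/B_t^L=f_t^L+O_\le(e_A+e_B+e_{C,0}),
\]
and Proposition~\ref{prop:magnitudes} makes the errors explicit. The first step is to show that the dominant part of $f_t^L$ consists of the $n=1$ terms of both sums. For $n\ge2$ the weights are $b_n^{t,L}n^{-\operatorname{Re}s_*^L(\pm)}$. Proposition~\ref{prop:magnitudes}(2) shows that $\operatorname{Re}s_*^L(+)$ contains the extra term $\tfrac t4\log(mx/4\pi)$, so the tail $\sum_{n\ge2}$ is bounded by $\sum_n n^{-1/2-\frac t4\ell+\frac t4\log n}$. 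Hence it is $O(x^{-ct})$ once $\ell=\log(mx/4\pi)\ge C/t$, by the integral-test argument of \eqref{eq:epsilon_AB_local}. The errors $e_A,e_B,e_{C,0}$ are likewise $O(x^{-ct})$ in this range: $e_{C,0}$ carries the factor $e^{-t\ell^2/16}$ together with $(mT')^{-(1+y)/4}$ multiplied by $\sqrt m$, and the uniformity in $m\le400000$ is absorbed into $C$. We then obtain
\[
H_t^L(x+iy)/B_t^L(x+iy)=1+\gamma^L(x+iy)+O(x^{-ct}),
\]
with $|\gamma^L|\asymp(mx/4\pi)^{-y/2}$. Consequently, for $|y|$ bounded away from $0$, say $|y|\ge C'/\ell$, one of the two terms dominates the other and there are no zeros. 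In the remaining range we write
\[
\gamma^L=\exp\bigl(-y\,\tfrac{\ell}{2}+O(y/x)\bigr)e^{-2i\theta_t(x)+O(\cdot)},
\]
where $\theta_t$ is the phase coming from $M_t^L(s_-)/M_t^L(s_+)$. Stirling's formula, applied through $\log M_0^L$ and $\alpha_L$, gives $\theta_t(x)=\pi g_L(x,t)-\pi/2+O(1/x)$ up to the normalizing constant. We fix the constant in $g_L$ by the $a$-dependent term of $\log\Gamma((s+a)/2)$ together with the $t\alpha_L^2/4$ correction, which yields the term $\tfrac t{16}\log(mx/4\pi)$.

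The second step proves (1) and (2). Zeros satisfy $1+\gamma^L=O(x^{-ct})$, which forces $|\gamma^L|=1+O(x^{-ct})$ and hence $y=O(x^{-ct}/\ell)$. It also forces $\theta_t(x)\in\pi(\mathbb Z+\tfrac12)+O(x^{-ct})$, that is, $x=x_n+O(x^{-ct}/\ell)$ because $\partial_xg_L\asymp\ell$. For the reality claim we use the symmetry $H_t^L(\bar z)=\overline{H_t^L(z)}$ from Lemma~\ref{prop:symmetric}. On the disk of (2) we apply Rouch\'e's theorem to compare $H_t^L/B_t^L$ with the holomorphic model $1+\gamma^L$, which has exactly one simple zero there because its derivative is of size $\asymp\ell$ and the disk radius is $c/\log(mx_n)$. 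This gives exactly one zero in the disk. Since the zero is unique, conjugation symmetry forces it to be real.

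The third step covers (3) and (4). Part (3) follows by counting: every zero with $\operatorname{Re}\ge X_0=\exp(C/t)$ is one of the $x_n$-zeros, so $N_t^L(X)=N_t^L(X_0)+\#\{n:X_0<x_n\le X\}$, and the latter equals $g_L(X,t)-g_L(X_0,t)+O(1)$. We then need $N_t^L(X_0)=g_L(X_0,t)+O(1)$, and this is not automatic. We obtain it by the argument principle on a rectangle $[0,X]\times[-3,3]$, choosing $X$ such that $X=x_n+\tfrac12$-spacing. On the vertical side at $X$ and on the horizontal sides $|y|=3$ (no zeros there, by Theorem~\ref{dBru}), the variation of $\arg(B_t^L)$ gives $2\pi g_L$, since $\arg B_t^L$ is exactly the Stirling phase. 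The variation of $\arg(H/B)$ is $O(\log X)$ in general by Jensen's formula (Backlund's trick), and $O(1)$ for $X\ge\exp(C/t)$ by the approximation above. This argument proves the second display of (4), and the local bound $N(X+1)-N(X)\ll\log(2+X)$ follows from Jensen's formula applied to a disk of bounded radius. For Jensen we use the growth $\log|H_t^L|\ll|z|\log|z|$ from the proof of Proposition~\ref{Dynz}, together with the lower bound $|H_t^L(x_0+3i)|\gg|B_t^L|$ at a centre point, which comes from the dominance of the $n=1$ term at $y=3$ via the heat representation \eqref{expH0} and the Euler product of $L$ on $\operatorname{Re}s=2$.

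We expect the main obstacle to be twofold. First, the approximation must be valid for small $x$ and for $|y|$ up to $3$, whereas Theorem~\ref{EffectiveRSa} only covers $x\ge480$ and $|y|\le1$; we therefore handle the region $0\le x\le480$ by compactness, which contributes $O(1)$ zeros with constant uniform over the finitely many characters, and the band $1<|y|\le3$ by the Euler-product lower bound. Second, all constants must be kept uniform in $t\in(0,1/2]$, which is the reason for the threshold $\exp(C/t)$.
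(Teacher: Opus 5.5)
Your architecture is the paper's. You normalize by $B_t^L$, reduce to the model $1+\gamma^L$ with the Stirling phase of $M_t^L$, apply Rouch\'e plus conjugation symmetry on the disks of radius $c/\log(mx_n)$, and then use the argument principle on $[0,X]\times[-3,3]$ with Backlund's trick. Parts (1) and (2) are fine as sketched. They need only Theorem~\ref{EffectiveRSa} on $|y|\le1$, together with Theorem~\ref{dBru} to put every zero in that strip. The gaps are in (3) and (4), whose inputs lie off that strip.

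\textbf{First gap: the Jensen input.} The Jensen steps do not work with the growth bound you cite. For a disk $D(z_0,R)$ with $z_0=x_0+3i$, Jensen bounds the number of zeros in $D(z_0,r)$ by $\log\bigl(\max_{D(z_0,R)}|H_t^L|/|H_t^L(z_0)|\bigr)/\log(R/r)$. Your lower bound $|H_t^L(z_0)|\gg|B_t^L(z_0)|$ is exponentially small, since it contains the factor $e^{-\pi x_0/8}$. The order-one bound $\log|H_t^L(z)|\ll|z|\log|z|$ from Proposition~\ref{Dynz} only controls the maximum by $e^{O(x_0\log x_0)}$. The resulting count is $O(X\log X)$, not $O(\log X)$, both for $N_t^L(X+1)-N_t^L(X)$ and for the zero count in Backlund's trick. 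What you need is a local upper bound matched to the Gamma factor: $|H_t^L(z)|\le x_0^{O(1)}|M_t^L(ix_0/2)|$ for $z=x_0+O(1)$ with bounded imaginary part. This is Proposition~\ref{PreAsymp}(iii) combined with Lemma~\ref{lem:Mtbehavior}, and it must hold on a disk reaching well beyond $|\operatorname{Im}z|\le1$. For the local count the paper avoids Jensen altogether. It shows $(H_t^L)'/H_t^L(X+\tfrac72 i)=O(\log X)$ and uses that every term of $\sum_\rho(h-\operatorname{Im}\rho)/|X+ih-\rho|^2$ is positive.

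\textbf{Second gap: the band $1<y\le3$.} You need estimates on this band for three things: the right edge $x=X$ (required for the $O(1)$ in (3)), the top edge $y=3$, and the Jensen disks. These do not follow from ``the Euler product of $L$ on $\operatorname{Re}s=2$''. In \eqref{expH0} the Gaussian averages $\xi_L$ over all real parts, so the Euler product cannot simply be inserted. The dominance of the $n=1$ term is a statement about the saddle-shifted truncated sum $\sum_{n\le M}b_n^{t,L}\chi_m(n)n^{-s_*^L(+)}$. There $\operatorname{Re}s_*^L(+)\ge\frac{1+y}{2}+\frac t4\log\frac{mx}{4\pi}-O(x^{-2})$, and $b_n^{t,L}n^{-\frac t4\log(mx/4\pi)}\le1$ for $n\le M$, which give the bound $\zeta(2)-1<0.7$ at $y\ge3$. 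The untruncated series with weights $b_n^{t,L}$ diverges.

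You therefore have two options. You can redo the Riemann--Siegel estimates on an enlarged strip, as the paper does in Proposition~\ref{PreAsymp} for $|y|\le10$. Lemma~\ref{lem:rtn_estimate} already allows arbitrary real parts, and the remainder needs a truncation order such as $J(u)=\lceil 1-u\rceil$ for $u<0$. Alternatively, you can shift the Gaussian for the complete $\xi_L$ to the saddle and control $L$ on the whole shifted line. That means convexity bounds on the left and the Euler product only where the real part is large.

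Finally, the left side of your rectangle lies on $i\mathbb R$. You must either show $H_t^L(iy)\neq0$ there, which the paper does in Lemma~\ref{lem:positive_imaginary_axis}, or handle possible zeros on that edge by indentation. Your proposal does not treat this edge.
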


	\begin{remark}
		Expanding the definition of $g_L$, part~(iv) reads
		\[
		N_t^L(X)= \frac{X}{4\pi}\log\left(\frac{Xm}{4\pi}\right)-\frac{X}{4\pi}+\frac{2a+3}{8}+\frac{t}{16}\log\left(\frac{Xm}{4\pi}\right)+O(\log(2+X)).
		\]
		The leading terms have the same form as in \cite[Theorem~1.5]{Poly}. The phase function used there has a constant term $11/8$ and the familiar $7/8$ appears in the heuristic averaged expression discussed there. The case $m=1$ is not covered here, since the completed zeta function includes the additional factor $s(s-1)$.
	\end{remark}

	The proof of Theorem~\ref{Distributionzero} relies on the effective Riemann--Siegel approximation of Theorem~\ref{EffectiveRSa}, which sharpens (and makes effective) the results of \cite{KKL}.

	We start with preliminary asymptotic estimates. Throughout this
	section, the conductor is restricted to $3\le m\le400000$.
	Unless otherwise stated, the implied constants are uniform in
	the primitive real character and in $0<t\le1/2$.
	
	\begin{prop}[Preliminary asymptotics]\label{PreAsymp}
		Let $\chi_m$ be a primitive real Dirichlet character with
		$3\le m\le400000$. Let $0<t\le1/2$, $x\ge480$, and
		$-10\le y\le10$, with $s_+=(1+y-ix)/2$ and
		$s_-=(1-y+ix)/2$. Then the following hold.
		\begin{enumerate}
			\item
			If $x\ge\exp(C/t)$ for a sufficiently large constant $C$,
			then
			\[
			H_t^L(x+iy)
			=
			\left(1+O(x^{-ct})\right)M_t^L(s_+)
			+
			\left(1+O(x^{-ct})\right)M_t^L(s_-)
			\]
			for a sufficiently small constant $c>0$.
			\item
			If $3\le y\le4$ and $x\ge C$ for a sufficiently large
			constant $C$, then
			\[
			H_t^L(x+iy)
			=
			\left(1+O_{\leq}(0.7)\right)M_t^L(s_+).
			\]
			
			\smallskip
			\item
			If $x=x_0+O(1)$ for some $x_0\ge480$, then
			\[
			H_t^L(x+iy)=O\left(x_0^{O(1)}|M_t^L(s_+)|\right)= O\left(x_0^{O(1)}|M_t^L(ix_0/2)|\right).
			\]
		\end{enumerate}
		The constants are independent of $t$ and of the character in
		the stated conductor range.
	\end{prop}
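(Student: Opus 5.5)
The approach follows \cite[Proposition~9.1]{Poly}, using the symmetric decomposition \eqref{eq:ABC_decomp} together with Theorem~\ref{EffectiveRSa}. One caveat comes first: Theorem~\ref{EffectiveRSa} is stated only for $|y|\le1$. For $|y|\le10$ I would therefore rerun its proof with $\sigma=\operatorname{Re}s_\pm\in[-5,6]$ instead of $[0,1]$. Lemma~\ref{lem:rtn_estimate} holds for every real $\sigma$. Proposition~\ref{prop:RtN_rigorous} uses $0\le\sigma\le1$ only through the constants $9^\sigma,2^{-\sigma}$ and $\sigma^2$, so on the larger range it gives the same decomposition with $O(1)$ constants; this suffices because only qualitative $O(\cdot)$ bounds are asserted. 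With this in hand we have
\[
H_t^L(x+iy)=M_t^L(s_+)\Bigl(\sum_{n\le M}\frac{b_n^{t,L}\chi_m(n)}{n^{s_*^L(+)}}(1+O(\varepsilon_n))\Bigr)+M_t^L(s_-)\Bigl(\sum_{n\le M}\frac{b_n^{t,L}\chi_m(n)}{n^{s_*^L(-)}}(1+O(\varepsilon_n))\Bigr)+\text{(type-}C\text{ term)},
\]
where the type-$C$ term is $O\bigl(e^{t\pi^2/64}|M_0^L(iT')|\bigr)$ by Proposition~\ref{prop:eC} and the bound $\mathcal B_m^\star\ll\sqrt m$.

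\textbf{Part (1).} The plan is to show that each bracket equals $1+O(x^{-ct})$ and that the type-$C$ term is $O(x^{-ct})\bigl(|M_t^L(s_+)|+|M_t^L(s_-)|\bigr)$.

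For the bracket attached to $s_+$, the $n=1$ term is $1$. By Proposition~\ref{prop:magnitudes}(2), each term with $n\ge2$ has modulus at most $n^{-(1+y)/2}\exp\bigl(-\tfrac t4\log n\,(\ell-\log n)\bigr)$ with $\ell=\log(mx/4\pi)$. For $n\le\sqrt M\approx (mx)^{1/4}$ this gives decay like $n^{-(1+y)/2-t\ell/8}$. When $t\ell\ge C$ is large, the sum over $2\le n\le M$ is therefore $O(2^{-ct\ell})=O(x^{-ct})$, provided the terms with $n$ near $M$ are also controlled. Those terms are the delicate ones: there $\ell-\log n\approx\tfrac12\ell$, so the factor $\exp\bigl(-\tfrac t4\log n(\ell-\log n)\bigr)\approx e^{-t\ell^2/16}$ is superpolynomially small, which more than compensates for the at most $M\ll x^{1/2}$ terms once $t\ell\ge C$. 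This is the argument of \cite[Prop.~9.1]{Poly}, with $\log x$ replaced by $\ell$; the factor $m\le400000$ only shifts constants.

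The second bracket is handled identically: $\overline{s_*^L(-)}$ differs from $s_*^L(+)$ by $y-\kappa$, which is harmless since $|\kappa|=O(t/x)$. The errors $\varepsilon_n$ from Lemma~\ref{lem:rtn_estimate} are $O(\ell^2/x)$ and are absorbed. For the type-$C$ term, I would compare it with $|M_t^L(s_\pm)|$ as in the proof of Proposition~\ref{prop:magnitudes}(6): the ratio is $\delta^{-\operatorname{Re}s_\pm}e^{-t\ell^2/16}\cdot O(\sqrt m)$, and the factor $e^{-t\ell^2/16}\le x^{-ct}$ makes it small once $t\ell\ge C$. Since the $M_t^L(s_-)$ factor may be much smaller than the $M_t^L(s_+)$ factor when $y<0$, the error should be assigned to whichever of the two factors is larger; the claimed form follows because $|M_t^L(s_+)|/|M_t^L(s_-)|$ is a power of $x$ by Proposition~\ref{prop:magnitudes}(1).

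\textbf{Part (2).} Take $3\le y\le4$. By Proposition~\ref{prop:magnitudes}(1), $|\gamma^L|\ll (mx)^{-3/2}$, so the whole $s_-$ contribution is $O(M(mx)^{-3/2}\cdot\max_n n^{y})$; this needs care, and I would bound it by $O(x^{-1/4})$ using $n\le M\le\sqrt{mx/4\pi}$. The $s_+$ sum is at least $1-\sum_{n\ge2}n^{-2}=2-\zeta(2)>0.35$, more precisely $|\text{sum}-1|\le\zeta(2)-1<0.65$. The type-$C$ term is $\ll\delta^{-2}\sqrt m\to0$. Altogether the error is at most $0.65+o(1)<0.7$ for $x\ge C$, uniformly in $t\le1/2$ and using $m\le400000$.

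\textbf{Part (3).} Each term is trivially bounded by $x^{O(1)}|M_t^L(s_+)|$, since $|\gamma^L|\le x^{O(1)}$ and the sums have at most $O(x^{1/2})$ terms. Then $\log|M_t^L(s_+)|-\log|M_t^L(ix_0/2)|=O(\log x_0)$ follows by integrating $\operatorname{Re}\alpha_L=O(\log x)$ and using $\operatorname{Im}\bigl(t\alpha_L^2/4\bigr)$ bounds along the segment of length $O(1)$.

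The main obstacle is Part (1): the uniform handling of terms with $n$ near $M$, together with justifying the extension of the remainder estimates to $|y|\le10$ (negative $\sigma$ in Proposition~\ref{prop:RtN_rigorous}).
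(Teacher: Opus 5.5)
Your proposal is correct and follows essentially the same route as the paper: extend the Riemann--Siegel estimates to $|y|\le10$ with adjusted constants, isolate the $n=1$ terms in the symmetric decomposition, bound the type-$C$ term relative to $M_t^L(s_\pm)$, and compare $|M_t^L(\overline{s_+})|$ with $|M_t^L(ix_0/2)|$ along a segment of length $O(1)$ (via conjugation, since $s_+$ itself is far from $ix_0/2$). Two small points. First, the terms near $M$ are not delicate: $\log n\le\log M\le\ell/2+o(1)$ for every $n\le M$, so every term with $n\ge2$ is at most $n^{-(1+y)/2-t\ell/8+o(1)}$ and no splitting of the range is needed. Second, in part (2) you must keep $n^{y}$ together with $n^{-\operatorname{Re}s_*^L(+)}\le n^{-(1+y)/2}$; this gives $|\gamma^L|\sum_{n\le M}n^{(y-1)/2}\ll (mx)^{-y/2}M^{(y+1)/2}\ll x^{-1/2}$, whereas the bound $M(mx)^{-3/2}\max_n n^y$ as you wrote it is not small.
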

	
\begin{proof}
We follow \cite[proof of Proposition~9.1]{Poly}.
We use the estimates underlying Theorem~\ref{EffectiveRSa}
on the enlarged strip $|y|\le10$, with adjusted constants.
The real parts of $s_\pm$ remain bounded, and
Lemma~\ref{lem:rtn_estimate} applies to arbitrary real parts.

For the integral remainder, put $T=x/2$, $T'=T+\pi t/8$,
$\delta=\sqrt{mT'/(2\pi)}$, and $u=\sigma+\sqrt t\,v$.
In the proof of Proposition~\ref{prop:RtN_rigorous}, choose
\[
 J(u)=
 \begin{cases}
 1,&u\ge0,\\
 \lceil1-u\rceil,&u<0.
 \end{cases}
\]
Then $J(u)+u\ge1$ for $u<0$, so
Theorem~\ref{thm:RSJk_bound} applies. Since
$J(u)\le |u|+2$ in this range, the coefficient and remainder
bounds give
\[
 \sum_{j=1}^{J(u)}
 \frac{|C_j^{(m)}(p,u)|}{\delta^j}
 +|RS_{J(u)}^{(m)}(u+iT')|
 \ll \frac{\varphi(m)}{\sqrt{T'}}
 \exp\left(C_1(1+|u|)\log(2+|u|)\right).
\]
The normalized prefactor in the same proof is $e^{\eta_v}$,
where $|\eta_v|\le\frac{u^2/2+5/12}{T'-1}.$

For bounded $\sigma$, its Gaussian average is $1+O(T^{-1})$.
The preceding majorant, multiplied by
$e^{|\eta_v|}e^{-v^2}$, is integrable uniformly for sufficiently
large $T$ and $0<t\le1/2$. Thus the normalized continuous
remainder is
\[
 C_0^{(m)}(p)
 +O\left(
 \frac{\mathcal B_m^\star}{T}
 +\frac{\varphi(m)}{\sqrt T}
 \right).
\]
This extends the required estimate to bounded real
parts. All constants below are uniform in the stated conductor range.

We first consider $0\le y\le10$. Put $q=mx/(4\pi)$,
$\ell=\log q$, and $M=\lfloor\delta\rfloor$. Recall that
$s_*^L(\pm)=s_\pm+\frac t2\alpha_L(s_\pm)$ and
$\gamma^L=M_t^L(s_-)/M_t^L(s_+)$. The symmetric decomposition gives
\begin{equation}\label{eq:PreAsymp_decomposition}
 \frac{H_t^L(x+iy)}{B_t^L(x+iy)}
 =
 \sum_{n=1}^{M}
 \frac{\chi_m(n)b_n^{t,L}}{n^{s_*^L(+)}}+\gamma^L\sum_{n=1}^{M}
 \frac{\chi_m(n)b_n^{t,L}}{n^{s_*^L(-)}}
 +O(e_A+e_B+e_{C,0}),
\end{equation}
where the error bounds use the adjusted constants on this strip.

The explicit formula for $\alpha_L$ gives, uniformly for bounded
$\sigma$,
$\alpha_L(\sigma+ix/2)=\ell/2+\pi i/4+O(x^{-1})$.
Consequently,
 $|\gamma^L|=O(q^{-y/2})$, $\operatorname{Re}s_*^L(\pm)
 =\frac{1\pm y}{2}+\frac t4\ell+O(t/x^2)$,
 $s_*^L(-)=\overline{s_*^L(+)}-y+\kappa^L$ and $\kappa^L=O(t/x)$.

Since $n\le M=O(\sqrt q)$, the factors
$n^{O(t/x^2)}$ are uniformly bounded. The error estimates yield
\[
 \begin{aligned}
 e_A&\ll
 \frac{\ell^2}{x}q^{-y/2}
 \sum_{n=1}^{M}
 b_n^{t,L}n^{-(1-y)/2-(t/4)\ell},\\
 e_B&\ll
 \frac{\ell^2}{x}
 \sum_{n=1}^{M}
 b_n^{t,L}n^{-(1+y)/2-(t/4)\ell},\\
 e_{C,0}&\ll
 q^{-(1+y)/4}e^{-t\ell^2/16}
 \left(\mathcal B_m^\star+\frac{\varphi(m)}{\sqrt x}\right).
 \end{aligned}
\]
Using $q^{-y/2}n^y=O(1)$, we obtain
\begin{equation}\label{eq:PreAsymp_errors}
 e_A+e_B\ll
 \frac{\ell^2}{x}
 \sum_{n=1}^{M}
 b_n^{t,L}n^{-(1+y)/2-(t/4)\ell}.
\end{equation}
The fixed conductor range also gives
$\ell=\log x+O(1)$ and
$\mathcal B_m^\star+\varphi(m)/\sqrt x=O(1)$.

\smallskip
\noindent
Proof of (i).
Removing the $n=1$ term from both sums in
\eqref{eq:PreAsymp_decomposition}, and using
$b_n^{t,L}=\exp((t/4)\log^2n)$, gives, for sufficiently large $x$,
\[
 \frac{H_t^L(x+iy)}{B_t^L(x+iy)}
 =1+\gamma^L+
 O\left(
 \frac{\ell^2}{x}
 +\sum_{n=2}^{M}
 n^{-(1+y)/2-(t/4)(\ell-\log n)}
 +x^{-1/4}
 \right).
\]
Here we absorbed the factor $\ell^2/x$ multiplying the 
sum, since it is bounded for large $x$.

Now
$\log n\le\log M\le
\ell/2+\frac12\log(1+\pi t/(4x))$.
Hence, for sufficiently large $x$,
\[
 \frac{1+y}{2}+\frac t4(\ell-\log n)
 \ge\frac{t}{16}\log x
 \qquad(1\le n\le M).
\]
Put $r=(t/16)\log x$. If $x\ge\exp(C/t)$ with $C$ sufficiently
large, then $r\ge2$, and the integral test gives
\[
 \sum_{n=2}^{M}
 n^{-(1+y)/2-(t/4)(\ell-\log n)}
 \le\sum_{n=2}^{\infty}n^{-r}
 \le2^{-r}+\frac{2^{1-r}}{r-1}
 \le3x^{-(\log2)t/16}.
\]
For sufficiently small $c>0$, we also have
$\ell^2/x+x^{-1/4}=O(x^{-ct})$, uniformly for $0<t\le1/2$.
Therefore
\[
 H_t^L(x+iy)
 =M_t^L(s_+)+M_t^L(s_-)
 +O\left(x^{-ct}|M_t^L(s_+)|\right)
 \qquad(0\le y\le10).
\]
Conjugation gives the corresponding estimate for $y<0$, with
$|M_t^L(s_-)|$ in the error term. This proves (i).

\smallskip
\noindent
Proof of (ii).
Assume $3\le y\le4$. For sufficiently large $x$, we have
$M\le q$, and hence
\[
 b_n^{t,L}n^{-(t/4)\ell}
 =\exp\left(-\frac t4\log n\,(\ell-\log n)\right)
 \le1
 \qquad(1\le n\le M).
\]
The real-part estimate for $s_*^L(+)$ therefore gives
\[
 \left|
 \sum_{n=2}^{M}
 \frac{\chi_m(n)b_n^{t,L}}{n^{s_*^L(+)}}
 \right|
 \le\sum_{n=2}^{\infty}n^{-2+\epsilon_x}
 =\zeta(2)-1+O(x^{-2}),
\]
where $0\le\epsilon_x\ll x^{-2}$. For the second sum,
\[
 \begin{aligned}
 \left|
 \gamma^L\sum_{n=1}^{M}
 \frac{\chi_m(n)b_n^{t,L}}{n^{s_*^L(-)}}
 \right|
 &\ll q^{-y/2}\sum_{n=1}^{M}n^{(y-1)/2}\\
 &\ll q^{-y/2}M^{(y+1)/2}
 \ll q^{(1-y)/4}
 =O(x^{-1/2}).
 \end{aligned}
\]
In this range, \eqref{eq:PreAsymp_errors} gives
$e_A+e_B=O(\log^2x/x)$, while $e_{C,0}=O(x^{-1})$.
Substitution into \eqref{eq:PreAsymp_decomposition} yields
\[
 \left|
 \frac{H_t^L(x+iy)}{B_t^L(x+iy)}-1
 \right|
 \le\zeta(2)-1+O(x^{-1/2}).
\]
Since $\zeta(2)-1=\pi^2/6-1<0.7$, increasing $C$ proves (ii).

\smallskip
\noindent
Proof of (iii).
For $0\le y\le10$, the preceding bounds and
\eqref{eq:PreAsymp_decomposition} give
\[
 H_t^L(x+iy)
 =O\left(x_0^{O(1)}|M_t^L(s_+)|\right),
\]
since $M=O(\sqrt x)$ and
$b_n^{t,L}n^{-(t/4)\ell}\le1$ for $n\le M$.

Moreover, from \eqref{alpha_L} and \eqref{MtL},
\[
 \frac{(M_t^L)'(s)}{M_t^L(s)}
 =\alpha_L(s)+\frac t2\alpha_L(s)\alpha_L'(s)
 =O(\log x_0)
 \qquad\left(s=\frac{ix_0}{2}+O(1)\right).
\]
Integrating a branch of $\log M_t^L$ along the segment joining
$ix_0/2$ to $\overline{s_+}$, and using conjugation, gives
\[
 \log\frac{|M_t^L(s_+)|}{|M_t^L(ix_0/2)|}
 =O(\log x_0).
\]
Thus these factors, including those corresponding to $y$ and $-y$,
are comparable up to powers of $x_0$. Together with
$H_t^L(x-iy)=\overline{H_t^L(x+iy)}$, this proves both estimates
for $|y|\le10$. Any remaining bounded range of $x$ is absorbed
by compactness.
\end{proof}
We next record the corresponding estimates for $M_t^L$.

\begin{lemma}[Behavior of $M_t^L$]\label{lem:Mtbehavior}
Let $0<t\le1/2$, let $x_*>0$ be sufficiently large, and suppose
that $x+iy=x_*+O(1)$. Then
\begin{equation}\label{MtL trans}
\begin{aligned}
 M_t^L\left(\frac{1+y+ix}{2}\right)
 ={}&M_t^L\left(\frac{1+ix_*}{2}\right)\\
 &\times\exp\left(
 (i(x-x_*)+y)
 \left(\frac14\log\frac{mx_*}{4\pi}+\frac{\pi i}{8}\right)
 +O\left(\frac{\log(mx_*)}{x_*}\right)
 \right).
\end{aligned}
\end{equation}
Moreover, there is a continuous branch of
$\arg M_t^L((1+ix_*)/2)$ for all sufficiently large $x_*$ such that
\begin{equation}\label{argML}
\begin{aligned}
 \arg M_t^L\left(\frac{1+ix_*}{2}\right)
 ={}&\frac{x_*}{4}\log\frac{mx_*}{4\pi}-\frac{x_*}{4}
 +\frac{(2a-1)\pi}{8}\\
 &+\frac{t\pi}{16}\log\frac{mx_*}{4\pi}
 +O\left(\frac{\log(mx_*)}{x_*}\right).
\end{aligned}
\end{equation}
The estimates are uniform in $0<t\le1/2$ and in the stated
conductor range, for a fixed bound in $x+iy=x_*+O(1)$.
\end{lemma}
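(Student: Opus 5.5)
The plan is to work with $\log M_t^L$ directly. By \eqref{MtL},
\[
\log M_t^L(s)=\frac t4\,\alpha_L(s)^2+\log M_0^L(s),
\]
where $\log M_0^L$ is the explicit branch in \eqref{M0L}, holomorphic on a neighbourhood of the half-line $\operatorname{Re}s=\tfrac12$, $\operatorname{Im}s$ large. Both assertions are then short computations with the explicit formula \eqref{alpha_L}. Throughout I put $s_*:=(1+ix_*)/2$, $\ell_*:=\log(mx_*/(4\pi))$, and
\[
h:=\frac{1+y+ix}{2}-s_*=\frac{y+i(x-x_*)}{2},\qquad h=O(1).
\]

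\textbf{Step 1: the shift formula \eqref{MtL trans}.}
\begin{enumerate}
\item Differentiate:
\[
(\log M_t^L)'=\alpha_L+\tfrac t2\,\alpha_L\alpha_L',\qquad
(\log M_t^L)''=\alpha_L'+\tfrac t2\bigl(\alpha_L'^2+\alpha_L\alpha_L''\bigr).
\]
\item On the segment from $s_*$ to $s_*+h$ we have $|s+a|\asymp x_*$. From \eqref{alpha_L}, $\alpha_L'=O(1/x_*)$, $\alpha_L''=O(1/x_*^2)$ and $\alpha_L=O(\log(mx_*))$. Hence $(\log M_t^L)''=O(1/x_*)$ there.
\item Taylor's formula along the segment gives
\[
\log M_t^L(s_*+h)-\log M_t^L(s_*)=h\,(\log M_t^L)'(s_*)+O(1/x_*).
\]
\item At $s_*$ we have $s_*+a=(1+2a+ix_*)/2$, so
\[
\log\frac{m(s_*+a)}{2\pi}=\ell_*+\frac{i\pi}{2}+O(1/x_*),
\]
which gives $\alpha_L(s_*)=\tfrac12\ell_*+\tfrac{i\pi}4+O(1/x_*)$, consistent with \eqref{eq:magnitudes_alpha}. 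The correction $\tfrac t2\alpha_L\alpha_L'$ is $O(\log(mx_*)/x_*)$.
\item Multiplying by $h$ gives the exponent $(i(x-x_*)+y)\bigl(\tfrac14\ell_*+\tfrac{i\pi}8\bigr)+O(\log(mx_*)/x_*)$. This is \eqref{MtL trans}.
\end{enumerate}

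\textbf{Step 2: the argument formula \eqref{argML}.} I take $\arg M_t^L(s_*)=\operatorname{Im}\log M_t^L(s_*)$ with the branch above. This is continuous in $x_*$, since $s_*$ stays in the slit plane and $\log((s+a)/2)$ is the principal logarithm. I then compute the imaginary part term by term.
\begin{enumerate}
\item $\operatorname{Im}\bigl[\tfrac{s_*+a}2\log\tfrac m\pi\bigr]=\tfrac{x_*}4\log\tfrac m\pi$.
\item $\operatorname{Im}\bigl[-\tfrac{s_*+a}2\bigr]=-\tfrac{x_*}4$.
\item Put $w=(1+2a+ix_*)/4$, so that $\log|w|=\log(x_*/4)+O(x_*^{-2})$ and $\arg w=\tfrac\pi2-\tfrac{1+2a}{x_*}+O(x_*^{-3})$. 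The coefficient is $\tfrac{s_*+a-1}2=\tfrac{2a-1+ix_*}4$. Then
\[
\operatorname{Im}\Bigl[\frac{2a-1+ix_*}{4}\,(\log|w|+i\arg w)\Bigr]
=\frac{x_*}4\log\frac{x_*}4+\frac{(2a-1)\pi}8+O(1/x_*).
\]
The $O(1)$ term $-(1+2a)/4$, coming from $x_*\arg w$, is real and drops out.
\item Items 1--3 sum to $\tfrac{x_*}4\ell_*-\tfrac{x_*}4+\tfrac{(2a-1)\pi}8+O(1/x_*)$.
\item For the heat term, $\alpha_L(s_*)^2=\tfrac14\ell_*^2-\tfrac{\pi^2}{16}+\tfrac{i\pi}4\ell_*+O(\log(mx_*)/x_*)$. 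Hence $\operatorname{Im}\bigl[\tfrac t4\alpha_L(s_*)^2\bigr]=\tfrac{t\pi}{16}\ell_*+O(\log(mx_*)/x_*)$, since $t\le\tfrac12$.
\end{enumerate}
Adding items 4 and 5 gives \eqref{argML}.

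\textbf{Uniformity and the main obstacle.} Every implied constant comes from explicit expansions of $\log$ and $1/(s+a)$, with $a\in\{0,1\}$ and $t\le\tfrac12$. The only place $m$ enters is through $\ell_*$, which is why the error is written $O(\log(mx_*)/x_*)$. The step needing the most care is the branch bookkeeping in Step 2. One must check that the branch of $\log M_0^L$ in \eqref{M0L}, namely the principal $\log((s+a)/2)$, is the one whose imaginary part is continuous in $x_*$. One must also check that the value $\arg w\to\pi/2$, rather than $\pi/2-2\pi$, is the correct one. Both hold because $\operatorname{Re}w>0$, so no winding occurs and the constant $(2a-1)\pi/8$ is correct.
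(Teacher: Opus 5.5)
Your proposal is correct and follows essentially the same route as the paper. Step 1 is the paper's integration of $(\log M_t^L)'=\alpha_L+\frac t2\alpha_L\alpha_L'$ along the segment from $s_*$ to $s_*+h$: you control the variation by a second-derivative Taylor bound, whereas the paper expands the derivative uniformly on the segment, and the two are equivalent. Step 2 is the paper's term-by-term imaginary-part computation, using the continuous branch inherited from the principal logarithm in \eqref{M0L} together with $\operatorname{Im}\bigl(\alpha_L(s_*)^2\bigr)=\frac\pi4\log\frac{mx_*}{4\pi}+O\bigl(\log(mx_*)/x_*\bigr)$.
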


\begin{proof}
For $s=ix_*/2+O(1)$, the explicit formula
\eqref{alpha_L} gives
\[
 \alpha_L(s)
 =\frac12\log\frac{mx_*}{4\pi}+\frac{\pi i}{4}
 +O(x_*^{-1}),
 \qquad
 \alpha_L'(s)=O(x_*^{-1}).
\]
Therefore
\[
 \frac{(M_t^L)'(s)}{M_t^L(s)}
 =\alpha_L(s)+\frac t2\alpha_L(s)\alpha_L'(s)
 =\frac12\log\frac{mx_*}{4\pi}+\frac{\pi i}{4}
 +O\left(\frac{\log(mx_*)}{x_*}\right).
\]
Integrating along the segment joining $(1+ix_*)/2$ to
$(1+y+ix)/2$ gives \eqref{MtL trans}.

For the second assertion, put $s_*=(1+ix_*)/2$ and
$w=(s_*+a)/2$. We use the continuous argument obtained from
the analytic logarithm defining $M_0^L$. Since
\[
 \log w
 =\log\frac{x_*}{4}+\frac{\pi i}{2}
 -\frac{i(1+2a)}{x_*}+O(x_*^{-2}),
\]
taking imaginary parts in \eqref{M0L} yields
\[
 \arg M_0^L(s_*)
 =\frac{x_*}{4}\log\frac{mx_*}{4\pi}
 -\frac{x_*}{4}
 +\frac{(2a-1)\pi}{8}
 +O(x_*^{-1}).
\]
Also, the preceding expansion of $\alpha_L$ gives
\[
 \operatorname{Im}\bigl(\alpha_L(s_*)^2\bigr)
 =\frac{\pi}{4}\log\frac{mx_*}{4\pi}
 +O\left(\frac{\log(mx_*)}{x_*}\right).
\]
By \eqref{MtL}, we can choose the continuous argument so that
\[
 \arg M_t^L(s_*)
 =\arg M_0^L(s_*)+\frac t4\operatorname{Im}(\alpha_L(s_*)^2).
\]
Combining the last two estimates proves \eqref{argML}.
\end{proof}
\begin{proof}[Proof of Theorem~\ref{Distributionzero}]
We follow the argument of \cite[\S9]{Poly}. Let $\Theta_t(v)$ denote
the continuous argument of $M_t^L((1+iv)/2)$ in
Lemma~\ref{lem:Mtbehavior}. By \eqref{argML},
\[
 \Theta_t(v)=\pi g_L(v,t)-\frac\pi2
 +O\left(\frac{\log(mv)}v\right).
\]
Throughout the proof, we may increase $C$ and decrease $c$.

We first prove (ii). Suppose that $x_n\ge\exp(C_1/t)$, where
$C_1$ is sufficiently large, and put
\[
 A_n=M_t^L\left(\frac{1+ix_n}{2}\right),\qquad
 \ell_n=\log\frac{mx_n}{4\pi},\qquad w=z-x_n.
\]
Since $g_L(x_n,t)=n$, the preceding phase estimate gives
$\overline{A_n}/A_n=-1+O(\log(mx_n)/x_n)$.
Proposition~\ref{PreAsymp}(i) and Lemma~\ref{lem:Mtbehavior}
therefore yield, for $|\operatorname{Re}w|\le1$ and $|y|\le1$,
\begin{equation}\label{eq:cosvanish}
 \frac{e^{\pi w/8}H_t^L(z)}{A_n}
 =2i\sin\left(\frac{\ell_n w}{4}\right)
 +O\left(x_n^{-c_0t}e^{\ell_n|y|/4}\right),
 \qquad z=x+iy,
\end{equation}
for a sufficiently small fixed $c_0>0$. Here the errors of size
$\log(mx_n)/x_n$ have been absorbed into $x_n^{-c_0t}$.

On the circle $|w|=c/\log(mx_n)$, the sine term has modulus
$\gg c$, whereas the error is smaller if $C_1$ is sufficiently
large. The sine has exactly one zero in this disk, and
Rouch\'e's theorem gives the same conclusion for $H_t^L$.
The zero is simple and real, since the disk is invariant under
conjugation and $H_t^L(\overline z)=\overline{H_t^L(z)}$.
This proves (ii).

Now suppose that $H_t^L(x+iy)=0$ and $x\ge\exp(C/t)$.
By Theorem~\ref{dBru}, $|y|\le1$. Choose an integer $n$ with
$|g_L(x,t)-n|\le1/2$. Since
\[
 \partial_x g_L(x,t)
 =\frac1{4\pi}\log\frac{mx}{4\pi}+\frac{t}{16x},
\]
we have $x-x_n=O(1/\ell_n)$ and
$|\ell_n(x-x_n)/4|\le\pi/2+o(1)$.
At this zero, comparison of the moduli of the two exponentials
in \eqref{eq:cosvanish} gives
$|y|\ell_n=O(x_n^{-c_0t})$. Substitution back into that equation
then gives
\[
 \sin\left(\frac{\ell_n(z-x_n)}4\right)=O(x_n^{-c_0t}),
 \qquad
 z-x_n=O\left(\frac{x_n^{-c_0t}}{\ell_n}\right).
\]
The choice of $n$ excludes the other zeros of the sine. Thus $z$
lies in one of the disks already treated above and is real.
Decreasing $c$ if necessary proves (i).

For (iii), first consider
$X=x_n+c/\log x_n$, with $c>0$ fixed and sufficiently small,
and $x_n\ge\exp(C_1/t)$.
Put $\ell_n=\log(mx_n/(4\pi))$,
$\ell_X=\log(mX/(4\pi))$, and
$\beta_n=c\ell_n/(2\log x_n)$.
The phase estimate and Lemma~\ref{lem:Mtbehavior} give
\[
 \frac{H_t^L(X+iy)}{B_t^L(X+iy)}
 =1-e^{i\beta_n}e^{-y\ell_X/2}
 +O(X^{-c_0t}),
 \qquad 0\le y\le3.
\]
Here $\beta_n\asymp c$ and $0<\beta_n<\pi/2$.
Since
$\operatorname{Re}(1-e^{i\beta_n}e^{-y\ell_X/2})
\ge1-\cos\beta_n\gg c^2$,
increasing $C_1$ places the quotient in the right half-plane.
Its change in argument on this segment is therefore $O(1)$.

On the upper edge, the corresponding change in argument
of $H_t^L/B_t^L$ is $O(1)$ by
Proposition~\ref{PreAsymp}(ii), with the fixed compact portion
handled by continuity. The left edge is positive by
Lemma~\ref{lem:positive_imaginary_axis}.
Applying the argument principle clockwise to
$0\le\operatorname{Re}z\le X$, $|\operatorname{Im}z|\le3$,
and using conjugation symmetry, we obtain
\[
 N_t^L(X)
 =-\frac1\pi\arg M_t^L\left(\frac{1-iX}{2}\right)+O(1)
 =\frac{\Theta_t(X)}\pi+O(1)
 =g_L(X,t)+O(1).
\]
The arguments are taken continuously along the contour.

For consecutive choices of $X$ of the above form,
the values of $g_L(X,t)$ differ by $O(1)$.
Monotonicity of $N_t^L$ and $g_L$ therefore extends the estimate
to every $X\ge\exp(C/t)$, after increasing $C$. This proves (iii).

Finally, we prove (iv). Put $h=7/2$. By
Proposition~\ref{PreAsymp}(ii) and Cauchy's estimate, the
logarithmic derivative of $H_t^L/B_t^L$ at $X+ih$ is $O(1)$
for sufficiently large $X$. Since
$(B_t^L)'/B_t^L(X+ih)=O(\log(2+X))$ by
\eqref{alpha_L} and \eqref{MtL}, we obtain
\[
 \frac{(H_t^L)'(X+ih)}{H_t^L(X+ih)}
 =O(\log(2+X)).
\]
The bounded range of $X$ follows by compactness, since the
strip bound excludes zeros on this line. Taking imaginary
parts in the Hadamard identity used in the proof of
Proposition~\ref{Dynz}, we conclude that
\[
 \sum_{\rho}
 \frac{h-\operatorname{Im}\rho}
 {(X-\operatorname{Re}\rho)^2+(h-\operatorname{Im}\rho)^2}
 =-\operatorname{Im}
 \frac{(H_t^L)'(X+ih)}{H_t^L(X+ih)}
 \ll\log(2+X),
\]
where the sum includes all zeros, counted with multiplicity.
Since $|\operatorname{Im}\rho|\le1$, every summand is positive,
and each zero with $X<\operatorname{Re}\rho\le X+1$
contributes $\geq\frac{1}{10}$. This proves the first assertion.

For the second assertion, take $X$ sufficiently large and
initially suppose that the vertical line through $X$ contains
no zero. Repeating the rectangle argument in (iii), it remains
to bound the change in argument of $H_t^L$ on
$\{X+iy:0\le y\le3\}$ by $O(\log X)$.
Choose $\theta$ so that $e^{i\theta}H_t^L(X+3i)>0$, and let
$J_X$ be the number of zeros of
$\operatorname{Re}(e^{i\theta}H_t^L(X+iy))$ on this segment.
The absolute value of the net change in argument is at most
$\pi(J_X+1)$.

By conjugation symmetry, these are zeros of the entire function
\[
 F_X(z):=e^{i\theta}H_t^L(z)
        +e^{-i\theta}H_t^L(2X-z).
\]
Moreover, $F_X(X+3i)=2|H_t^L(X+3i)|>0$, and
Proposition~\ref{PreAsymp}(ii)--(iii), together with
Lemma~\ref{lem:Mtbehavior}, gives
\[
 \max_{|z-(X+3i)|\le4}|F_X(z)|
 \ll X^{O(1)}|F_X(X+3i)|.
\]
Jensen's formula therefore bounds the number of zeros of
$F_X$ in $|z-(X+3i)|\le3$ by $O(\log X)$.
Thus $J_X=O(\log X)$, and the rectangle argument yields
\[
 N_t^L(X)=\frac{\Theta_t(X)}{\pi}+O(\log X)
        =g_L(X,t)+O(\log X).
\]
Limits from the right cover the excluded values of $X$.
The bounded range $X\ge4\pi$ follows from the local counting
bound and the boundedness of $g_L$ on compact intervals.
This completes (iv).
\end{proof}	
\begin{remark}
As in the scale discussion of \cite[\S~10]{Poly}, the large-$x$
estimates of Section~\ref{sec:Nyg} suggest that, for fixed
$m$, $\chi_m$, and $D$, the parameter $t_0$ should be on the scale
$C/\log X$. Since a verification up to height $h_m$ in the
$s$-plane gives $X\simeq2h_m$ in the $z$-plane, this suggests
$t_0\sim C/\log(2h_m)$. This is a scale estimate: the other
zero-free conditions must also be verified.

If $h_m$ is multiplied by a factor $R$, the model predicts
\[
 \frac{t_{0,\mathrm{sc}}(Rh_m)}{t_{0,\mathrm{sc}}(h_m)}
 \approx\frac{\log(2h_m)}{\log(2Rh_m)},
 \qquad
 t_{0,\mathrm{sc}}(h):=\frac{C}{\log(2h)}.
\]
For $m=4$, Platt's height is $h_4=25\,000\,000$, and the
parameters $t_{\mathrm{ref}}=0.25155$ and $y_0=0.13$ give
$t_{\mathrm{ref}}+\tfrac12 y_0^2=0.26$. Calibrating this
model at $(h_4,t_{\mathrm{ref}})$ and keeping $y_0=0.13$,
a bound of
$0.1$ would require $t_0=0.1-\tfrac12(0.13)^2=0.09155$.
The calibrated model then predicts
\[
 \frac{h'}{h_4}
 \approx
 (2h_4)^{\,t_{\mathrm{ref}}/0.09155-1}
 \approx10^{13.46},
 \qquad
 h'\approx 7.1\times 10^{20}.
\]
This is only an illustration of the logarithmic scale: the
constant in that scale is implicit, and the estimate gives
neither a necessary nor a sufficient height for the desired
bound. The zero-free regions in Lemma~\ref{lemmabox} would
still have to be verified.
\end{remark}
\begin{cor}
For fixed $0<t\le1/2$, as $X\to\infty$,
\[
 N_t^L(X+1)-N_t^L(X)
 =\frac1{4\pi}\log\frac{mX}{4\pi}+O(1).
\]
Moreover, if $\rho_n(t)$ denotes the unique zero in the disk around
$x_n(t)$ from Theorem~\ref{Distributionzero}(ii), then
\[
 \rho_n(t)=x_n(t)+O\bigl(x_n(t)^{-ct}\bigr),
 \qquad
 x_{n+1}(t)-x_n(t)
 =\frac{4\pi}{\log(mx_n(t)/(4\pi))}\bigl(1+o(1)\bigr).
\]
Consequently, the same leading spacing holds for consecutive zeros
$\rho_n(t)$ in this range.
\end{cor}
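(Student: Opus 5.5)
The statement to be proved is the final corollary: the local counting formula and the asymptotics for $\rho_n(t)$ and for the spacing. The plan is to read everything off Theorem~\ref{Distributionzero}, fixing $0<t\le1/2$ throughout.

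For the counting formula, take $X\ge\exp(C/t)$ and apply Theorem~\ref{Distributionzero}(iii) at $X$ and at $X+1$. Subtracting gives
\[
N_t^L(X+1)-N_t^L(X)=g_L(X+1,t)-g_L(X,t)+O(1).
\]
By the mean value theorem together with the formula
$\partial_xg_L(x,t)=\frac1{4\pi}\log\frac{mx}{4\pi}+\frac{t}{16x}$ from the proof of the theorem, the difference equals $\frac1{4\pi}\log\frac{m\xi}{4\pi}+O(1/X)$ for some $\xi\in[X,X+1]$. Since $\log(\xi/X)=O(1/X)$, this is $\frac1{4\pi}\log\frac{mX}{4\pi}+O(1)$. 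The $O(1)$ coming from (iii) cannot be improved by this method, which is why the statement has exactly this error term.

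For the zeros, the bound $\rho_n=x_n+O(x_n^{-ct})$ is the last clause of Theorem~\ref{Distributionzero}(ii). The theorem states (ii) for $n\ge\exp(C/t)$, whereas the corollary speaks of $x_n$; the two conditions are compatible as $n\to\infty$, since $n=g_L(x_n,t)\asymp x_n\log x_n$. For the spacing, the definition of $x_n$ gives $g_L(x_{n+1},t)-g_L(x_n,t)=1$. By the mean value theorem there is some $\xi\in(x_n,x_{n+1})$ with
\[
x_{n+1}-x_n=\frac{1}{\partial_xg_L(\xi,t)}.
\]
The derivative is increasing and at least $\frac1{4\pi}\log\frac{mx_n}{4\pi}$. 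Hence $x_{n+1}-x_n\le 4\pi/\log(mx_n/4\pi)$, which tends to $0$. It follows that $\log\xi=\log x_n+o(1)$, and the $t/(16\xi)$ term is negligible, so
\[
\partial_xg_L(\xi,t)=\frac1{4\pi}\log\frac{mx_n}{4\pi}\,(1+o(1)).
\]
Inverting this gives the stated spacing.

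For the consecutive zeros, the errors $O(x_n^{-ct})$ in $\rho_n$ and $\rho_{n+1}$ are $o(1/\log x_n)$. The reason is that $x^{-ct}\log x\to0$ for fixed $t>0$. So $\rho_{n+1}-\rho_n$ has the same leading term as $x_{n+1}-x_n$. Part (i) of the theorem shows that every zero with $x\ge\exp(C/t)$ is one of these $\rho_n$, and part (ii) shows that each disk contains exactly one zero; together these ensure that the $\rho_n$ are the consecutive zeros in this range. The only point that needs care is this bookkeeping, namely that the disks are disjoint and exhaust the zeros. Disjointness holds because the disks have radius $c/\log(mx_n)$ while the spacing is about $4\pi/\log$, so they do not overlap once $c<2\pi$.
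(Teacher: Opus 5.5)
Your proposal is correct and follows the paper's proof: subtract Theorem~\ref{Distributionzero}(iii) at $X+1$ and $X$ and apply the mean value theorem to $g_L$; then apply the mean value theorem with $\partial_x g_L$ between $x_n$ and $x_{n+1}$; finally use $x_n^{-ct}=o(1/\log x_n)$ to transfer the spacing to the zeros $\rho_n$. Your extra bookkeeping makes explicit why the $\rho_n$ are genuinely consecutive zeros, a point the paper leaves implicit: the disks are disjoint once $c<2\pi$, and part~(i) shows they contain every zero in this range.
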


\begin{proof}
The first formula follows by subtracting Theorem~\ref{Distributionzero}(iii)
at $X+1$ and $X$, and applying the mean value theorem to $g_L$.
For the spacing, apply the mean value theorem between $x_n(t)$ and
$x_{n+1}(t)$.
\[
 \partial_xg_L(x,t)=\frac1{4\pi}\log\frac{mx}{4\pi}+\frac{t}{16x}.
\]
The zero-location estimate is Theorem~\ref{Distributionzero}(ii).
Since $x_n^{-ct}=o(1/\log x_n)$ for fixed $t>0$, these location errors
are smaller than the spacing and do not change its leading term.
\end{proof}

\begin{remark}
The term $\frac{t}{16}\log(mx/(4\pi))$ also describes the motion
of the centers $x_n(t)$, as in the discussion of
\cite[Remark~9.4]{Poly}. Since $\partial_xg_L>0$, these centers
are smooth in $t$. Differentiating $g_L(x_n(t),t)=n$ and writing
$\ell_n(t):=\log(mx_n(t)/(4\pi))$, we obtain
\[
 \begin{aligned}
 x_n'(t)
 &=-\frac{\ell_n(t)/16}
 {\ell_n(t)/(4\pi)+t/(16x_n(t))}
 =-\frac{\pi}{4}
 \frac{\ell_n(t)}{\ell_n(t)+\pi t/(4x_n(t))}\\
 &=-\frac{\pi}{4}
 +O\!\left(\frac{t}{x_n(t)\ell_n(t)}\right).
 \end{aligned}
\]
Thus the centers move to the left with asymptotic speed $\pi/4$
as $n\to\infty$. This calculation concerns the centers: the
localization estimate in Theorem~\ref{Distributionzero} alone
does not justify differentiating the error to obtain a velocity
estimate for the zeros themselves.
\end{remark}

Since $\Phi_L$ decays superexponentially, the integral defining
$H_t^L$ also converges for negative $t$.
The upper panel of Figure~\ref{fig:m4_zero_gL} illustrates the
motion of real zeros, as in \cite[Fig.~18]{Poly}.
The lower panel plots phase residuals for zeros of the leading
Riemann--Siegel term, following the type of plot in
\cite[Fig.~10]{Poly}. 

\begin{figure}[htbp]
\centering
\includegraphics[width=\textwidth]{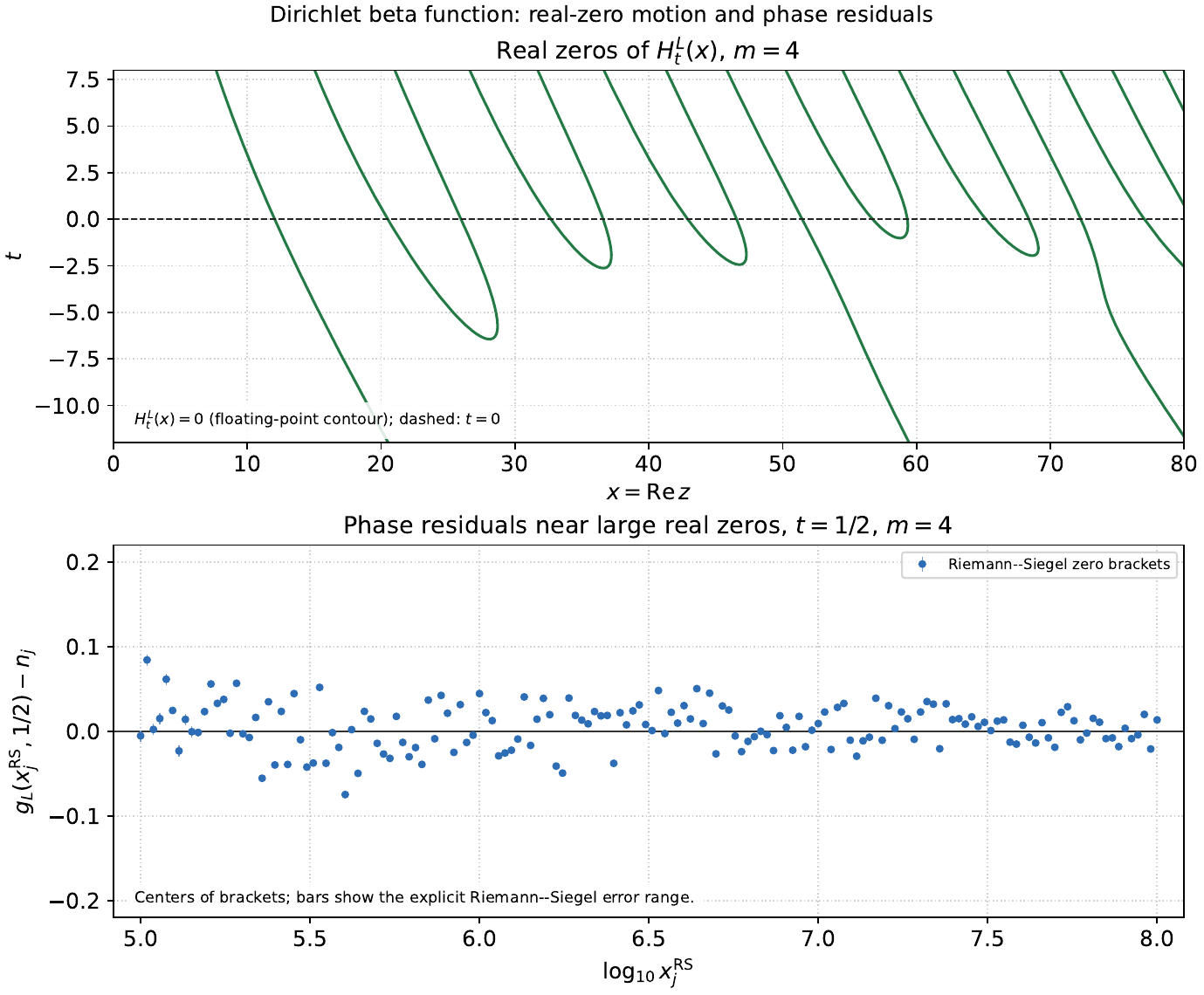}
\caption{Dirichlet beta function ($m=4$). Upper: numerical contour
$H_t^L(x)=0$ for $0\le x\le80$ and $-12\le t\le8$, computed from the
Fourier integral; the dashed line marks $t=0$. Lower: phase residuals for
large $x$. Each plotted center $x_j^{\mathrm{RS}}$ is a zero of the leading
Riemann--Siegel term, and $n_j$ is the nearest integer to
$g_L(x_j^{\mathrm{RS}},1/2)$.}
\label{fig:m4_zero_gL}
\end{figure}

\clearpage
\appendix
\section{Reference pairs and strip bounds}\label{app:certified_m400}

Table~\ref{tab:certified_pairs_m400} lists all 242 real primitive
characters with $3\le m\le400$, at 222 distinct conductors.
The discriminant $d$ identifies the character, so the two characters
at a common conductor are listed separately. Neither $400$ nor $-400$
is a fundamental discriminant.

For each character, the pair $(t_0,y_0)$ retained at $D=1$ is repeated
for $D=3/4$ and $D=1$, with
$\lambda_{\mathrm{pair}}=t_0+y_0^2/2$. Its verification at $D=1$
gives the unconditional bound $\Lambda_L\le\lambda_{\mathrm{pair}}$.
The second comparison bound, $9/32$, assumes the global initial strip
$|\operatorname{Im}z|\le3/4$ from Section~\ref{DynSection}.

The last column gives $9/32-\lambda_{\mathrm{pair}}$ for $D=3/4$
and a dash for $D=1$. It is positive for 136 characters and negative
for 106. A positive difference also gives the strict admissibility
condition $y_0^2+2t_0<9/16$, so the case $D=1$ can be
restricted as explained in Section~\ref{sec:verification}.
When the difference is negative, the displayed pair is retained only
as a reference and  the smaller strip bound is stronger. A different pair
may still improve on it, as the $d=365$ example in
Table~\ref{tab:m365_D_pairs} shows.

The table does not list the separate cap-dependent choices or assert
optimality of the reference pairs. Its 484 comparison rows therefore
give 136 improvements on $9/32$, whereas the separate search in
Table~\ref{tab:m400_summary} gives 149.
The parameters are exact rationals. Pair bounds are rounded upwards
to five decimal places, and differences downwards from their exact
rational values. The values of $D^2/2$ are exact.

{\footnotesize
\setlength{\tabcolsep}{3pt}
\renewcommand{\arraystretch}{1.6}
\begin{longtable}{@{\extracolsep{\fill}}r r c c c r r r@{}}
\caption{The $D=1$ reference pairs compared with the two strip bounds. The signed difference in the last column is shown only for $D=3/4$.}\label{tab:certified_pairs_m400}
\\
\toprule
$m$ & $d$ & $D$ & $t_0$ & $y_0$ & $\lambda_{\mathrm{pair}}$ & $D^2/2$ & $D^2/2-\lambda_{\mathrm{pair}}$\\
\midrule
\endfirsthead
\multicolumn{8}{c}{\tablename\ \thetable\ (continued)}\\
\toprule
$m$ & $d$ & $D$ & $t_0$ & $y_0$ & $\lambda_{\mathrm{pair}}$ & $D^2/2$ & $D^2/2-\lambda_{\mathrm{pair}}$\\
\midrule
\endhead
\midrule
\multicolumn{8}{r}{Continued on the next page}\\
\endfoot
\bottomrule
\endlastfoot
3 & -3 & $\frac{3}{4}$ & $\frac{5211}{20000}$ & $\frac{27}{200}$ & 0.26967 & 0.28125 & 0.01158\\
3 & -3 & $1$ & $\frac{5211}{20000}$ & $\frac{27}{200}$ & 0.26967 & 0.50000 & --\\
4 & -4 & $\frac{3}{4}$ & $\frac{5031}{20000}$ & $\frac{13}{100}$ & 0.26000 & 0.28125 & 0.02125\\
4 & -4 & $1$ & $\frac{5031}{20000}$ & $\frac{13}{100}$ & 0.26000 & 0.50000 & --\\
5 & 5 & $\frac{3}{4}$ & $\frac{1351}{5000}$ & $\frac{267}{2000}$ & 0.27912 & 0.28125 & 0.00213\\
5 & 5 & $1$ & $\frac{1351}{5000}$ & $\frac{267}{2000}$ & 0.27912 & 0.50000 & --\\
7 & -7 & $\frac{3}{4}$ & $\frac{6743}{25000}$ & $\frac{3}{20}$ & 0.28097 & 0.28125 & 0.00028\\
7 & -7 & $1$ & $\frac{6743}{25000}$ & $\frac{3}{20}$ & 0.28097 & 0.50000 & --\\
8 & -8 & $\frac{3}{4}$ & $\frac{5111}{20000}$ & $\frac{19}{200}$ & 0.26007 & 0.28125 & 0.02118\\
8 & -8 & $1$ & $\frac{5111}{20000}$ & $\frac{19}{200}$ & 0.26007 & 0.50000 & --\\
8 & 8 & $\frac{3}{4}$ & $\frac{5111}{20000}$ & $\frac{251}{2000}$ & 0.26343 & 0.28125 & 0.01782\\
8 & 8 & $1$ & $\frac{5111}{20000}$ & $\frac{251}{2000}$ & 0.26343 & 0.50000 & --\\
11 & -11 & $\frac{3}{4}$ & $\frac{7}{25}$ & $\frac{11}{100}$ & 0.28605 & 0.28125 & -0.00480\\
11 & -11 & $1$ & $\frac{7}{25}$ & $\frac{11}{100}$ & 0.28605 & 0.50000 & --\\
12 & 12 & $\frac{3}{4}$ & $\frac{47}{200}$ & $\frac{9}{100}$ & 0.23905 & 0.28125 & 0.04220\\
12 & 12 & $1$ & $\frac{47}{200}$ & $\frac{9}{100}$ & 0.23905 & 0.50000 & --\\
13 & 13 & $\frac{3}{4}$ & $\frac{11}{40}$ & $\frac{4}{25}$ & 0.28780 & 0.28125 & -0.00655\\
13 & 13 & $1$ & $\frac{11}{40}$ & $\frac{4}{25}$ & 0.28780 & 0.50000 & --\\
15 & -15 & $\frac{3}{4}$ & $\frac{49}{200}$ & $\frac{3}{20}$ & 0.25625 & 0.28125 & 0.02500\\
15 & -15 & $1$ & $\frac{49}{200}$ & $\frac{3}{20}$ & 0.25625 & 0.50000 & --\\
17 & 17 & $\frac{3}{4}$ & $\frac{7}{25}$ & $\frac{27}{200}$ & 0.28912 & 0.28125 & -0.00787\\
17 & 17 & $1$ & $\frac{7}{25}$ & $\frac{27}{200}$ & 0.28912 & 0.50000 & --\\
19 & -19 & $\frac{3}{4}$ & $\frac{7}{25}$ & $\frac{7}{50}$ & 0.28980 & 0.28125 & -0.00855\\
19 & -19 & $1$ & $\frac{7}{25}$ & $\frac{7}{50}$ & 0.28980 & 0.50000 & --\\
20 & -20 & $\frac{3}{4}$ & $\frac{47}{200}$ & $\frac{31}{200}$ & 0.24702 & 0.28125 & 0.03423\\
20 & -20 & $1$ & $\frac{47}{200}$ & $\frac{31}{200}$ & 0.24702 & 0.50000 & --\\
21 & 21 & $\frac{3}{4}$ & $\frac{51}{200}$ & $\frac{21}{200}$ & 0.26052 & 0.28125 & 0.02073\\
21 & 21 & $1$ & $\frac{51}{200}$ & $\frac{21}{200}$ & 0.26052 & 0.50000 & --\\
23 & -23 & $\frac{3}{4}$ & $\frac{57}{200}$ & $\frac{21}{200}$ & 0.29052 & 0.28125 & -0.00927\\
23 & -23 & $1$ & $\frac{57}{200}$ & $\frac{21}{200}$ & 0.29052 & 0.50000 & --\\
24 & -24 & $\frac{3}{4}$ & $\frac{23}{100}$ & $\frac{27}{200}$ & 0.23912 & 0.28125 & 0.04213\\
24 & -24 & $1$ & $\frac{23}{100}$ & $\frac{27}{200}$ & 0.23912 & 0.50000 & --\\
24 & 24 & $\frac{3}{4}$ & $\frac{23}{100}$ & $\frac{27}{200}$ & 0.23912 & 0.28125 & 0.04213\\
24 & 24 & $1$ & $\frac{23}{100}$ & $\frac{27}{200}$ & 0.23912 & 0.50000 & --\\
28 & 28 & $\frac{3}{4}$ & $\frac{49}{200}$ & $\frac{11}{100}$ & 0.25105 & 0.28125 & 0.03020\\
28 & 28 & $1$ & $\frac{49}{200}$ & $\frac{11}{100}$ & 0.25105 & 0.50000 & --\\
29 & 29 & $\frac{3}{4}$ & $\frac{7}{25}$ & $\frac{3}{20}$ & 0.29125 & 0.28125 & -0.01000\\
29 & 29 & $1$ & $\frac{7}{25}$ & $\frac{3}{20}$ & 0.29125 & 0.50000 & --\\
31 & -31 & $\frac{3}{4}$ & $\frac{7}{25}$ & $\frac{3}{20}$ & 0.29125 & 0.28125 & -0.01000\\
31 & -31 & $1$ & $\frac{7}{25}$ & $\frac{3}{20}$ & 0.29125 & 0.50000 & --\\
33 & 33 & $\frac{3}{4}$ & $\frac{1}{4}$ & $\frac{4}{25}$ & 0.26280 & 0.28125 & 0.01845\\
33 & 33 & $1$ & $\frac{1}{4}$ & $\frac{4}{25}$ & 0.26280 & 0.50000 & --\\
35 & -35 & $\frac{3}{4}$ & $\frac{13}{50}$ & $\frac{7}{50}$ & 0.26980 & 0.28125 & 0.01145\\
35 & -35 & $1$ & $\frac{13}{50}$ & $\frac{7}{50}$ & 0.26980 & 0.50000 & --\\
37 & 37 & $\frac{3}{4}$ & $\frac{7}{25}$ & $\frac{31}{200}$ & 0.29202 & 0.28125 & -0.01077\\
37 & 37 & $1$ & $\frac{7}{25}$ & $\frac{31}{200}$ & 0.29202 & 0.50000 & --\\
39 & -39 & $\frac{3}{4}$ & $\frac{51}{200}$ & $\frac{27}{200}$ & 0.26412 & 0.28125 & 0.01713\\
39 & -39 & $1$ & $\frac{51}{200}$ & $\frac{27}{200}$ & 0.26412 & 0.50000 & --\\
40 & -40 & $\frac{3}{4}$ & $\frac{6}{25}$ & $\frac{3}{25}$ & 0.24720 & 0.28125 & 0.03405\\
40 & -40 & $1$ & $\frac{6}{25}$ & $\frac{3}{25}$ & 0.24720 & 0.50000 & --\\
40 & 40 & $\frac{3}{4}$ & $\frac{6}{25}$ & $\frac{3}{25}$ & 0.24720 & 0.28125 & 0.03405\\
40 & 40 & $1$ & $\frac{6}{25}$ & $\frac{3}{25}$ & 0.24720 & 0.50000 & --\\
41 & 41 & $\frac{3}{4}$ & $\frac{7}{25}$ & $\frac{31}{200}$ & 0.29202 & 0.28125 & -0.01077\\
41 & 41 & $1$ & $\frac{7}{25}$ & $\frac{31}{200}$ & 0.29202 & 0.50000 & --\\
43 & -43 & $\frac{3}{4}$ & $\frac{7}{25}$ & $\frac{31}{200}$ & 0.29202 & 0.28125 & -0.01077\\
43 & -43 & $1$ & $\frac{7}{25}$ & $\frac{31}{200}$ & 0.29202 & 0.50000 & --\\
44 & 44 & $\frac{3}{4}$ & $\frac{6}{25}$ & $\frac{4}{25}$ & 0.25280 & 0.28125 & 0.02845\\
44 & 44 & $1$ & $\frac{6}{25}$ & $\frac{4}{25}$ & 0.25280 & 0.50000 & --\\
47 & -47 & $\frac{3}{4}$ & $\frac{7}{25}$ & $\frac{4}{25}$ & 0.29280 & 0.28125 & -0.01155\\
47 & -47 & $1$ & $\frac{7}{25}$ & $\frac{4}{25}$ & 0.29280 & 0.50000 & --\\
51 & -51 & $\frac{3}{4}$ & $\frac{13}{50}$ & $\frac{11}{100}$ & 0.26605 & 0.28125 & 0.01520\\
51 & -51 & $1$ & $\frac{13}{50}$ & $\frac{11}{100}$ & 0.26605 & 0.50000 & --\\
52 & -52 & $\frac{3}{4}$ & $\frac{1}{4}$ & $\frac{19}{200}$ & 0.25452 & 0.28125 & 0.02673\\
52 & -52 & $1$ & $\frac{1}{4}$ & $\frac{19}{200}$ & 0.25452 & 0.50000 & --\\
53 & 53 & $\frac{3}{4}$ & $\frac{7}{25}$ & $\frac{4}{25}$ & 0.29280 & 0.28125 & -0.01155\\
53 & 53 & $1$ & $\frac{7}{25}$ & $\frac{4}{25}$ & 0.29280 & 0.50000 & --\\
55 & -55 & $\frac{3}{4}$ & $\frac{13}{50}$ & $\frac{4}{25}$ & 0.27280 & 0.28125 & 0.00845\\
55 & -55 & $1$ & $\frac{13}{50}$ & $\frac{4}{25}$ & 0.27280 & 0.50000 & --\\
56 & -56 & $\frac{3}{4}$ & $\frac{49}{200}$ & $\frac{11}{100}$ & 0.25105 & 0.28125 & 0.03020\\
56 & -56 & $1$ & $\frac{49}{200}$ & $\frac{11}{100}$ & 0.25105 & 0.50000 & --\\
56 & 56 & $\frac{3}{4}$ & $\frac{49}{200}$ & $\frac{11}{100}$ & 0.25105 & 0.28125 & 0.03020\\
56 & 56 & $1$ & $\frac{49}{200}$ & $\frac{11}{100}$ & 0.25105 & 0.50000 & --\\
57 & 57 & $\frac{3}{4}$ & $\frac{13}{50}$ & $\frac{23}{200}$ & 0.26662 & 0.28125 & 0.01463\\
57 & 57 & $1$ & $\frac{13}{50}$ & $\frac{23}{200}$ & 0.26662 & 0.50000 & --\\
59 & -59 & $\frac{3}{4}$ & $\frac{7}{25}$ & $\frac{4}{25}$ & 0.29280 & 0.28125 & -0.01155\\
59 & -59 & $1$ & $\frac{7}{25}$ & $\frac{4}{25}$ & 0.29280 & 0.50000 & --\\
60 & 60 & $\frac{3}{4}$ & $\frac{43}{200}$ & $\frac{4}{25}$ & 0.22780 & 0.28125 & 0.05345\\
60 & 60 & $1$ & $\frac{43}{200}$ & $\frac{4}{25}$ & 0.22780 & 0.50000 & --\\
61 & 61 & $\frac{3}{4}$ & $\frac{7}{25}$ & $\frac{4}{25}$ & 0.29280 & 0.28125 & -0.01155\\
61 & 61 & $1$ & $\frac{7}{25}$ & $\frac{4}{25}$ & 0.29280 & 0.50000 & --\\
65 & 65 & $\frac{3}{4}$ & $\frac{53}{200}$ & $\frac{27}{200}$ & 0.27412 & 0.28125 & 0.00713\\
65 & 65 & $1$ & $\frac{53}{200}$ & $\frac{27}{200}$ & 0.27412 & 0.50000 & --\\
67 & -67 & $\frac{3}{4}$ & $\frac{7}{25}$ & $\frac{4}{25}$ & 0.29280 & 0.28125 & -0.01155\\
67 & -67 & $1$ & $\frac{7}{25}$ & $\frac{4}{25}$ & 0.29280 & 0.50000 & --\\
68 & -68 & $\frac{3}{4}$ & $\frac{49}{200}$ & $\frac{3}{20}$ & 0.25625 & 0.28125 & 0.02500\\
68 & -68 & $1$ & $\frac{49}{200}$ & $\frac{3}{20}$ & 0.25625 & 0.50000 & --\\
69 & 69 & $\frac{3}{4}$ & $\frac{13}{50}$ & $\frac{3}{25}$ & 0.26720 & 0.28125 & 0.01405\\
69 & 69 & $1$ & $\frac{13}{50}$ & $\frac{3}{25}$ & 0.26720 & 0.50000 & --\\
71 & -71 & $\frac{3}{4}$ & $\frac{7}{25}$ & $\frac{4}{25}$ & 0.29280 & 0.28125 & -0.01155\\
71 & -71 & $1$ & $\frac{7}{25}$ & $\frac{4}{25}$ & 0.29280 & 0.50000 & --\\
73 & 73 & $\frac{3}{4}$ & $\frac{57}{200}$ & $\frac{13}{100}$ & 0.29345 & 0.28125 & -0.01220\\
73 & 73 & $1$ & $\frac{57}{200}$ & $\frac{13}{100}$ & 0.29345 & 0.50000 & --\\
76 & 76 & $\frac{3}{4}$ & $\frac{49}{200}$ & $\frac{31}{200}$ & 0.25702 & 0.28125 & 0.02423\\
76 & 76 & $1$ & $\frac{49}{200}$ & $\frac{31}{200}$ & 0.25702 & 0.50000 & --\\
77 & 77 & $\frac{3}{4}$ & $\frac{53}{200}$ & $\frac{31}{200}$ & 0.27702 & 0.28125 & 0.00423\\
77 & 77 & $1$ & $\frac{53}{200}$ & $\frac{31}{200}$ & 0.27702 & 0.50000 & --\\
79 & -79 & $\frac{3}{4}$ & $\frac{57}{200}$ & $\frac{13}{100}$ & 0.29345 & 0.28125 & -0.01220\\
79 & -79 & $1$ & $\frac{57}{200}$ & $\frac{13}{100}$ & 0.29345 & 0.50000 & --\\
83 & -83 & $\frac{3}{4}$ & $\frac{57}{200}$ & $\frac{13}{100}$ & 0.29345 & 0.28125 & -0.01220\\
83 & -83 & $1$ & $\frac{57}{200}$ & $\frac{13}{100}$ & 0.29345 & 0.50000 & --\\
84 & -84 & $\frac{3}{4}$ & $\frac{9}{40}$ & $\frac{11}{100}$ & 0.23105 & 0.28125 & 0.05020\\
84 & -84 & $1$ & $\frac{9}{40}$ & $\frac{11}{100}$ & 0.23105 & 0.50000 & --\\
85 & 85 & $\frac{3}{4}$ & $\frac{669}{2500}$ & $\frac{13}{100}$ & 0.27605 & 0.28125 & 0.00520\\
85 & 85 & $1$ & $\frac{669}{2500}$ & $\frac{13}{100}$ & 0.27605 & 0.50000 & --\\
87 & -87 & $\frac{3}{4}$ & $\frac{13}{50}$ & $\frac{13}{100}$ & 0.26845 & 0.28125 & 0.01280\\
87 & -87 & $1$ & $\frac{13}{50}$ & $\frac{13}{100}$ & 0.26845 & 0.50000 & --\\
88 & -88 & $\frac{3}{4}$ & $\frac{49}{200}$ & $\frac{13}{100}$ & 0.25345 & 0.28125 & 0.02780\\
88 & -88 & $1$ & $\frac{49}{200}$ & $\frac{13}{100}$ & 0.25345 & 0.50000 & --\\
88 & 88 & $\frac{3}{4}$ & $\frac{49}{200}$ & $\frac{13}{100}$ & 0.25345 & 0.28125 & 0.02780\\
88 & 88 & $1$ & $\frac{49}{200}$ & $\frac{13}{100}$ & 0.25345 & 0.50000 & --\\
89 & 89 & $\frac{3}{4}$ & $\frac{57}{200}$ & $\frac{13}{100}$ & 0.29345 & 0.28125 & -0.01220\\
89 & 89 & $1$ & $\frac{57}{200}$ & $\frac{13}{100}$ & 0.29345 & 0.50000 & --\\
91 & -91 & $\frac{3}{4}$ & $\frac{669}{2500}$ & $\frac{29}{200}$ & 0.27812 & 0.28125 & 0.00313\\
91 & -91 & $1$ & $\frac{669}{2500}$ & $\frac{29}{200}$ & 0.27812 & 0.50000 & --\\
92 & 92 & $\frac{3}{4}$ & $\frac{49}{200}$ & $\frac{4}{25}$ & 0.25780 & 0.28125 & 0.02345\\
92 & 92 & $1$ & $\frac{49}{200}$ & $\frac{4}{25}$ & 0.25780 & 0.50000 & --\\
93 & 93 & $\frac{3}{4}$ & $\frac{13}{50}$ & $\frac{13}{100}$ & 0.26845 & 0.28125 & 0.01280\\
93 & 93 & $1$ & $\frac{13}{50}$ & $\frac{13}{100}$ & 0.26845 & 0.50000 & --\\
95 & -95 & $\frac{3}{4}$ & $\frac{669}{2500}$ & $\frac{13}{100}$ & 0.27605 & 0.28125 & 0.00520\\
95 & -95 & $1$ & $\frac{669}{2500}$ & $\frac{13}{100}$ & 0.27605 & 0.50000 & --\\
97 & 97 & $\frac{3}{4}$ & $\frac{57}{200}$ & $\frac{13}{100}$ & 0.29345 & 0.28125 & -0.01220\\
97 & 97 & $1$ & $\frac{57}{200}$ & $\frac{13}{100}$ & 0.29345 & 0.50000 & --\\
101 & 101 & $\frac{3}{4}$ & $\frac{57}{200}$ & $\frac{3}{20}$ & 0.29625 & 0.28125 & -0.01500\\
101 & 101 & $1$ & $\frac{57}{200}$ & $\frac{3}{20}$ & 0.29625 & 0.50000 & --\\
103 & -103 & $\frac{3}{4}$ & $\frac{57}{200}$ & $\frac{3}{20}$ & 0.29625 & 0.28125 & -0.01500\\
103 & -103 & $1$ & $\frac{57}{200}$ & $\frac{3}{20}$ & 0.29625 & 0.50000 & --\\
104 & -104 & $\frac{3}{4}$ & $\frac{1}{4}$ & $\frac{13}{100}$ & 0.25845 & 0.28125 & 0.02280\\
104 & -104 & $1$ & $\frac{1}{4}$ & $\frac{13}{100}$ & 0.25845 & 0.50000 & --\\
104 & 104 & $\frac{3}{4}$ & $\frac{1}{4}$ & $\frac{13}{100}$ & 0.25845 & 0.28125 & 0.02280\\
104 & 104 & $1$ & $\frac{1}{4}$ & $\frac{13}{100}$ & 0.25845 & 0.50000 & --\\
105 & 105 & $\frac{3}{4}$ & $\frac{47}{200}$ & $\frac{4}{25}$ & 0.24780 & 0.28125 & 0.03345\\
105 & 105 & $1$ & $\frac{47}{200}$ & $\frac{4}{25}$ & 0.24780 & 0.50000 & --\\
107 & -107 & $\frac{3}{4}$ & $\frac{57}{200}$ & $\frac{3}{20}$ & 0.29625 & 0.28125 & -0.01500\\
107 & -107 & $1$ & $\frac{57}{200}$ & $\frac{3}{20}$ & 0.29625 & 0.50000 & --\\
109 & 109 & $\frac{3}{4}$ & $\frac{57}{200}$ & $\frac{3}{20}$ & 0.29625 & 0.28125 & -0.01500\\
109 & 109 & $1$ & $\frac{57}{200}$ & $\frac{3}{20}$ & 0.29625 & 0.50000 & --\\
111 & -111 & $\frac{3}{4}$ & $\frac{13}{50}$ & $\frac{4}{25}$ & 0.27280 & 0.28125 & 0.00845\\
111 & -111 & $1$ & $\frac{13}{50}$ & $\frac{4}{25}$ & 0.27280 & 0.50000 & --\\
113 & 113 & $\frac{3}{4}$ & $\frac{29}{100}$ & $\frac{23}{200}$ & 0.29662 & 0.28125 & -0.01537\\
113 & 113 & $1$ & $\frac{29}{100}$ & $\frac{23}{200}$ & 0.29662 & 0.50000 & --\\
115 & -115 & $\frac{3}{4}$ & $\frac{669}{2500}$ & $\frac{4}{25}$ & 0.28040 & 0.28125 & 0.00085\\
115 & -115 & $1$ & $\frac{669}{2500}$ & $\frac{4}{25}$ & 0.28040 & 0.50000 & --\\
116 & -116 & $\frac{3}{4}$ & $\frac{1}{4}$ & $\frac{31}{200}$ & 0.26202 & 0.28125 & 0.01923\\
116 & -116 & $1$ & $\frac{1}{4}$ & $\frac{31}{200}$ & 0.26202 & 0.50000 & --\\
119 & -119 & $\frac{3}{4}$ & $\frac{669}{2500}$ & $\frac{4}{25}$ & 0.28040 & 0.28125 & 0.00085\\
119 & -119 & $1$ & $\frac{669}{2500}$ & $\frac{4}{25}$ & 0.28040 & 0.50000 & --\\
120 & -120 & $\frac{3}{4}$ & $\frac{9}{40}$ & $\frac{13}{100}$ & 0.23345 & 0.28125 & 0.04780\\
120 & -120 & $1$ & $\frac{9}{40}$ & $\frac{13}{100}$ & 0.23345 & 0.50000 & --\\
120 & 120 & $\frac{3}{4}$ & $\frac{9}{40}$ & $\frac{13}{100}$ & 0.23345 & 0.28125 & 0.04780\\
120 & 120 & $1$ & $\frac{9}{40}$ & $\frac{13}{100}$ & 0.23345 & 0.50000 & --\\
123 & -123 & $\frac{3}{4}$ & $\frac{13}{50}$ & $\frac{4}{25}$ & 0.27280 & 0.28125 & 0.00845\\
123 & -123 & $1$ & $\frac{13}{50}$ & $\frac{4}{25}$ & 0.27280 & 0.50000 & --\\
124 & 124 & $\frac{3}{4}$ & $\frac{51}{200}$ & $\frac{3}{25}$ & 0.26220 & 0.28125 & 0.01905\\
124 & 124 & $1$ & $\frac{51}{200}$ & $\frac{3}{25}$ & 0.26220 & 0.50000 & --\\
127 & -127 & $\frac{3}{4}$ & $\frac{29}{100}$ & $\frac{23}{200}$ & 0.29662 & 0.28125 & -0.01537\\
127 & -127 & $1$ & $\frac{29}{100}$ & $\frac{23}{200}$ & 0.29662 & 0.50000 & --\\
129 & 129 & $\frac{3}{4}$ & $\frac{13}{50}$ & $\frac{4}{25}$ & 0.27280 & 0.28125 & 0.00845\\
129 & 129 & $1$ & $\frac{13}{50}$ & $\frac{4}{25}$ & 0.27280 & 0.50000 & --\\
131 & -131 & $\frac{3}{4}$ & $\frac{29}{100}$ & $\frac{23}{200}$ & 0.29662 & 0.28125 & -0.01537\\
131 & -131 & $1$ & $\frac{29}{100}$ & $\frac{23}{200}$ & 0.29662 & 0.50000 & --\\
132 & -132 & $\frac{3}{4}$ & $\frac{9}{40}$ & $\frac{31}{200}$ & 0.23702 & 0.28125 & 0.04423\\
132 & -132 & $1$ & $\frac{9}{40}$ & $\frac{31}{200}$ & 0.23702 & 0.50000 & --\\
133 & 133 & $\frac{3}{4}$ & $\frac{669}{2500}$ & $\frac{4}{25}$ & 0.28040 & 0.28125 & 0.00085\\
133 & 133 & $1$ & $\frac{669}{2500}$ & $\frac{4}{25}$ & 0.28040 & 0.50000 & --\\
136 & -136 & $\frac{3}{4}$ & $\frac{51}{200}$ & $\frac{21}{200}$ & 0.26052 & 0.28125 & 0.02073\\
136 & -136 & $1$ & $\frac{51}{200}$ & $\frac{21}{200}$ & 0.26052 & 0.50000 & --\\
136 & 136 & $\frac{3}{4}$ & $\frac{51}{200}$ & $\frac{21}{200}$ & 0.26052 & 0.28125 & 0.02073\\
136 & 136 & $1$ & $\frac{51}{200}$ & $\frac{21}{200}$ & 0.26052 & 0.50000 & --\\
137 & 137 & $\frac{3}{4}$ & $\frac{29}{100}$ & $\frac{23}{200}$ & 0.29662 & 0.28125 & -0.01537\\
137 & 137 & $1$ & $\frac{29}{100}$ & $\frac{23}{200}$ & 0.29662 & 0.50000 & --\\
139 & -139 & $\frac{3}{4}$ & $\frac{29}{100}$ & $\frac{23}{200}$ & 0.29662 & 0.28125 & -0.01537\\
139 & -139 & $1$ & $\frac{29}{100}$ & $\frac{23}{200}$ & 0.29662 & 0.50000 & --\\
140 & 140 & $\frac{3}{4}$ & $\frac{23}{100}$ & $\frac{27}{200}$ & 0.23912 & 0.28125 & 0.04213\\
140 & 140 & $1$ & $\frac{23}{100}$ & $\frac{27}{200}$ & 0.23912 & 0.50000 & --\\
141 & 141 & $\frac{3}{4}$ & $\frac{53}{200}$ & $\frac{13}{100}$ & 0.27345 & 0.28125 & 0.00780\\
141 & 141 & $1$ & $\frac{53}{200}$ & $\frac{13}{100}$ & 0.27345 & 0.50000 & --\\
143 & -143 & $\frac{3}{4}$ & $\frac{11}{40}$ & $\frac{27}{200}$ & 0.28412 & 0.28125 & -0.00287\\
143 & -143 & $1$ & $\frac{11}{40}$ & $\frac{27}{200}$ & 0.28412 & 0.50000 & --\\
145 & 145 & $\frac{3}{4}$ & $\frac{347}{1250}$ & $\frac{9}{100}$ & 0.28165 & 0.28125 & -0.00040\\
145 & 145 & $1$ & $\frac{347}{1250}$ & $\frac{9}{100}$ & 0.28165 & 0.50000 & --\\
148 & -148 & $\frac{3}{4}$ & $\frac{1}{4}$ & $\frac{4}{25}$ & 0.26280 & 0.28125 & 0.01845\\
148 & -148 & $1$ & $\frac{1}{4}$ & $\frac{4}{25}$ & 0.26280 & 0.50000 & --\\
149 & 149 & $\frac{3}{4}$ & $\frac{57}{200}$ & $\frac{31}{200}$ & 0.29702 & 0.28125 & -0.01577\\
149 & 149 & $1$ & $\frac{57}{200}$ & $\frac{31}{200}$ & 0.29702 & 0.50000 & --\\
151 & -151 & $\frac{3}{4}$ & $\frac{57}{200}$ & $\frac{31}{200}$ & 0.29702 & 0.28125 & -0.01577\\
151 & -151 & $1$ & $\frac{57}{200}$ & $\frac{31}{200}$ & 0.29702 & 0.50000 & --\\
152 & -152 & $\frac{3}{4}$ & $\frac{51}{200}$ & $\frac{11}{100}$ & 0.26105 & 0.28125 & 0.02020\\
152 & -152 & $1$ & $\frac{51}{200}$ & $\frac{11}{100}$ & 0.26105 & 0.50000 & --\\
152 & 152 & $\frac{3}{4}$ & $\frac{51}{200}$ & $\frac{11}{100}$ & 0.26105 & 0.28125 & 0.02020\\
152 & 152 & $1$ & $\frac{51}{200}$ & $\frac{11}{100}$ & 0.26105 & 0.50000 & --\\
155 & -155 & $\frac{3}{4}$ & $\frac{1313}{5000}$ & $\frac{33}{200}$ & 0.27622 & 0.28125 & 0.00503\\
155 & -155 & $1$ & $\frac{1313}{5000}$ & $\frac{33}{200}$ & 0.27622 & 0.50000 & --\\
156 & 156 & $\frac{3}{4}$ & $\frac{23}{100}$ & $\frac{13}{100}$ & 0.23845 & 0.28125 & 0.04280\\
156 & 156 & $1$ & $\frac{23}{100}$ & $\frac{13}{100}$ & 0.23845 & 0.50000 & --\\
157 & 157 & $\frac{3}{4}$ & $\frac{57}{200}$ & $\frac{31}{200}$ & 0.29702 & 0.28125 & -0.01577\\
157 & 157 & $1$ & $\frac{57}{200}$ & $\frac{31}{200}$ & 0.29702 & 0.50000 & --\\
159 & -159 & $\frac{3}{4}$ & $\frac{53}{200}$ & $\frac{13}{100}$ & 0.27345 & 0.28125 & 0.00780\\
159 & -159 & $1$ & $\frac{53}{200}$ & $\frac{13}{100}$ & 0.27345 & 0.50000 & --\\
161 & 161 & $\frac{3}{4}$ & $\frac{1363}{5000}$ & $\frac{27}{200}$ & 0.28172 & 0.28125 & -0.00047\\
161 & 161 & $1$ & $\frac{1363}{5000}$ & $\frac{27}{200}$ & 0.28172 & 0.50000 & --\\
163 & -163 & $\frac{3}{4}$ & $\frac{57}{200}$ & $\frac{31}{200}$ & 0.29702 & 0.28125 & -0.01577\\
163 & -163 & $1$ & $\frac{57}{200}$ & $\frac{31}{200}$ & 0.29702 & 0.50000 & --\\
164 & -164 & $\frac{3}{4}$ & $\frac{51}{200}$ & $\frac{13}{100}$ & 0.26345 & 0.28125 & 0.01780\\
164 & -164 & $1$ & $\frac{51}{200}$ & $\frac{13}{100}$ & 0.26345 & 0.50000 & --\\
165 & 165 & $\frac{3}{4}$ & $\frac{49}{200}$ & $\frac{27}{200}$ & 0.25412 & 0.28125 & 0.02713\\
165 & 165 & $1$ & $\frac{49}{200}$ & $\frac{27}{200}$ & 0.25412 & 0.50000 & --\\
167 & -167 & $\frac{3}{4}$ & $\frac{57}{200}$ & $\frac{31}{200}$ & 0.29702 & 0.28125 & -0.01577\\
167 & -167 & $1$ & $\frac{57}{200}$ & $\frac{31}{200}$ & 0.29702 & 0.50000 & --\\
168 & -168 & $\frac{3}{4}$ & $\frac{9}{40}$ & $\frac{23}{200}$ & 0.23162 & 0.28125 & 0.04963\\
168 & -168 & $1$ & $\frac{9}{40}$ & $\frac{23}{200}$ & 0.23162 & 0.50000 & --\\
168 & 168 & $\frac{3}{4}$ & $\frac{9}{40}$ & $\frac{23}{200}$ & 0.23162 & 0.28125 & 0.04963\\
168 & 168 & $1$ & $\frac{9}{40}$ & $\frac{23}{200}$ & 0.23162 & 0.50000 & --\\
172 & 172 & $\frac{3}{4}$ & $\frac{51}{200}$ & $\frac{13}{100}$ & 0.26345 & 0.28125 & 0.01780\\
172 & 172 & $1$ & $\frac{51}{200}$ & $\frac{13}{100}$ & 0.26345 & 0.50000 & --\\
173 & 173 & $\frac{3}{4}$ & $\frac{57}{200}$ & $\frac{31}{200}$ & 0.29702 & 0.28125 & -0.01577\\
173 & 173 & $1$ & $\frac{57}{200}$ & $\frac{31}{200}$ & 0.29702 & 0.50000 & --\\
177 & 177 & $\frac{3}{4}$ & $\frac{53}{200}$ & $\frac{13}{100}$ & 0.27345 & 0.28125 & 0.00780\\
177 & 177 & $1$ & $\frac{53}{200}$ & $\frac{13}{100}$ & 0.27345 & 0.50000 & --\\
179 & -179 & $\frac{3}{4}$ & $\frac{57}{200}$ & $\frac{31}{200}$ & 0.29702 & 0.28125 & -0.01577\\
179 & -179 & $1$ & $\frac{57}{200}$ & $\frac{31}{200}$ & 0.29702 & 0.50000 & --\\
181 & 181 & $\frac{3}{4}$ & $\frac{57}{200}$ & $\frac{31}{200}$ & 0.29702 & 0.28125 & -0.01577\\
181 & 181 & $1$ & $\frac{57}{200}$ & $\frac{31}{200}$ & 0.29702 & 0.50000 & --\\
183 & -183 & $\frac{3}{4}$ & $\frac{53}{200}$ & $\frac{13}{100}$ & 0.27345 & 0.28125 & 0.00780\\
183 & -183 & $1$ & $\frac{53}{200}$ & $\frac{13}{100}$ & 0.27345 & 0.50000 & --\\
184 & -184 & $\frac{3}{4}$ & $\frac{1}{4}$ & $\frac{31}{200}$ & 0.26202 & 0.28125 & 0.01923\\
184 & -184 & $1$ & $\frac{1}{4}$ & $\frac{31}{200}$ & 0.26202 & 0.50000 & --\\
184 & 184 & $\frac{3}{4}$ & $\frac{1}{4}$ & $\frac{31}{200}$ & 0.26202 & 0.28125 & 0.01923\\
184 & 184 & $1$ & $\frac{1}{4}$ & $\frac{31}{200}$ & 0.26202 & 0.50000 & --\\
185 & 185 & $\frac{3}{4}$ & $\frac{669}{2500}$ & $\frac{17}{100}$ & 0.28205 & 0.28125 & -0.00080\\
185 & 185 & $1$ & $\frac{669}{2500}$ & $\frac{17}{100}$ & 0.28205 & 0.50000 & --\\
187 & -187 & $\frac{3}{4}$ & $\frac{11}{40}$ & $\frac{29}{200}$ & 0.28552 & 0.28125 & -0.00427\\
187 & -187 & $1$ & $\frac{11}{40}$ & $\frac{29}{200}$ & 0.28552 & 0.50000 & --\\
188 & 188 & $\frac{3}{4}$ & $\frac{51}{200}$ & $\frac{13}{100}$ & 0.26345 & 0.28125 & 0.01780\\
188 & 188 & $1$ & $\frac{51}{200}$ & $\frac{13}{100}$ & 0.26345 & 0.50000 & --\\
191 & -191 & $\frac{3}{4}$ & $\frac{57}{200}$ & $\frac{31}{200}$ & 0.29702 & 0.28125 & -0.01577\\
191 & -191 & $1$ & $\frac{57}{200}$ & $\frac{31}{200}$ & 0.29702 & 0.50000 & --\\
193 & 193 & $\frac{3}{4}$ & $\frac{57}{200}$ & $\frac{31}{200}$ & 0.29702 & 0.28125 & -0.01577\\
193 & 193 & $1$ & $\frac{57}{200}$ & $\frac{31}{200}$ & 0.29702 & 0.50000 & --\\
195 & -195 & $\frac{3}{4}$ & $\frac{49}{200}$ & $\frac{29}{200}$ & 0.25552 & 0.28125 & 0.02573\\
195 & -195 & $1$ & $\frac{49}{200}$ & $\frac{29}{200}$ & 0.25552 & 0.50000 & --\\
197 & 197 & $\frac{3}{4}$ & $\frac{57}{200}$ & $\frac{31}{200}$ & 0.29702 & 0.28125 & -0.01577\\
197 & 197 & $1$ & $\frac{57}{200}$ & $\frac{31}{200}$ & 0.29702 & 0.50000 & --\\
199 & -199 & $\frac{3}{4}$ & $\frac{57}{200}$ & $\frac{31}{200}$ & 0.29702 & 0.28125 & -0.01577\\
199 & -199 & $1$ & $\frac{57}{200}$ & $\frac{31}{200}$ & 0.29702 & 0.50000 & --\\
201 & 201 & $\frac{3}{4}$ & $\frac{21}{80}$ & $\frac{3}{20}$ & 0.27375 & 0.28125 & 0.00750\\
201 & 201 & $1$ & $\frac{21}{80}$ & $\frac{3}{20}$ & 0.27375 & 0.50000 & --\\
203 & -203 & $\frac{3}{4}$ & $\frac{109}{400}$ & $\frac{7}{50}$ & 0.28230 & 0.28125 & -0.00105\\
203 & -203 & $1$ & $\frac{109}{400}$ & $\frac{7}{50}$ & 0.28230 & 0.50000 & --\\
204 & 204 & $\frac{3}{4}$ & $\frac{93}{400}$ & $\frac{1}{8}$ & 0.24032 & 0.28125 & 0.04093\\
204 & 204 & $1$ & $\frac{93}{400}$ & $\frac{1}{8}$ & 0.24032 & 0.50000 & --\\
205 & 205 & $\frac{3}{4}$ & $\frac{2751}{10000}$ & $\frac{3}{25}$ & 0.28230 & 0.28125 & -0.00105\\
205 & 205 & $1$ & $\frac{2751}{10000}$ & $\frac{3}{25}$ & 0.28230 & 0.50000 & --\\
209 & 209 & $\frac{3}{4}$ & $\frac{7}{25}$ & $\frac{23}{200}$ & 0.28662 & 0.28125 & -0.00537\\
209 & 209 & $1$ & $\frac{7}{25}$ & $\frac{23}{200}$ & 0.28662 & 0.50000 & --\\
211 & -211 & $\frac{3}{4}$ & $\frac{29}{100}$ & $\frac{3}{25}$ & 0.29720 & 0.28125 & -0.01595\\
211 & -211 & $1$ & $\frac{29}{100}$ & $\frac{3}{25}$ & 0.29720 & 0.50000 & --\\
212 & -212 & $\frac{3}{4}$ & $\frac{101}{400}$ & $\frac{3}{20}$ & 0.26375 & 0.28125 & 0.01750\\
212 & -212 & $1$ & $\frac{101}{400}$ & $\frac{3}{20}$ & 0.26375 & 0.50000 & --\\
213 & 213 & $\frac{3}{4}$ & $\frac{21}{80}$ & $\frac{3}{20}$ & 0.27375 & 0.28125 & 0.00750\\
213 & 213 & $1$ & $\frac{21}{80}$ & $\frac{3}{20}$ & 0.27375 & 0.50000 & --\\
215 & -215 & $\frac{3}{4}$ & $\frac{1363}{5000}$ & $\frac{7}{50}$ & 0.28240 & 0.28125 & -0.00115\\
215 & -215 & $1$ & $\frac{1363}{5000}$ & $\frac{7}{50}$ & 0.28240 & 0.50000 & --\\
217 & 217 & $\frac{3}{4}$ & $\frac{11}{40}$ & $\frac{1}{8}$ & 0.28282 & 0.28125 & -0.00157\\
217 & 217 & $1$ & $\frac{11}{40}$ & $\frac{1}{8}$ & 0.28282 & 0.50000 & --\\
219 & -219 & $\frac{3}{4}$ & $\frac{21}{80}$ & $\frac{3}{20}$ & 0.27375 & 0.28125 & 0.00750\\
219 & -219 & $1$ & $\frac{21}{80}$ & $\frac{3}{20}$ & 0.27375 & 0.50000 & --\\
220 & 220 & $\frac{3}{4}$ & $\frac{93}{400}$ & $\frac{63}{400}$ & 0.24491 & 0.28125 & 0.03634\\
220 & 220 & $1$ & $\frac{93}{400}$ & $\frac{63}{400}$ & 0.24491 & 0.50000 & --\\
221 & 221 & $\frac{3}{4}$ & $\frac{7}{25}$ & $\frac{3}{25}$ & 0.28720 & 0.28125 & -0.00595\\
221 & 221 & $1$ & $\frac{7}{25}$ & $\frac{3}{25}$ & 0.28720 & 0.50000 & --\\
223 & -223 & $\frac{3}{4}$ & $\frac{29}{100}$ & $\frac{3}{25}$ & 0.29720 & 0.28125 & -0.01595\\
223 & -223 & $1$ & $\frac{29}{100}$ & $\frac{3}{25}$ & 0.29720 & 0.50000 & --\\
227 & -227 & $\frac{3}{4}$ & $\frac{29}{100}$ & $\frac{3}{25}$ & 0.29720 & 0.28125 & -0.01595\\
227 & -227 & $1$ & $\frac{29}{100}$ & $\frac{3}{25}$ & 0.29720 & 0.50000 & --\\
228 & -228 & $\frac{3}{4}$ & $\frac{47}{200}$ & $\frac{9}{80}$ & 0.24133 & 0.28125 & 0.03992\\
228 & -228 & $1$ & $\frac{47}{200}$ & $\frac{9}{80}$ & 0.24133 & 0.50000 & --\\
229 & 229 & $\frac{3}{4}$ & $\frac{29}{100}$ & $\frac{3}{25}$ & 0.29720 & 0.28125 & -0.01595\\
229 & 229 & $1$ & $\frac{29}{100}$ & $\frac{3}{25}$ & 0.29720 & 0.50000 & --\\
231 & -231 & $\frac{3}{4}$ & $\frac{49}{200}$ & $\frac{27}{200}$ & 0.25412 & 0.28125 & 0.02713\\
231 & -231 & $1$ & $\frac{49}{200}$ & $\frac{27}{200}$ & 0.25412 & 0.50000 & --\\
232 & -232 & $\frac{3}{4}$ & $\frac{51}{200}$ & $\frac{1}{8}$ & 0.26282 & 0.28125 & 0.01843\\
232 & -232 & $1$ & $\frac{51}{200}$ & $\frac{1}{8}$ & 0.26282 & 0.50000 & --\\
232 & 232 & $\frac{3}{4}$ & $\frac{51}{200}$ & $\frac{1}{8}$ & 0.26282 & 0.28125 & 0.01843\\
232 & 232 & $1$ & $\frac{51}{200}$ & $\frac{1}{8}$ & 0.26282 & 0.50000 & --\\
233 & 233 & $\frac{3}{4}$ & $\frac{29}{100}$ & $\frac{3}{25}$ & 0.29720 & 0.28125 & -0.01595\\
233 & 233 & $1$ & $\frac{29}{100}$ & $\frac{3}{25}$ & 0.29720 & 0.50000 & --\\
235 & -235 & $\frac{3}{4}$ & $\frac{11}{40}$ & $\frac{1}{8}$ & 0.28282 & 0.28125 & -0.00157\\
235 & -235 & $1$ & $\frac{11}{40}$ & $\frac{1}{8}$ & 0.28282 & 0.50000 & --\\
236 & 236 & $\frac{3}{4}$ & $\frac{51}{200}$ & $\frac{27}{200}$ & 0.26412 & 0.28125 & 0.01713\\
236 & 236 & $1$ & $\frac{51}{200}$ & $\frac{27}{200}$ & 0.26412 & 0.50000 & --\\
237 & 237 & $\frac{3}{4}$ & $\frac{53}{200}$ & $\frac{27}{200}$ & 0.27412 & 0.28125 & 0.00713\\
237 & 237 & $1$ & $\frac{53}{200}$ & $\frac{27}{200}$ & 0.27412 & 0.50000 & --\\
239 & -239 & $\frac{3}{4}$ & $\frac{29}{100}$ & $\frac{3}{25}$ & 0.29720 & 0.28125 & -0.01595\\
239 & -239 & $1$ & $\frac{29}{100}$ & $\frac{3}{25}$ & 0.29720 & 0.50000 & --\\
241 & 241 & $\frac{3}{4}$ & $\frac{29}{100}$ & $\frac{3}{25}$ & 0.29720 & 0.28125 & -0.01595\\
241 & 241 & $1$ & $\frac{29}{100}$ & $\frac{3}{25}$ & 0.29720 & 0.50000 & --\\
244 & -244 & $\frac{3}{4}$ & $\frac{51}{200}$ & $\frac{27}{200}$ & 0.26412 & 0.28125 & 0.01713\\
244 & -244 & $1$ & $\frac{51}{200}$ & $\frac{27}{200}$ & 0.26412 & 0.50000 & --\\
247 & -247 & $\frac{3}{4}$ & $\frac{7}{25}$ & $\frac{1}{8}$ & 0.28782 & 0.28125 & -0.00657\\
247 & -247 & $1$ & $\frac{7}{25}$ & $\frac{1}{8}$ & 0.28782 & 0.50000 & --\\
248 & -248 & $\frac{3}{4}$ & $\frac{51}{200}$ & $\frac{1}{8}$ & 0.26282 & 0.28125 & 0.01843\\
248 & -248 & $1$ & $\frac{51}{200}$ & $\frac{1}{8}$ & 0.26282 & 0.50000 & --\\
248 & 248 & $\frac{3}{4}$ & $\frac{51}{200}$ & $\frac{1}{8}$ & 0.26282 & 0.28125 & 0.01843\\
248 & 248 & $1$ & $\frac{51}{200}$ & $\frac{1}{8}$ & 0.26282 & 0.50000 & --\\
249 & 249 & $\frac{3}{4}$ & $\frac{53}{200}$ & $\frac{27}{200}$ & 0.27412 & 0.28125 & 0.00713\\
249 & 249 & $1$ & $\frac{53}{200}$ & $\frac{27}{200}$ & 0.27412 & 0.50000 & --\\
251 & -251 & $\frac{3}{4}$ & $\frac{23}{80}$ & $\frac{7}{50}$ & 0.29730 & 0.28125 & -0.01605\\
251 & -251 & $1$ & $\frac{23}{80}$ & $\frac{7}{50}$ & 0.29730 & 0.50000 & --\\
253 & 253 & $\frac{3}{4}$ & $\frac{111}{400}$ & $\frac{7}{50}$ & 0.28730 & 0.28125 & -0.00605\\
253 & 253 & $1$ & $\frac{111}{400}$ & $\frac{7}{50}$ & 0.28730 & 0.50000 & --\\
255 & -255 & $\frac{3}{4}$ & $\frac{99}{400}$ & $\frac{7}{50}$ & 0.25730 & 0.28125 & 0.02395\\
255 & -255 & $1$ & $\frac{99}{400}$ & $\frac{7}{50}$ & 0.25730 & 0.50000 & --\\
257 & 257 & $\frac{3}{4}$ & $\frac{23}{80}$ & $\frac{7}{50}$ & 0.29730 & 0.28125 & -0.01605\\
257 & 257 & $1$ & $\frac{23}{80}$ & $\frac{7}{50}$ & 0.29730 & 0.50000 & --\\
259 & -259 & $\frac{3}{4}$ & $\frac{109}{400}$ & $\frac{29}{200}$ & 0.28302 & 0.28125 & -0.00177\\
259 & -259 & $1$ & $\frac{109}{400}$ & $\frac{29}{200}$ & 0.28302 & 0.50000 & --\\
260 & -260 & $\frac{3}{4}$ & $\frac{6}{25}$ & $\frac{23}{200}$ & 0.24662 & 0.28125 & 0.03463\\
260 & -260 & $1$ & $\frac{6}{25}$ & $\frac{23}{200}$ & 0.24662 & 0.50000 & --\\
263 & -263 & $\frac{3}{4}$ & $\frac{23}{80}$ & $\frac{7}{50}$ & 0.29730 & 0.28125 & -0.01605\\
263 & -263 & $1$ & $\frac{23}{80}$ & $\frac{7}{50}$ & 0.29730 & 0.50000 & --\\
264 & -264 & $\frac{3}{4}$ & $\frac{47}{200}$ & $\frac{17}{200}$ & 0.23862 & 0.28125 & 0.04263\\
264 & -264 & $1$ & $\frac{47}{200}$ & $\frac{17}{200}$ & 0.23862 & 0.50000 & --\\
264 & 264 & $\frac{3}{4}$ & $\frac{47}{200}$ & $\frac{17}{200}$ & 0.23862 & 0.28125 & 0.04263\\
264 & 264 & $1$ & $\frac{47}{200}$ & $\frac{17}{200}$ & 0.23862 & 0.50000 & --\\
265 & 265 & $\frac{3}{4}$ & $\frac{27}{100}$ & $\frac{4}{25}$ & 0.28280 & 0.28125 & -0.00155\\
265 & 265 & $1$ & $\frac{27}{100}$ & $\frac{4}{25}$ & 0.28280 & 0.50000 & --\\
267 & -267 & $\frac{3}{4}$ & $\frac{53}{200}$ & $\frac{27}{200}$ & 0.27412 & 0.28125 & 0.00713\\
267 & -267 & $1$ & $\frac{53}{200}$ & $\frac{27}{200}$ & 0.27412 & 0.50000 & --\\
268 & 268 & $\frac{3}{4}$ & $\frac{51}{200}$ & $\frac{27}{200}$ & 0.26412 & 0.28125 & 0.01713\\
268 & 268 & $1$ & $\frac{51}{200}$ & $\frac{27}{200}$ & 0.26412 & 0.50000 & --\\
269 & 269 & $\frac{3}{4}$ & $\frac{23}{80}$ & $\frac{7}{50}$ & 0.29730 & 0.28125 & -0.01605\\
269 & 269 & $1$ & $\frac{23}{80}$ & $\frac{7}{50}$ & 0.29730 & 0.50000 & --\\
271 & -271 & $\frac{3}{4}$ & $\frac{23}{80}$ & $\frac{7}{50}$ & 0.29730 & 0.28125 & -0.01605\\
271 & -271 & $1$ & $\frac{23}{80}$ & $\frac{7}{50}$ & 0.29730 & 0.50000 & --\\
273 & 273 & $\frac{3}{4}$ & $\frac{97}{400}$ & $\frac{13}{80}$ & 0.25571 & 0.28125 & 0.02554\\
273 & 273 & $1$ & $\frac{97}{400}$ & $\frac{13}{80}$ & 0.25571 & 0.50000 & --\\
276 & -276 & $\frac{3}{4}$ & $\frac{47}{200}$ & $\frac{3}{25}$ & 0.24220 & 0.28125 & 0.03905\\
276 & -276 & $1$ & $\frac{47}{200}$ & $\frac{3}{25}$ & 0.24220 & 0.50000 & --\\
277 & 277 & $\frac{3}{4}$ & $\frac{23}{80}$ & $\frac{7}{50}$ & 0.29730 & 0.28125 & -0.01605\\
277 & 277 & $1$ & $\frac{23}{80}$ & $\frac{7}{50}$ & 0.29730 & 0.50000 & --\\
280 & -280 & $\frac{3}{4}$ & $\frac{47}{200}$ & $\frac{41}{400}$ & 0.24026 & 0.28125 & 0.04099\\
280 & -280 & $1$ & $\frac{47}{200}$ & $\frac{41}{400}$ & 0.24026 & 0.50000 & --\\
280 & 280 & $\frac{3}{4}$ & $\frac{47}{200}$ & $\frac{41}{400}$ & 0.24026 & 0.28125 & 0.04099\\
280 & 280 & $1$ & $\frac{47}{200}$ & $\frac{41}{400}$ & 0.24026 & 0.50000 & --\\
281 & 281 & $\frac{3}{4}$ & $\frac{23}{80}$ & $\frac{7}{50}$ & 0.29730 & 0.28125 & -0.01605\\
281 & 281 & $1$ & $\frac{23}{80}$ & $\frac{7}{50}$ & 0.29730 & 0.50000 & --\\
283 & -283 & $\frac{3}{4}$ & $\frac{23}{80}$ & $\frac{7}{50}$ & 0.29730 & 0.28125 & -0.01605\\
283 & -283 & $1$ & $\frac{23}{80}$ & $\frac{7}{50}$ & 0.29730 & 0.50000 & --\\
284 & 284 & $\frac{3}{4}$ & $\frac{103}{400}$ & $\frac{47}{400}$ & 0.26441 & 0.28125 & 0.01684\\
284 & 284 & $1$ & $\frac{103}{400}$ & $\frac{47}{400}$ & 0.26441 & 0.50000 & --\\
285 & 285 & $\frac{3}{4}$ & $\frac{51}{200}$ & $\frac{7}{80}$ & 0.25883 & 0.28125 & 0.02242\\
285 & 285 & $1$ & $\frac{51}{200}$ & $\frac{7}{80}$ & 0.25883 & 0.50000 & --\\
287 & -287 & $\frac{3}{4}$ & $\frac{11}{40}$ & $\frac{13}{100}$ & 0.28345 & 0.28125 & -0.00220\\
287 & -287 & $1$ & $\frac{11}{40}$ & $\frac{13}{100}$ & 0.28345 & 0.50000 & --\\
291 & -291 & $\frac{3}{4}$ & $\frac{107}{400}$ & $\frac{47}{400}$ & 0.27441 & 0.28125 & 0.00684\\
291 & -291 & $1$ & $\frac{107}{400}$ & $\frac{47}{400}$ & 0.27441 & 0.50000 & --\\
292 & -292 & $\frac{3}{4}$ & $\frac{103}{400}$ & $\frac{47}{400}$ & 0.26441 & 0.28125 & 0.01684\\
292 & -292 & $1$ & $\frac{103}{400}$ & $\frac{47}{400}$ & 0.26441 & 0.50000 & --\\
293 & 293 & $\frac{3}{4}$ & $\frac{23}{80}$ & $\frac{7}{50}$ & 0.29730 & 0.28125 & -0.01605\\
293 & 293 & $1$ & $\frac{23}{80}$ & $\frac{7}{50}$ & 0.29730 & 0.50000 & --\\
295 & -295 & $\frac{3}{4}$ & $\frac{109}{400}$ & $\frac{29}{200}$ & 0.28302 & 0.28125 & -0.00177\\
295 & -295 & $1$ & $\frac{109}{400}$ & $\frac{29}{200}$ & 0.28302 & 0.50000 & --\\
296 & -296 & $\frac{3}{4}$ & $\frac{51}{200}$ & $\frac{13}{100}$ & 0.26345 & 0.28125 & 0.01780\\
296 & -296 & $1$ & $\frac{51}{200}$ & $\frac{13}{100}$ & 0.26345 & 0.50000 & --\\
296 & 296 & $\frac{3}{4}$ & $\frac{51}{200}$ & $\frac{13}{100}$ & 0.26345 & 0.28125 & 0.01780\\
296 & 296 & $1$ & $\frac{51}{200}$ & $\frac{13}{100}$ & 0.26345 & 0.50000 & --\\
299 & -299 & $\frac{3}{4}$ & $\frac{57}{200}$ & $\frac{19}{200}$ & 0.28952 & 0.28125 & -0.00827\\
299 & -299 & $1$ & $\frac{57}{200}$ & $\frac{19}{200}$ & 0.28952 & 0.50000 & --\\
301 & 301 & $\frac{3}{4}$ & $\frac{7}{25}$ & $\frac{19}{200}$ & 0.28452 & 0.28125 & -0.00327\\
301 & 301 & $1$ & $\frac{7}{25}$ & $\frac{19}{200}$ & 0.28452 & 0.50000 & --\\
303 & -303 & $\frac{3}{4}$ & $\frac{13}{50}$ & $\frac{17}{100}$ & 0.27445 & 0.28125 & 0.00680\\
303 & -303 & $1$ & $\frac{13}{50}$ & $\frac{17}{100}$ & 0.27445 & 0.50000 & --\\
305 & 305 & $\frac{3}{4}$ & $\frac{11}{40}$ & $\frac{13}{100}$ & 0.28345 & 0.28125 & -0.00220\\
305 & 305 & $1$ & $\frac{11}{40}$ & $\frac{13}{100}$ & 0.28345 & 0.50000 & --\\
307 & -307 & $\frac{3}{4}$ & $\frac{29}{100}$ & $\frac{49}{400}$ & 0.29751 & 0.28125 & -0.01626\\
307 & -307 & $1$ & $\frac{29}{100}$ & $\frac{49}{400}$ & 0.29751 & 0.50000 & --\\
308 & -308 & $\frac{3}{4}$ & $\frac{93}{400}$ & $\frac{63}{400}$ & 0.24491 & 0.28125 & 0.03634\\
308 & -308 & $1$ & $\frac{93}{400}$ & $\frac{63}{400}$ & 0.24491 & 0.50000 & --\\
309 & 309 & $\frac{3}{4}$ & $\frac{13}{50}$ & $\frac{17}{100}$ & 0.27445 & 0.28125 & 0.00680\\
309 & 309 & $1$ & $\frac{13}{50}$ & $\frac{17}{100}$ & 0.27445 & 0.50000 & --\\
311 & -311 & $\frac{3}{4}$ & $\frac{29}{100}$ & $\frac{49}{400}$ & 0.29751 & 0.28125 & -0.01626\\
311 & -311 & $1$ & $\frac{29}{100}$ & $\frac{49}{400}$ & 0.29751 & 0.50000 & --\\
312 & -312 & $\frac{3}{4}$ & $\frac{47}{200}$ & $\frac{39}{400}$ & 0.23976 & 0.28125 & 0.04149\\
312 & -312 & $1$ & $\frac{47}{200}$ & $\frac{39}{400}$ & 0.23976 & 0.50000 & --\\
312 & 312 & $\frac{3}{4}$ & $\frac{47}{200}$ & $\frac{39}{400}$ & 0.23976 & 0.28125 & 0.04149\\
312 & 312 & $1$ & $\frac{47}{200}$ & $\frac{39}{400}$ & 0.23976 & 0.50000 & --\\
313 & 313 & $\frac{3}{4}$ & $\frac{29}{100}$ & $\frac{49}{400}$ & 0.29751 & 0.28125 & -0.01626\\
313 & 313 & $1$ & $\frac{29}{100}$ & $\frac{49}{400}$ & 0.29751 & 0.50000 & --\\
316 & 316 & $\frac{3}{4}$ & $\frac{51}{200}$ & $\frac{11}{80}$ & 0.26446 & 0.28125 & 0.01679\\
316 & 316 & $1$ & $\frac{51}{200}$ & $\frac{11}{80}$ & 0.26446 & 0.50000 & --\\
317 & 317 & $\frac{3}{4}$ & $\frac{29}{100}$ & $\frac{49}{400}$ & 0.29751 & 0.28125 & -0.01626\\
317 & 317 & $1$ & $\frac{29}{100}$ & $\frac{49}{400}$ & 0.29751 & 0.50000 & --\\
319 & -319 & $\frac{3}{4}$ & $\frac{111}{400}$ & $\frac{29}{200}$ & 0.28802 & 0.28125 & -0.00677\\
319 & -319 & $1$ & $\frac{111}{400}$ & $\frac{29}{200}$ & 0.28802 & 0.50000 & --\\
321 & 321 & $\frac{3}{4}$ & $\frac{53}{200}$ & $\frac{11}{80}$ & 0.27446 & 0.28125 & 0.00679\\
321 & 321 & $1$ & $\frac{53}{200}$ & $\frac{11}{80}$ & 0.27446 & 0.50000 & --\\
323 & -323 & $\frac{3}{4}$ & $\frac{57}{200}$ & $\frac{41}{400}$ & 0.29026 & 0.28125 & -0.00901\\
323 & -323 & $1$ & $\frac{57}{200}$ & $\frac{41}{400}$ & 0.29026 & 0.50000 & --\\
327 & -327 & $\frac{3}{4}$ & $\frac{53}{200}$ & $\frac{11}{80}$ & 0.27446 & 0.28125 & 0.00679\\
327 & -327 & $1$ & $\frac{53}{200}$ & $\frac{11}{80}$ & 0.27446 & 0.50000 & --\\
328 & -328 & $\frac{3}{4}$ & $\frac{101}{400}$ & $\frac{3}{20}$ & 0.26375 & 0.28125 & 0.01750\\
328 & -328 & $1$ & $\frac{101}{400}$ & $\frac{3}{20}$ & 0.26375 & 0.50000 & --\\
328 & 328 & $\frac{3}{4}$ & $\frac{101}{400}$ & $\frac{3}{20}$ & 0.26375 & 0.28125 & 0.01750\\
328 & 328 & $1$ & $\frac{101}{400}$ & $\frac{3}{20}$ & 0.26375 & 0.50000 & --\\
329 & 329 & $\frac{3}{4}$ & $\frac{111}{400}$ & $\frac{23}{200}$ & 0.28412 & 0.28125 & -0.00287\\
329 & 329 & $1$ & $\frac{111}{400}$ & $\frac{23}{200}$ & 0.28412 & 0.50000 & --\\
331 & -331 & $\frac{3}{4}$ & $\frac{29}{100}$ & $\frac{1}{8}$ & 0.29782 & 0.28125 & -0.01657\\
331 & -331 & $1$ & $\frac{29}{100}$ & $\frac{1}{8}$ & 0.29782 & 0.50000 & --\\
332 & 332 & $\frac{3}{4}$ & $\frac{101}{400}$ & $\frac{31}{200}$ & 0.26452 & 0.28125 & 0.01673\\
332 & 332 & $1$ & $\frac{101}{400}$ & $\frac{31}{200}$ & 0.26452 & 0.50000 & --\\
335 & -335 & $\frac{3}{4}$ & $\frac{11}{40}$ & $\frac{13}{100}$ & 0.28345 & 0.28125 & -0.00220\\
335 & -335 & $1$ & $\frac{11}{40}$ & $\frac{13}{100}$ & 0.28345 & 0.50000 & --\\
337 & 337 & $\frac{3}{4}$ & $\frac{29}{100}$ & $\frac{1}{8}$ & 0.29782 & 0.28125 & -0.01657\\
337 & 337 & $1$ & $\frac{29}{100}$ & $\frac{1}{8}$ & 0.29782 & 0.50000 & --\\
339 & -339 & $\frac{3}{4}$ & $\frac{21}{80}$ & $\frac{31}{200}$ & 0.27452 & 0.28125 & 0.00673\\
339 & -339 & $1$ & $\frac{21}{80}$ & $\frac{31}{200}$ & 0.27452 & 0.50000 & --\\
340 & -340 & $\frac{3}{4}$ & $\frac{47}{200}$ & $\frac{33}{200}$ & 0.24862 & 0.28125 & 0.03263\\
340 & -340 & $1$ & $\frac{47}{200}$ & $\frac{33}{200}$ & 0.24862 & 0.50000 & --\\
341 & 341 & $\frac{3}{4}$ & $\frac{7}{25}$ & $\frac{13}{100}$ & 0.28845 & 0.28125 & -0.00720\\
341 & 341 & $1$ & $\frac{7}{25}$ & $\frac{13}{100}$ & 0.28845 & 0.50000 & --\\
344 & -344 & $\frac{3}{4}$ & $\frac{101}{400}$ & $\frac{3}{20}$ & 0.26375 & 0.28125 & 0.01750\\
344 & -344 & $1$ & $\frac{101}{400}$ & $\frac{3}{20}$ & 0.26375 & 0.50000 & --\\
344 & 344 & $\frac{3}{4}$ & $\frac{101}{400}$ & $\frac{3}{20}$ & 0.26375 & 0.28125 & 0.01750\\
344 & 344 & $1$ & $\frac{101}{400}$ & $\frac{3}{20}$ & 0.26375 & 0.50000 & --\\
345 & 345 & $\frac{3}{4}$ & $\frac{51}{200}$ & $\frac{39}{400}$ & 0.25976 & 0.28125 & 0.02149\\
345 & 345 & $1$ & $\frac{51}{200}$ & $\frac{39}{400}$ & 0.25976 & 0.50000 & --\\
347 & -347 & $\frac{3}{4}$ & $\frac{29}{100}$ & $\frac{1}{8}$ & 0.29782 & 0.28125 & -0.01657\\
347 & -347 & $1$ & $\frac{29}{100}$ & $\frac{1}{8}$ & 0.29782 & 0.50000 & --\\
348 & 348 & $\frac{3}{4}$ & $\frac{93}{400}$ & $\frac{29}{200}$ & 0.24302 & 0.28125 & 0.03823\\
348 & 348 & $1$ & $\frac{93}{400}$ & $\frac{29}{200}$ & 0.24302 & 0.50000 & --\\
349 & 349 & $\frac{3}{4}$ & $\frac{29}{100}$ & $\frac{1}{8}$ & 0.29782 & 0.28125 & -0.01657\\
349 & 349 & $1$ & $\frac{29}{100}$ & $\frac{1}{8}$ & 0.29782 & 0.50000 & --\\
353 & 353 & $\frac{3}{4}$ & $\frac{29}{100}$ & $\frac{1}{8}$ & 0.29782 & 0.28125 & -0.01657\\
353 & 353 & $1$ & $\frac{29}{100}$ & $\frac{1}{8}$ & 0.29782 & 0.50000 & --\\
355 & -355 & $\frac{3}{4}$ & $\frac{7}{25}$ & $\frac{37}{400}$ & 0.28428 & 0.28125 & -0.00303\\
355 & -355 & $1$ & $\frac{7}{25}$ & $\frac{37}{400}$ & 0.28428 & 0.50000 & --\\
356 & -356 & $\frac{3}{4}$ & $\frac{51}{200}$ & $\frac{7}{50}$ & 0.26480 & 0.28125 & 0.01645\\
356 & -356 & $1$ & $\frac{51}{200}$ & $\frac{7}{50}$ & 0.26480 & 0.50000 & --\\
357 & 357 & $\frac{3}{4}$ & $\frac{51}{200}$ & $\frac{9}{100}$ & 0.25905 & 0.28125 & 0.02220\\
357 & 357 & $1$ & $\frac{51}{200}$ & $\frac{9}{100}$ & 0.25905 & 0.50000 & --\\
359 & -359 & $\frac{3}{4}$ & $\frac{29}{100}$ & $\frac{1}{8}$ & 0.29782 & 0.28125 & -0.01657\\
359 & -359 & $1$ & $\frac{29}{100}$ & $\frac{1}{8}$ & 0.29782 & 0.50000 & --\\
364 & 364 & $\frac{3}{4}$ & $\frac{93}{400}$ & $\frac{67}{400}$ & 0.24653 & 0.28125 & 0.03472\\
364 & 364 & $1$ & $\frac{93}{400}$ & $\frac{67}{400}$ & 0.24653 & 0.50000 & --\\
365 & 365 & $\frac{3}{4}$ & $\frac{7}{25}$ & $\frac{37}{400}$ & 0.28428 & 0.28125 & -0.00303\\
365 & 365 & $1$ & $\frac{7}{25}$ & $\frac{37}{400}$ & 0.28428 & 0.50000 & --\\
367 & -367 & $\frac{3}{4}$ & $\frac{29}{100}$ & $\frac{1}{8}$ & 0.29782 & 0.28125 & -0.01657\\
367 & -367 & $1$ & $\frac{29}{100}$ & $\frac{1}{8}$ & 0.29782 & 0.50000 & --\\
371 & -371 & $\frac{3}{4}$ & $\frac{7}{25}$ & $\frac{1}{10}$ & 0.28500 & 0.28125 & -0.00375\\
371 & -371 & $1$ & $\frac{7}{25}$ & $\frac{1}{10}$ & 0.28500 & 0.50000 & --\\
372 & -372 & $\frac{3}{4}$ & $\frac{47}{200}$ & $\frac{13}{100}$ & 0.24345 & 0.28125 & 0.03780\\
372 & -372 & $1$ & $\frac{47}{200}$ & $\frac{13}{100}$ & 0.24345 & 0.50000 & --\\
373 & 373 & $\frac{3}{4}$ & $\frac{29}{100}$ & $\frac{1}{8}$ & 0.29782 & 0.28125 & -0.01657\\
373 & 373 & $1$ & $\frac{29}{100}$ & $\frac{1}{8}$ & 0.29782 & 0.50000 & --\\
376 & -376 & $\frac{3}{4}$ & $\frac{51}{200}$ & $\frac{27}{200}$ & 0.26412 & 0.28125 & 0.01713\\
376 & -376 & $1$ & $\frac{51}{200}$ & $\frac{27}{200}$ & 0.26412 & 0.50000 & --\\
376 & 376 & $\frac{3}{4}$ & $\frac{51}{200}$ & $\frac{27}{200}$ & 0.26412 & 0.28125 & 0.01713\\
376 & 376 & $1$ & $\frac{51}{200}$ & $\frac{27}{200}$ & 0.26412 & 0.50000 & --\\
377 & 377 & $\frac{3}{4}$ & $\frac{57}{200}$ & $\frac{41}{400}$ & 0.29026 & 0.28125 & -0.00901\\
377 & 377 & $1$ & $\frac{57}{200}$ & $\frac{41}{400}$ & 0.29026 & 0.50000 & --\\
379 & -379 & $\frac{3}{4}$ & $\frac{29}{100}$ & $\frac{1}{8}$ & 0.29782 & 0.28125 & -0.01657\\
379 & -379 & $1$ & $\frac{29}{100}$ & $\frac{1}{8}$ & 0.29782 & 0.50000 & --\\
380 & 380 & $\frac{3}{4}$ & $\frac{19}{80}$ & $\frac{61}{400}$ & 0.24913 & 0.28125 & 0.03212\\
380 & 380 & $1$ & $\frac{19}{80}$ & $\frac{61}{400}$ & 0.24913 & 0.50000 & --\\
381 & 381 & $\frac{3}{4}$ & $\frac{53}{200}$ & $\frac{7}{50}$ & 0.27480 & 0.28125 & 0.00645\\
381 & 381 & $1$ & $\frac{53}{200}$ & $\frac{7}{50}$ & 0.27480 & 0.50000 & --\\
383 & -383 & $\frac{3}{4}$ & $\frac{29}{100}$ & $\frac{1}{8}$ & 0.29782 & 0.28125 & -0.01657\\
383 & -383 & $1$ & $\frac{29}{100}$ & $\frac{1}{8}$ & 0.29782 & 0.50000 & --\\
385 & 385 & $\frac{3}{4}$ & $\frac{101}{400}$ & $\frac{61}{400}$ & 0.26413 & 0.28125 & 0.01712\\
385 & 385 & $1$ & $\frac{101}{400}$ & $\frac{61}{400}$ & 0.26413 & 0.50000 & --\\
388 & -388 & $\frac{3}{4}$ & $\frac{51}{200}$ & $\frac{7}{50}$ & 0.26480 & 0.28125 & 0.01645\\
388 & -388 & $1$ & $\frac{51}{200}$ & $\frac{7}{50}$ & 0.26480 & 0.50000 & --\\
389 & 389 & $\frac{3}{4}$ & $\frac{29}{100}$ & $\frac{1}{8}$ & 0.29782 & 0.28125 & -0.01657\\
389 & 389 & $1$ & $\frac{29}{100}$ & $\frac{1}{8}$ & 0.29782 & 0.50000 & --\\
391 & -391 & $\frac{3}{4}$ & $\frac{57}{200}$ & $\frac{11}{100}$ & 0.29105 & 0.28125 & -0.00980\\
391 & -391 & $1$ & $\frac{57}{200}$ & $\frac{11}{100}$ & 0.29105 & 0.50000 & --\\
393 & 393 & $\frac{3}{4}$ & $\frac{53}{200}$ & $\frac{7}{50}$ & 0.27480 & 0.28125 & 0.00645\\
393 & 393 & $1$ & $\frac{53}{200}$ & $\frac{7}{50}$ & 0.27480 & 0.50000 & --\\
395 & -395 & $\frac{3}{4}$ & $\frac{7}{25}$ & $\frac{19}{200}$ & 0.28452 & 0.28125 & -0.00327\\
395 & -395 & $1$ & $\frac{7}{25}$ & $\frac{19}{200}$ & 0.28452 & 0.50000 & --\\
397 & 397 & $\frac{3}{4}$ & $\frac{29}{100}$ & $\frac{1}{8}$ & 0.29782 & 0.28125 & -0.01657\\
397 & 397 & $1$ & $\frac{29}{100}$ & $\frac{1}{8}$ & 0.29782 & 0.50000 & --\\
399 & -399 & $\frac{3}{4}$ & $\frac{51}{200}$ & $\frac{19}{200}$ & 0.25952 & 0.28125 & 0.02173\\
399 & -399 & $1$ & $\frac{51}{200}$ & $\frac{19}{200}$ & 0.25952 & 0.50000 & --\\
\end{longtable}
}\subsection*{Use of AI disclosure}
Most of this work was carried out without assistance from recent AI language models. AI tools were used to improve the writing, refine some error bounds for large $m$, and extend the numerical methods. We also added a comparison with recent OpenAI preprints~\cite{OPENAI1,OPENAI2}, particularly regarding the last column of Table~\ref{tab:certified_pairs_m400}. The authors take full responsibility for the mathematical arguments, numerical computations, and final manuscript.

\bibliographystyle{unsrtnat}

\end{document}